\newtheorem{theorem}{Theorem}[section]
\newtheorem{lemma}[theorem]{Lemma}
\newtheorem{proposition}[theorem]{Proposition}
\newtheorem{corollary}[theorem]{Corollary}
	\theoremstyle{plain}
	\newtheorem{definition}[theorem]{Definition}
	\newtheorem{example}[theorem]{Example}
	\newtheorem{remark}[theorem]{Remark}
\newtheorem{theoremalphabetical}{Theorem}
\renewcommand{\theequation}{\arabic{section}.\arabic{equation}}
\newcounter{subeqn}
\renewcommand{\thesubeqn}{\theequation\alph{subeqn}}
\newcommand{\subeqn}{%
	\refstepcounter{subeqn}
	\tag{\thesubeqn}
}
\newcommand{\newseq}{%
	\refstepcounter{equation}
}
\newcommand{\nc}{\newcommand}
\nc{\PR}{D^i\!R}
\nc{\rola}{X}
\nc{\wela}{Y}
\nc{\Lcat}{\mathsf{DQ}}
\nc{\lcat}{\mathsf{dq}}
\nc{\micro}{\EuScript{R}}
\nc{\microh}{\EuScript{W}}
\nc{\cat}{\mathcal{V}}
\nc{\func}{\EuScript{T}}
\nc{\res}{\operatorname{res}}
\newcommand{\Spec}{\operatorname{Spec}}
\nc{\sK}{\EuScript{K}}
\nc{\sL}{\EuScript{L}}
\nc{\sM}{\EuScript{M}}
\nc{\sQ}{\EuScript{Q}}
\nc{\sP}{\EuScript{P}}
\nc{\tsL}{\tilde{\sL}}
\nc{\structuresheaf}{\mathscr{O}}
\nc{\structuresheafcal}{\mathscr{O}}
\nc{\Qvectorbundle}{\mathscr{Q}}
\newcommand{\A}{\mathscr{A}}
\nc{\mapomodules}{f}
\nc{\lieS}{\operatorname{Lie}(\mathbb{S}}
\newcommand{\vectorspace}{\mathbb{V}}
\newcommand{\Proj}{\operatorname{Proj}}
\newcommand{\lin}{{\ell}}
\nc{\bQ}{\mathbb{N}}
\nc{\op}{\operatorname{op}}
\nc{\sEnd}{\mathcal{E}nd}
\newcommand{\chamber}{\Delta}
\newcommand{\Ba}{\mathbf{a}}
\newcommand{\Bb}{\mathbf{b}}
\newcommand{\HH}{H}
\newcommand{\Bk}{\mathbf{k}}
\newcommand{\Bl}{\mathbf{l}}
\newcommand{\duall}{*}
\newcommand{\Bx}{\mathbf{x}}
\newcommand{\By}{\mathbf{y}}
\newcommand{\Bz}{\mathbf{z}}
\nc{\sz}{\mathsf{z}}
\nc{\sw}{\mathsf{w}}
\nc{\dF}{\mathsf{F}}
\nc{\dE}{\mathsf{E}}
\nc{\lle}{\Gamma}
\nc{\Dc}{d}
\nc{\pS}{S}
\nc{\ps}{\mathsf{s}}
\nc{\pt}{\mathsf{t}}
\nc{\slehat}{\mathfrak{\widehat{sl}}_e}
\nc{\sllhat}{\mathfrak{\widehat{sl}}_\ell}
\nc{\glehat}{\mathfrak{\widehat{gl}}_e}
\nc{\slnhat}{\mathfrak{\widehat{sl}}_n}
\nc{\glnhat}{\mathfrak{\widehat{gl}}_n}
\nc{\eE}{\EuScript{E}}
\newcommand{\arxiv}[1]{\href{http://arxiv.org/abs/#1}{\tt arXiv:\nolinkurl{#1}}}
\nc{\ep}{\epsilon}
\nc{\RHom}{\mathbb{R}\operatorname{Hom}}
\nc{\sHom}{\mathscr{H}\!\mathit{om}}
\nc{\eF}{\EuScript{F}}
\nc{\fF}{\mathfrak{F}}
\nc{\fE}{\mathfrak{E}}
\nc{\fI}{\mathfrak{I}}
\nc{\fp}{\mathfrak{p}}
\newcommand{\K}{\mathbb{K}}
\newcommand{\Z}{\mathbb{Z}}
\newcommand{\Q}{\mathbb{Q}}
\nc{\Qlb}{\mathbb{\bar \Q}_\ell}
\nc{\Fq}{\mathbb{F}_q}
\nc{\Fqb}{\mathbb{\bar F}_q}
\nc{\walg}{W}
\newcommand{\R}{\mathbb{R}}
\newcommand{\Fp}{{\mathbb{F}_p}}
\newcommand{\C}{\mathbb{C}}
\nc{\KZ}{\mathsf{KZ}}
\newcommand{\DQ}{\mathsf{DQ}}
\newcommand{\dq}{\mathsf{dq}}
\newcommand{\la}{\leftarrow}
\newcommand{\ssM}{\EuScript{M}}
\nc{\Bv}{\mathbf{v}}
\nc{\Bw}{\mathbf{w}}
\nc{\perf}{\operatorname{-perf}}
\nc{\Bp}{\mathbf{p}}
\nc{\tU}{\mathcal{U}}
\nc{\Bu}{\mathbf{u}}
\nc{\Fl}{\mathscr{F}\!\ell}
\nc{\Tr}{\operatorname{Tr}}
\nc{\cata}{\mathfrak{V}}
\nc{\tcat}{\tilde{\mathcal{V}}}
\nc{\tcata}{\tilde{\mathfrak{V}}}
\nc{\sheafK}{\EuScript{K}}
\nc{\bmu}{\boldsymbol{\mu}}
\nc{\bpi}{\boldsymbol{\pi}}
\nc{\dwalg}{\mathbb{W}}
\nc{\dalg}{\mathbb{T}}
\nc{\aalg}{\mathbb{A}}
\nc{\alm}{\mathscr{A}}
\nc{\bra}{\mathscr{B}}
\nc{\bO}{\mathbb{O}}
\nc{\Kos}{\EuScript{K}}
\nc{\tilt}{\mathscr{T}}
\renewcommand{\la}{\lambda}
\newcommand{\al}{\alpha}
\newcommand{\Hom}{\operatorname{Hom}}
\newcommand{\Homs}{\mathscr{H}\!om}
\newcommand{\fd}{\mathfrak {d}}
\newcommand{\ft}{\mathfrak {t}}
\newcommand{\cL}{\mathcal{L}}
\newcommand{\ch}{h}
\newcommand{\Ext}{\operatorname{Ext}}
\newcommand{\bS}{\mathbb{S}}
\newcommand{\excise}[1]{}
\newcommand{\End}{\operatorname{End}}
\newcommand{\fM}{\mathfrak{M}}
\newcommand{\fN}{\mathfrak{N}}
\newcommand{\mmod}{\operatorname{-mod}}
\newcommand{\Coh}{\mathsf{Coh}}
\newcommand{\AZ}{\mathbb{A}_\Z}
\newcommand{\AZn}{\mathbb{A}_\Z^n}
\newcommand{\Lotimes}{\overset{L}{\otimes}}
\nc{\iwedge}[1]{\bigwedge\nolimits^{\! #1}}
\nc{\wedgep}[1]{\iwedge{#1}\C^n}
\nc{\fsl}{\mathfrak{sl}}
\nc{\sln}{\mathfrak{sl}_n}
\newcommand{\DD}{\mathbb{D}}
\newcommand{\uu}{\mathbb{U}_1}
\newcommand{\hdef}{\hbar}
\newcommand{\mhm}{\mu\mathsf{m}}
\newcommand{\MHM}{\mu\mathsf{M}}
\newcommand{\tmhm}{\widetilde{\mhm}}
\newcommand{\tMHM}{\widetilde{\MHM}}
\newcommand{\tdq}{\widetilde{\dq}}
\newcommand{\Bet}{\mathfrak{B}}
\newcommand{\Dol}{\mathfrak{D}}
\newcommand{\Dolcov}{\widetilde{\Dol}}
\newcommand{\ZDol}{\mathfrak{Z}}
\newcommand{\basicW}{\mathfrak{W}}
\newcommand{\algcov}{\Dolcov^{\operatorname{alg}}}
\newcommand{\ZDolcov}{\widetilde{\ZDol}}
\newcommand{\DDD}{\mathbf{d}}
\newcommand{\SGmod}{\EuScript{O}_{\phi}^\hdef \mmod^{\mathbb{S}G}}
\newcommand{\Amodule}{\mathscr{M}}
\newcommand{\gradedHalg}{\tilde{H}}
\newcommand{\degradedHalg}{H}
\newcommand{\gradedchambers}{\tilde{\Lambda}}
\newcommand{\degradedchambers}{\Lambda}
\newcommand{\monomialm}{m}
\renewcommand*\FXLayoutInline[3]{%
	{\@fxuseface{inline}
		
		\ignorespaces\noindent \ovalbox{\hspace{.03\textwidth} \begin{minipage}{.9\textwidth}
				#3 \fxnotename{#1}: #2
			\end{minipage}\hspace{.03\textwidth}}}}
\begin{document}
	
	\renewcommand{\theitheorem}{\Alph{itheorem}}
	
	\usetikzlibrary{decorations.pathreplacing,backgrounds,decorations.markings,calc,
		shapes.geometric}
	\tikzset{wei/.style={draw=red,double=red!40!white,double distance=1.5pt,thin}}
	\tikzset{bdot/.style={fill,circle,color=blue,inner sep=3pt,outer sep=0}}
	\tikzset{dir/.style={postaction={decorate,decoration={markings,
					mark=at position .8 with {\arrow[scale=1.3]{<}}}}}}
	\tikzset{rdir/.style={postaction={decorate,decoration={markings,
					mark=at position .8 with {\arrow[scale=1.3]{>}}}}}}
	\tikzset{edir/.style={postaction={decorate,decoration={markings,
					mark=at position .2 with {\arrow[scale=1.3]{<}}}}}}
	\author{Michael McBreen}
	
	\address{Department of Mathematics, Chinese University of Hong Kong}
	\author{ Ben Webster}
\address{Department of Pure Mathematics, University of Waterloo \& 
	Perimeter Institute for Theoretical Physics}
		\begin{abstract}
			\noindent {\em Abstract.}
			We consider homological mirror symmetry in the context of hypertoric varieties, showing that an appropriate category of B-branes (that is, coherent sheaves) on an additive hypertoric variety matches a category of A-branes on a Dolbeault hypertoric manifold for the same underlying combinatorial data.  For technical reasons, the A-branes we consider are modules over a deformation quantization (that is, DQ-modules).  We consider objects in this category equipped with an analogue of a Hodge structure, which corresponds to a $\mathbb{G}_m$-action on the dual side of the mirror symmetry.  
			
			This result is based on hands-on calculations in both categories.  We analyze coherent sheaves by constructing a tilting generator, using the characteristic $p$ approach of Kaledin; the result is a sum of line bundles, which can be described using a simple combinatorial rule.  The endomorphism algebra $H$ of this tilting generator has a simple quadratic presentation in the grading induced by $\mathbb{G}_m$-equivariance. In fact, we can confirm it is Koszul, and compute its Koszul dual $H^!$.  
			
			We then show that this same algebra appears as an Ext-algebra of simple A-branes in a Dolbeault hypertoric manifold.  The $\mathbb{G}_m$-equivariant grading on coherent sheaves matches a Hodge grading in this category.  
		\end{abstract}

\title[Homological mirror symmetry for hypertoric varieties I]{Homological mirror symmetry for hypertoric varieties I: conic equivariant sheaves}
	
	\maketitle
	\section{Introduction}
	\label{sec:introduction}
	
	Toric varieties have proven many times in algebraic geometry to be a valuable testing ground.  Their combinatorial flavor and concrete nature has been extremely conducive to calculation.  Certainly this is the case in the domain of homological mirror symmetry (see \cite{Aboutoric,FOOO}).  
	
	Toric varieties have a natural hyperk\"ahler analogue which we call {\bf hypertoric varieties} (some other places in the literature, they are called ``toric hyperk\"ahler varieties'').  Just as toric varieties can be written as K\"ahler quotients of complex vector spaces, hypertoric varieties are hyperk\"ahler quotients by tori (see Definition \ref{def:hypertoric}). 
	
	Despite their name, hypertoric varieties are almost never toric. Rather, they are {\bf conical symplectic resolutions:} the natural map $\pi : \fM \to \Spec H^0(\fM, \mathscr{O}_{\fM})$ is a proper resolution of singularities, and there is an action of $\mathbb{G}_m$ on $\fM$ which dilates the algebraic symplectic form and contracts $\Spec H^0(\fM, \mathscr{O}_{\fM})$ to a point $o$. Among symplectic resolutions, hypertoric varieties are distinguished by the presence of an effective complex hamiltonian action of a half-dimensional complex torus.
	
	In this paper, we study homological mirror symmetry for hypertoric varieties. This is typically understood to mean an equivalence between the derived category of coherent sheaves (or $B$-branes) on an algebraic variety  and the Fukaya category (the $A$-branes) of a related symplectic manifold. We will instead prove a different, but closely related, equivalence.
	
	On the $B$-side, we consider the derived category of coherent sheaves on the hypertoric variety $\fM$. For the statement of our equivalence, it is most natural to impose some finiteness conditions.  
 The simplest version of our equivalence concerns the category $\Coh(\fM)_o$ of sheaves set-theoretically supported on the fiber $\pi^{-1}(o)$ over the cone point of $\Spec H^0(\fM, \mathscr{O}_{\fM})$. 
	
	On the $A$-side, we take our mirror space to be a {\bf Dolbeault hypertoric manifold} $\Dol$, as defined by Hausel and Proudfoot. This is a multiplicative analogue of $\fM$, equipped with a fibration $q : \Dol \to \C^d$ by Lagrangian abelian subvarieties degenerating to a union of toric varieties over $0 \in \C^d$. We prove in the sequel paper \cite{GMW}, joint with Ben Gammage, that $q^{-1}(0)$ is the skeleton of a suitable Liouville structure on $\Dol$. When we need to distinguish, we will call usual hypertoric varieties {\bf additive}. 
	
  We define a certain category $\dq$ of deformation quantization modules on $\Dol$, quantizing the irreducible components of $q^{-1}(0) \subset \Dol$. Let $\DQ$ be the (dg enhanced) derived category of $\dq$. We prove:
	
				\begin{theoremalphabetical} \ref{centralcorollary}
						\label{centralcorollary_intro}
					There is an equivalence of dg categories $D^b(\Coh(\fM_\C)_o)\to \Lcat$.
					\end{theoremalphabetical}

     The simples of $\dq$ may be thought of as certain distinguished objects in the Fukaya category of $\Dol$. We do not attempt to make this precise here; the exact relation of $\DQ$ to $Fuk(\Dol)$ is described in \cite{GMW}. 
	
	The left-hand category has an important extra structure : the conical $\mathbb{G}_m$ action. To understand its mirror, we consider an abelian category $\mhm$ consisting of DQ-modules endowed with a `microlocal mixed Hodge
	structure', along with its derived category $\MHM$. We have the following graded version of Theorem \ref{centralcorollary_intro} :
	
		\begin{theoremalphabetical} \ref{cor:maincorollary} \label{cor:maincorollary_intro}
						There is an equivalence of dg categories $D^b(\Coh_{\mathbb{G}_m}(\fM_\Q)_o)\to \MHM,$ such that tensoring with the weight 1 representation of $\mathbb{G}_m$ corresponds to a $1/2$ Tate twist.
					\end{theoremalphabetical}
					This equivalence may be thought of as homological mirror symmetry
	for two subcategories of the $A$ and $B$ branes, both of which are
	enriched with suitable notions of $\mathbb{G}_m$-equivariance. The reader may compare with \cite{BMO, MaOk} and their sequels, where the same $\mathbb{G}_m$-action plays a key role. 
	
	In fact, we construct a family of equivalences, which are best understood in terms of special t-structures on both sides. On the one hand, the Dolbeault space $\Dol$ depends on a choice of parameter $\zeta \in \mathfrak{t}^{\vee}_\R \cong H^2(\fM, \R)$, in the complement of a periodic hyperplane arrangement. As $\zeta$ crosses these hyperplanes, components of the central fiber $q^{-1}(0)$ may appear or disappear. Thus different chambers yield different abelian categories $\mhm$, which are nevertheless derived equivalent. 
	
    On the other hand, a choice of $\zeta$ in the complement of the arrangement determines a {\bf tilting generator} of $D^b(\Coh_{\mathbb{G}_m}(\fM_\Q)_o)$. This is a vector bundle $\EuScript{T}^{\zeta}$ such that $\Ext(\EuScript{T}^{\zeta},-)$ defines an equivalence of dg categories
	\[ D^b(\Coh(\fM))\cong D^b(\degradedHalg^{\zeta} \mmod^{\operatorname{op}}) \]
	where
	$\degradedHalg^{\zeta}=\End(\EuScript{T}^{\zeta})$. In particular, the natural t-structure on the right-hand side defines an `exotic' t-structure on the left-hand side.
	
	Our construction of $\EuScript{T}^{\zeta}$ follows a recipe of Kaledin \cite{KalDEQ}. The algebra $\degradedHalg^{\zeta}$ is thus an analogue in our context of Bezrukavnikov's noncommutative Springer resolution \cite{BezNon}. Its significance can be understood as follows. Both $\fM$ and $\degradedHalg^{\zeta}$ are naturally defined over $\Z$. Given a field $\K$ of characteristic $p$, let $\fM_{\K}$ and $\degradedHalg^{\zeta}_{\K}$ be the corresponding $\K$-forms. Suppose $p\zeta \in H^2(\mathfrak{M};\Z)$, in which case it defines a class $\la \in\operatorname{Pic}(\fM_{\K})$. There is an associated Frobenius-constant quantization of the variety $\fM_{\K}$ in the sense of \cite{BKpos}. We write $A^\la_{\K}$ for the resulting non-commutative algebra, which deforms $H^0(\fM_\K, \mathscr{O}_{\fM})$. By Theorem \ref{thmprojectiveendomorphisms}, there is equivalence of abelian categories between the category of $A^\la_{\K}$-modules with special central character, and  the category of finite dimensional representations of $\degradedHalg^\la_\K$ satisfying a nilpotence condition. 
	
	While this construction springs from geometry in characteristic $p$, and the tilting property is checked using this approach, the tilting generators we consider are sums of line bundles and have a simple combinatorial construction, as does the endomorphism ring $\degradedHalg^{\zeta}$.  This endomorphism ring inherits a grading from a $\mathbb{G}_m$-equivariant structure on $\EuScript{T}^\zeta$ and is Koszul with respect to it.  Thus, the category of $\mathbb{G}_m$-equivariant coherent sheaves on $\fM$ is controlled by the derived category of graded $\degradedHalg^{\zeta}$-modules, or equivalently by graded modules over $(\degradedHalg^{\zeta})^!$, its Koszul dual. It is this Koszul dual that has a natural counterpart on the mirror side. 
	
	Theorem \ref{mainthm-Hodge} of this paper explains the relevance of these structures to our mirror equivalence. It can be paraphrased as follows.
	
\begin{theoremalphabetical} \label{mainthm-Hodge_intro}
Under the equivalence \ref{cor:maincorollary_intro}, the natural $t$-structure on deformation quantization modules on $\Dol$ corresponds to the exotic $t$-structure on coherent sheaves on $\fM$ arising from the tilting bundle $\tilt^{\zeta}$.
\end{theoremalphabetical}				

    There are many directions one can go from here. For instance, it is natural to expect different $t$-structures should fit together into a real variation of stability in the sense of \cite{anno2015stability}, in particular, as predicted by Conjecture 1 of {\it loc.\ cit}.  
	The second author will show this in the more general context of Coulomb branches in \cite{WebcohII}.  
	
	As a result of our use of DQ-modules as a substitute for the Fukaya category, this paper contains little about Lagrangian branes, pseudo-holomorphic disks and other staples of symplectic geometry. The reader may wish to compare with the interesting recent preprint \cite{LZhypertoric}, which appeared a few days before this paper and treats the problem of non-equivariant mirror symmetry for hypertoric varieties from the perspective of SYZ fibrations.
	
The	variety $\fM$ is the Coulomb branch (in the sense of \cite{BFN}) with
	gauge group given by a torus, and that $\Dol$ is expected to
	be a hyperk\"ahler rotation of the K-theoretic version of this
	construction.  Thus, it is natural to consider how these constructions
	can be generalized to that case.  The analogous calculation of a
	tilting bundle with explicit endomorphism ring can be generalized in
	this case, as the second author will shows in \cite{WebcohI}, but it is very difficult to even conjecture the correct category to consider on the A-side.
	
	One key source of interest in hypertoric varieties is that they provide excellent examples of conic symplectic singularities (see
	\cite{BLPWquant,BLPWgco}), which can be understood in combinatorial
	terms.  Considerations in 3-d mirror symmetry \cite{BLPWgco} and
	calculations in the representation theory of its quantization  led
	Braden, Licata, Proudfoot and the second author to suggest that
	hypertoric varieties should be viewed as coming in dual pairs,
	corresponding to Gale dual combinatorial data.  In particular, the
	categories $\mathcal{O}$ attached to these two varieties are Koszul
	dual \cite{GDKD,BLPWtorico}.  An obvious question in this case is how
	the categories we have considered, such as coherent sheaves, can be
	interpreted in terms of the dual variety (they are certainly not
	equivalent or Koszul dual to the coherent sheaves on the dual variety,
	as some very simple examples show).  Some calculations in quantum
	field theory suggest that they are the representations of a vertex
	algebra constructed by a BRST analogue of the hyperk\"ahler reduction,
	but this is definitely a topic which will need to wait for future research.
	
	\section*{Detailed outline of the argument}
	\subsubsection*{Part 1: Coherent sheaves and characteristic p quantizations of the additive hypertoric variety}
	Section \ref{sec:addit-mult-hypert} defines the additive hypertoric variety $\mathfrak{M}$. In Section \ref{subsecquantizations}  we fix a field $\K$ of characteristic p, and review the relation between the quantization of $\mathfrak{M}_\K$, called $A_{\K}^{\la}$, and coherent sheaves on $\mathfrak{M}_\K$. In Section \ref{sec:weight-functors}, we introduce a category of modules $A^\la_{\K}\mmod_o$, along with its graded counterpart $A^\la_{\K}\mmod_o^D$. All these objects depend on a quantization parameter $\la$. In Sections \ref{secisomQ}, \ref{subsecprojrepweight} and \ref{subsecweightsofsimples} we classify the projective pro-objects $P_\Bx$ of $A^\la_{\K}\mmod_o^D$, which also yields a classification of simple objects $L_\Bx$. 
	
	Both projectives and simples are indexed by the chambers of a periodic
	hyperplane arrangement $\textgoth{A}^{\operatorname{per}}_\la$ defined
	in \ref{defAhyperplane}. We compute the endomorphism algebra
	$\bigoplus_{\Bx,\By\in \gradedchambers}\Hom( P_\Bx,P_\By)$ in Theorem \ref{presentation1}. The latter contains a
	ring of power series $\widehat{S}$ as a central subalgebra, and we
	define a variant $\gradedHalg_\K^\la$ (Definition \ref{definingH}) in which
	$\widehat{S}$ is replaced by the corresponding polynomial ring $S$. We
	find that $A^\la_{\K}\mmod_o^D$ is equivalent to the subcategory of
	$\gradedHalg_\K^\la$-modules on which $S$ acts nilpotently. 
	
	The algebra $\gradedHalg^\la_\K$ has a natural lift to $\Z$, written $\gradedHalg^\la_\Z$, which we will use to compare with characteristic zero objects on the mirror side. Corollary \ref{cor:KoszulityofH} shows that $\gradedHalg_\Z^\la$ is Koszul. We compute the Koszul dual algebra $\gradedHalg_{\la, \K}^! = \bigoplus_{\Bx,\By\in \gradedchambers}\Ext(L_\Bx,L_\By)$ (Definition \ref{defHdual} and Theorem \ref{thm:HandHdualaredual}). 
	
	In Section \ref{sec:degrading} we describe the ungraded category $A^\la_{\K}\mmod_o$ in terms of the graded one. Its simples and projectives are indexed by the toroidal hyperplane arrangement $\textgoth{A}^{\operatorname{tor}}_\la$ obtained as the quotient of $\textgoth{A}^{\operatorname{per}}_\la$ by certain translations. We describe the corresponding algebras $\degradedHalg^\la_{\K} = \bigoplus_{\Bx,\By\in \degradedchambers}\Hom( P_\Bx,P_{\By})$ and $\degradedHalg^!_{\la, \K} = \bigoplus_{\Bx,\By\in \degradedchambers} \Ext( L_\Bx,L_{\By})$, where the sums now range over simples (resp projectives) for $A^\la_{\K}\mmod_o$. 
	
	In Section \ref{sec:tilting-generators} we use the above results to produce a tilting bundle $\tilt^\la$ on $\mathfrak{M}$ with endomorphism ring $\End(\tilt^\la) = \degradedHalg^\la$. Passing to characteristic zero, and replacing $\la$ by a parameter $\zeta \in \mathfrak{t}^*_\R$, we obtain equivalences  (from Corollary \ref{cor:H-equivalence} and Proposition \ref{prop:Hbang-coh}, respectively):  \begin{equation}
		D^b(\Coh(\fM_\Q))\cong
		D^b(\degradedHalg_\Q^{\zeta, \operatorname{op}}\operatorname{-mod})\qquad \degradedHalg_{\zeta, \Q}^!\perf\cong D^b(\Coh(\fM_\Q)_{o})
	\end{equation}
	where $\degradedHalg_{\zeta, \Q}^!\perf$ is the category of perfect dg-modules over this ring.
	\begin{remark}
		Throughout, we will always endow the bounded derived category $D^b$ of an abelian category with its usual dg-enhancement using injective resolutions; thus if we write $D^b(\mathsf{a})\cong \mathsf{C}$ for an abelian category $\mathsf{a}$ and  a dg-category $\mathsf{C}$, we really mean that this dg-enhancement is quasi-equivalent to $\mathsf{C}$.
	\end{remark}
	
	\subsubsection*{Part 2: Deformation quantization and microlocal mixed Hodge modules on the Dolbeault hypertoric manifold}
	
	The second half of our paper begins with a definition of the Dolbeault
	hypertoric manifold $\Dol$ (\ref{def:dolbhyper}), depending on a
	moment map parameter $\zeta$. The complex manifold $\Dol$ is a complex integrable system, with a `central fiber' consisting of a collection of complex Lagrangian submanifolds $\mathfrak{X}_\Bx$ indexed by the chambers of a toroidal hyperplane arrangement $\textgoth{B}^{\operatorname{tor}}_\zeta$ (Definition \ref{def:torBarrange} and Proposition \ref{prop:corecharact}). 
	
	The universal cover $\Dolcov$ of the Dolbeault space is an infinite type complex symplectic manifold, whose geometry is described by a periodic hyperplane arrangement $\textgoth{B}^{\operatorname{per}}_\zeta$. In turn, $\Dolcov$ is an open submanifold of an infinite type algebraic symplectic variety $\algcov$. The latter has a key additional structure: an action of a torus $\mathbb{S} \cong \mathbb{C}^*$ dilating both the complex symplectic form and the base of the integrable system, and preserving the central fiber. 
	
	In Section \ref{sec:defquant}, we define a sheaf  $\EuScript{O}^\hdef_\phi$ of $\C((\hbar))$-algebras on $\Dol$ quantising the structure sheaf, and for each $\mathfrak{X}_\Bx$ we define a module $\sL_\Bx$ over $\EuScript{O}^\hdef_\phi$ supported on $\mathfrak{X}_\Bx$.
	
	Although $\mathbb{S}$ does not preserve $\Dolcov \subset \algcov$, we can nevertheless make sense of $\mathbb{S}$-equivariant DQ modules on $\Dolcov$ and $\Dol$, and we show that $\sL_\Bx$ has a natural $\mathbb{S}$-equivariant structure. 
	
	 We define a subcategory $\lcat$ of $\mathbb{S}$-equivariant $\EuScript{O}^\hdef_\phi$-modules on $\Dol$ generated by the simple DQ-modules $\sL_\Bx$, together with the category dg-category $\Lcat$ of complexes in $\lcat$. The $\mathbb{S}$-equivariance yields a category with $\C$ (rather than $\C((\hbar))$) coefficients. We write $\widetilde{\dq}$ and $\widetilde{\DQ}$ for the corresponding categories on $\widetilde{\Dol}$.
	
	When $\la$ is the reduction of $p\zeta$, the arrangements $\textgoth{A}^{\operatorname{tor}}_\la$ and $\textgoth{B}^{\operatorname{tor}}_\zeta$ are identified. We hence have a bijection of chambers, and a corresponding bijection of isomorphism classes of simple objects for the categories $\lcat$ and $A^\la_{\K}\mmod_o$. Moreover, Theorem \ref{mainthm} shows that the $\Ext$ algebras of the simples in both categories share a common integral form: $\degradedHalg^!_{\la,\C}\cong E^!_{\C}:=\bigoplus_{\Bx,\By\in \degradedchambers}  \Ext(\sL_\Bx,\sL_\Bx)$.  
	
	Unfortunately, some care is needed about concluding that this isomorphism induces an equivalence of categories $\Lcat\to D^b(\mathsf{Coh}(\fM))_o$, since {\it a priori} it is not clear that $ E^!_{\C}$  is formal as a dg-algebra, which we would need to define a fully-faithful functor.  We prove this equivalence by constructing projective objects in $\dq$, and showing that $\degradedHalg_{\la,\C}$ appears as their automorphism algebra.  This shows that we have the desired derived equivalence (Corollary \ref{centralcorollary}).

	We can further account for the grading on $\degradedHalg_{\la,\Q}$ and reduce the structure ring to $\Q$ from $\C$ by considering a new graded abelian category $\mhm$ (Definition \ref{def:mhm}), and a corresponding triangulated category $D^b(\MHM)$. Each object of $\mhm$ is a $\EuScript{O}^\hdef_\phi$-module, such that for each lagrangian $\mathfrak{X}_\Bx$, the restriction to a Weinstein neighborhood of $\mathfrak{X}_\Bx$ is equipped with the structure of a mixed Hodge module. These structures are required to be compatible in a natural sense whenever two components intersect. We define $\mhm$ as the category generated by a special collection of such objects.
	
	Each object $\sL_\Bx$ has a natural lift to $\mhm$, and moreover any simple object of $\mhm$ is isomorphic to such a lift. 
	This allows us to conclude that the equivalence $D^b_{\operatorname{perf}}(\Coh(\fM_\C)_o)\to \Lcat$ can be upgraded to an equivalence of graded categories $D^b_{\operatorname{perf}}(\Coh_{\mathbb{G}_m}(\fM_\Q)_o)\to \MHM$ in the spirit of equivariant mirror symmetry.
	
	\begin{remark}
		In an earlier version of this paper, the proof of the main result depended on the use of this Hodge structure. In revisions responding to a referee's comments, we found a proof that avoids the use of it, so we have moved all discussion of Hodge topics to Section \ref{sec:Hodge}, after the proof of Theorem \ref{centralcorollary}.  We have left the discussion of Hodge structures in the paper, since we believe it is of some interest in understanding how $\C^*$-actions translate through mirror symmetry.
	\end{remark}
	
	\section*{Acknowledgements}
	We would like to thank Andrei Okounkov for suggesting a multiplicative analogue of the hypertoric variety as a mirror, Tam\'as Hausel and Nick Proudfoot for sharing their unpublished results on multiplicative hypertoric varieties, and Roman Bezrukavnikov, Ben Gammage, Sam Gunningham, Paul Seidel, Vivek Shende and Michael Thaddeus for helpful conversations. We would also like to thank the anonymous referee for many helpful comments. The first author performed part of this work at the Massachusetts
	Institute of Technology, the \'Ecole Polytechnique F\'ed\'erale de
	Lausanne, and during the Junior Trimester on Symplectic Geometry and
	Representation Theory at the Hausdorff Research Institute for
	Mathematics. He gratefully acknowledges the hospitality of all these
	institutions. While at the EPFL, he was supported by the Advanced Grant ``Arithmetic and Physics of Higgs moduli spaces'' No. 320593 of the European Research Council. This work was also partly supported by the Simons Foundation, as part of a Simons Investigator award. The second author was supported during the course of
	this work by the NSF under Grant DMS-1151473, the Alfred Sloan
	Foundation and by Perimeter Institute for Theoretical
	Physics. Research at Perimeter Institute is supported by the
	Government of Canada through the Department of Innovation, Science and
	Economic Development Canada and by the Province of Ontario through the
	Ministry of Research, Innovation and Science.
	
	\section{Hypertoric enveloping algebras}
	\label{sec:hypert-envel-algebr}
	
	\subsection{Additive hypertoric varieties}
	\label{sec:addit-mult-hypert}
	
	For a general introduction to hypertoric varieties, see \cite{Pr07}.
	
	Consider a split algebraic torus $T$ over $\Z$ of dimension $k$ (that is, an algebraic
	group isomorphic to $\mathbb{G}_m^k$) and a faithful linear action
	of $T$ on the affine space  $\AZn$, which we may assume is diagonal in the
	usual basis.  We let $D\cong \mathbb{G}_m^n$ be the group of diagonal  matrices in this basis, and write $G :=D/T$. 
	
	We have an induced action of $T$ on the cotangent bundle $T^*\AZn\cong \AZ^{2n}$.  We'll
	use $\sz_i$ for the usual coordinates on $\AZn$, and $\sw_i$ for the dual
	coordinates.  This
	action has an algebraic moment map $\mu\colon T^*\AZn\to \ft^*_\Z$, defined by a map of
	polynomial rings $\Z[\ft_\Z] \to \Z[\sz_1,\dots,\sz_n,\sw_1,\dots,\sw_n]$
	sending a cocharacter $\chi$ to the sum $\sum_{i=1}^n\langle
	\epsilon_i,\chi\rangle \sz_i\sw_i$, where $\epsilon_i$ is the character on
	$D$ defined by the action on the $i$th coordinate line, and $\langle
	-,-\rangle$ is the usual pairing between characters and cocharacters
	of $D$.
	
	For us, the main avatar of this action is the {\bf
		(additive) hypertoric variety}.  This is an algebraic hamiltonian reduction of $T^*\AZn$ by $T$. It comes in affine and smooth
	flavors, these being the categorical and GIT quotients (respectively)
	of the scheme-theoretic fiber $\mu^{-1}(0)$ by the group $T$.  More
	precisely, fix a character $\alpha\colon T\to \mathbb{G}_m$ whose kernel
	does not fix a coordinate line.  
	\begin{definition}\label{def:hypertoric}
		For a commutative ring $\K$, we let \[\fN_\K:=\Spec (\K[\sz_1,\dots,\sz_n,\sw_1,\dots, \sw_n]^T/\langle
		\mu^*(\chi)\mid\chi\in \ft_\Z\rangle)\]
		and 
		\[\fM_\K :=\Proj(\K[\sz_1,\dots,\sz_n,\sw_1,\dots, \sw_n,t]^T/\langle
		\mu^*(\chi)\mid\chi\in \ft_\Z\rangle)\]
		where $t$ is an additional variable of degree 1 with $T$-weight
		$-\alpha$.  
	\end{definition}
	
	Both varieties carry a residual action of the torus $G = D/T$, and an additional commuting action of a rank one torus $\bS := \mathbb{G}_m$ which scales the coordinates $\sw_i$ linearly while fixing $\sz_i$.
	
	We say that the sequence $T \to D \to G$ is {\bf unimodular} if the image of any tuple of coordinate cocharacters in $\fd_\Z := \operatorname{Lie}(D)_\Z$ forming a $\Q$-basis of $\mathfrak{g}_{\Q} := \operatorname{Lie}(G)_\Q$ also forms a $\Z$-basis of $\mathfrak{g}_\Z$. 
	
	Let $\pi\colon \fM_\C \to \fN_\C$ be the natural map. If we assume unimodularity, then $\fM_\C$ is a smooth scheme and $\pi$ defines a proper $T \times \bS$-equivariant resolution of singularities of $\fN_\C$. Together with the algebraic symplectic form on $\fM_\C$ arising from Hamiltonian reduction, this makes $\fM_\C$ a {\bf symplectic resolution}. Many elements of this paper make sense in the broader context of symplectic resolutions, although we will not press this point here. In the non-unimodular case, $\fM_\C$ may have orbifold singularities.

	In the description given above, $\fN_\C$ appears as the Higgs branch
	of the $\mathcal{N}=4$ three-dimensional gauge theory attached to the
	representation of $T_\C$ on $\C^n$.  However, it is more natural from
	the perspective of what is to follow to see $\fN_\C$ as the Coulomb
	branch of the theory attached to the dual action of $(D/T)^\vee$ on
	$\C^n$, in the sense of Braverman--Finkelberg--Nakajima \cite{BFN, NaCoulomb}.  This leads to a different presentation of the hypertoric enveloping algebra, which will be useful for understanding its representation theory.  In particular, the multiplicative hypertoric varieties we'll
	discuss later appear naturally from this perspective as the Coulomb
	branches of related 4 dimensional theories.  
	
	\subsection{Quantizations} \label{subsecquantizations}
	\label{sec:quantizations}

	The ring of functions on the hypertoric variety $\fN_\Z$ has a quantization which we call the {\bf hypertoric enveloping
		algebra}. We construct it by a quantum analogue of the Hamiltonian reduction which defines $\fM_{\Z}$. Consider the
	Weyl algebra $W_n$ generated over $\Z$ by the elements $z_1,\dots,
	z_n,\partial_1,\dots,\partial_n$ modulo the relations:
	\[[z_i,z_j]=0\qquad [\partial_i,\partial_j]=0\qquad
	[\partial_i,z_j]=\delta_{ij}.\]  
	It is a quantization of the ring of functions on $T^*\AZ^n$. The torus $D$ acts on $W_n$, scaling $z_i$ by the character $\epsilon_i$ and $\partial_i$ by $\epsilon_i^{-1}$. It thus determines a decomposition into weight spaces
	$$ W_n = \bigoplus_{\Ba \in \Z^n} W_n[\Ba].$$  Let 
	\[\ch_i^+ :=z_i\partial_i \qquad \ch_i^- :=\partial_iz_i=\ch_i^++1\qquad \ch_i^{\operatorname{mid}}: =\frac{1}2(\ch_i^++\ch_i^-)=\ch_i^++\frac{1}{2}=\ch_i^--\frac{1}{2}.\] Each of the tuples $\ch_i^+$, $\ch_i^-$ and $\ch_i^{\operatorname{mid}}$ generate the same subalgebra, i.e. the $D$-fixed subalgebra $\Z[\ch_i^\pm] = W_n[0]$.  
	
	Via the embedding $T \to D$, $W_n$ carries an action of the torus $T$. To this action one can associate a {\bf non-commutative moment map}, i.e. a map $\mu_q\colon \Z[\ft_\Z] \to W_n$ such that $[\mu_q(\chi),-]$ coincides with the action of the Lie algebra
	$\ft_\Z$. This property uniquely determines $\mu_q$ up to the addition of a character in $\ft_\Z^*$. We make the following choice.
	\[\mu_q(\chi):=\sum_{i=1}^n\langle
	\epsilon_i,\chi\rangle \ch_i^+.\]  It's worth nothing that in the formula above, we have broken the symmetry between $z_i$ and $\partial_i$; it would arguably be more natural to use $\ch_i^{\operatorname{mid}}$, but this requires inserting a lot of annoying factors of $1/2$ into formulas, not to mention being a bit confusing in positive characteristic.  
	
	\begin{definition} \label{defHET}
		The { hypertoric enveloping
			algebra} $A_\Z$ is the subring $W_n^T \subset W_n$ invariant under $T$.
		We'll also consider the central quotients of this algebra
		associated to a character $\la\in \ft^*_\Z$, given by 
		\[A_\Z^\la:=A_\Z/\langle \mu_q(\chi)-\la(\chi)\mid\chi\in \ft_\Z\rangle.\]
	\end{definition}
	We will often abbreviate ``hypertoric enveloping algebra'' to HEA.
	
	Let $A_{\K}:=A_\Z\otimes_{\Z}\K$ be the base change of this algebra
	to a
	commutative ring $\K$.  The algebra $A_\C$ was studied extensively in
	\cite{BLPWtorico,MVdB}.  The algebra $A_{\K}$ when $\K$ has
	characteristic $p$ was studied in work of
	Stadnik \cite{Stadnik}.  Fix a field $\K$ of characteristic $p$ for
	the rest of the paper.
	
	Unlike $W_n$ itself, or its base change to a characteristic 0
	field, the ring $W_n\otimes_\Z \Fp$ has a ``big center'' generated by
	the elements $z_i^p,\partial_i^p$. This central subring can be identified
	with the function ring  $H^0(X^{(1)}, \structuresheaf_{X^{(1)}})$ where $X = T^* \mathbb{A}^n_{\Fp}$.

	\subsection{Coulomb presentation}
	
	The algebra $A_{\K}$ has a different presentation which is more compatible with the subalgebra $\K[\ch_i^{\pm}]$. The action of $D$ on $A_{\K}$ determines a decomposition into weight subspaces. Since $A_{\K} = W_n^T$, its weights lie in $\ft_\Z^\perp = \mathfrak{g}_\Z^*$:
	$$ A_{\K} = \bigoplus_{\Ba \in \ft_\Z^\perp} A_{\K}[\Ba]. $$
	
	For each $\Ba\in \ft_\Z^\perp$, we let \begin{equation} \label{defminimalel} m(\Ba):=\prod_{a_i>0}z_i^{a_i}\prod_{a_i<0}\partial_i^{-a_i}.\end{equation}  Up to scalar multiplication, this is the unique element in $A_{\K}[\Ba]$ in of minimal degree. 
	
	Each weight space $A_{\K}[\Ba]$ is a module over the $D$-invariant subalgebra generated by the $\ch_i^+$. Let:
	\[[h_i]^{(a)} :=\begin{cases}
		1 & a=0\\
		z_i^a\partial_i^a=(h_i^--1)(h_i^--2)\cdots (h_i^--a) & a>0\\
		\partial_i^{-a}z_i^{-a} =(h_i^++1)(h_i^++2)\cdots (h_i^+-a)& a<0
	\end{cases}\]
	
	\begin{theorem}[\mbox{\cite[(6.21b)]{BDGH}}]
		The algebra $A_{\K}$ is generated by $\K[h_1^{\pm}, \dots, h_n^{\pm}]$ and $m(\Ba)$ for $\Ba\in \mathfrak{g}_\Z^*$, subject to the relations:
		\begin{align}
			(h_i^{\pm}-a_i) m(\Ba)&=m(\Ba)h_i^{\pm}\label{eq:Coulomb1}\\
			m(\Ba)m(\Bb)&=\prod_{\substack{a_ib_i<0\\ |a_i|\leq |b_i|}} [h_i]^{(a_i)}\cdot m(\Ba+\Bb)\cdot \prod_{\substack{a_ib_i<0\\ |a_i|> |b_i|}} [h_i]^{(-b_i)}\label{eq:Coulomb2}
		\end{align}
	\end{theorem}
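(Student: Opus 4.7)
The plan is to prove the theorem in two parts: first verify that the given generators satisfy the stated relations, and then show that these relations are complete. The whole argument hinges on the $D$-weight decomposition $A_\K = \bigoplus_{\Ba\in\ft_\Z^\perp} A_\K[\Ba]$ together with the observation that each weight space is free of rank one as a module over the degree-zero subalgebra $\K[h_1^\pm,\dots,h_n^\pm]$, generated by $m(\Ba)$. This rank-one property follows from the PBW basis of $W_n$: any $T$-invariant monomial $z^\alpha\partial^\beta$ lies in $A_\K[\alpha-\beta]$ and can be rewritten as $p(h)\cdot m(\alpha-\beta)$ by repeatedly applying the identities $z_i^a\partial_i^a=[h_i]^{(a)}$ and $\partial_i^a z_i^a=[h_i]^{(-a)}$ for $a>0$.

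To verify relation~\eqref{eq:Coulomb1}, I would use $[h_i^+,z_j]=\delta_{ij}z_j$ and $[h_i^+,\partial_j]=-\delta_{ij}\partial_j$, and apply these inductively to the factors of $m(\Ba)=\prod_{a_j>0}z_j^{a_j}\prod_{a_j<0}\partial_j^{-a_j}$ to obtain $h_i^+ m(\Ba)=m(\Ba)(h_i^++a_i)$; the case of $h_i^-$ is identical. For relation~\eqref{eq:Coulomb2}, since operators at distinct indices commute, the computation reduces to a single index $i$. When $a_ib_i\geq 0$ the $i$-th factor of the product is simply $z_i^{a_i+b_i}$ or $\partial_i^{-(a_i+b_i)}$, matching the $i$-th factor of $m(\Ba+\Bb)$. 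When $a_i b_i<0$, we split the mixed product: in the case $a_i>0>b_i$, we have $z_i^{a_i}\partial_i^{-b_i}=[h_i]^{(a_i)}\partial_i^{-(a_i+b_i)}$ if $|a_i|\leq|b_i|$ and $z_i^{a_i}\partial_i^{-b_i}=z_i^{a_i+b_i}[h_i]^{(-b_i)}$ if $|a_i|>|b_i|$, with the symmetric case $b_i>0>a_i$ handled analogously. Since the factors $[h_i]^{(\cdot)}$ commute with each other and with $z_j,\partial_j$ for $j\neq i$, they can be collected to the left or right of $m(\Ba+\Bb)$ as in the stated formula.

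For completeness of the relations, I would argue that they suffice to reduce every element of the presented algebra to normal form $p(h)\cdot m(\Ba)$: relation~\eqref{eq:Coulomb2} collapses any product of $m$'s into a single $m$ up to polynomial corrections in the $h_i^\pm$, and relation~\eqref{eq:Coulomb1} migrates these polynomials past $m(\Ba)$ with appropriate shifts of the variables. Comparing with the free rank-one decomposition of $A_\K$ established above, the resulting surjection from the presented algebra to $A_\K$ is forced to be an isomorphism. The main obstacle lies in relation~\eqref{eq:Coulomb2}: one must ensure the multiplication rule is compatible with associativity on a triple product $m(\Ba)m(\Bb)m(\Bc)$, where the two bracketings produce superficially different expressions. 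Rather than verify this combinatorially on the abstract side, it is cleaner to check both relations inside $W_n^T$ — where associativity is automatic — and then deduce completeness from the weight-by-weight dimension count.
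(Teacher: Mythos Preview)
The paper does not actually prove this theorem: it is stated with a citation to \cite[(6.21b)]{BDGH} and no proof environment follows. So there is no ``paper's own proof'' to compare against; your proposal stands on its own.

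Your argument is correct and is the standard one. The key structural input --- that each $D$-weight space $A_\K[\Ba]$ is free of rank one over $\K[h_1^\pm,\dots,h_n^\pm]$, generated by $m(\Ba)$ --- is exactly what the paper invokes later (see the proof of Lemma~\ref{weightspaceofproj}, where the same fact is used for $A^\la_\K$). Once that is established, the surjection from the presented algebra onto $A_\K$ is automatically injective weight-by-weight: the composite $\K[h] \twoheadrightarrow \tilde{A}[\Ba] \twoheadrightarrow A_\K[\Ba] \cong \K[h]$ is the identity, so both arrows are isomorphisms. Your verification of the relations is fine; the reduction of $m(\Ba)m(\Bb)$ index-by-index is the natural way to do it, and your observation that associativity need not be checked abstractly (since it holds in $W_n$) is the right shortcut.
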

	We call this is the {\bf Coulomb} presentation, since it matches the
	presentation of the abelian Coulomb branch in \cite[(4.7)]{BFN}, and
	shows that the algebra $A_{\K}$ can also be realized using this dual
	approach.  As mentioned in the introduction, the techniques of this
	paper generalize to Coulomb branches with non-abelian gauge group as well, whereas it seems very challenging to generalize them
	to Higgs branches with non-abelian gauge group (that is, hyperk\"ahler
	reductions by non-commutative groups).  
	\subsection{Characteristic \texorpdfstring{$p$}{p} localization}
	\label{sec:char-p-local}
	Following \cite{Stadnik}, in this section we exploit the large center of quantizations in characteristic $p$ to relate modules over $A^{\la}_{\K}$ with coherent sheaves on $\fM_{\K}^{(1)}$. Roughly speaking, upon restriction to fibers of $\pi: \fM_{\K}^{(1)} \to \fN_{\K}^{(1)}$, the quantization becomes the algebra of endomorphisms of a vector bundle, and thus Morita-equivalent to the structure sheaf of the fiber. 
	\begin{theorem} [\mbox{\cite[Thms. 4.3.1 \& 4.3.4]{Stadnik}}]
		For any $\la\in \ft^*_{\mathbb{F}_p}$, there exists a coherent sheaf $\A^\la$ of
		algebras Azumaya over the structure sheaf on
		$\fM_{\K}^{(1)}$ such that $\Gamma(\fM_{\K}^{(1)},\A^\la)\cong
		A^\la_{\K}$.  
	\end{theorem}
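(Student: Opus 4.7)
The plan is to construct $\A^\la$ by sheafifying the quantum Hamiltonian reduction that defines $A^\la_\K$, and then to verify the Azumaya property and the global sections statement separately using the characteristic $p$ structure.

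First, I would start from the sheaf $\cD_\K$ on $(T^*\AZn)_\K$ obtained by localizing $W_n\otimes_\Z\K$ in the Zariski topology. In characteristic $p$, this sheaf has a large center generated by $z_i^p$ and $\partial_i^p$, which identifies canonically with $\cO$ on the Frobenius twist $(T^*\AZn)^{(1)}_\K$. Regarded as a sheaf of algebras on $(T^*\AZn)^{(1)}_\K$, $\cD_\K$ is locally free of rank $p^{2n}$ over its center, and the calculation of Revoy shows it is in fact Azumaya (étale locally a matrix algebra). The residual action of $T$ lifts to this sheaf, as does the quantum moment map $\mu_q$, whose image lies in the $T$-invariant part.

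Next I would perform the sheaf-theoretic analogue of Definition \ref{defHET}: form the $T$-equivariant sheaf $\cD_\K/\cD_\K\cdot\langle\mu_q(\chi)-\la(\chi)\rangle$ on the (equivariant Frobenius twist of the) zero locus $\mu^{-1}(0)$, and take $T$-invariants on the $\alpha$-semistable locus. The resulting sheaf $\A^\la$ lives on the GIT quotient, which after Frobenius twist is $\fM^{(1)}_\K$. Two things need to be checked here: (i) that the classical moment map condition defining $\mu^{-1}(0)$ is recovered on passing to $p$-th powers of the quantum moment map (modulo a twist controlled by $\la$ — this is why the parameter lies in $\ft^*_{\Fp}$ rather than all of $\ft^*_\K$), and (ii) that unimodularity ensures $T$ acts freely on the semistable locus, so that the reduction of an Azumaya algebra by a free torus action is again Azumaya. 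Step (ii) is a Morita/descent argument: an equivariant Azumaya algebra on a space with free $T$-action descends to an Azumaya algebra on the quotient.

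Finally I would address the global sections statement $\Gamma(\fM^{(1)}_\K,\A^\la)\cong A^\la_\K$. By construction the affinization $\fN^{(1)}_\K$ receives a map from $\fM^{(1)}_\K$, and taking $T$-invariants commutes with pushforward, so the candidate isomorphism is clear on the level of sections over $\fN^{(1)}_\K$. The nontrivial input is that no higher cohomology of $\A^\la$ contributes: since $\A^\la$ is a vector bundle on $\fM^{(1)}_\K$ pulled back by Frobenius from one on $\fM_\K$, a Grauert–Riemenschneider / Kodaira-type vanishing for the symplectic resolution $\fM_\K\to\fN_\K$ (available for hypertoric varieties via the explicit Białynicki-Birula decomposition, and ultimately the source of the tilting property exploited later in the paper) yields $R^i\pi_\ast\A^\la=0$ for $i>0$.

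The main obstacle is the compatibility between the quantum moment map and its classical avatar under $p$-th powers, that is, understanding precisely which central character of $\A^\la$ a given $\la\in\ft^*_{\Fp}$ produces. This requires computing the $p$-curvature of the quantum moment map (an Artin–Schreier–type correction enters because $\ch_i^+$ was chosen over $\ch_i^{\operatorname{mid}}$), and reconciling it with the chosen lift of $\la$ to $\ft^*_\Z$. This is precisely the point Stadnik handles in \cite[§4]{Stadnik}, and everything else in the argument is relatively formal once that compatibility is established.
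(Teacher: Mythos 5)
The paper itself offers no proof of this statement: it is imported verbatim from Stadnik \cite[Thms.\ 4.3.1 \& 4.3.4]{Stadnik}, so there is no internal argument to compare against, and your write-up is best judged as a reconstruction of the cited source. As such it follows the same strategy Stadnik actually uses (in the tradition of Bezrukavnikov--Mirkovi\'c--Rumynin and Bezrukavnikov--Finkelberg--Ginzburg): the Weyl algebra in characteristic $p$ is Azumaya over its big center, identified with $\mathscr{O}$ on the Frobenius twist of $T^*\mathbb{A}^n$; one sheafifies, performs quantum Hamiltonian reduction at $\la$ along the semistable locus, and descends the Azumaya property through the free $T$-action; the Artin--Schreier bookkeeping relating the quantum moment map to the classical one on the twist is exactly the point you correctly flag and defer to Stadnik. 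Two steps in your sketch are thinner than what the reference establishes. First, $\Gamma(\fM_{\K}^{(1)},\A^\la)\cong A^\la_{\K}$ is not solely a matter of killing higher direct images: one must also show that restricting to the semistable locus and then taking sections and $T$-invariants recovers the full reduction $A^\la_{\K}$ (no sections are lost or gained on the unstable locus), which requires a separate argument. Second, ``Grauert--Riemenschneider/Kodaira-type vanishing'' is not available off the shelf in characteristic $p$; the needed vanishing is proved by hand in Stadnik using the explicit hypertoric/toric structure (this is the \cite[4.4.2]{Stadnik}-type statement the present paper quotes later). With those caveats made explicit, your outline matches the intended proof of the cited theorem.
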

	This theorem includes the existence of an injection
	$\HH^0(\fN_{\K}^{(1)}, \structuresheaf_{\fN_{\K}^{(1)}}) \to A^\la_{\K}$; this is induced by the map
	$\HH^0((T^*\mathbb{A}^n_{\K})^{(1)}, \structuresheaf_{(T^*\mathbb{A}^n_{\K})^{(1)}}) \to W_n\otimes{\K}$ sending
	\[\sz_i\mapsto z_i^p\qquad \sw_i\mapsto \partial_i^p.\]

	Consider the moment map $\mu\colon \fM_{\K}^{(1)}\to
	\mathfrak{d}_{\K}^{(1)}$.  Work of Stadnik shows that the
	Azumaya algebra $\A^\la$ splits on fibers of this map after field extension. Fix $\xi\in
	\mathfrak{d}_{\K}^{(1)}$.  Possibly after extending $\K$, we can choose $\nu$ such that
	$\nu^p-\nu=\xi$, and define the splitting bundle as the
	quotient $
	\A^\la /\sum_{i=1}^n\A^\la (\ch_i^+-\nu_i)$; this left module is
	already 
	supported on the fiber $\mu^{-1}(\xi)$, since
	\[(\ch_i^+-\nu)^p-(\ch_i^+-\nu)=z_i^p\partial_i^p-\nu^p+\nu=z_i^p\partial_i^p-\xi.\]
	
	We can thicken this to the formal neighborhood of the fiber $\mu^{-1}(\widehat{\xi})$ by
	taking the inverse limit $\Qvectorbundle_{\nu} :=\varinjlim \A^\la /\sum_{i=1}^n\A^\la (\ch_i^+-\nu_i)^N$.
	
	\begin{theorem}[\mbox{\cite[Thm. 4.3.8]{Stadnik}}]\label{frob-iso}
		The natural map $$\A^\la|_{\mu^{-1}(\widehat{\xi})}\to \End_{\structuresheaf_{\fM^{(1)}}}(\Qvectorbundle_{\nu},\Qvectorbundle_{\nu})$$ is an isomorphism.  
	\end{theorem}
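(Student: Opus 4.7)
The plan is to verify that $\mathscr{Q}_\nu$ is a splitting module for the Azumaya algebra $\A^\la$ on the formal neighborhood $\mu^{-1}(\widehat{\xi})$. The natural map in question arises from the left action of $\A^\la$ on its own quotient, so it is automatically a ring homomorphism of coherent $\mathscr{O}$-algebras on the same base. The task therefore reduces to verifying that $\mathscr{Q}_\nu$ is locally free of rank $p^d$, where $d = n-k = \tfrac{1}{2}\dim\fM$. Once this is known, both source and target of our map are Azumaya of rank $p^{2d}$, and the map must be an isomorphism since at each closed point it restricts to a nonzero homomorphism $M_{p^d}(\K)\to M_{p^d}(\K)$ of central simple $\K$-algebras.

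The rank computation is the heart of the argument. Using the Coulomb presentation from Section~\ref{subsecquantizations}, $A^\la_{\K}$ is generated as a $\K[h_1^+,\ldots,h_n^+]$-module by the weight vectors $m(\Ba)$ for $\Ba\in \fg_\Z^*$, with multiplications governed by \eqref{eq:Coulomb1}--\eqref{eq:Coulomb2}. The cyclic module $A^\la_{\K}/\sum_i A^\la_{\K}(h_i^+-\nu_i)$ has a natural spanning set given by the classes $m(\Ba)\cdot v$, and the Frobenius images $z_i^p,\partial_i^p$ act centrally, parameterizing the normal direction to $\mu^{-1}(\xi)$ in $\fM^{(1)}$. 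Passing to the completion $\mathscr{Q}_\nu$ and restricting to a formal affine on the base, I would show these spanning vectors become a free basis over the completed local ring.

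I would handle this rank computation in two stages. First, for the Weyl algebra $W_n\otimes \K$ itself, the analogous cyclic module is a standard example of a splitting bundle for the Weyl Azumaya algebra in characteristic $p$: it is free of rank $p^n$ over the formal neighborhood of a point in $(T^*\mathbb{A}^n)^{(1)}$, which reduces essentially to the $\fsl_2$ calculation repeated $n$ times (the generators $z_i, \partial_i$ produce $p$-dimensional cyclic pieces once the central Frobenius parameters are fixed). Second, I would descend from $W_n$ to $A^\la$ via quantum Hamiltonian reduction by $T$, showing that the reduced splitting bundle is precisely $\mathscr{Q}_\nu$ and that its rank drops from $p^n$ to $p^{n-k}=p^d$.

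The main obstacle is making the descent argument precise. One has to track how imposing the quantum moment map relation $\mu_q(\chi)=\la(\chi)$ simultaneously with the ideal generated by $h_i^+-\nu_i$ interacts with the unimodularity hypothesis on $T\to D\to G$, so as not to accidentally create extra relations or produce the wrong rank at non-generic values of $\xi$. Concretely, one must verify that the Coulomb relation \eqref{eq:Coulomb2} is compatible with the substitution $h_i^+\mapsto\nu_i$ in a way that lets the spanning set $\{m(\Ba)\cdot v\}$ collapse cleanly to a basis of the expected size. Once these details are in hand, the structural part of the argument (Azumaya simplicity plus rank comparison implies isomorphism) is immediate.
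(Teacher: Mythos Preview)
The paper does not prove this statement: it is quoted verbatim from Stadnik \cite[Thm.~4.3.8]{Stadnik} and stated without proof, so there is nothing in the paper to compare your attempt against.

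That said, your outline is the standard route to such a splitting result and is broadly in line with how Stadnik argues. The key reduction --- show $\mathscr{Q}_\nu$ is locally free of rank $p^d$, then use that any nonzero map of Azumaya algebras of equal rank over a local base is an isomorphism --- is correct. Your two-stage rank computation (first the Weyl algebra, then descent through Hamiltonian reduction) is also the right shape. The one place where your sketch is genuinely incomplete is the descent step: you correctly flag that imposing $\mu_q(\chi)=\la(\chi)$ together with $h_i^+=\nu_i$ must be shown to cut the rank from $p^n$ to $p^{n-k}$ without introducing torsion, and this requires more than unimodularity --- one needs that the quantum moment map ideal is regular in the appropriate sense on the formal neighborhood, which is where Stadnik does real work. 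But since the paper treats this as a black box, your level of detail already exceeds what the paper provides.
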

	
	The sheaf $\A^\la$ is not globally split; it has no global
	zero-divisor sections.  It still has a close relationship with
	a tilting vector bundle on $\fM^{(1)}_{\K}$.  We'll fix our attention on the case where $\xi=0$, so $\nu_i\in \mathbb{F}_p$. 
	
	Let $\tilt_{\K}$ be a
	$\bS$-equivariant locally
	free coherent sheaf on $\fM^{(1)}_{\K}$ such that 
	$\tilt_{\K}|_{\mu^{-1}(\hat{0})}\cong \Qvectorbundle_{\nu}$.  Such a sheaf exists by \cite[Theorem 1.8(ii)]{KalDEQ}. 
	As coherent sheaves, we have isomorphisms \[\A^\la_{\K}\cong
	\operatorname{Fr}_*\structuresheaf_{\fM_{\K}}\cong 
	\tilt_{\K}\otimes \tilt^*_{\K}\cong \End(\tilt_{\K}).\]
	
	By  \cite[4.4.2]{Stadnik}, these sheaves have vanishing higher
	cohomology.  Furthermore, combining with results of Kaledin \cite[1.4]{KalDEQ},
	this shows that:
	\begin{proposition}
		For $p$ sufficiently large and $\nu_i$ generic, the sheaf $\tilt_{\K}$
		is a tilting generator on $\fM_\K$ and has a lift $\tilt_\Q$
		which is a tilting generator on $\fM_\Q$; that is,
		$\Ext^i(\tilt_\Q,\tilt_\Q)=0$ for $i>0$, and $\Ext^i(\tilt_\Q,\mathscr{F})=0$ implies
		$\mathscr{F}=0$ for any coherent sheaf on $\fM_\Q$.
	\end{proposition}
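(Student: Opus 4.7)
The plan is to bootstrap from the local Frobenius splitting of Theorem \ref{frob-iso} together with the cohomology vanishing of Stadnik's \cite[4.4.2]{Stadnik} to establish the tilting property in characteristic $p$, and then transport it to characteristic $0$ via Kaledin's lifting machinery \cite[1.4, 1.8(ii)]{KalDEQ}. Thus the proof is largely a matter of assembling these inputs; the genuinely new content is the verification that our specific $\tilt_\K$ falls within the scope of Kaledin's theorem for $p$ large and $\nu_i$ generic.

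First I would handle the $\Ext$-vanishing on $\fM_\K^{(1)}$. Using the chain of isomorphisms already recorded just before the proposition, namely $\A^\la_\K \cong \operatorname{Fr}_*\mathscr{O}_{\fM_\K}\cong \tilt_\K\otimes\tilt_\K^*\cong \sEnd(\tilt_\K)$, the local-to-global spectral sequence collapses to
\[\Ext^i_{\fM^{(1)}_\K}(\tilt_\K,\tilt_\K)\;\cong\; H^i\bigl(\fM^{(1)}_\K,\sEnd(\tilt_\K)\bigr)\;\cong\; H^i\bigl(\fM^{(1)}_\K,\A^\la\bigr),\]
which vanishes for $i>0$ by \cite[4.4.2]{Stadnik}. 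For the generation property in characteristic $p$, the key point is Theorem \ref{frob-iso}: on the formal neighborhood of any fiber $\mu^{-1}(\widehat\xi)$, the sheaf $\tilt_\K$ restricts (up to the choice of $\nu$) to the splitting module $\mathscr{Q}_\nu$, which is a progenerator for the Azumaya algebra $\A^\la$ on that neighborhood. Hence if $\mathscr{F}$ is a coherent sheaf on $\fM^{(1)}_\K$ with $\Ext^i(\tilt_\K,\mathscr{F})=0$ for all $i$, its support must be empty after passing to completions along any fiber, forcing $\mathscr{F}=0$.

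Next I would lift to characteristic zero. By \cite[Thm.\ 1.8(ii)]{KalDEQ}, provided $p$ is large enough and $\nu$ is chosen away from a thin subset of $\mathfrak{d}^{(1)}_{\mathbb{F}_p}$, the bundle $\tilt_\K$ extends to a $\bS$-equivariant locally free sheaf over a mixed-characteristic base whose generic fiber is a coherent sheaf $\tilt_\Q$ on $\fM_\Q$. Upper semicontinuity of $\Ext$ in a flat proper family (applied to $\tilt\otimes\tilt^*$) together with the Ext-vanishing established over $\K$ then forces $\Ext^i(\tilt_\Q,\tilt_\Q)=0$ for $i>0$. The generation in characteristic zero follows from \cite[1.4]{KalDEQ}: a coherent sheaf $\mathscr{F}$ on $\fM_\Q$ with $\Ext^i(\tilt_\Q,\mathscr{F})=0$ for all $i$ can be spread out to an integral model and specialized to a characteristic $p$ fiber, where generation of $\tilt_\K$ then forces $\mathscr{F}_{\K}=0$ for infinitely many $p$, hence $\mathscr{F}=0$.

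The main obstacle, and the only place where real care is needed, is the quantitative dependence on $p$ and on the genericity of $\nu$. One has to ensure that $p$ exceeds the explicit bounds arising in Kaledin's construction (controlling both the existence of the lift of $\tilt$ and the vanishing needed for $\tilt_\Q$ to be tilting), and that $\nu$ avoids the hyperplanes along which the Azumaya algebra fails to split on fibers. Once these hypotheses are in place, no further geometric input about hypertoric varieties is required: the proposition reduces to a direct application of the Kaledin--Stadnik framework.
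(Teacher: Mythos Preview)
Your approach is essentially the same as the paper's, which gives no detailed proof but simply records that the proposition follows by combining Stadnik's vanishing \cite[4.4.2]{Stadnik} with Kaledin's \cite[1.4]{KalDEQ}; your proposal is a reasonable unpacking of what those citations entail.

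One minor looseness: your argument for generation in characteristic $p$ asserts that $\tilt_\K$ restricts to the splitting module $\mathscr{Q}_\nu$ on the formal neighborhood of \emph{every} fiber $\mu^{-1}(\widehat\xi)$, but the construction only pins this down at $\xi=0$ (the isomorphism $\A^\la\cong\sEnd(\tilt_\K)$ recorded in the paper is stated only as coherent sheaves, not as algebras). The global generation is really part of what Kaledin's \cite[1.4]{KalDEQ} supplies, so you might as well invoke it directly there rather than attempt an independent fiberwise argument.
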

	We will later calculate the sheaf $\tilt_{\K}$, once we understand
	$\A^\la_{\K}$ a bit better.
	
	\section{The representation theory of hypertoric enveloping algebras}
	\label{sec:repr-theory-heas}

	\subsection{Module categories and Weight functors}
	\label{sec:weight-functors}
	Recall that we have a short exact sequence of tori $T \to D \to G$. $A^\la_\K$ is a quotient of $W_n^T$, and thus carries a residual action of $G$, which we will now use to study its modules. 
	
	Let $o \in \fN^{(1)}$ be the point defined by $z_i = w_i = 0$, i.e. the unique $\K$-valued $\bS$-fixed point of $\fN^{(1)}$. The following category will play a central role in this paper.
	\begin{definition}
		Let $A^\la_{\K}\mmod_o$ be the category of finitely generated $A^\la_{\K}$-modules which are set-theoretically supported at $o$ when viewed as modules over $\HH^0(\fN_{\K}^{(1)}, \structuresheaf_{\fN_{\K}^{(1)}})$.
	\end{definition}  
	In fact, we will first study the following closely related category.
	\begin{definition} 
		Let $A^\la_{\K}\mmod_o^D$ be the category of modules in $A^\la_{\K}\mmod_o$ which are additionally endowed with a compatible $D$-action, such that $T$ acts via the
		character $\la$, and the action of
		$\Dc_i\in \mathfrak{d}_\Z$ satisfies
		\begin{equation}
			(\ch_i^+-\Dc_i)^Nv=0\qquad N\gg 0. \label{eq:nilpotent}
		\end{equation}
	\end{definition} 
	The difference $\ps_i=\ch_i^+-\Dc_i$ acts centrally on
	such a module, since the adjoint action of $\ch_i^+$ on $A^\la_{\K}$
	agrees with the action of $\Dc_i$.  The operator $\ps_i$ is thus the
	nilpotent part of the Jordan decomposition of $h_i^{\pm}$. The operators $\ps_i$ define an action of the polynomial ring $U_{\K}(\fd)$, which factors through $U_{\K}(\mathfrak{g})$ since elements of $\ft$ act by zero.  This extends to an action of the completion of $U_{\K}(\mathfrak{g})$, since $\ps_i$ acts nilpotently by \eqref{eq:nilpotent}.  
	\begin{definition}
		Let $S := U_{\K}(\mathfrak{g})$, and let $\widehat{S}$ be its completion at zero.
	\end{definition}
	
	Let $\mathfrak{g}^{*,\la}_\Z\subset \fd^*_\Z$ be the $\mathfrak{g}_\Z^*$-coset of characters of $D$
	whose restriction to $T$ coincides with $\la$.  It indexes the $D$-weights which can occur in an object of $A^\la_{\K}\mmod_o^D$.  
	
	We can construct projectives objects in a slight enlargement of $A^\la_{\K}\mmod_o^D$ by working with the
	exact functors picking out weight spaces.  That is, for each $\Ba\in
	\mathfrak{g}^{*,\la}_\Z$, we consider the functor which associates to an object $M \in A^\la_{\K}\mmod_o^D$ the following vector space:
	\[W_{\Ba}(M) :=\{m\in M\mid m\text{ has $D$-weight $\Ba$}\}.\]
	Even though we are working in characteristic $p$, the
	$D$-weights are valued in $\mathfrak{g}^{*,\la}_\Z \subset \Z^n$.  This functor is exact, and we will
	show that it is pro-representable.  
	
	\subsection{Projectives representing the weight functors} \label{subsecprojrepweight}
	To construct the projective object that represents this functor, we
	consider the filtration of it by \[ W_{\Ba}^N(M) :=\{m\in W_{\Ba}(M)\mid
	(\ch_i^+-a_i)^Nm=0\text{ for all }i\}.\]
	
	\begin{proposition}
		We have a canonical isomorphism \[W_{\Ba}^N(M)\cong
		\Hom_{A^\la_{\K}\mmod_o^D}\big(A^\la_{\K}\big/\sum_{i=1}^nA^\la_{\K}(\ch_i^+-a_i)^N,M\big)\]
		where $D$ acts on $A^\la_{\K}\big/\sum_{i=1}^nA^\la_{\K}(\ch_i^+-a_i)^N$ so that the image $1_{\Ba}$ of $1$ has
		weight $\Ba$.   
	\end{proposition}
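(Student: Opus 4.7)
The plan is the standard twisted-cyclic-module representability argument. Let $P_{\Ba,N} := A^\la_{\K}\big/\sum_{i=1}^n A^\la_{\K}(\ch_i^+-a_i)^N$ be the candidate representing object, with $1_{\Ba}$ denoting the image of $1$.

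First I would verify that $P_{\Ba,N}$ carries a well-defined $D$-action with $1_{\Ba}$ in weight $\Ba$. Since each $\ch_i^+$ has $D$-weight zero, the left ideal $\sum_i A^\la_{\K}(\ch_i^+-a_i)^N$ is stable under the weight decomposition of $A^\la_{\K}$, so the conjugation action of $D$ descends to the quotient and may be twisted by the character $\Ba$. The $T$-weight of every element of $A^\la_{\K}$ is zero (since $A_{\K}$ equals $W_n^T$), so after twisting, all of $P_{\Ba,N}$ lies in $T$-weight $\la$, as required. The nilpotency condition \eqref{eq:nilpotent} is forced by the defining relations: using the identity $[\ch_i^+, x] = b_i x$ for $x$ of $D$-weight $\Bb$ (which is exactly the statement that the adjoint action of $\ch_i^+$ equals the action of $\Dc_i$), a direct computation gives $(\ch_i^+ - \Dc_i)(x\cdot 1_{\Ba}) = x(\ch_i^+ - a_i)\cdot 1_{\Ba}$. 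Iterating, $(\ch_i^+-\Dc_i)^N(x\cdot 1_{\Ba}) = x(\ch_i^+-a_i)^N \cdot 1_{\Ba} = 0$.

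Next I would set up the natural bijection. In one direction, send $\phi \in \Hom(P_{\Ba,N},M)$ to $\phi(1_{\Ba})$; by $D$-equivariance this element has weight $\Ba$, and applying $\phi$ to $(\ch_i^+-a_i)^N\cdot 1_{\Ba}=0$ shows it is annihilated by each $(\ch_i^+-a_i)^N$, so it lies in $W_{\Ba}^N(M)$. Conversely, given $m \in W_{\Ba}^N(M)$, define $\phi_m : P_{\Ba,N} \to M$ by $x \cdot 1_{\Ba} \mapsto x \cdot m$; this is well-defined precisely because $m$ is killed by each $(\ch_i^+-a_i)^N$, it is $A^\la_{\K}$-linear by construction, and it is $D$-equivariant because $1_{\Ba}$ and $m$ both have weight $\Ba$ and the $A^\la_{\K}$-action respects weights. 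The two assignments are mutually inverse by direct unwinding.

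There is no serious obstacle; the only point requiring care is the internal consistency of the twisted $D$-structure on $P_{\Ba,N}$ and its compatibility with \eqref{eq:nilpotent}, which reduces to the commutator identity above. Note that $P_{\Ba,N}$ itself need not be supported at $o$, which is why the preceding paragraph of the paper speaks of a ``slight enlargement'' of $A^\la_{\K}\mmod_o^D$: the Hom is taken in this larger category of $D$-equivariant $A^\la_{\K}$-modules satisfying the nilpotency condition, and the target $M$ is the object that is genuinely required to be supported at $o$. Passing to the inverse limit $\varprojlim_N P_{\Ba,N}$ then yields the pro-object representing the unfiltered functor $W_{\Ba}$, as used in the constructions that follow.
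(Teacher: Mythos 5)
Your argument is correct, and it is exactly the routine representability verification the paper leaves implicit (the proposition is stated there without proof): the twisted cyclic module, the commutator identity forcing \eqref{eq:nilpotent}, and the mutually inverse maps $\phi\mapsto\phi(1_{\Ba})$, $m\mapsto\phi_m$. Your closing remark about the quotient not lying in $A^\la_{\K}\mmod_o^D$ itself, so that the Hom is taken in the enlarged category of $D$-equivariant modules satisfying the nilpotency condition, matches the paper's intended reading of ``slight enlargement.''
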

	Since $W_\Ba(M) =\varinjlim W^N_\Ba(M)$, we have that $W_{\Ba}(M)$ is
	represented by the module 
	\begin{equation} \label{defQa} Q_\Ba :=\varprojlim
		A^\la_{\K}/A^\la_{\K}(\ch_i^+-a_i)^N \end{equation}
	with its induced $D$-action.   Note that
	$Q_\Ba =\Gamma(\mu^{-1}(\hat{0});\Qvectorbundle_\Ba)$.
	
	This is endowed with the usual induced topology, and it is a pro-weight module in the sense that its weight spaces are pro-finite dimensional.  This is a projective object in the category $\widehat{A^\la_{\K}\mmod}^D$ of complete topologically finitely generated $A^\la_{\K}$-modules $M$ with compatible $D$-action in the sense that  \begin{equation*}
		\lim_{N\to \infty}(\ch_i^+-\Dc_i)^Nv=0.
	\end{equation*} That is, $\ps_i$ acts topologically nilpotently on each
	$D$-weight space.  This is  equivalent to \eqref{eq:nilpotent}  if the topology on $M$ is discrete.

	In the arguments below, $\Hom$ and $\End$ will be interpreted to mean continuous homomorphisms compatible with $D$; all objects in $A^\la_{\K}\mmod_o^D$ will be given the discrete topology, so continuity is a trivial condition for homomorphisms between them.

	\begin{lemma} \label{weightspaceofproj}
		If $\Bb$ is a character of $D/T$, then
		$W_{\Ba}(Q_{\Ba+\Bb})\cong \widehat{\pS}$. Otherwise, this weight space is 0.  
	\end{lemma}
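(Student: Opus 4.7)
The plan is to compute $W_\Ba(Q_{\Ba+\Bb})$ directly via the Coulomb presentation of $A^\la_\K$. First, recall that $A^\la_\K$ is a quotient of $W_n^T$, so all its $D$-weights lie in $\ft_\Z^\perp = \fg_\Z^*$. Since $W_\Ba(Q_{\Ba+\Bb})$ is the image of $A^\la_\K[-\Bb]\cdot 1_{\Ba+\Bb}$ inside $Q_{\Ba+\Bb}$, it vanishes unless $\Bb$ itself is a character of $G = D/T$. This disposes of the vanishing case.

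Assume henceforth that $\Bb$ is a character of $G$. A straightforward monomial count (or equivalently, relation \eqref{eq:Coulomb2}) shows that $W_n[-\Bb]$ is a free module of rank one over $W_n[0] = \K[h_1^\pm,\ldots,h_n^\pm]$, with generator $m(-\Bb)$. Since the moment map elements $\mu_q(\chi)-\la(\chi)$ are $T$-invariant and hence central in $A_\K$, taking the quotient respects the weight decomposition and yields a free rank-one $S$-module
\[
A^\la_\K[-\Bb] \;\cong\; m(-\Bb)\cdot S.
\]

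The crucial step is now the quotient defining $Q_{\Ba+\Bb}$. A short computation combining \eqref{eq:Coulomb1}, which gives $[h_i^+, m(-\Bb)] = -b_i\cdot m(-\Bb)$, with the $D$-equivariance identity $[\Dc_i, m(-\Bb)] = -b_i\cdot m(-\Bb)$, yields $[\ps_i, m(-\Bb)] = 0$. Consequently, the action of $\widehat{S}$ on the weight-$\Ba$ vector $m(-\Bb)\cdot 1_{\Ba+\Bb}$ is completely determined by its action on $1_{\Ba+\Bb}$, where each $\ps_i$ acts as $h_i^+ - (a_i+b_i)$ and is topologically nilpotent by definition of $Q_{\Ba+\Bb}$. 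Putting the pieces together, the ideals
\[
I_N \;:=\; \sum_{i=1}^n S\cdot (h_i^+-(a_i+b_i))^N \;\subset\; S
\]
are cofinal with the powers of the maximal ideal $\mathfrak{m} = (h_i^+ - (a_i+b_i)\mid i)$ of $S$; the inverse limit of $m(-\Bb)\cdot S/I_N$ is therefore $m(-\Bb)\cdot \widehat{S}_\mathfrak{m}$, and $\widehat{S}_\mathfrak{m} \cong \widehat{S}$ via the translation identifying $\mathfrak{m}$ with the augmentation ideal. This gives the required isomorphism.

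The one delicate point is precisely the commutation identity $[\ps_i, m(-\Bb)] = 0$: it is what reconciles the a priori different completions (of $S$ at the point $(a_i+b_i)\in\fg^*$ versus at the origin), and ensures that the shift in weight between $m(-\Bb)\cdot 1_{\Ba+\Bb}$ and $1_{\Ba+\Bb}$ does not distort the completion topology.
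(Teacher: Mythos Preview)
Your proof is correct and follows essentially the same approach as the paper's: both compute the weight space by identifying $A^\la_\K[-\Bb]$ as a free rank-one $S$-module generated by $m(-\Bb)$, use the commutation of $\ps_i$ with $m(-\Bb)$ to reduce the relations $(h_i^+-(a_i+b_i))^N$ to $\ps_i^N$, and then pass to the inverse limit. The paper's version is terser and computes $W_{\Ba+\Bb}(Q_\Ba)$ rather than $W_\Ba(Q_{\Ba+\Bb})$, but this is a trivial relabeling; your added remarks on cofinality of the ideals $I_N$ and the role of the commutation identity make explicit what the paper leaves implicit.
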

	
	\begin{proof}
		For any character $\Bb$ of $D$ which vanishes on $T$, the
		$\Bb$ weight-space $A^\la_{\K}[\Bb]$ is a free rank one module over $\pS$ (acting
		via multiplication by $\ps_i$), generated by $m(\Bb)$. Thus, the $\Ba+\Bb$ weight space of
		$ A^\la_{\K}/A^\la_{\K}(\ch_i^+-a_i)^N$ is generated by $m(\Bb)$,
		subject to the relations
		\[\ps_i^N m(\Bb)\cdot 1_{\Ba}= m(\Bb)\ps_i^N\cdot 1_{\Ba}=m(\Bb)
		(\ch_i^+-a_i)^N\cdot 1_{\Ba}=0\] and is thus free over the
		quotient ring $\pS/\sum \pS\cdot \ps_i^N$.  Taking the inverse limit, we
		see that every weight space of $Q_{\Ba}$ is a free module of rank 1
		over $\widehat{\pS}$.  
	\end{proof}
	\begin{corollary}
		We have an isomorphism of rings
		$\End(Q_\Ba)\cong W_{\Ba}(Q_{\Ba})\cong  \widehat{\pS}$. Since $\widehat{\pS}$
		is local, the module $Q_{\Ba}$ is indecomposable (in the category $\widehat{A^\la_{\K}\mmod}^D$). 
	\end{corollary}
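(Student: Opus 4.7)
The plan is to deduce both assertions directly from the pro-representability of $W_\Ba$ established above, together with Lemma \ref{weightspaceofproj}. First I would specialize the natural isomorphism $\Hom(Q_\Ba, M) \cong W_\Ba(M)$ to the case $M = Q_\Ba$, using the paper's convention that $\Hom$ and $\End$ denote continuous $D$-compatible morphisms in $\widehat{A^\la_\K\mmod}^D$. This yields an identification $\End(Q_\Ba) \cong W_\Ba(Q_\Ba)$ of abelian groups, and Lemma \ref{weightspaceofproj} with $\Bb = 0$ then identifies the target with $\widehat{\pS}$, the image $1_\Ba$ of $1 \in Q_\Ba$ corresponding to $1 \in \widehat{\pS}$.

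Next I would upgrade this bijection to a ring isomorphism. Any $\phi \in \End(Q_\Ba)$ is determined by $\phi(1_\Ba) = g_\phi \cdot 1_\Ba$ for a unique $g_\phi \in \widehat{\pS}$, where $\widehat{\pS}$ acts on the weight space via the operators $\ps_i$. Because each $\ps_i$ acts centrally on $A^\la_\K$ (as observed just after its definition), every morphism in $\widehat{A^\la_\K\mmod}^D$ commutes with its action, so
\[
(\phi \circ \psi)(1_\Ba) = \phi(g_\psi \cdot 1_\Ba) = g_\psi \cdot \phi(1_\Ba) = g_\psi g_\phi \cdot 1_\Ba,
\]
and composition is transported to multiplication in the commutative ring $\widehat{\pS}$.

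For indecomposability I would observe that $\pS = U_\K(\fg)$ is a polynomial ring on $\dim G$ generators, and $\widehat{\pS}$ is its completion at the maximal ideal generated by the $\ps_i$, hence a Noetherian local ring whose only idempotents are $0$ and $1$. Any direct-sum decomposition of $Q_\Ba$ in $\widehat{A^\la_\K\mmod}^D$ would produce a nontrivial idempotent in $\End(Q_\Ba) \cong \widehat{\pS}$, which is ruled out. I do not anticipate a real obstacle here: the whole statement is a formal corollary of the preceding lemma, and the only step deserving care is checking that the bijection with $\widehat{\pS}$ respects multiplication, which comes down to the centrality of the $\ps_i$.
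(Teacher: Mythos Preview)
Your proof is correct and follows exactly the approach implicit in the paper, which states the corollary without proof as an immediate consequence of the pro-representability of $W_\Ba$ and Lemma~\ref{weightspaceofproj}. The only minor quibble is wording: $\ps_i$ is not literally an element of $A^\la_\K$ (since $\Dc_i$ is not), but rather acts centrally on any object of the category, which is what your argument actually uses and is exactly what the paper observes just after introducing $\ps_i$.
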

	
	\subsection{Isomorphisms between projectives} \label{secisomQ} 
	In this section, we determine the distinct isomorphism classes of weight functors, i.e. we determine all isomorphisms between the pro-projectives $Q_{\Ba}$. As we will see, there are typically many distinct weights $\Ba \in \mathfrak{g}^{*,\la}_\Z$ that give isomorphic functors. 
	
	By the results of the previous section, the space $W_{\Ba}(Q_{\Ba+\Bb})=\Hom(Q_{\Ba},Q_{\Ba+\Bb})$ is free of rank one over $\widehat{S}$, with generator $m(\Bb)$. Likewise, $\Hom(Q_{\Ba+\Bb}, Q_{\Ba})$ is generated by $m(-\Bb)$. Thus in order to verify whether $Q_\Ba$ and $Q_{\Ba + \Bb}$ are isomorphic, it is enough to check whether the composition $m(-\Bb)m(\Bb)$, viewed as an endomorphism of $Q_\Ba$, is an invertible element of the local ring $End(Q_\Ba) \cong \widehat{S}$.

	By \eqref{eq:Coulomb2}, we have that 
	\begin{equation*}
		m(-\Bb)m(\Bb)=\prod_{i=1}^n [h_i]^{(-b_i)}.
	\end{equation*}
	where the right-hand side is a product of factors of the form
	$h_i^++k$ with $k$ an integer between $\frac{1}{2}$ and $b_i+\frac{1}{2}$. To
	check whether $h_i^+ + k$ defines an invertible element of $\widehat{S}$, it is enough to compute its action on the weight-space of weight $\Ba$, on which $h_i$ acts by $a_i + \ps_i$. The resulting endomorphism $h_i^++k= \ps_i+(a_i+k)$ is invertible if and only if $k+a_i\not\equiv 0\pmod p$. 
	
	The number of non-invertible factors (each equal to $\ps_i)$ in $[h_i]^{(-b_i)}$ is therefore the number of
	integers $k$ divisible by $p$ lying between $a_i + 1/2$ and $a_i + b_i
	+ 1/2$. We denote it by $\delta_i(\Ba,\Ba+\Bb)$.

	We can sum up the above computations as follows. Let \[q(y,k)=
	\begin{cases}
		1 & k=0\\
		\frac{1}{y+k}& k\neq 0\\
	\end{cases}\]  where $y$ is a formal variable and $k\in \K$. Note that $q(\ps_i, a_i +j)(h_i^+ +j)$ acts on a $D$-weight space of weight $\Ba$ by $1$ if $a_i+j$ is not
	divisible by $p$ and by $\ps_i$ if it is. Let \begin{equation} \label{defcmorphism} c^{\Bb}_{\Ba}=m(\Bb) \prod_{i=1}^n
		\prod_{j=1}^{b_i}q(\ps_i,a_i+j) \in
		W_{\Ba}(Q_{\Ba+\Bb})=\Hom(Q_{\Ba},Q_{\Ba+\Bb}).\end{equation} It is a generator of the $\widehat{S}$-module $W_{\Ba}(Q_{\Ba+\Bb})$. Note that this expression
	breaks the symmetry between positive and negative; if $b_i\leq 0$ for
	all $i$, then $c^{\Bb}_{\Ba}=m(\Bb)$, since all the products in the
	definition are over empty sets.  
	
	\begin{lemma}\label{c-reverse}
		\[c^{-\Bb}_{\Ba+\Bb} c^{\Bb}_{\Ba}=\prod_{i=1}^n \ps_i^{\delta_i(\Ba,\Ba+\Bb)}\]
	\end{lemma}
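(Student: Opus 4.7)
The plan is to evaluate $c^{-\Bb}_{\Ba+\Bb}(c^\Bb_\Ba(1_\Ba))$ directly as an element of $\widehat{S}\cdot 1_\Ba = W_\Ba(Q_\Ba)$. The key observation is that $\ps_i$ lies in $\End_{A^\la_\K}(Q_\bullet)$, so it commutes with every $A^\la_\K$-linear morphism; consequently the central $q(\ps_i,\cdot)$ factors appearing in both $c^\Bb_\Ba$ and $c^{-\Bb}_{\Ba+\Bb}$ can be freely pulled past the $m(\pm\Bb)$ factors. This reduces the composition to $F \cdot m(-\Bb) m(\Bb) \cdot 1_\Ba$, where $F$ is the product of all the $q$-factors from both morphisms. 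Applying the Coulomb relation \eqref{eq:Coulomb2} with the pair $(-\Bb,\Bb)$ gives $m(-\Bb)m(\Bb) = \prod_{i: b_i \neq 0}[h_i]^{(-b_i)}$.

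Next I would expand each $[h_i]^{(-b_i)}$ into its linear factors in $h_i^+$. Acting on the weight-$\Ba$ vector $1_\Ba$, we have $h_i^+ = \ps_i + a_i$, so each $[h_i]^{(-b_i)}$ becomes $\prod_k(\ps_i + a_i + k)$, with $k$ running over $\{1,\dots,b_i\}$ when $b_i>0$ and over $\{b_i+1,\dots,0\}$ when $b_i<0$. Each such factor is a unit of $\widehat{S}$ unless $a_i+k\equiv 0\pmod p$, in which case it equals $\ps_i$ itself.

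The heart of the argument is a direct bookkeeping check: the $q(\ps_i, a_i + j)$ factors from $c^\Bb_\Ba$ (for $b_i > 0$, $j \in \{1,\dots,b_i\}$) together with the $q(\ps_i, a_i+b_i+j)$ factors from $c^{-\Bb}_{\Ba+\Bb}$ (for $b_i < 0$, $j \in \{1,\dots,-b_i\}$, relabeled via $k = j + b_i$ as $q(\ps_i, a_i + k)$ with $k \in \{b_i+1,\dots,0\}$) match the linear factors of $[h_i]^{(-b_i)}$ one-to-one. Since $q(\ps_i, a_i+k)\cdot(\ps_i+a_i+k)$ evaluates to $1$ in the unit case and to $\ps_i$ in the non-unit case, pairing each $q$ with its matching linear factor cancels all units and contributes exactly one copy of $\ps_i$ per non-unit. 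The total number of non-unit pairs in direction $i$ is, by construction, $\delta_i(\Ba,\Ba+\Bb)$, yielding the claimed product $\prod_i \ps_i^{\delta_i(\Ba,\Ba+\Bb)}$.

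The main (and essentially only) obstacle is this index-matching step, which is the raison d'être of the slightly asymmetric normalization \eqref{defcmorphism}: the $b_i > 0$ factors are absorbed into $c^\Bb_\Ba$ and the $b_i < 0$ factors into $c^{-\Bb}_{\Ba+\Bb}$, so that the two halves jointly tile the product $\prod_{i:b_i\neq 0}[h_i]^{(-b_i)}$ with neither overlap nor omission.
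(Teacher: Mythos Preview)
Your proposal is correct and follows essentially the same approach as the paper's proof: commute the central $q$-factors past the $m(\pm\Bb)$, apply \eqref{eq:Coulomb2} to get $m(-\Bb)m(\Bb)=\prod_i[h_i]^{(-b_i)}$, and then pair each linear factor $(h_i^++k)$ (acting as $\ps_i+a_i+k$ on the weight-$\Ba$ space) with its matching $q(\ps_i,a_i+k)$, noting that for each $i$ exactly one of the two $q$-products is nonempty according to the sign of $b_i$. Your explicit index bookkeeping and the observation about the asymmetric normalization \eqref{defcmorphism} are precisely the content of the paper's final paragraph.
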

	\begin{proof}
		We have
		\begin{align*}
			c^{-\Bb}_{\Ba+\Bb} c^{\Bb}_{\Ba}&=m(-\Bb) \cdot \prod_{i=1}^n
			\prod_{j=1}^{-b_i}q(\ps_i,a_i+b_i+j) \cdot m(\Bb) \cdot \prod_{i=1}^n
			\prod_{j=1}^{b_i}q(\ps_i,a_i+j)\\
			&=m(-\Bb) m(\Bb) \prod_{i=1}^n
			\left(\prod_{j=1}^{-b_i}q(\ps_i,a_i+b_i+j) \cdot 
			\prod_{j=1}^{b_i}q(\ps_i,a_i+j)\right)\\
			&=\prod_{i=1}^n[h_i]^{(-b_i)}\prod_{j=1}^{-b_i}q(\ps_i,a_i+b_i+j) 
			\prod_{j=1}^{b_i}q(\ps_i,a_i+j)
		\end{align*}
		Note that for each index $i$ only one of the products is non-unital, depending on the sign, and in either case,
		we obtain the product of $q(\ps_i,a_i+j)$ ranging over integers lying between $a_i + 1/2$ and $a_i + b_i
		+ 1/2$.  As we noted earlier, $[h_i]^{(-b_i)} $ is the product of $h_i+j$ with $j$
		ranging over this set.  Thus, we obtain the product over this same set
		of $(h_i+j)q(\ps_i,a_i+j)$, which is precisely $\ps_i^{\delta_i(\Ba,\Ba+\Bb)}$.
	\end{proof}

	It remains for us to describe which pairs $\Ba, \Ba'$ satisfy $\delta_i(\Ba,\Ba') = 0$ for all $i$ and thus index isomorphic projective modules. 
	\begin{definition} \label{defAhyperplane}
		Let  $\textgoth{A}^{\operatorname{per}}_\la$ be the periodic hyperplane arrangement in $\mathfrak{g}^{*,\la}_\Z$ defined by the hyperplanes $\Dc_i=kp-1/2$ for $k\in \Z$ and $i = 1,.., n$.
	\end{definition}
	By definition, $\delta_i(\Ba,\Ba')$ is the minimal number of hyperplanes $\Dc_i = kp-1/2$ crossed when travelling from $\Ba$ to $\Ba'$. 
	Given $\Bx \in \Z^n$, let  \[\chamber_\Bx=\{\Ba\in \mathfrak{g}^{*,\la}_\Z\mid px_i \leq a_i <
	px_i+p\},\qquad \Delta^\R_{\Bx}=\{\Ba\in \mathfrak{g}^{*,\la}_\Z\otimes \R \mid px_i \leq a_i <
	px_i+p\}.\]  
	We have shown
	\begin{theorem}
		We have an isomorphism $Q_\Ba\cong Q_{\Ba'}$ if and only if
		$\Ba,\Ba'\in \chamber_\Bx$ for some $\Bx$.
	\end{theorem}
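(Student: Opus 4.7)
The plan is to reduce the existence of an isomorphism $Q_\Ba\cong Q_{\Ba'}$ to invertibility of the canonical composition map in the local ring $\End(Q_\Ba)\cong\widehat{S}$, and then to read this invertibility off the formula in Lemma \ref{c-reverse}.

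First, I would record the observation that both $\chamber_\Bx$ membership and $\delta_i(\Ba,\Ba')=0$ admit the same hyperplane-arrangement interpretation. Writing $\Bb=\Ba'-\Ba\in \fg^*_\Z$, the integer $\delta_i(\Ba,\Ba')$ counts the hyperplanes of $\textgoth{A}^{\operatorname{per}}_\la$ of the form $\Dc_i=kp-1/2$ strictly separating $\Ba$ from $\Ba'$. Since the open chambers of $\textgoth{A}^{\operatorname{per}}_\la$ are precisely the sets $\Delta^\R_\Bx$, and since $\chamber_\Bx=\Delta^\R_\Bx\cap \fg^{*,\la}_\Z$ is the set of integer points in $\Delta^\R_\Bx$, we see that $\Ba,\Ba'\in \chamber_\Bx$ for some $\Bx$ if and only if $\delta_i(\Ba,\Ba')=0$ for all $i$.

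For the $(\Leftarrow)$ direction, suppose $\Ba,\Ba'\in \chamber_\Bx$, so that $\delta_i(\Ba,\Ba')=0$ for each $i$. By Lemma \ref{c-reverse} the composition $c^{-\Bb}_{\Ba'}\circ c^{\Bb}_{\Ba}\in\End(Q_{\Ba})\cong\widehat{S}$ equals $\prod_i \ps_i^0=1$, and by the symmetric version (applied with $\Bb$ replaced by $-\Bb$) the reverse composition $c^{\Bb}_{\Ba}\circ c^{-\Bb}_{\Ba'}$ equals $1$ as well, since $\delta_i(\Ba',\Ba)=\delta_i(\Ba,\Ba')=0$. Hence $c^{\Bb}_{\Ba}$ is an isomorphism with inverse $c^{-\Bb}_{\Ba'}$.

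For the $(\Rightarrow)$ direction, assume an isomorphism $\varphi\colon Q_\Ba\xrightarrow{\sim} Q_{\Ba'}$ exists. By Lemma \ref{weightspaceofproj}, $\Hom(Q_\Ba,Q_{\Ba'})$ is free of rank one over $\widehat{S}$ with generator $c^{\Bb}_\Ba$ and $\Hom(Q_{\Ba'},Q_\Ba)$ is free of rank one over $\widehat{S}$ with generator $c^{-\Bb}_{\Ba'}$. Write $\varphi=s\cdot c^{\Bb}_\Ba$ and $\varphi^{-1}=t\cdot c^{-\Bb}_{\Ba'}$ for $s,t\in \widehat{S}$. Then by Lemma \ref{c-reverse},
\[
1_{Q_\Ba}=\varphi^{-1}\varphi=st\prod_{i=1}^n\ps_i^{\delta_i(\Ba,\Ba')}\in\widehat{S}.
\]
Since $\widehat{S}$ is local with maximal ideal containing each $\ps_i$, the right-hand side can be a unit only if every exponent $\delta_i(\Ba,\Ba')$ vanishes. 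By the first paragraph, this means $\Ba$ and $\Ba'$ lie in a common chamber $\chamber_\Bx$.

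I do not expect any serious obstacle: the content of the statement is already packaged in Lemmas \ref{weightspaceofproj} and \ref{c-reverse} together with the locality of $\widehat{S}$, and the only genuine check is the harmless combinatorial identification of $\{\delta_i=0\text{ for all }i\}$ with being in a common chamber, which follows directly from the definitions of $\textgoth{A}^{\operatorname{per}}_\la$ and $\chamber_\Bx$.
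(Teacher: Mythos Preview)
Your proposal is correct and follows essentially the same approach as the paper: the paper's proof is the discussion immediately preceding the theorem (signaled by ``We have shown''), which reduces the question to invertibility of the composition in the local ring $\End(Q_\Ba)\cong\widehat{S}$ and reads this off via the hyperplane-crossing count $\delta_i$. Your write-up simply makes explicit the two directions and the use of Lemma~\ref{c-reverse}, which is exactly what the paper's discussion does more tersely.
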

	Let \[\gradedchambers(\la)= \{\Bx\in \Z^n\mid \chamber_\Bx\neq \emptyset\}\qquad \gradedchambers^\R(\la)= \{\Bx\in \Z^n\mid \chamber_\Bx^\R\neq \emptyset\}.\] 
	
	Thus, $\gradedchambers(\la)$ canonically parametrizes the set of indecomposable
	projective modules in the pro-completion of $A^\la_{\K}\mmod_o^D$.  It follows
	that $\gradedchambers(\la)$ also canonically parametrizes the simple modules in
	this category.  
	
	Let us call the parameter $\la$ {\bf smooth} if there is a
	neighborhood $U$ of $\la$ in $\R\otimes \mathfrak{g}^{*,\la}_\Z$ such that for
	all $\la'\in U$, we have $\gradedchambers(\la)=\gradedchambers^\R(\la')$. In
	particular, if $\la$ is smooth, then the hyperplanes in
	$\textgoth{A}^{\operatorname{per}}_\la$ must intersect
	generically. 
	
	\subsection{A taxicab metric}
	We can endow $\gradedchambers(\la)$ with a metric given by the taxicab
	distance  $|\Bx-\By|_1=\sum_{i}|x_i-y_i|$ for all $\Bx,\By\in
	\gradedchambers(\la)$.  We can add a graph structure to $\gradedchambers(\la)$ by
	adding in a pair of edges between any two chambers satisfying $|\Bx-\By|_1=1$; generically, this is the same as requiring that $\chamber^\R_\Bx$ and $\chamber^\R_\By$ are adjacent across a hyperplane.  
	
	We say that this adjacency is {\bf across
		$i$} if $\Bx,\By$ differ in the $i$th coordinate.  For every $\Bx$, let $\alpha(\Bx)$ be the
	set of neighbors of $\Bx$ in $\gradedchambers(\la)$.  Generically, this is the same as the
	number of facets of $\chamber^\R_\Bx$; we let $\alpha_i(\Bx)\subset
	\alpha(\Bx)$ be those facets adjacent across $i$.  Note that in some
	degenerate cases, we may have that
	$\Bx,\Bx+\epsilon_i,\Bx-\epsilon_i\in \gradedchambers(\la)$ so the size of
	$\alpha_i(\Bx)$ is typically $0$ or $1$, but could be $2$.
	
	\subsection{Weights of simple modules} \label{subsecweightsofsimples}
	
	\begin{definition}
		For any $\Bx\in \Z^n$ such that $\chamber_\Bx\neq
		\emptyset$, we let $P_\Bx :=Q_{\Bb}$ for some $\Bb\in \chamber_\Bx$.
	\end{definition}

	\begin{lemma}
		The module $P_\Bx$ has a unique simple quotient  $L_{\Bx}$, and $L_{\Bx}$ for $\Bx\in \Z^n$ such that $\chamber_\Bx\neq
		\emptyset$ are a complete irredundant list of simple modules in  $A^\la_{\K}\mmod_o^D$.
		
		Furthermore,  the $\Ba$-weight space of $L_{\Bx}$ is 1-dimensional if $\Ba\in
		\chamber_\Bx$ and 0 otherwise.
	\end{lemma}
	\begin{proof}
		We show that $Q_{\Ba}$ has a unique simple quotient by showing the sum of two proper submodules is proper; this then shows that there is a unique maximal proper submodule, and $L_{\Bx}$ is the quotient by it.  A submodule $M\subset Q_{\Ba}$ is proper if and only if $W_{\Ba}(M)\subset W_{\Ba}(Q_{\Ba})\cong \widehat{\pS}$ is a proper submodule, that is, if it lies in the unique maximal ideal $\mathfrak{m} \subset \widehat{\pS}$.  This shows that the sum of two proper submodules is proper, and so $L_{\Bx}$ is well-defined.
		
		Using the isomorphism $Q_{\Ba}\cong Q_{\Bb}$ if $\Ba, \Bb\in \chamber_{\Bx}$, we can extend this to the observation that a submodule $M\subset P_{\Bx}$ is proper if and only if   $W_{\Ba}(M)\subset \mathfrak{m} W_{\Ba}(P_{\Bx})$ for all $\Ba\in \chamber_{\Bx}$.
		
		By Lemma \ref{c-reverse}, we can check that there is a unique submodule $M$ in $P_{\Bx}$ such that
		\[W_{\Ba}(M)=\begin{cases}
			\mathfrak{m} W_{\Ba}(P_{\Bx})& \Ba\in\chamber_\Bx\\
			W_{\Ba}(P_{\Bx}) & \Ba\notin\chamber_\Bx.
		\end{cases}\]
		By the observation above, this must be the maximal proper submodule, so $L_{\Bx}=P_{\Bx}/M.$ This shows that $L_{\Bx}$ has the claimed dimensions of weight spaces.  Furthermore, this shows that we can recover the set $\chamber_{\Bx}$ for $L_{\Bx}$, so we must have $L_{\Bx}\ncong L_{\By}$ if $\Bx\neq \By$.  
		
		For any simple $L$, we must have $W_{\Ba}(L)\neq 0$ for some $\Ba$.  This induces a map $P_{\Bx}\to L$ where $\Ba\in \chamber_\Bx$. Since $L_{\Bx}$ is the unique simple quotient of $P_{\Bx}$, this shows that $L_{\Bx}\cong L$.  This shows that they give a complete list and completes the proof.
	\end{proof}
	
	\begin{example} \label{classicexample}
		An interesting example to keep in mind is the following. Let $T$ be the scalar
		matrices acting on $\mathbb{A}^3$.  In this case, $n=3,k=1$.  The
		space $\mathfrak{g}^{*,\la}_\Z$ is an affine space on which $\Dc_1,\Dc_2$ give a set of
		coordinates, with $\Dc_3$ related by the relation $\Dc_3=-\Dc_1-\Dc_2+\la$
		for some $\la\in \Z$.  Thus, the hyperplane arrangement that
		interests us is given by 
		\[\Dc_1=kp-1/2 \qquad \Dc_2=kp-1/2 \qquad -\Dc_1 -\Dc_2+\la=kp-1/2\]
		In particular, we have that $\chamber_\Bx\neq \emptyset$ if and only if, there
		exist integers $a_1,a_2$ such that 
		\[x_1p\leq a_1 <x_1p+p\quad  x_2p\leq a_2 <x_2p+p \qquad x_3p\leq 
		-a_1-a_2+\la <x_3p+p\] The values of $-a_1-a_2+\la$ for $a_1,a_2$
		satisfying the first two inequalities range from
		$-(x_1+x_2+2)p+2+\la$ to $-(x_1+x_2)p+\la$.  Thus, $x_3$ is a
		possibility if  $-x_1-x_2-2+\lfloor\frac{\la+3}{p}\rfloor x_3\leq
		-x_1-x_2+\lfloor\frac{\la}{p}\rfloor$.  Thus, there are 3 such $x_3$
		if $\lfloor\frac{\la+3}{p}\rfloor=\lfloor\frac{\la}{p}\rfloor$, that is, if $\la\not\equiv -1,-2\pmod
		p$.  If $\la\equiv -1,-2\mod p$, then there are 2, and the parameter
		$\la$ is not smooth.
		
		Of course, the numbers $-1$ and $-2$ have another 
		significance in terms of $\mathbb{P}^2$: the line bundles
		$\structuresheaf(-1)$ and $\structuresheaf(-2)$ on $\mathbb{P}^2$ are the
		unique ones that have trivial pushforward.  This is not coincidence.  Let
		$\lambda_+$ be the unique integer in the range $0\leq \la_+<p$
		congruent to $\la\pmod p$ and $\la_-$ the unique such integer in
		$-p\leq \la_-<0$.   
		The simples $(x_1,x_2, -x_1-x_2+\lfloor\frac{\la}{p}\rfloor)$ and
		$(x_1,x_2,-x_1-x_2-2+\lfloor\frac{\la}{p}\rfloor)$ over $A^\la_{\K}$ can be
		identified with $\HH^0(\mathbb{P}^2;\structuresheaf(\lambda_+))$ and $\HH^1(
		\mathbb{P}^2;\structuresheaf(\lambda_-))$.  If $\la\cong -1,-2\mod p$,
		then the latter group is trivial, so one of the simple representations
		is ``missing.''  
		
		Note that the final simple can be identified with the first cohomology of the kernel of the map $\structuresheaf(\lambda_+)^{\oplus 3}\to \structuresheaf(\lambda_++p)$ defined by $(z_1^p,z_2^p,z_3^p)$ (in characteristic $p$, this is a map of twisted D-modules); this map is surjective on sheaves, but {\em injective} on sections, with the desired simple module its cokernel.    
		
		Let's assume for simplicity that $0\leq \la \leq p-3$.  In this case,
		the ``picture'' of these representations when $p=5$ and $\lambda=1$ is as follows:
		\begin{equation}\label{eq:P2-chambers}
			\begin{tikzpicture}[very thick,scale=2]
				\draw (-.2,-.8) -- node[below, at start,scale=.8]{$\Dc_1=-\nicefrac{1}{2}$} (-.2,2.5);
				\draw (1.8,-.8) -- node[above, at end,scale=.8]{$\Dc_1=\nicefrac{9}{2}$}
				(1.8,2.5);
				\draw (-.8,-.2) -- node[left, at start,scale=.8]{$\Dc_2=-\nicefrac{1}{2}$} (2.5,-.2);
				\draw (-.8,1.8) -- node[right, at end,scale=.8]{$\Dc_2=\nicefrac{9}{2}$}
				(2.5,1.8);
				\draw (-.8,1.4) -- node[left, at start,scale=.8]{$\Dc_3=-\nicefrac{1}{2}$} (1.4,-.8);
				\draw (.1,2.5) -- node[right, at end,scale=.8]{$\Dc_3=-\nicefrac{11}{2}$} (2.5,.1);
				\foreach \x in {-.8,-.4,0,...,2.4}
				\foreach \y in {-.8,-.4,0,...,2.4}
				\fill[color=gray] (\x,\y) circle (.5pt);;
			\end{tikzpicture}
		\end{equation}
		The three chambers shown (read SW to NE) are $\chamber_{(0,0,0)}$,
		$\chamber_{(0,0,-1)}$, $\chamber_{(0,0,-2)}$.  
	\end{example}

	\subsection{The endomorphism algebra of a projective generator}
	Having developed this structure theory, we can easily give a
	presentation of our category. For each $\Bx,\By$ with $\chamber_\Bx\neq
	\emptyset$ and $\chamber_\By\neq \emptyset$, we can define $c_{\Bx,\By}$ to
	be $c^{\Ba'-\Ba}_{\Ba}$ for $\Ba\in \chamber_\By,\Ba'\in \chamber_\Bx$.  For each
	$i$, let $\eta_i(\Bx,\By,\Bu)=\frac12 (|x_i-y_i|+|y_i-u_i|-|x_i-u_i|)$.

	\begin{theorem}\label{presentation1}
		The algebra $\bigoplus_{\Bx, \By\in \gradedchambers}\Hom(P_{\Bx}, P_{\By})$ is generated by the
		idempotents $1_\Bx$ and the elements
		$c_{\Bx,\By}$ over $\widehat{\pS}$ modulo the relation:
		\begin{equation}
			c_{\Bx,\By}c_{\By,\Bu}=\prod_i \ps_i^{\eta_i(\Bx,\By,\Bu)}c_{\Bx,\Bu}\label{general-relation}
		\end{equation}
	\end{theorem}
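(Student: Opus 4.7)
First, by Lemma \ref{weightspaceofproj}, every Hom space $\Hom(P_\By, P_\Bx) = W_{\Ba_\By}(Q_{\Ba_\Bx})$ is free of rank one over $\widehat{\pS}$ with generator $c_{\Bx,\By}$ (for chosen basepoints $\Ba_\Bx\in\chamber_\Bx$, $\Ba_\By\in\chamber_\By$). Consequently, the algebra $\End(\oplus_\Bx P_\Bx)$ is generated as an $\widehat{\pS}$-module by the orthogonal idempotents $1_\Bx$ together with the elements $c_{\Bx,\By}$, and the only possible relations are those expressing each product $c_{\Bx,\By}c_{\By,\Bu}$ as a scalar in $\widehat{\pS}$ times $c_{\Bx,\Bu}$. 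The plan is therefore to compute that scalar and verify it equals $\prod_i \ps_i^{\eta_i(\Bx,\By,\Bu)}$.

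Fix basepoints $\Ba_\Bx, \Ba_\By, \Ba_\Bu$ in the three chambers and set $\Bb = \Ba_\Bx-\Ba_\By$, $\Bc = \Ba_\By-\Ba_\Bu$. Unwinding the definition \eqref{defcmorphism} and using centrality of the $\ps_i$ to commute the $q$-factors of $c_{\Bx,\By}$ past $m(\Bc)$, one obtains $c_{\Bx,\By}c_{\By,\Bu} = m(\Bb)m(\Bc) \cdot Q_1 Q_2$, where $Q_1, Q_2$ are the two $q$-products from \eqref{defcmorphism}. Applying the Coulomb relation \eqref{eq:Coulomb2} to $m(\Bb)m(\Bc)$ and then using \eqref{eq:Coulomb1} to move the resulting $[h_i]^{(\cdot)}$-factors to the right of $m(\Bb+\Bc)$ expresses the whole product as $m(\Bb+\Bc)$ times a scalar in $\widehat{\pS}$, evaluated on a weight-$\Ba_\Bu$ vector where $h_i^+$ acts by $(a_\Bu)_i + \ps_i$.

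The remaining identity splits into independent calculations for each coordinate $i$. When $b_i c_i \geq 0$ no Coulomb factor is produced, and because $(a_\By)_i - (a_\Bu)_i = c_i$ the two relevant $q$-products concatenate cleanly into the $q$-product defining $c_{\Bx,\Bu}$, matching the fact that $\eta_i=0$ in this case. When $b_i c_i < 0$, the Coulomb factor is a product of $\min(|b_i|,|c_i|)$ linear terms of the form $(h_i^+ + k)$; exactly as in the proof of Lemma \ref{c-reverse}, each such term pairs with a ``surplus'' $q$-factor present in $Q_1 Q_2$ but absent from the $q$-product of $c_{\Bx,\Bu}$, collapsing to $\ps_i$ precisely when $k\equiv 0 \pmod p$ and to $1$ otherwise. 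After these pairings, the unpaired $q$-factors reassemble into the $q$-product defining $c_{\Bx,\Bu}$, and the surviving $\ps_i$-factors count the hyperplane crossings in $\textgoth{A}^{\operatorname{per}}_\la$ traversed along the detour $\Ba_\Bx\to\Ba_\By\to\Ba_\Bu$ but not along $\Ba_\Bx\to\Ba_\Bu$, which is exactly $\eta_i(\Bx,\By,\Bu)$.

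The main obstacle is the combinatorial bookkeeping in the opposite-sign case: identifying exactly which $q$-factors pair with which Coulomb factors, keeping track of the shift $(a_\By)_i - (a_\Bu)_i = c_i$ between the two index ranges, and verifying that after cancellation the leftover $q$-factors reproduce the product in $c_{\Bx,\Bu}$ on the nose. Once this matching is carried out the stated relation \eqref{general-relation} holds on the nose for our chosen basepoints, and the freeness of the Hom spaces over $\widehat{\pS}$ guarantees that no further relations can exist; in particular the presentation of $\End(\oplus_\Bx P_\Bx)$ is complete.
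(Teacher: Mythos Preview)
Your proposal is correct and follows essentially the same approach as the paper. The paper's proof is terser: it simply asserts that relation \eqref{general-relation} holds ``by an easy extension of Lemma \ref{c-reverse}'' and then observes that, in the abstract algebra $\widehat{H}$ with this presentation, each $1_\Bx \widehat{H} 1_\By$ is cyclic over $\widehat{\pS}$ generated by $c_{\Bx,\By}$, so the map to $\Hom(P_\By,P_\Bx)$ (free of rank one) must be an isomorphism. Your argument is the same in structure, but you spell out the ``easy extension'' explicitly via the same-sign/opposite-sign case analysis, which is exactly the content the paper leaves implicit.
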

	
	Note that this relation is homogeneous if $\deg c_{\Bx,\By}=|\Bx-\By|_1$ and $\deg \ps_i=2$. 
	
	\begin{proof} The relation holds by an easy extension of Lemma
		\ref{c-reverse}.  To see that these elements and relations are sufficient, note that in the algebra $\widehat{H}$ with this
		presentation, the Hom-space $1_{\Bx}\widehat{H} 1_{\By}$ is cyclically generated over $\widehat{S}$
		by $c_{\Bx,\By}$.  The image of $c_{\Bx,\By}$ under induced map $1_{\Bx} \widehat{H} 1_{\By}\to
		\Hom(P_\By,P_\Bx)$ generates the target space over $\widehat{S}$. Since the target is free of rank 1 as a $\widehat{\pS}$-module, the map must be an isomorphism.
	\end{proof}
	
	\begin{definition} \label{definingH}
		Let $S_\Z := U_\Z (\mathfrak{g})$. Let $\gradedHalg^{\la}_{\Z}$ be the graded algebra over $S_\Z$ generated by $1_{\Bx}$ and $c_{\Bx,\By}$ with presentation given in Theorem \ref{presentation1}. Let
		$\gradedHalg^{\la}_{\K} := \gradedHalg^{\la}_{\Z}\otimes \K$.
	\end{definition}
	This algebra is isomorphic to its opposite via the anti-isomorphism which acts by the identity on $S_\Z$ and $c_{\Bx,\By}\mapsto c_{\By,\Bx}$.  Since this algebra has a left action on the sum  $\bigoplus_{\Bx} P_{\Bx}$, it naturally has a {\it right} action on $\bigoplus_{\Bx}\Hom(P_{\Bx},M)$ for any $A^\la_{\K}$-module, which we will turn into a left module structure using the anti-automorphism above.  
	
	It may concern the reader that $\gradedHalg^{\la}_{\K}$ is not a unital algebra, but it has a structure which can serve as a replacement.  Note that we follow the terminology and notation of \cite{Brundan-Davidson} in this section.  We call a $\K$-algebra $A$ {\bf locally unital} if there are idempotents $1_{\alpha}$  indexed by some set $\aleph$ such that $A=\oplus_{\alpha,\beta\in \aleph}1_{\alpha}A1_{\beta}$.  
	\begin{definition}\label{def:P-category}
		Given a locally unital algebra $A$, let $\mathscr{P}(A)$ be the category where the objects are the set $\aleph$, and the morphism spaces are given by $\Hom(\alpha,\beta)=1_{\alpha} A 1_{\beta}.$ 
	\end{definition}
	Note that $\mathscr{P}$ is equivalent to the subcategory of left projective modules with objects $A1_{\alpha}$.
	
	We call a module $M$ over $A$  {\bf locally unital} if $M=\oplus_{\alpha\in \aleph}1_{\alpha}M$;  note that this is automatic if $\K$ is a field and $M$ is finite dimensional over $\K$. We can think of $\alpha\mapsto 1_{\alpha}M$ as a functor $\mathscr{P}^{\operatorname{op}}\to \K\mmod$, and conversely, every locally unital left $A$-module arises from a unique such functor.  In particular, the results of  \cite{MOS}, which are formulated in terms of representations of categories, also apply to locally unital algebras.

	The algebra $\gradedHalg^{\la}_{\K} $ is may not be left or right Noetherian as a ring, since it is not finitely generated as a module over itself.  However, it can be {\bf locally left Noetherian}\footnote{This is not identical to the notion of ``locally Noetherian'' found in scheme theory, but is related: the spectrum of a commutative locally Noetherian ring will be a possibly infinite disjoint union of Noetherian schemes, which is thus locally Noetherian as a scheme.}, meaning that left submodules of $A1_{\Bx}$ are finitely generated.
	\begin{proposition}
		If $\K$ is Noetherian, then  the algebra  $\gradedHalg^{\la}_{\K}$ is locally left Noetherian.
	\end{proposition}
	\begin{proof}
		Consider a submodule $U\subset \gradedHalg^{\la}_{\K} 1_{\Bx}$.  The intersection $U\cap 1_{\By}\gradedHalg^{\la}_{\K}1_{\Bx}$ must be of the form $I_{\By}c_{\By,\Bx}$ for some ideal $I_{\By}\subset S_{\K}$.  Note also that since $c_{\Bz,\By}I_{\By}c_{\By,\Bx}\subset I_{\Bz}c_{\Bz,\Bx}$, we have $I_{\By}\subset I_{\Bz}$ if for each $i$, either $x_i\leq y_i\leq z_i$ or $z_i\leq y_i\leq x_i$.   For any subset $B$ of $\Z_{\geq 0}^n$, there is a finite list of points $b^{(1)},\dots, b^{(r)}$ such that for any $b\in B$, there is some $r$ such that $b_i\geq b_i^{(r)}$ for all $i$.  This means that for any finitely generated ideal $I\subset S_{\K}$ and any subset $B\subset \gradedchambers$, the submodule generated by $Ic_{\By,\Bx}$ for $\By\in B $ is finitely generated, since it is generated by $Ic_{\By,\Bx}$ for finitely many choices of $\By$.  Since $\K$ is Noetherian, so is $S_{\K}$, and thus every ideal in $I$ is finitely generated.  
		
		Thus, if $U$ is not finitely generated, then infinitely many different ideals must appear as $I_{\By}$.  By standard methods, we can choose an infinite  sequence $\By^{(1)},\By^{(2)}, \dots$ such that the ideals $I_{\By^{(i)}}$ are all different, and for each $i$, the difference of coordinates $y_i^{(k)}-x_i$ are either all positive and weakly increasing with respect to $k$, or negative and weakly decreasing. In either case, we have $I_{\By^{(1)}}\subset I_{\By^{(2)}}\subset \cdots $.  Since $S_{\K}$ is Noetherian, the existence of such a chain of ideals which are all distinct contradicts the ascending chain condition, proving that $U$ is finitely generated.  
	\end{proof}
	If an algebra $A$ is locally left Noetherian,  then its category of finitely generated, locally unital modules is an abelian category $A\operatorname{-lu-mod}$.  
	The objects $A1_{\alpha_i}$ form a {\bf nearly resolving set of projectives} in the sense of Freyd \cite[\S 1]{Freyd}, i.e. every object in this category is a quotient of a finite sum of these projectives.

	\begin{lemma}\label{lem:Freyd-morita}
		Assume $A$ is locally Noetherian.  If $\mathcal{C}$ is an abelian category and $\alpha\mapsto P_{\alpha}\colon \mathscr{P} \to \mathcal{C}$ is a fully faithful functor such that the set of projectives $\{P_{\alpha}\}_{\alpha\in \aleph}$ is nearly resolving, then the functor \[\mathsf{M}\colon \mathcal{C}\to A\operatorname{-lu-mod}\qquad \mathsf{M}(M)=\oplus_{\alpha\in \aleph}\Hom(P_{\alpha},M) \] is an equivalence.   
	\end{lemma}
	\begin{proof} 
		In the terms of \cite{Freyd}, this functor $\mathsf{M}$ sends an object in $\mathcal{C}$ to the corresponding representation of the category $\mathscr{P}^{\operatorname{op}}$.  By a small modification of \cite[Thm. 1.2]{Freyd} (stated above \cite[Thm. 1.3]{Freyd} with the proof left to the reader), this functor is an equivalence to the subcategory of representations which are the cokernel of a map of the form $\mathsf{M}(\oplus_{i=1}^r  P_{\alpha_i})\to\mathsf{M}(\oplus_{j=1}^s  P_{\beta_j})$, that is of the form $\oplus_{i=1}^r  A1_{\alpha_i}\to \oplus_{j=1}^s  A1_{\beta_j}$. Since $A$ is locally Noetherian,  the modules of this form are exactly the finitely generated, locally unital modules.
	\end{proof}

	Let 
	$\gradedHalg^{\la}_{\K}\mmod_o$ denote the category of finite dimensional representations of $\gradedHalg^{\la}_{\K}$, on which each $\ps_i$ acts nilpotently.  As discussed above, such a module is necessarily locally unital.  
	\begin{theorem} \label{thmprojectiveendomorphisms}
		The functor \[\bigoplus_{\Bx\in
			\gradedchambers(\la)}\Hom\Big( P_\Bx,-\Big) \colon A^\la_{\K}\mmod_o^D\to
		\gradedHalg^{\la}_{\K}\mmod_o\]  
		defines an equivalence of categories between $A^\la_{\K}\mmod_o^D$ and the category of finite dimensional representations of $\gradedHalg^{\la}_{\K}$, on which each $\ps_i$ acts nilpotently.
	\end{theorem}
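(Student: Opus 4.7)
My plan is to treat this as a Morita-style equivalence arising from the ``projective pro-generator'' $P := \bigoplus_{\Bx \in \Lambda(\la)} P_\Bx$. By Theorem \ref{presentation1}, $\End(P) = \widehat{H}^{\la}_{\K}$ is the completion of $H^{\la}_{\K}$ along the ideal generated by the $\ps_i$, while $A^\la_{\K}\mmod_o^D$ consists by definition of finitely generated $A^\la_{\K}$-modules with $T$ acting by $\la$ and each $\ps_i$ locally nilpotent. The nilpotency condition on the target is precisely what converts $\widehat{H}^{\la}_{\K}$-modules into $H^{\la}_{\K}$-modules with nilpotent $\ps_i$.

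First I would check that $\Phi := \Hom(P, -)$ lands in the stated target category. For $M \in A^\la_{\K}\mmod_o^D$, the space $\Phi(M) = \bigoplus_\Bx W_{\Ba_\Bx}(M)$ is finite-dimensional, since $M$ is finite-dimensional (being finitely generated and set-theoretically supported at $o$); it inherits an $H^{\la}_{\K}$-action from the composition formulas of Theorem \ref{presentation1}, with each $\ps_i$ acting nilpotently as on $M$. Exactness of $\Phi$ is immediate from the projectivity of each $P_\Bx$, which in turn follows from the exactness of the weight functor $W_{\Ba_\Bx}$ (a consequence of the semisimplicity of the $D$-action). For faithfulness I would argue that $P$ is a generator: every nonzero $M$ admits a simple quotient $L_\Bx$, and $\Phi(L_\Bx) = W_{\Ba_\Bx}(L_\Bx) \cong \K$ is nonzero, so exactness forces $\Phi(M) \neq 0$.

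For essential surjectivity and fullness I would use the discrete quotients $P_\Bx / (\ps)^N P_\Bx$, where $(\ps)$ denotes the ideal of $\widehat{H}^{\la}_{\K}$ generated by the $\ps_i$. These lie in $A^\la_{\K}\mmod_o^D$, and $\Phi$ carries them to the cyclic $H^{\la}_{\K}$-modules $H^{\la}_{\K} 1_\Bx / (\ps)^N$, which are free-on-one-generator objects in the subcategory of target modules annihilated by $(\ps)^N$. Any finitely generated $H^{\la}_{\K}$-module with nilpotent $\ps_i$-action is annihilated by $(\ps)^N$ for some $N$, so admits a finite presentation by such images; a direct verification from Theorem \ref{presentation1} shows $\Phi$ is fully faithful on these elementary discrete projectives, and lifting the presentation through $\Phi$ produces the preimage. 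The same lifting argument, applied to presentations of $\Phi(M)$ and $\Phi(M')$, yields fullness on general morphisms.

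The main technical point, and the one requiring most care, is the passage between the completed ring $\widehat{H}^{\la}_{\K} = \End(P)$ and the polynomial ring $H^{\la}_{\K}$ of the statement. Everything rests on the identity $\widehat{H}^{\la}_{\K} / (\ps)^N = H^{\la}_{\K} / (\ps)^N$ for all $N$, which produces an honest uncompleted Morita equivalence at each finite level, with the theorem recovered as the colimit over $N$.
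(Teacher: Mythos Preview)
Your proposal is correct and follows the same Morita-style approach the paper invokes; indeed the paper gives no proof beyond the single sentence ``Since $\bigoplus_{\Bx\in \Lambda(\la)} P_\Bx$ is a faithfully projective module, we have the following result.'' Your write-up is more careful than the paper's, correctly flagging the subtlety that $P$ is only a pro-object with endomorphism ring the \emph{completion} $\widehat{H}^{\la}_{\K}$, and that the passage to $H^{\la}_{\K}$-modules with nilpotent $\ps_i$ requires working level-by-level with the truncations $P_\Bx/(\ps)^N P_\Bx$; this is exactly the right way to make the one-line claim precise.
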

	\begin{proof}
		First, consider the category $\widehat{A^\la_{\K}\mmod}^D$.  Since any module $M$ in this category is topologically finitely generated over $A^\la_{\K}$, we can assume that the generators are generalized weight vectors for $D$. These weight vectors induce a surjection $\oplus_{i=1}^k Q_{\Ba_i}\to M$.  This shows that the $\{P_{\Bx}\}$ are a nearly resolving set of projectives in this category, and we have an equivalence of the category $\widehat{A^\la_{\K}\mmod}^D$ to the category of modules over the completion $\widehat{H}^{\la}_{\K}\cong \gradedHalg^{\la}_{\K}\otimes_{S}\widehat{S}$ by Lemma \ref{lem:Freyd-morita}; note that we have used the anti-automorphism of $ \gradedHalg^{\la}_{\K}$ to switch between left and right modules. 
		
		Now, we will show that restricting this functor gives the desired equivalence.  A finite dimensional representation of $\gradedHalg^{\la}_{\K}$ on which each $\ps_i$ acts nilpotently can be inflated to a $\widehat{H}^{\la}_{\K}$-module, and thus sent to a $A^\la_{\K}$-module by this equivalence.  The nilpotent condition and finite dimensionality imply that this module is a sum of finitely many generalized weight spaces, so it is supported on a finite union of points in $\fN_{\K}^{(1)}$.  The functions $z_i$ and $w_i$ must act nilpotently for weight reasons, so the only point in the support must be $o$.  
		On the other hand, if the corresponding $A^\la_{\K}$-module is supported on $o$, then by coherence, it must be finite-dimensional, and thus give a finite dimensional  $\widehat{H}^{\la}_{\K}$-module, and $\ps_i$ acts nilpotently on any finite dimensional $\widehat{H}^{\la}_{\K}$-module.  
	\end{proof}
	
	In fact, we will see that when $\la$ is smooth, $\gradedHalg^{\la}_{\K}$ admits a presentation as a quadratic algebra. We begin by producing some generators. 
	
	Let $\epsilon_i=(0,\dots, 0,1,0,\dots,0)$ by the $i$th unit vector.
	Let $c_{\Bx}^{\pm i}=c_{\Bx\pm\epsilon_i,\Bx}$; note that $\deg c_{\Bx}^{\pm i}=1$.  These elements correspond to the adjacencies in the graph structure of $\gradedchambers(\la)$.  Thus, we have a homomorphism from the path algebra of $\gradedchambers(\la)$ sending each length 0 path to the corresponding $1_{\Bx}$ and each edge to the corresponding $c_{\Bx}^{\pm i}$.  
	
	We'll be interested in the particular cases of \eqref{general-relation} which relate these length 1 paths.  
	\newseq
	If $\Bx,\Bx+\epsilon_i\in \gradedchambers(\la)$, then \[\subeqn\label{wall-cross}
	c_{\Bx+\epsilon_i}^{-i}c_{\Bx}^{+i}=\ps_i 1_{\Bx+\epsilon_i}\qquad c_{\Bx}^{+i}c_{\Bx+\epsilon_i}^{-i}=\ps_i 1_{\Bx+\epsilon_i}.\]  
	Note, we can view this as saying that the length 2 paths that cross a hyperplane and return satisfy the same linear relations as the normal vectors to the corresponding hyperplanes.  
	
	If
	$\Bx,\Bx+\epsilon_i,\Bx+\epsilon_j,\Bx+\epsilon_i+\epsilon_j\in
	\gradedchambers(\la)$, then the corresponding chambers fit together as in the picture below:
	\[\tikz[very thick,scale=1.5]{ 
		\draw (1,1) --  node [at start, above right]{$i$} (-1,-1);
		\draw (-1,1) -- node [at start, above left]{$j$}(1,-1);
		\node at (0,1) {$\Bx$};
		\node at (0,-1) {$\Bx+\epsilon_i+\epsilon_j$};
		\node at (1,0) {$\Bx+\epsilon_i$};
		\node at (-1,0) {$\Bx+\epsilon_j$};}\]
	In this situation, we find that either way of going around the codimension 2 subspace gives the same result, and that more generally any two paths between chambers that never cross the same hyperplane twice give equal elements of the algebra.
	
	\[\subeqn \label{codim1} c_{\Bx+\epsilon_i}^{+ j}c_{\Bx}^{+
		i}=c_{\Bx+\epsilon_j}^{+ i}c_{\Bx}^{+ j}\qquad c_{\Bx+\epsilon_j}^{- j}c_{\Bx+\epsilon_i+\epsilon_j}^{-
		i}=c_{\Bx+\epsilon_i}^{- i}c_{\Bx+\epsilon_i+\epsilon_j}^{- j}.\]
	\[\subeqn \label{codim2} c_{\Bx+\epsilon_i+\epsilon_j}^{- j}c_{\Bx+\epsilon_j}^{+
		i}=c_{\Bx}^{+ i}c_{\Bx+\epsilon_j}^{- j}\qquad
	c_{\Bx+\epsilon_i+\epsilon_j}^{-
		i}c_{\Bx+\epsilon_i}^{+j}=c_{\Bx}^{+j}c_{\Bx+\epsilon_i}^{- i}.\]
	If $\la$ is a smooth parameter, then as the following theorem shows, these are the only relations
	needed. 
	
	\begin{theorem}\label{presentation2}
		If $\la$ is a smooth parameter, then  the algebra $\bigoplus_{\Bx, \By}\Hom(P_{\Bx}, P_{\By})$ is generated by the
		idempotents $1_\Bx$ and the elements
		$c_{\Bx}^{\pm i}$ for all $\Bx\in \gradedchambers(\la)$ over $\widehat{\pS}$ modulo
		the relations (\ref{wall-cross}--\ref{codim2}).
	\end{theorem}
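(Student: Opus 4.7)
The plan is to exhibit the algebra $A$ defined by the presentation in Theorem \ref{presentation2} as isomorphic to $E := \End(\oplus_\Bx P_\Bx)$ via the natural comparison homomorphism. Define $\varphi \colon A \to E$ by $1_\Bx \mapsto 1_\Bx$, $c_\Bx^{\pm i} \mapsto c_{\Bx \pm \epsilon_i, \Bx}$, and the identity on $\widehat{\pS}$. To see $\varphi$ is well-defined, I note that each of \eqref{wall-cross}, \eqref{codim1} and \eqref{codim2} is a special instance of the general relation \eqref{general-relation} of Theorem \ref{presentation1}: specializing the triple $(\Bx, \By, \Bu)$ to $(\Bx, \Bx \pm \epsilon_i, \Bx)$ yields $\eta_j = \delta_{ij}$, producing \eqref{wall-cross}; specializing to the triples appearing in \eqref{codim1}--\eqref{codim2} yields $\eta_j = 0$ for all $j$, since these triples describe non-backtracking galleries of length $2$, so both sides of each relation reduce to the same element $c_{\Bu, \Bu''}$ in $E$.

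The theorem then reduces to showing $\varphi$ induces an isomorphism on each Hom-space $1_\Bx A 1_\By \to 1_\Bx E 1_\By$. The target is free of rank $1$ over $\widehat{\pS}$ on the generator $c_{\Bx, \By}$ by Theorem \ref{presentation1}. I claim that the source is a cyclic $\widehat{\pS}$-module generated by the product $\tilde c_{\Bx, \By}$ of short generators along any fixed minimal path $\By = \Bu_0, \Bu_1, \dots, \Bu_\ell = \Bx$ in $\Lambda(\la)$ (with $\ell = |\Bx - \By|_1$). Granting this, iterating \eqref{general-relation} along the minimal path shows $\varphi(\tilde c_{\Bx, \By}) = c_{\Bx, \By}$, a free generator of the target; hence no element of $\widehat{\pS}$ annihilates $\tilde c_{\Bx, \By}$ in $A$, and $\varphi$ restricts to an isomorphism on each weight space, and thus on all of $A$.

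It remains to prove the cyclicity claim. Any element of $1_\Bx A 1_\By$ is an $\widehat{\pS}$-linear combination of products of short generators representing walks from $\By$ to $\Bx$ in $\Lambda(\la)$, and I proceed by induction on the length of such a word $w$. If $w$ contains a backtracking pair $c_{\Bu \pm \epsilon_i}^{\mp i} c_\Bu^{\pm i}$, relation \eqref{wall-cross} rewrites it as $\ps_i$ times a strictly shorter word, and induction applies. Otherwise $w$ is non-backtracking, and hence of length exactly $|\Bx - \By|_1$, i.e.\ a minimal gallery. The main obstacle is then the combinatorial statement: \emph{any two minimal galleries in $\Lambda(\la)$ with the same endpoints can be connected by a sequence of square moves of type \eqref{codim1}--\eqref{codim2}, with all intermediate galleries lying inside $\Lambda(\la)$.} This is exactly where smoothness enters --- at a smooth parameter the arrangement $\textgoth{A}^{\operatorname{per}}_\la$ is generic in codimension $2$, so each codimension-$2$ stratum is the transverse intersection of exactly two hyperplanes and carries a complete four-chamber local picture entirely inside $\Lambda(\la)$. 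Standard combinatorics of chamber graphs of generic arrangements then guarantees that any two minimal galleries differ by a sequence of such square moves, and applying \eqref{codim1}--\eqref{codim2} reduces $w$ to $\tilde c_{\Bx, \By}$, completing the induction.
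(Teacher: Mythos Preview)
Your overall strategy --- define $\varphi\colon A\to E$, check well-definedness, then show each $1_\Bx A 1_\By$ is cyclic over $\widehat{\pS}$ with generator mapping to $c_{\Bx,\By}$ --- is sound and close in spirit to the paper's argument.  But there is a genuine gap in the induction that proves cyclicity.

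You assert that if a word $w$ contains no backtracking pair $c_{\Bu\pm\epsilon_i}^{\mp i}c_\Bu^{\pm i}$, then it is automatically of length $|\Bx-\By|_1$.  This is false: ``no immediate reversal'' is much weaker than ``minimal''.  For instance, in $\Z^2$ the walk $(0,0)\to(1,0)\to(1,1)\to(0,1)$ is non-backtracking in your sense but has length $3$, while the taxicab distance from $(0,0)$ to $(0,1)$ is $1$.  More generally, a path can contain a $+i$ step and a $-i$ step which are not adjacent, and your reduction rule \eqref{wall-cross} cannot act until they are brought together.  Doing that requires the square moves \eqref{codim1}--\eqref{codim2}, and one must argue that the intermediate chambers stay in $\Lambda(\la)$; this is exactly where the smoothness hypothesis does real work, and it does not come for free.

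The paper handles this differently: rather than reducing arbitrary words to minimal ones, it first establishes generation (any $c_{\Bx,\By}$ equals a product along \emph{some} minimal path, using smoothness to find intermediate chambers), then verifies the full relation \eqref{general-relation} for the composite $c_{\Bx,\By}c_{\By,\Bu}$ by induction on $\min(|\Bx-\By|_1,|\By-\Bu|_1)$.  The base case peels off a single step and either extends the minimal path or creates an adjacent backtrack; the inductive step rebrackets.  Your argument could be repaired along similar lines --- use square moves to commute a $+i$ past intervening steps until it meets a $-i$, invoking smoothness to guarantee the needed $2\times 2$ squares exist --- but as written the step ``non-backtracking $\Rightarrow$ minimal'' does not hold.
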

	\begin{proof}
		Since these relations are a consequence of Theorem
		\ref{presentation1}, it suffices to show that the elements
		$c_{\Bx}^{\pm i}$ generate, and that the relations
		\eqref{general-relation} are a consequence of
		(\ref{wall-cross}--\ref{codim2}).
		
		We show that $c_{\Bx}^{\pm i}$ generate $c_{\Bx,\By}$ by induction
		on the $L_1$-norm $|\Bx-\By|$.  If $|\Bx-\By|_1=1$, then
		$c_{\Bx,\By}=c_{\By}^{\pm i}$.  On the other hand, if $|\Bx-\By|_1>1$,
		then there is some $\Bx'\neq \Bx,\By$ such that
		$|\Bx-\Bx'|_1+|\Bx'-\By|_1=|\Bx-\By|_1$. Choosing a generic parameter $\la'$ such that $\gradedchambers^\R(\la')=\gradedchambers(\la)$, we can consider the line segment joining generic points in $\bar{\chamber}_{\Bx}$ and $\bar{\chamber}_{\By}$, and let $\Bx'$ be any chamber this line segment passes through.  The smoothness hypothesis is needed to conclude that there is such a chamber that lies in $\gradedchambers(\la)$.
		Since $c_{\Bx,\By}=c_{\Bx,\Bx'}c_{\Bx',\By}$, this proves generation by
		induction.
		
		We must now check that the relations \eqref{general-relation} are satisfied. First, consider the situation where  $\Bx^{(0)}=\Bx,\dots,\Bx^{(m)}=\By$ is a path with
		$|\Bx^{(i)}-\Bx^{(i+1)}|_1=1$ with $\Bx^{(i)}\in \gradedchambers(\la)$, and
		$\By^{(0)}=\Bx,\dots,\By^{(m)}=\By$ a path with the same
		conditions. These two paths differ by a finite number of applications of
		the relations (\ref{codim1}--\ref{codim2}).  
		
		What it remains to show is that if
		$\Bx^{(0)}=\Bx,\dots,\Bx^{(m)}=\By$ is a path of minimal length
		between these points with
		$|\Bx^{(i)}-\Bx^{(i+1)}|_1=1$, and we have similar paths
		$\By^{(0)}=\By,\dots,\By^{(n)}=\Bu$ and
		$\Bu^{(0)}=\Bx,\dots,\Bu^{(p)}=\Bu$, then 
		\begin{equation}
			c_{\Bx^{(0)},\Bx^{(1)}}\cdots
			c_{\Bx^{(m-1)},\Bx^{(m)}}c_{\By^{(0)},\By^{(1)}}\cdots
			c_{\By^{(n-1)},\By^{(n)}}=c_{\Bu^{(0)},\Bu^{(1)}}\cdots
			c_{\Bu^{(p-1)},\Bu^{(p)}}\prod_{i=1}^n\ps_i^{\eta_i(\Bx,\By,\Bu)}.\label{product}
		\end{equation}

		We'll prove this by induction on $\min(m,n)$.  If $m=0$ or $n=0$, then
		this is tautological.  Assume $m=1$, and $\Bx=\By+\sigma \ep_j$ for
		$\sigma\in \{1,-1\}$.  If $\sigma(y_j-u_j)\geq 0$, then
		$\eta_j(\Bx,\By,\Bu)=0$, so this follows from the statement about
		minimal length paths.  If $\sigma(y_j-u_j)< 0$, then
		$\eta_j(\Bx,\By,\Bu)=1$, and we can assume that
		$\By^{(1)}=\Bx,\dots,\By^{(n)}$ is a minimal length path from $\Bx$ to
		$\Bu$.  Thus \[c_{\Bx,\By}c_{\By,\Bx}\cdots
		c_{\By^{(n-1)},\By^{(n)}}= c_{\Bx,\By^{(2)}}\cdots
		c_{\By^{(n-1)},\By^{(n)}}\ps_j\] as desired.  The argument if $n=1$ is
		analogous.  
		
		Now consider the general case.  Assume for simplicity that $n\geq m$.
		Consider the path $\Bx^{(m-1)},\By^{(0)},\dots,\By^{(n)}=\Bu$.  Either
		this is a minimal path, or by induction, we have that
		\[c_{\Bx^{(m-1)},\By}c_{\By,\By^{(1)}}\cdots
		c_{\By^{(n-1)},\By^{(n)}}=c_{\Bw^{(1)},\Bw^{(2)}}\cdots
		c_{\Bw^{(n-2)},\Bw^{(n-1)}}\ps_j \] for a minimal path
		$\Bw^{(0)}=\Bx^{(m-1)},\Bw^{(2)},\dots, \Bw^{(n-1)}=\By^{(n)}$ with $j$ being the index that changes from $\Bx^{(m-1)}$ to $\By$.  
		
		In the former case, by induction, the relation \eqref{product} for the paths
		$\Bx^{(0)}=\Bx,\dots,\Bx^{(m-1)}$ and
		$\Bx^{(m-1)},\By^{(0)},\dots,\By^{(n)}=\Bu$ holds.  This is just a
		rebracketing of the desired case of \eqref{product}.  In the latter,
		after rebracketing, we have 
		\begin{multline*}
			(c_{\Bx^{(0)},\Bx^{(1)}}\cdots c_{\Bx^{(m-2)},\Bx^{(m-1)}})(
			c_{\Bx^{(m-1)},\Bx^{(m)}}\cdots
			c_{\By^{(n-1)},\By^{(n)}})\\=(c_{\Bx^{(0)},\Bx^{(1)}}\cdots c_{\Bx^{(m-2)},\Bx^{(m-1)}})(
			c_{\Bw^{(0)},\Bw^{(1)}}\cdots
			c_{\Bw^{(n-2)},\Bw^{(n-1)}})\ps_j=c_{\Bu^{(0)},\Bu^{(1)}}\cdots
			c_{\Bu^{(p-1)},\Bu^{(p)}}\prod_{i=1}^n\ps_i^{\eta_i(\Bx,\By,\Bu)}.
		\end{multline*}
		applying \eqref{product} to the shorter paths.
	\end{proof}

	\subsection{Quadratic duality and the Ext-algebra of the sum of all simple modules.}
	\label{sec:quadratic-duality}
	
	The algebra $\gradedHalg^{\lambda}_\Z$ for smooth parameters has already appeared in the literature in \cite{GDKD}; it
	is the ``A-algebra'' of the hyperplane arrangement defined by $\Dc_i=pk-1/2$
	for all $k\in \Z$.  This is slightly outside the scope of that paper,
	since only finite hyperplane arrangements were considered there, but
	the results of that paper are easily extended to the locally finite
	case.  In particular, we have that the algebra $\gradedHalg^{\lambda}_\Z$ is quadratic, and
	its quadratic dual also has a geometric description, given by the
	``B-algebra.'' We will use this to produce a description of the $\Ext$-algebra of the sum of all simple representations of $\gradedHalg^{\lambda}_\Z$.

	If we fix an integer $m$, we may consider the hyperplane arrangement
	given by $\Dc_i=pk-1/2$ for $k\in [-m,m]$.  Let $H^{[m]}$ be the A-algebra associated to this arrangement as in \cite[\S
	8.3]{BLPWtorico} (in that paper, it is denoted by $A(\eta,-)$). We leave the dependence on $\la$ and the ground ring implicit. 
	
	By definition, $H^{[m]}$ is obtained by considering the chambers of the arrangement we have fixed above, putting a quiver structure on this set by connecting chambers adjacent across a hyperplane, and then imposing the same local relations (\ref{wall-cross}--\ref{codim2}). One result which will be extremely important for us
	is:
	\begin{theorem}[\mbox{\cite[8.25]{BLPWtorico}}]
		The algebra $H^{[m]}$ is finite dimensional in each graded degree, with finite global
		dimension $\leq 2n$.  
	\end{theorem}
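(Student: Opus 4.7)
The plan is to prove both statements directly from the presentation of $H^{(m)}$ via generators and relations. For finite-dimensionality in each graded degree, the argument from the proof of Theorem \ref{presentation1} transfers essentially verbatim to the finite setting: the relations \eqref{wall-cross}--\eqref{codim2} force each idempotent piece $1_\Bx H^{(m)} 1_\By$ to be cyclic as a module over $S = \K[\psi_1,\dots,\psi_n]$, generated in degree $|\Bx-\By|_1$ by the image of any minimal-length path between $\Bx$ and $\By$. Since $S$ has finite-dimensional graded components and the finite arrangement admits only finitely many chambers, summing over all pairs $(\Bx,\By)$ yields finite-dimensionality of each graded component of $H^{(m)}$.

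For the global dimension bound, I would construct for each simple $L_\Bx$ an explicit projective resolution of length at most $2n$. The natural candidate is a Koszul-type double complex with two resolving directions. The \emph{combinatorial} direction is built from the edge generators $c_\Bx^{\pm i}$ and resolves the chamber-adjacency structure, contributing length at most $n$ (matching the number of coordinate hyperplane directions locally bounding $\chamber_\Bx$ in the smooth case). The \emph{algebraic} direction uses the central polynomial generators $\psi_i$ and is essentially the Koszul resolution of the trivial module over $S$, contributing length $n$. The totalization of this double complex would give a resolution of length at most $2n$, establishing the desired bound.

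The main obstacle will be verifying exactness of this proposed complex, which requires careful combinatorial control over the local arrangement near each chamber and over how the wall-crossing elements $c_\Bx^{\pm i}$ interact with the central elements $\psi_i$ through the relations \eqref{wall-cross}. A cleaner route is to observe that $H^{(m)}$ is by construction the A-algebra of the finite arrangement $\Dc_i = pk-1/2$ for $k\in[-m,m]$, so both conclusions reduce to the corresponding assertions for A-algebras of finite hyperplane arrangements as developed in \cite[\S 8]{BLPWtorico}. The bound $2n$ then acquires a pleasing geometric interpretation: it is twice the dimension of the ambient affine space containing the arrangement, matching the complex dimension of the associated hypertoric variety.
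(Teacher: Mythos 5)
The paper offers no proof of this statement: it is quoted wholesale from \cite[8.25]{BLPWtorico}, so the ``cleaner route'' at the end of your proposal --- identify $H^{(m)}$ with the A-algebra $A(\eta,-)$ of the finite arrangement and invoke the results of that paper --- is exactly what the authors do, and by itself it is the intended justification. Your argument for finite-dimensionality in each graded degree is also fine, and in fact easier than you make it: $H^{(m)}$ is a quotient of the path algebra of a finite quiver over the graded ring $S$, with finitely many generators of positive degree, so each graded piece is finite dimensional whether or not one first establishes that $1_\Bx H^{(m)} 1_\By$ is cyclic over $S$.

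The direct argument you sketch for the bound on global dimension, however, has a genuine gap. The ``Koszul-type double complex'' is never constructed --- neither its terms nor its differentials are specified --- and you explicitly defer the one point where all the content lies, namely exactness; as it stands this is a plan rather than a proof. The announced lengths are also off: $S=U_\K(\fg)$ is a polynomial ring in $n-k$ variables (the $\ps_i$ span $\fg$ and satisfy $k$ linear relations), so the ``algebraic direction'' has length $n-k$, and no reason is given why the ``combinatorial direction'' should terminate after $n$ steps --- a chamber of this arrangement can be bounded by walls in as many as two places per coordinate direction, and controlling the interaction of the $c_{\Bx}^{\pm i}$ with the $\ps_i$ is precisely the hard part. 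In \cite{BLPWtorico} the bound is not obtained from such a bicomplex but from the structure theory of the finite A-algebra developed there, which is substantially more work than the sketch acknowledges. Finally, the closing geometric remark is incorrect: the ambient affine space of the arrangement has dimension $n-k$ and the hypertoric variety has complex dimension $2(n-k)$, so $2n$ is neither of these; it is the dimension of $T^*\mathbb{A}^n$ before reduction.
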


	There's a natural map of $H^{[m]}$ to $\gradedHalg^{\la}$, sending the
	idempotents for chambers to $1_{\Bx}$ for $x_i\in [-m-1,m]$.
	
	\begin{proposition}  Fix $\Bx,\By$ and an integer $q$.  
		For $m$ sufficiently large, the map $H^{[m]}\to \gradedHalg^{\la}$ induces an
		isomorphism $(1_{\Bx}H^{[m]}1_{\By})_q\cong (1_{\Bx}\gradedHalg^{\lambda}1_{\By})_q$ between homogeneous
		elements of degree $q$ and an isomorphism $\Ext_{H^{[m]}}(L_{\Bx},L_{\By})\cong
		\Ext_{\gradedHalg^{\la}}(L_{\Bx},L_{\By})$.  
	\end{proposition}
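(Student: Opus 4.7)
The approach exploits the fact that the presentation of Theorem \ref{presentation2} is local: every generator $c_\Bz^{\pm i}$ connects chambers at $L_1$-distance $1$, every relation (\ref{wall-cross}--\ref{codim2}) involves chambers within distance $2$, and so any computation in bounded degree stays within a bounded region of the chamber lattice.

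For the isomorphism of graded pieces, recall that $1_\Bx H^\la 1_\By$ is spanned over $S$ by products of length-one generators along paths in the chamber graph from $\By$ to $\Bx$, possibly multiplied by monomials in the $\ps_i$. Since $\deg c_\Bz^{\pm i}=1$ and $\deg \ps_i=2$, a homogeneous element of degree $q$ is a linear combination of such products of total path-length at most $q$, so every intermediate chamber lies in the $L_1$-ball of radius $q$ about $\{\Bx,\By\}$. I will take $m \geq q + \max_i \max(|x_i|,|y_i|)$, so that this ball sits in the interior of the arrangement defining $H^{(m)}$; then all relations needed to reduce degree-$q$ words hold already in $H^{(m)}$. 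Hence the natural map is an isomorphism on the degree-$q$ part, which handles the first claim.

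For the Ext isomorphism I plan to use the finite global dimension bound on $H^{(m)}$ from \cite[8.25]{BLPWtorico}. Take a minimal graded projective resolution $P^\bullet\to L_\Bx$ over $H^{(m)}$; it has length at most $2n$ with each $P^i$ finitely generated. Each summand $P_\Bz(-d)$ of $P^i$ is attached to $\Bx$ via an element of $1_\Bz H^{(m)} 1_\Bx$ of positive degree, giving $|\Bx-\Bz|_1\leq d$, while finite generation of $P^i$ bounds $d$ in each cohomological degree. Consequently, the full resolution involves only chambers in a bounded region about $\Bx$ and differentials of bounded internal degree. Enlarging $m$ so that the first part of the proposition applies to every Hom-space appearing in the differentials of $P^\bullet$ and in $\Hom(P^\bullet,L_\By)$, the base-changed complex $\widetilde{P}^\bullet$ over $H^\la$ is a complex of projective $H^\la$-modules whose graded pieces agree in the relevant range with those of $P^\bullet$; since exactness of a graded complex is a degree-wise condition, $\widetilde{P}^\bullet\to L_\Bx$ remains a resolution. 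Applying $\Hom_{H^\la}(-,L_\By)$ then yields the same complex as $\Hom_{H^{(m)}}(P^\bullet,L_\By)$, because $\Hom(P_\Bz,L_\By)$ vanishes unless $\Bz=\By$ and the nonzero pieces agree on the two sides.

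The main obstacle is coordinating the choice of $m$: one must simultaneously accommodate the distance from $\Bx$ of chambers appearing in $P^\bullet$ through cohomological degree $2n$, the internal degrees of the differentials, and the internal degrees contributing to $\Ext(L_\Bx,L_\By)$. The finite global dimension bound for $H^{(m)}$ is precisely what reduces these potentially infinite data to finite ones, so that a single uniform $m$ suffices.
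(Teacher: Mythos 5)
Your treatment of the first claim is essentially the paper's: a degree-$q$ element of $1_{\Bx}H^{\la}1_{\By}$ is a combination of path products of length at most $q$, so all intermediate chambers and crossed hyperplanes lie in a ball of radius controlled by $q$, $|\Bx|_1$, $|\By|_1$, and for $m$ beyond that bound the map $(1_{\Bx}H^{(m)}1_{\By})_q\to(1_{\Bx}H^{\la}1_{\By})_q$ is surjective and injective because every relation used in $H^{\la}$ already holds in $H^{(m)}$. That part is fine.

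The gap is in the Ext comparison, and it is the circularity you yourself flag but do not actually resolve. Your bound on the internal degrees (and hence on the chambers) occurring in the minimal resolution $P^\bullet$ of $L_{\Bx}$ over $H^{(m)}$ comes from finite generation of the syzygies of that particular algebra $H^{(m)}$; it is a bound $D(m)$ that a priori depends on $m$. The condition you then impose, ``enlarge $m$ so that the first part applies to every Hom-space appearing in $P^\bullet$,'' requires $m \gtrsim D(m)+|\Bx|_1+|\By|_1$, where $D(m)$ is defined using the algebra $H^{(m)}$ itself; enlarging $m$ changes the algebra and hence the resolution, so nothing guarantees this can ever be satisfied. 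Finite global dimension alone does not ``reduce the data to finite ones'' uniformly in $m$, which is what the argument needs.

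The paper closes exactly this hole by invoking the Koszulity of $H^{(m)}$ (from \cite[8.25]{BLPWtorico}, together with the global dimension bound): every simple $H^{(m)}$-module has a \emph{linear} projective resolution of length at most $2n$, so the internal degrees occurring in $P^\bullet$ are at most $2n$ and the chambers occurring are within $L_1$-distance $2n$ of $\Bx$ --- bounds independent of $m$. With that uniform bound, the first part of the proposition (applied with $q$ on the order of $2n$ plus the relevant distances) shows the base change of $P^\bullet$ to $H^{\la}$ is still exact, hence a projective resolution, and the Ext comparison follows as you describe. So your plan becomes correct once you replace ``finite global dimension plus finite generation'' by the Koszulity (linearity of resolutions) of $H^{(m)}$, or by some other regularity bound that is uniform in $m$; as written, the choice of $m$ is not well founded.
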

	\begin{proof}
		An element of $(1_{\Bx}H^{[m]}1_{\By})_q$ can be written as a sum of length
		$n$ paths from $x$ to $y$.  Thus, it can only pass through $\Bu$ if
		$|\Bx-\Bu|+|\Bu-\By|\leq q$.  Thus, if $m>q+|\Bx|_1+|\By_1|$, then
		no hyperplane crossed by this path is excluded in $H^{[m]}$. The map
		$(1_{\Bx}H^{[m]}1_{\By})_q \to (1_{\Bx}\gradedHalg^\la 1_{\By})_q$ is clearly surjective in this
		case, and injective as well, since any relation used in $H$ is also
		a relation in $H^{[m]}$.  
		
		Thus, if we take a projective resolution of $L_{\Bx}$ over $H^{[m]}$
		and tensor it with $\gradedHalg^\la $, we can choose $m$
		sufficiently large that the result is still exact in degrees below
		$2q$.  Since $H^{[m]}$
		is Koszul, with global dimension $\leq 2n$, every simple over $H^{[m]}$ has a linear resolution of length less than $\leq 2n$.  This establishes that
		the tensor product complex is a projective resolution for $m\gg 0$.
		
		This establishes that we have an isomorphism $\Ext_{H^{[m]}}(L_{\Bx},L_{\By})\to
		\Ext_{\gradedHalg^{\la}}(L_{\Bx},L_{\By})$ for $m\gg 0$.  
	\end{proof}
	\begin{corollary} \label{cor:KoszulityofH}
		The algebra $\gradedHalg^{\la}$ is Koszul with global dimension $\leq 2n$.
	\end{corollary}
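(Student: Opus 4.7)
The plan is to deduce the corollary directly from the preceding proposition, which already does the essential work. Recall that for a graded algebra $H$ whose degree-zero part is a direct sum of copies of the base field indexed by the simples $L_\Bx$, Koszulity is equivalent to the statement that for every pair of simples $L_\Bx, L_\By$, the graded vector space $\Ext^i_H(L_\Bx, L_\By)$ is concentrated in internal degree $i$. Likewise, global dimension $\leq 2n$ is the statement that these Ext groups vanish for $i > 2n$. Both conditions are checked one pair $(\Bx, \By)$ and one homological degree $i$ at a time.

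I would proceed as follows. Fix $\Bx, \By \in \Lambda(\la)$ and an integer $i$ with $0 \leq i \leq 2n + 1$. By the previous proposition, there exists $m$ large enough (depending on $\Bx, \By, i$) that
\[
\Ext^i_{H^{(m)}}(L_\Bx, L_\By) \;\cong\; \Ext^i_{H^\la}(L_\Bx, L_\By)
\]
as graded vector spaces, and similarly for all lower homological degrees, since the proof of the proposition builds a projective resolution of $L_\Bx$ over $H^{(m)}$ that remains exact after base change up to the required range. The cited theorem of \cite{BLPWtorico} (Theorem 8.25) says that $H^{(m)}$ is Koszul of global dimension $\leq 2n$. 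Hence the left-hand side is concentrated in internal degree $i$ when $i \leq 2n$ and vanishes when $i > 2n$. Transferring across the isomorphism, the same holds for $H^\la$.

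Running this argument over all pairs $(\Bx, \By)$ and all $i$ gives the two conditions simultaneously: $\Ext^i_{H^\la}(L_\Bx, L_\By)$ is concentrated in internal degree $i$ (Koszulity) and vanishes for $i > 2n$ (global dimension bound).

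There is essentially no hard obstacle here; the real content was packaged into the previous proposition and the result from \cite{BLPWtorico}. The only minor point worth being careful about is that $H^\la$ is locally finite but infinite-dimensional, so Koszulity should be formulated in terms of the simple modules $L_\Bx$ (equivalently, projective covers of each vertex idempotent), rather than asking for a single projective generator. Once phrased this way, checking the criterion pair-by-pair is automatic, and the parameter $m$ may safely be chosen separately for each pair.
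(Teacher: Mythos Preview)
Your proposal is correct and is precisely the intended argument: the corollary is immediate from the preceding proposition, which transfers the Koszulity and global-dimension bound from the finite truncations $H^{(m)}$ to $H^\la$ one pair of simples at a time. The paper does not spell out a proof, but what you have written is exactly how one fills in the details.
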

	Note that in the language of \cite[\S 5.4]{MOS}, we should say that the category $\mathscr{P}(\gradedHalg^{\la})$ is Koszul.  By \cite[Th. 30]{MOS}, 
	the Koszul dual of $\gradedHalg^{\la}$, is its quadratic
	dual.  Thus, lety us calculate is quadratic dual.
	
	Continue to assume that $\gradedchambers(\la)$ is smooth.  If we dualize the short
	exact sequence \[0\longrightarrow\ft_\Z \longrightarrow\fd_\Z
	\longrightarrow\mathfrak{g}_\Z\longrightarrow 0\] we obtain a dual
	sequence
	\[0\longleftarrow\ft_\Z^* \longleftarrow\fd^*_\Z
	\longleftarrow\mathfrak{g}_\Z^*\longleftarrow 0\]
	Let $\pt_i$ be the image in $\ft_\Z^*$ of the $i$th coordinate weight of
	$\ft_\Z^*$.

	\begin{definition} \label{defHdual}
		\newseq
		Let  $\gradedHalg_{\la,\Z}^!$ (resp $\gradedHalg_{\la,\K}^!$) be the dg-algebra generated over $U_\Z (\ft^*)$ (resp $U_\K (\ft^*)$) by elements $e_{\Bx}$ for $\Bx\in
		\gradedchambers(\la)$, $d_{\Bx}^{\pm i}$ for $\Bx,\Bx\pm \epsilon_i\in \gradedchambers(\la)$ with trivial differential and 
		subject to the quadratic relations:
		\begin{enumerate}
			\item Write $d_{\Bx, \Bu} := d_\Bx^{\pm i}$ where $\Bu = \Bx \pm \epsilon_i$. For each $\Bx$ and each $i$, we have:
			\[\subeqn\label{wall-crossbang}
			\sum_{\Bu\in \al_i(\Bx)}d_{\Bx,\Bu}d_{\Bu,\Bx}=\pt_i e_{\Bx}.\]    
			Note that this implies that if $\al_i(\Bx)=\emptyset$, then $\pt_i e_{\Bx} =0$.  
			\item If
			$\Bx,\Bx+\epsilon_i,\Bx+\epsilon_j,\Bx+\epsilon_i+\epsilon_j\in
			\gradedchambers(\la)$, then 
			\[\subeqn \label{codim1bang} d_{\Bx+\epsilon_i}^{+ j}d_{\Bx}^{+
				i}=-d_{\Bx+\epsilon_j}^{+ i}d_{\Bx}^{+ j}\qquad d_{\Bx+\epsilon_j}^{- j}d_{\Bx+\epsilon_i+\epsilon_j}^{-
				i}=-d_{\Bx+\epsilon_i}^{- i}d_{\Bx+\epsilon_i+\epsilon_j}^{- j}.\]
			\[\subeqn \label{codim2bang} d_{\Bx+\epsilon_i+\epsilon_j}^{- j}d_{\Bx+\epsilon_j}^{+
				i}=-d_{\Bx}^{+ i}d_{\Bx+\epsilon_j}^{- j}\qquad
			d_{\Bx+\epsilon_i+\epsilon_j}^{-
				i}d_{\Bx+\epsilon_i}^{+j}=-d_{\Bx}^{+j}d_{\Bx+\epsilon_i}^{- i}.\]
			\item If $\Bx$ and $\Bu$ are chambers such that $|\Bx-\Bu|=2$
			and there is only one length $2$ path $(\Bx,\By,\Bu)$ in $\gradedchambers(\la)$ from $\Bx$ to
			$\Bu$, then  \[\subeqn \label{doublestep} d_{\Bx,\By}d_{\By,\Bu}=0.\] 
		\end{enumerate}
	\end{definition}
	For example, if $\Bx\notin\gradedchambers$ but $\Bx+\epsilon_i,\Bx+\epsilon_j,\Bx+\epsilon_i+\epsilon_j\in
	\gradedchambers(\la)$, then
	$ d_{\Bx+\epsilon_i+\epsilon_j}^{- j}d_{\Bx+\epsilon_j}^{+
		i}=0.$
	We suppress the dependence of $\gradedHalg$ and $\gradedHalg^!$ on $\la$ and the ground ring, to avoid clutter. The following holds over both $\K$ and $\Z$:
	\begin{theorem} \label{thm:HandHdualaredual}
		The algebras $\gradedHalg$ and $\gradedHalg^!$ are quadratically dual, with the pairing
		$\gradedHalg_1\times \gradedHalg_1^!$ given by \[\langle c_{\Bx}^{\sigma i},
		d_{\By}^{\sigma'
			j}\rangle=\delta_{\Bx,\By}\delta_{i,j}\delta_{\sigma,\sigma'}.\]  
	\end{theorem}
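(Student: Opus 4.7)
The plan is to treat both $H$ and $H^!$ as quadratic algebras over the semisimple ring $R=\bigoplus_{\Bx\in\Lambda(\la)}k\cdot 1_\Bx$, and to verify the two conditions that define a quadratic-dual pair: a perfect pairing in degree $1$, and annihilator compatibility in degree $2$. The quadratic presentation of $H$ is given by Theorem \ref{presentation2}: degree-$1$ generators $c_\Bx^{\pm i}$ with degree-$2$ relations (\ref{wall-cross}--\ref{codim2}), together with the identifications of the scalars $\ps_i\cdot 1_\Bx$ coming from the base ring $S=U(\fg)$. The algebra $H^!$ is quadratic by construction, with generators $d_\Bx^{\pm i}$ and relations (\ref{wall-crossbang}--\ref{doublestep}). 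Both $H_1$ and $H^!_1$ are free $R$-bimodules on the same index set of adjacencies in $\Lambda(\la)$, so the pairing in the statement is manifestly a perfect pairing of $R$-bimodules.

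The main task is to check that the relation subspace cutting out $H_2$ from $H_1\otimes_R H_1$ is the annihilator of the corresponding subspace in $H^!_1\otimes_R H^!_1$ under the induced pairing. I would run through this one relation type at a time. The wall-crossing relations (\ref{wall-cross}) are dual to (\ref{wall-crossbang}) via the pairing of $\fg=\fd/\ft$ with $\ft^*=\fd^*/\fg^*$ inherited from the standard duality of $\fd$ and $\fd^*$; this matches the linear relations among the $\ps_i$ with those among the $\pt_i$. The codimension-$2$ path relations (\ref{codim1})--(\ref{codim2}) are dual to (\ref{codim1bang})--(\ref{codim2bang}), with the overall sign differences being exactly the Koszul signs that arise when transposing $V\otimes V$ with $V^*\otimes V^*$. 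Finally, relation (\ref{doublestep}) dualises the absence of any $H$-side relation when $(\Bx,\By,\Bu)$ is the unique length-$2$ path in $\Lambda(\la)$.

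To avoid technical issues with locally finite arrangements, I would reduce to the finite case using the truncations $H^{(m)}$ already introduced in the proof of Corollary \ref{cor:KoszulityofH}: quadratic duality for the A- and B-algebras of finite arrangements is the content of \cite[Theorem 8.25]{BLPWtorico}, and the stabilisation of $(1_\Bx H 1_\By)_q$ for $m$ large (and similarly for $H^!$) allows us to pass to the direct limit.

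The main obstacle is the careful bookkeeping around the pairing between the scalar contributions $\ps_i\cdot 1_\Bx$ on the $H$-side and $\pt_i\cdot e_\Bx$ on the $H^!$-side in degree $2$: these come from different base rings $U(\fg)$ and $U(\ft^*)$ and enter the degree-$2$ relations only implicitly, via the linear relations among the generators. One must verify that the vanishing $\pt_i e_\Bx=0$ when $\alpha_i(\Bx)=\emptyset$ corresponds, under the duality, to the fact that on the $H$-side $\ps_i\cdot 1_\Bx$ is then a nonzero degree-$2$ element not expressible as any length-$2$ path. Once this dictionary is in place the remaining verifications are routine coefficient calculations.
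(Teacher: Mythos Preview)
Your approach is essentially the paper's: a direct verification that the degree-$2$ relation spaces are mutual annihilators under the stated pairing, broken into the cases $|\Bx-\By|=2$ (one path vs.\ two paths) and $\Bx=\By$. Two remarks. First, the reduction to the finite truncations $H^{(m)}$ is unnecessary and the paper does not use it: quadratic duality is a local condition involving only $e_\Bx(H_1\otimes_{H_0}H_1)e_\By$ for $|\Bx-\By|\le 2$, so no infinitude issues arise (and \cite[8.25]{BLPWtorico} is cited in the paper for global dimension, not for duality). Second, the minus signs in (\ref{codim1bang}--\ref{codim2bang}) are not ``Koszul signs from transposing $V\otimes V$ with $V^*\otimes V^*$''; they come simply from the fact that the annihilator of the line spanned by $(1,-1)$ in a $2$-dimensional space is the line spanned by $(1,1)$. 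Your identification of the $\Bx=\By$ case with the $\fg$ vs.\ $\ft^*$ pairing is exactly what the paper does: the $H$-relations there are the preimage of $\ft_\K$ under $\K^{\alpha(\Bx)}\to\fd_\K$, and the annihilator is the image of $\ft_\K^\perp\subset\fd_\K^*$ under the dual map, which gives precisely (\ref{wall-crossbang}).
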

	Again, in the notation of \cite{MOS}, we would say that the categories $\mathscr{P}(\gradedHalg)$ and $\mathscr{P}(\gradedHalg^!)$ are quadratically dual. 
	\begin{proof}
		What we must show is that the quadratic relations of $\gradedHalg$ in
		$\gradedHalg_1\otimes_{\gradedHalg_0} \gradedHalg_1$ are the annihilator of the relations of $\gradedHalg^!$
		in $\gradedHalg_1^!\otimes_{\gradedHalg_0} \gradedHalg_1^!$. It is enough to consider $e_\Bx \gradedHalg_1\otimes_{\gradedHalg_0} \gradedHalg_1 e_{\By}$ for any pair of idempotents $e_\Bx,e_{\By}$.
		This space can only be non-zero if $|\Bx-\By|=2$ or $0$.  Let us first
		assume that $|\Bx-\By|=2$.  If there is one path between $\Bx$ and
		$\By$ in $\gradedchambers$, then $e_\Bx \gradedHalg_1\otimes_{\gradedHalg_0} \gradedHalg_1 e_{\By}\cong
		e_\Bx \gradedHalg_2 e_{\By}$ and there are no relations.  On the other hand,
		in $\gradedHalg^!$, we have that all elements of $e_\Bx \gradedHalg_1^!\otimes_{\gradedHalg_0^!}
		\gradedHalg_1^! e_{\By}$ are relations by (\ref{doublestep}).
		
		If there are two paths, through $\Bu$ and $\Bu'$, then the element
		$c_{\Bx,\Bu}\otimes c_{\Bu,\By}-c_{\Bx,\Bu'}\otimes c_{\Bu',\By}$
		spans the set of relations.  Its annihilator is $d_{\Bx,\Bu}\otimes
		d_{\Bu,\By}+d_{\Bx,\Bu'}\otimes d_{\Bu',\By}$, which spans the relations in
		$\gradedHalg^!$ by (\ref{codim1bang}--\ref{codim2bang}).  This deals with the case where $|\Bx-\By|=2$.  
		
		Now, assume that $\Bx=\By$.  The space $e_\Bx \gradedHalg_1\otimes_{\gradedHalg_0} \gradedHalg_1
		e_{\Bx}$ is spanned by $c_{\Bx,\Bu}c_{\Bu,\Bx}$ for $\Bu\in
		\alpha(\Bx)$.  Thus, $e_\Bx \gradedHalg_1\otimes_{\gradedHalg_0} \gradedHalg_1
		e_{\Bx}\cong \K^{\alpha(\Bx)}$. We can map this to $\fd_{\K}$ by sending the unit vector corresponding to $\Bu$ to $\ps_i$ where
		$\Bx=\Bu\pm \epsilon_i$.  The relations are the preimage of 
		$\ft_{\K}$.  
		
		By standard linear algebra, the annihilator of a preimage is the image
		of the annihilator under the dual map.  Thus, we must consider the
		dual map $\ft_{\K}^\perp\subset \fd_{\K}^*\to \K^{\alpha(\Bx)}$, and identify its image
		with the relations in $\gradedHalg^!$.  These are exactly the relations imposed by taking linear combinations of the relations in (\ref{wall-crossbang}) such that the RHS is 0.
	\end{proof}

		\begin{corollary} \label{corSpellingoutKoszul}
			We have a quasi-isomorphism of dg-algebras $\oplus_{\Bx, \By} \Ext( L_\Bx, L_\By) \cong \gradedHalg^!_{\la, \K}$, with
			individual summands given by $\Ext(L_\By,L_{\Bx})\cong e_\Bx \gradedHalg_\la^! e_\By$.
		\end{corollary}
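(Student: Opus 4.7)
The plan is to deduce the corollary directly from the Koszulity of $H^\la$ established in Corollary \ref{cor:KoszulityofH} together with the explicit identification of the quadratic dual in Theorem \ref{thm:HandHdualaredual}. Recall that by Theorem \ref{thmprojectiveendomorphisms}, the category $A^\la_{\K}\mmod_o^D$ is equivalent to the category of finitely generated $H^\la_{\K}$-modules on which $\ps_i$ acts nilpotently, and under this equivalence $L_\Bx$ corresponds to the simple $H^\la_{\K}$-module picked out by the idempotent $1_\Bx$. Consequently, computing Ext between the $L_\Bx$ reduces to computing graded $\Ext^*_{H^\la_{\K}}$ between the corresponding simple modules, with the idempotent decomposition $1 = \sum_\Bx 1_\Bx$ inducing the summand decomposition $\Ext(L_\By,L_\Bx) \cong 1_\Bx \Ext^*_{H^\la_{\K}}(\bigoplus L, \bigoplus L)\, 1_\By$.

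The standard Koszul duality statement says that for a Koszul algebra $A$ over a semisimple base $A_0$, the Yoneda algebra of the trivial module $A_0$ is canonically isomorphic, as a graded algebra, to the quadratic dual $A^!$. To apply this in our locally finite setting I would pass to the finite truncations $H^{(m)}$ introduced just before Corollary \ref{cor:KoszulityofH}. Each $H^{(m)}$ is a finite-dimensional Koszul algebra of finite global dimension, so classical Koszul duality applies and gives graded isomorphisms $\Ext^*_{H^{(m)}}(L_\Bx,L_\By) \cong e_\Bx (H^{(m)})^! e_\By$. The proposition preceding Corollary \ref{cor:KoszulityofH} says that, in each fixed bidegree, the left-hand side stabilises to $\Ext^*_{H^\la_{\K}}(L_\Bx,L_\By)$ as $m \to \infty$.

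It then remains to identify the stable value on the right with $e_\Bx H^!_{\la,\K} e_\By$. Inspecting Definition \ref{defHdual}, one sees that the defining relations of $H^!_{\la,\K}$ are local: the quadratic relations \eqref{wall-crossbang}--\eqref{doublestep} only involve pairs of chambers at distance at most two. Consequently, for fixed $\Bx,\By$ and fixed degree $q$, the graded piece $(e_\Bx H^!_{\la,\K} e_\By)_q$ agrees with $(e_\Bx (H^{(m)})^! e_\By)_q$ once $m$ is large enough to include all chambers within $\ell^1$-distance $q$ of $\Bx$ and $\By$. Taking the limit matches the two stable values and yields the claimed isomorphism, compatibly with idempotents.

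The main conceptual input is Koszulity (already established); the rest is bookkeeping to bridge the finite and locally finite settings. The only point that requires any care is the compatibility of the two stabilisations, which is immediate from the locality of the relations on both sides and the fact that the map $H^{(m)} \to H^\la$ is an isomorphism onto its image in each bounded region. No new relations or hidden higher products appear because both $H^\la$ and $H^!_\la$ are quadratic and Koszul by construction.
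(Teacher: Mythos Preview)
Your stabilisation argument via the truncations $H^{(m)}$ is a valid way to identify $\Ext^*$ taken in the \emph{full} category of finitely generated $H^\la_\K$-modules with $e_\Bx H^!_{\la,\K} e_\By$; the paper reaches the same conclusion by simply invoking Koszul duality. But you have glossed over a genuine issue. The corollary concerns $\Ext$ in $A^\la_\K\mmod_o^D$, which by Theorem~\ref{thmprojectiveendomorphisms} is equivalent to the \emph{subcategory} of $H^\la_\K$-modules on which each $\ps_i$ acts nilpotently. You record this equivalence and then immediately write ``computing Ext between the $L_\Bx$ reduces to computing graded $\Ext^*_{H^\la_\K}$'', silently passing to the full module category. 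That step is not automatic: the projectives $H^\la_\K 1_\Bz$ used in your resolution (obtained by tensoring up from $H^{(m)}$) are not objects of the nilpotent subcategory, since $\ps_i$ acts freely on them. So the resolution you build does not live in the subcategory and a priori computes the wrong Ext.

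This is exactly the point the paper's proof is devoted to. After noting that Koszul duality gives the answer in the full derived category of finitely generated modules, the paper shows that the inclusion of the nilpotent subcategory into all finitely generated $H^\la_\K$-modules is fully faithful on bounded derived categories: for any $A,B$ in the subcategory one produces a surjection $C\twoheadrightarrow A$ with $C$ a sum of quotients $H^\la_\K 1_\Bz/(\ps_i^N)$ for $N\gg 0$ (still in the subcategory) such that $\Ext^n(A,B)\to\Ext^n(C,B)$ vanishes for all $n$, which is a standard effaceability criterion. Your proposal is missing this ingredient; without it the reduction from $\Ext$ in $A^\la_\K\mmod_o^D$ to $\Ext$ over $H^\la_\K$ is unjustified.
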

		\begin{proof}
			Here, we apply Theorem \ref{thmprojectiveendomorphisms}; this equivalence of abelian categories implies that we can replace the computation of $\Ext_{A^\la}(L_{\Bx},L_{\By})$ with that of the corresponding 1-dimensional simple modules over $\gradedHalg^\la$ in the subcategory of modules on which $\ps_i$ acts nilpotently.  
			
			If we instead did the same computation in the bounded derived category of all finitely generated modules, then we would know the result is $e_\Bx \gradedHalg_\la^! e_\By$ by Koszul duality.  The formality of the $\Ext$-algebra follows from the consistency of $A_\infty$-operations with the internal grading, so this is a quasi-isomorphism of dg-algebras.  Thus, we need to know that the inclusion of the category on which $\ps_i$ acts nilpotently induces a fully-faithful functor on derived categories.  
			
			For this, it's enough to show that every pair of objects $A,B$ has an object $C$ (all in the subcategory) and a surjective morphism $\psi\colon C\to A$ such that the induced map $\Ext^n(A,B)\to \Ext^n(C,B)$ is trivial for all $n$.  We can accomplish this with $C$ a sum of quotients of $\gradedHalg^\la 1_\Bz$'s by the ideal generated by $\ps_i^N$ for $N\gg 0$; this is clear for degree reasons if $A$ and $B$ are gradeable, and since gradeable objects dg-generate, this is enough.  
		\end{proof}
		This gives us a combinatorial realization of the $\Ext$-algebra of the
		simple modules in this category. We can restate it in terms of Stanley-Reisner rings as follows.
		
		For every $\Bx,\By$, we have a polytope
		$\bar{\chamber}^{\R}_{\Bx}\cap \bar{\chamber}^{\R}_{\By}$, which has an associated
		Stanley-Reisner ring $SR(\Bx,\By)_\K$.  The latter is the quotient of
		$\K[\pt_1,\dots, \pt_n]$ by the relation that $\pt_{i_1}\cdots
		\pt_{i_k}=0$ if the intersection of $\bar{\chamber}^{\R}_{\Bx}\cap
		\bar{\chamber}^{\R}_{\By}$ with the hyperplanes defined by $a_{i_j}=pn$ for $n\in \Z$ is empty. Let $\overline{SR}(\Bx,\By)_\K$ be
		its quotient modulo the system of parameters defined by the
		image of $\ft_{\K}^\perp$. 
		
		We can define $SR(\Bx,\By)_\Z$ and $\overline{SR}(\Bx,\By)_\Z$ by the same prescription, replacing $\K$ by $\Z$ everywhere. 
		In \cite[4.1]{GDKD}, the authors define a product on the sum
		$\overline{SR}_\Z \cong \oplus_{\Bx,\By\in\gradedchambers}\overline{SR}(\Bx,\By)_\Z$, which they call the
		``B-algebra.'' The same definition works over $\K$.

		The result \cite[4.14]{GDKD} shows that this algebra
		is isomorphic to the ``A-algebra'' (that defined by the relations
		(\ref{wall-cross}--\ref{codim2})) for a Gale dual hyperplane
		arrangement.  Unfortunately, for a periodic arrangement, the Gale dual
		is an arrangement on an infinite dimensional space, which we will not
		consider.  We can easily restate this theorem in a way which will
		generalize for us. 
		Assume that $\la$ is a
		smooth parameter. 
		\begin{proposition}[\mbox{\cite[Th. A]{GDKD}}] \label{extalgebraisSR}
			The algebra $\overline{SR}_\K$ is quadratic dual to $\gradedHalg_\K^\la$.  That is, it
			is isomorphic to $\gradedHalg_{\la, \K}^!$.  In particular, 
			we have the canonical isomorphisms \[\Ext(L_\Bx,L_{\By}) \cong e_\Bx \gradedHalg^!_{\la, \K} e_\By \cong
			\overline{SR}(\Bx,\By)_\K[-|\Bx-\By|_1].\]
		\end{proposition}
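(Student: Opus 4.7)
The plan is to reduce to the finite hyperplane arrangement case, exploiting the truncation $H^{(m)}$ introduced in the proof of Corollary \ref{cor:KoszulityofH}. Both sides of the claimed isomorphism stabilize on finitely many hyperplanes once the degree is bounded. Concretely, the degree-$\leq N$ part of $e_\Bx H^!_{\la, \K} e_\By$ is spanned by products of at most $N$ generators $d^{\pm i}_\Bu$, and hence only involves chambers at distance $\leq N$ from $\Bx$; for $m$ sufficiently large relative to $N$, $|\Bx|_1$, and $|\By|_1$, these chambers all lie within the truncated arrangement whose A-algebra is $H^{(m)}$. Dually, the polytope $\bar{\chamber}^{\R}_\Bx \cap \bar{\chamber}^{\R}_\By$ is bounded and meets only finitely many hyperplanes of $\textgoth{A}^{\operatorname{per}}_\la$, so $\overline{SR}(\Bx,\By)_\K$ coincides with its truncated counterpart for $m$ large.

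Once the problem is localized, we invoke the isomorphism between the A-algebra and the quadratic dual of the B-algebra established in \cite[Th. A]{GDKD} for finite arrangements. To apply it, match $e_\Bx$ with the idempotent for $\chamber_\Bx$ and $d^{\pm i}_\Bx$ with the Stanley-Reisner generator corresponding to the wall $\Dc_i = pk - 1/2$ separating $\chamber_\Bx$ from $\chamber_{\Bx \pm \epsilon_i}$; then verify directly that the relations (\ref{wall-crossbang})--(\ref{doublestep}) of Definition \ref{defHdual} coincide with the defining relations of the B-algebra. Relation (\ref{wall-crossbang}) says $\sum_{\Bu \in \alpha_i(\Bx)} d_{\Bx,\Bu} d_{\Bu,\Bx} = \pt_i e_\Bx$, reflecting that the Stanley-Reisner generators for the two walls on opposite sides of $\chamber_\Bx$ in direction $i$ sum to the ambient variable $\pt_i$; (\ref{codim1bang})--(\ref{codim2bang}) are the signed commutation relations for paths around a codimension-$2$ intersection; and (\ref{doublestep}) encodes that two walls lacking a common chamber vanish under the Stanley-Reisner product. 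Passing to the colimit $m \to \infty$ yields the global algebra isomorphism $H^!_{\la, \K} \cong \overline{SR}_\K$.

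The $\Ext$-algebra statement then follows from Corollary \ref{corSpellingoutKoszul}, which gives $\Ext(L_\By, L_\Bx) \cong e_\Bx H^!_{\la, \K} e_\By$. The grading shift $[-|\Bx-\By|_1]$ arises because the lowest-degree nonzero elements of $e_\Bx H^!_{\la, \K} e_\By$ are products of exactly $|\Bx-\By|_1$ degree-one $d$-generators, one per step of a shortest path in $\Lambda(\la)$ from $\By$ to $\Bx$, whereas $\overline{SR}(\Bx,\By)_\K$ begins in degree zero; the shift aligns the two gradings.

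The main technical obstacle is ensuring the truncation compatibility cleanly in both directions: one must confirm that the relations of $H^!_{\la,\K}$ localized to a bounded window of $\textgoth{A}^{\operatorname{per}}_\la$ are precisely those of $H^{(m),!}$ with no spurious additions from the artificial boundary of the truncated arrangement, and conversely, that every relevant B-algebra relation survives the restriction. Since Definition \ref{defHdual} is entirely local — every quadratic relation involves at most a quadruple of mutually adjacent chambers — this should follow formally, but care is required in matching conventions with \cite{GDKD}, where the B-algebra is defined in terms of polytope geometry rather than chamber adjacency.
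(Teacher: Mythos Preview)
Your approach is correct and aligns with the paper's intent. The paper does not give its own proof of this proposition; it simply attributes the result to \cite[Th.~A]{GDKD}, having already established in Theorem~\ref{thm:HandHdualaredual} that the algebra $H^!_{\la,\K}$ of Definition~\ref{defHdual} is the quadratic dual of $H^\la_\K$. Your truncation argument via $H^{(m)}$ is precisely the mechanism the paper uses earlier (in the proof of Corollary~\ref{cor:KoszulityofH}) to transport results from finite arrangements to the periodic setting, and it is the natural way to make the citation to \cite{GDKD} rigorous here.

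One minor point: your description of relation~(\ref{wall-crossbang}) as saying that ``the Stanley--Reisner generators for the two walls on opposite sides of $\chamber_\Bx$ in direction $i$ sum to the ambient variable $\pt_i$'' is slightly off. The element $\pt_i$ lives in $\ft^*_\Z$ (the image of the $i$th coordinate weight of $\fd^*_\Z$), not in the Stanley--Reisner ring directly; the relation rather expresses that the sum of the divisor classes is equal to $\pt_i$ after passing to the quotient by $\ft^\perp$. This does not affect the validity of the argument, since you are ultimately invoking \cite{GDKD} rather than reproving it, but the informal gloss could mislead.
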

		
		\subsection{Interpretation as the cohomology of a toric variety}
		\label{interpretation}
		For our purposes, the key feature of the quadratic dual of $\gradedHalg^{\la}_\Z$ is its topological
		interpretation, which is exactly as in \cite[\S 4.3]{GDKD}. This interpretation will allow us to match the $\Ext$-algebras which appear on the mirror side, in the second half of this paper.
		
		Indeed, the periodic hyperplane arrangement $\textgoth{A}^{\operatorname{per}}_\la$ defines a tiling of $\mathfrak{g}^{*,\la}_\R$ by the polytopes $\bar{\chamber}^{\R}_{\Bx}$. 
		
		To each such polytope we can associate a $G$-toric variety $\mathfrak{X}_{\Bx}$ \cite[Chapter XI]{CdS}. Each facet of the polytope defines a toric subvariety of $\mathfrak{X}_{\Bx}$. In particular, the facet $\chamber^{\R}_\Bx \cap \chamber^{\R}_\By$ defines a toric subvariety $\mathfrak{X}_{\Bx, \By}$ of both $\mathfrak{X}_{\Bx}$ and $\mathfrak{X}_{\By}$.

		Moreover, the Stanley-Reisner ring $SR(\Bx,\By)_\K$ is identified with $\HH^*_G(\mathfrak{X}_{\Bx, \By} ; \K)$, and the quotient $\overline{SR}(\Bx,\By)_\K$ is identified with $\HH^*(\mathfrak{X}_{\Bx, \By} ; \K)$. Composing this identification with Proposition \ref{extalgebraisSR}, we have an identification $$e_\Bx \gradedHalg^!_{\la, \K} e_\By \cong \HH^*(\mathfrak{X}_{\Bx, \By} ; \K) [-|\Bx-\By|_1] .$$ 
		
		In this presentation, multiplication in the $\Ext$-algebra is given by a natural convolution on cohomology groups \cite[\S 4.3]{GDKD}.

		\subsection{Degrading} 
		\label{sec:degrading}
		
		So far, we have only considered $A_\K^\la$-modules which are endowed with a
		$D$-action. Now, we use the results of the preceding sections to describe the category $A^\la_{\K}\mmod_o$ of modules without this extra structure.
		\begin{proposition}
			Assume that $L$ is a simple module in the category $A^\la_{\K}\mmod_o$.  Then we have an isomorphism
			of $ A^\la_{\K}$-modules $L\cong L_{\Bx}$ for some $\Bx$.
		\end{proposition}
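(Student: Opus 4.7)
\medskip
\noindent\textbf{Proof proposal.}
The plan is to show that $L$ carries a canonical $D$-equivariant refinement, and then invoke the classification of simples in $A^\la_\K\mmod_o^D$.  The key input is the Artin--Schreier type identity
\[
(\ch_i^+)^p - \ch_i^+ \;=\; z_i^p\,\partial_i^p \qquad \text{in } W_n\otimes_\Z\K,
\]
which can be proved by a short induction using $[\partial_i,z_i]=1$.  Because $z_i^p=\sz_i$ and $\partial_i^p=\sw_i$ lie in the maximal ideal of $o$ in $\HH^0(\fN_\K^{(1)},\mathscr{O})$, and $A^\la_\K$ is finite as a module over this central subalgebra (obtained as global sections of the coherent Azumaya algebra $\A^\la$ pushed forward to the affine scheme $\fN_\K^{(1)}$), the set-theoretic support hypothesis together with finite generation of $L$ forces $\mathfrak{m}_o^N L=0$ for some $N$.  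Hence $L$ is finite-dimensional over $\K$, and on it the operator $(\ch_i^+)^p-\ch_i^+$ is nilpotent, so each $\ch_i^+$ has eigenvalues in $\Fp$.

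I would then decompose $L$ into joint generalized eigenspaces $L=\bigoplus_{\bar\Ba\in\Fp^n}L_{\bar\Ba}$ for the commuting operators $\ch_1^+,\ldots,\ch_n^+$.  The commutator identity $[\ch_i^+,x]=b_i\,x$ for $x\in A^\la_\K$ of $D$-weight $\Bb$ shows that the $A^\la_\K$-action shifts this grading by $\Bb\bmod p$.  Moreover $\mu_q(\chi)=\la(\chi)$ acts as a scalar on all of $L$, so on each $L_{\bar\Ba}$ we must have $\sum_i\langle\epsilon_i,\chi\rangle\bar a_i=\la(\chi)$ in $\K$; the eigenvalues $\bar\Ba$ occurring thus lie in the image of $\fg^{*,\la}_\Z$ in $\Fp^n$.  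Choosing any integer lift $\Ba\in\fg^{*,\la}_\Z$ of each such $\bar\Ba$, and declaring $\Dc_i$ to act as the semisimple part of $\ch_i^+$ (so that $\ps_i=\ch_i^+-\Dc_i$ acts nilpotently), equips $L$ with the structure of an object of $A^\la_\K\mmod_o^D$ on which $T$ acts via $\la$.

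Because the semisimple parts $\Dc_i$ are polynomials in $\ch_i^+$ acting on the finite-dimensional space $L$, the $D$-structure just constructed is intrinsic to the underlying $A^\la_\K$-module.  Consequently every $A^\la_\K$-submodule of $L$ is automatically $D$-stable, and simplicity of $L$ in $A^\la_\K\mmod_o$ upgrades to simplicity in $A^\la_\K\mmod_o^D$.  By Theorem~\ref{thmprojectiveendomorphisms} and the classification of simples described in Section~\ref{subsecweightsofsimples}, $L\cong L_\Bx$ for some $\Bx\in\Lambda(\la)$.  The main technical point in this outline is the verification that $L$ is finite-dimensional, which rests on the finiteness of $A^\la_\K$ over its central subalgebra $\HH^0(\fN_\K^{(1)},\mathscr{O})$; once that is established, the eigenspace decomposition and the transfer of simplicity are essentially formal.
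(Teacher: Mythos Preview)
Your argument is correct and shares the same opening move as the paper's proof: the Artin--Schreier identity $(\ch_i^+)^p-\ch_i^+=z_i^p\partial_i^p$ (equivalently, the factorization $z_i^p\partial_i^p=\ch_i^+(\ch_i^+-1)\cdots(\ch_i^+-p+1)$), combined with the nilpotence of $z_i^p\partial_i^p$ on $L$, forces the spectrum of each $\ch_i^+$ into $\Fp$.

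From there the paper takes a shorter path than you do.  Rather than equipping $L$ with a $D$-structure and transferring simplicity, it simply picks a single simultaneous eigenvector $v$ with $\ch_i^+v=a_iv$; this yields a nonzero map $Q_\Ba\cong P_\Bx\to L$, and since $P_\Bx$ has local endomorphism ring $\widehat S$ (hence a unique simple quotient $L_\Bx$), the map forces $L\cong L_\Bx$.  No finite-dimensionality argument is needed: the uniform nilpotence of $z_i^p\partial_i^p$ already gives a polynomial satisfied by $\ch_i^+$ on all of $L$, and commuting operators satisfying split polynomials always admit a common eigenvector.

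Your route---decompose into generalized eigenspaces, lift to a $D$-action, invoke the equivariant classification---works fine and is conceptually pleasant, since it makes explicit that the degrading functor is essentially surjective on simples.  One small imprecision: the $D$-structure you build is not literally intrinsic, since it depends on the choice of integer lifts $\Ba\in\fg^{*,\la}_\Z$ of the eigenvalues $\bar\Ba\in\Fp^n$ (different lifts differ by a character twist).  What \emph{is} intrinsic is the generalized eigenspace decomposition itself, and that is all you actually use to conclude that $A^\la_\K$-submodules are $D$-stable.  So the argument is unaffected, but the wording could be tightened.
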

		\begin{proof}  
			On the subcategory $ A^\la_{\K}\mmod_o$, the central element 
			\[z_i^p\partial_i^p=\ch_i^+(\ch_i^+-1)(\ch_i^+-2)
			\cdots (\ch_i^+-p+1)\] acts nilpotently, so $\ch_i^+$ has
			spectrum in $\mathbb{F}_p$.  
			In $L$, there thus must exist a simultaneous
			eigenvector $v$ for all $\ch_i^+$'s, and $\Ba$ such
			that $\ch_i^+v=a_iv$.  Thus, $W_\Ba^1(L)\neq 0$, which shows
			that there is a non-zero map $Q_\Ba\cong P_\Bx\to L$, so we must have
			$L\cong L_\Bx$.
		\end{proof}
		This shows that $L_\Bx$ gives a complete list of simples. The module
		$P_\Bx$ represents the $\Ba$ generalized eigenspace of
		$\ch_i^+$, and thus still projective.  In fact, there are
		redundancies in this list, but they are easy to understand.  
		\begin{definition}
			Let $\degradedchambers(\lambda)$ be the quotient of $\gradedchambers(\lambda)$ by the equivalence relation
			that $\Bx\sim \By$ if and only if $\Bx|_{\ft_\Z}=\By|_{\ft_\Z}$. Equivalently, $\Bx\sim \By$ if $\By = \Bx + \gamma$ where $\gamma$ lies in $\ft^{\perp}_\Z = \mathfrak{g}^*_\Z$. 
		\end{definition}
		We write $\bar{\Bx}$ for the image of $\Bx$ in $\degradedchambers(\lambda)$. Recall that $\mathfrak{g}^{*,\la}_\Z$ is a torsor for the lattice $\mathfrak{g}^*_\Z$. The action of the sublattice $p \cdot \mathfrak{g}^*_\Z$ preserves the periodic arrangement $\textgoth{A}^{\operatorname{per}}_\la$. The quotient $\textgoth{A}^{\operatorname{tor}}_\la = \textgoth{A}^{\operatorname{per}}_\la / p \cdot \mathfrak{g}^*_\Z$ is an arrangement on the quotient $\mathfrak{g}^{*,\la}_\Z / p \cdot \mathfrak{g}^*_\Z$, and $\degradedchambers(\la)$ is the set of chambers of $\textgoth{A}^{\operatorname{tor}}_\la$. 
		\begin{example} 
			In the setting of Example \ref{classicexample}, $\textgoth{A}^{\operatorname{tor}}_\la$ has three chambers. A set of representatives is given by those chambers of the periodic arrangement lying within the pictured square.
		\end{example}

		\begin{theorem}
			As $ A^\la_{\K}$-modules $L_\Bx\cong L_\By$ if and only if $\Bx\sim
			\By$. That is, the simple modules in $ A^\la_{\K}\mmod_o$ are in
			bijection with $\degradedchambers(\lambda)$.
		\end{theorem}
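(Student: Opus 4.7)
The plan is to exploit the correspondence between $A^\la_{\K}\mmod_o^D$ and $A^\la_{\K}\mmod_o$: forgetting the $D$-structure amounts to quotienting by the group of allowed twists of the $D$-grading, and in characteristic $p$ this group corresponds precisely to the sublattice $p\fg^*_\Z$, which shifts chambers in the way described by the equivalence relation $\Bx \sim \By$.

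For the forward direction, set $\gamma := \By - \Bx \in \fg^*_\Z$.  I would take the underlying $A^\la_{\K}$-module of $L_\Bx$ and shift every $D$-weight by $p\gamma$, yielding a new object of $A^\la_{\K}\mmod_o^D$ with unchanged $A^\la_{\K}$-action.  The key check is that the compatibility $(\ch_i^+-\Dc_i)^N v = 0$ is preserved, since shifting $\Dc_i$ by the multiple of $p$ given by $p\gamma_i$ leaves the operator unchanged in characteristic $p$.  The shifted weight support is $\chamber_\Bx + p\gamma = \chamber_\By$, so by the classification of simples in $A^\la_{\K}\mmod_o^D$ the twisted object is isomorphic to $L_\By$; forgetting the $D$-structure then gives $L_\Bx \cong L_\By$ in $A^\la_{\K}\mmod_o$.

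For the converse, suppose $\Phi\colon L_\Bx \to L_\By$ is an isomorphism of $A^\la_{\K}$-modules.  I transport the $D$-grading of $L_\By$ along $\Phi^{-1}$ to equip $L_\Bx$ with a second compatible $D$-grading $\rho$ of weight support $\chamber_\By$, and the task becomes showing that two $D$-gradings on the same underlying module with supports $\chamber_\Bx$ and $\chamber_\By$ must differ by a single global translation.  The key intrinsic invariant is the joint generalized eigenspace decomposition under $(\ch_1^+,\dots,\ch_n^+)$: by the nilpotency condition, together with the fact that each chamber projects injectively to $\mathbb{F}_p^n$ (since $px_i \leq a_i < px_i+p$ forces distinct elements of a chamber to have distinct residues mod $p$), this decomposition agrees with the $D$-weight decomposition in either grading, so each joint eigenvector is a weight vector in both.

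The main step, and the main obstacle I anticipate, is to propagate the weight difference from a single eigenvector to all others.  Fix a nonzero joint eigenvector $v$ of original weight $\Ba_0 \in \chamber_\Bx$ and $\rho$-weight $\Ba_0' \in \chamber_\By$, and set $\zeta := \Ba_0' - \Ba_0 \in \fg^*_\Z$.  Using simplicity of $L_\Bx$ in $A^\la_{\K}\mmod_o^D$ (applicable since $v$ is a weight vector and hence generates a $D$-graded submodule with respect to the original grading), any other weight vector $w$ of original weight $\Ba_1$ can be written as $Tv$ for some $T \in A^\la_{\K}[\Ba_1 - \Ba_0]$; since such a $T$ shifts $D$-weights by $\Ba_1-\Ba_0$ in either grading, the $\rho$-weight of $w$ equals $\Ba_1 + \zeta$.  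Hence $\chamber_\By = \chamber_\Bx + \zeta$, and comparing the defining inequalities of the chambers forces $\zeta = p(\By - \Bx)$.  Since $\zeta \in \fg^*_\Z$ and $\ft^*_\Z$ is torsion-free, the equation $p(\By - \Bx)|_{\ft_\Z} = 0$ forces $(\By - \Bx)|_{\ft_\Z} = 0$, i.e.\ $\By - \Bx \in \fg^*_\Z$ and $\Bx \sim \By$.
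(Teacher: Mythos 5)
Your proof is correct and follows essentially the same route as the paper: both directions rest on the fact that in characteristic $p$ a shift of $D$-weights by $p\gamma$, $\gamma\in\fg^*_\Z$, is invisible to the underlying $A^\la_{\K}$-module, and that the weights modulo $p$ (the joint generalized eigenvalues of the $\ch_i^+$) are intrinsic to that module. The paper phrases the forward direction via the literal coincidence of the projectives $P_\Bx$ and $P_\By$ and states the converse more tersely (``their weights modulo $p$ must agree''), but your regrading of $L_\Bx$ and your propagation-from-one-eigenvector argument are just expanded versions of the same ideas.
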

		\begin{proof}
		  
			If $\Bx \sim \By$, then $P_\Bx$ and $P_{\By}$ are canonically isomorphic as $A^\la_{\K}$-modules, since \eqref{defQa} is only sensitive to the coset of $\Ba$ under the action of $p \cdot \ft^{\perp}_\Z$. It follows that $L_\Bx \cong L_\By$ as $A^\la_{\K}$-modules.
			On the other hand, if $L_\Bx\cong L_\By$ as $ A^\la_{\K}$-modules,
			their weights modulo $p$ must agree.  This is only possible if $\Bx|_{\ft_\Z}=\By|_{\ft_\Z}$.
		\end{proof}

		When convenient, we will write $L_{\bar{\Bx}}$ for the simple attached to $\bar{\Bx} \in \degradedchambers(\la)$. We can understand the $\Ext$-algebra of simples using the degrading functor 
		$\mathbb{D}\colon A^\la_{\K}\mmod^D_o\to
		A^\la_{\K}\mmod_o$ which forgets the action of $D$.  
		\begin{theorem}
			We have a canonical isomorphism of algebras 
			\[\Ext_{A^\la_{\K}\mmod_o}(L_\Bx,L_{\By})\cong
			\oplus_{\Bx|_{\ft_\Z}=\Bu|_{\ft_\Z}}\Ext_{A^\la_{\K}\mmod_o^D}(L_\Bu,L_{\By})\cong 
			\oplus_{\By|_{\ft_\Z}=\Bu|_{\ft_\Z}}\Ext_{A^\la_{\K}\mmod_o^D}(L_\Bx,L_{\Bu}).\]
		\end{theorem}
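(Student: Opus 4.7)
The plan is to exploit the degrading functor $\D\colon A^\la_{\K}\mmod^D_o \to A^\la_{\K}\mmod_o$, which is manifestly exact. First I would show that $\D P_\Bv$ is a projective cover of $L_{\bar{\Bv}}$ in $A^\la_{\K}\mmod_o$: projectivity follows because $\Hom(\D P_\Bv, M)$ coincides with the generalized $\ch_i^+$-eigenspace of $M$ at the eigenvalues $\Bb \bmod p$ for any $\Bb \in \chamber_\Bv$, and this is an exact functor in $M$ since the possible eigenvalues lie in $\mathbb{F}_p$; uniqueness of the simple top amounts to the fact that $L_{\bar{\Bu}}$ carries these eigenvalues exactly when $\Bu \sim \Bv$.

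The heart of the argument is the identity
\[
  \Hom_{A^\la_{\K}\mmod_o}(\D P_\Bv, L_{\bar{\By}}) \;\cong\; \bigoplus_{\Bu \sim \By} \Hom_{A^\la_{\K}\mmod_o^D}(P_\Bv, L_\Bu),
\]
where the map sends $\phi\colon P_\Bv \to L_\Bu$ to its degrading composed with the canonical identification $\D L_\Bu \cong L_{\bar{\By}}$. Both sides vanish unless $\Bv \sim \By$ and are one-dimensional otherwise: on the right, only $\Bu = \Bv$ contributes; on the left, writing $L_{\bar{\By}} = \D L_\Bw$ for a fixed representative $\Bw \sim \By$, the generalized $\ch_i^+$-eigenspace at $\Bb \bmod p$ meets $\chamber_\Bw$ in the single weight $\Bc = \Bb + p(\Bw - \Bv)$, precisely when $\Bw - \Bv \in \fg^*_\Z$. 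Choosing a $D$-equivariant projective resolution $P^\bullet \to L_\Bx$ and applying $\D$ yields a projective resolution $\D P^\bullet \to L_{\bar{\Bx}}$; applying $\Hom(-, L_{\bar{\By}})$, using the identity termwise, and passing to cohomology (which commutes with direct sums) produces the second claimed isomorphism.

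For the first isomorphism I would run the dual argument: replace projective resolutions of $L_\Bx$ with injective resolutions $L_\By \to J^\bullet$ in $A^\la_{\K}\mmod_o^D$, first verifying that $\D$ preserves injectives, then invoking the dual identity $\Hom(L_{\bar{\Bx}}, \D J) \cong \bigoplus_{\Bu \sim \Bx} \Hom(L_\Bu, J)$ for injective $J$. The main technical obstacle is the key identity --- specifically the claim that the generalized eigenspace of $L_{\bar{\By}}$ at $\Bb \bmod p$ is one-dimensional when $\Bv \sim \By$ and vanishes otherwise. The argument rests on the observation that each box-shaped chamber $\chamber_\Bw$ meets each coordinatewise mod-$p$ residue class in at most one point, together with the lattice identity $p\Z^n \cap \fg^*_\Z = p\fg^*_\Z$, which holds under the unimodularity assumption implicit in the paper.
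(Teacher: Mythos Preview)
Your approach is essentially the paper's, just spelled out in more detail: the paper's entire proof is the one sentence ``$P_\Bx$ remains projective in $A^\la_{\K}\mmod_o$, so the degrading of a projective resolution of $L_\Bx$ remains projective,'' and your key identity for $\Hom(\D P_\Bv, L_{\bar\By})$ is what makes that sentence work.

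Two small remarks. First, the detour through injective resolutions for the first isomorphism is unnecessary. The grading-shift autoequivalences $\langle p\gamma\rangle$ for $\gamma\in\fg^*_\Z$ give $\Ext_D(L_{\Bx+\gamma},L_\By)\cong\Ext_D(L_\Bx,L_{\By-\gamma})$, so the two direct sums in the statement are just reindexings of one another; once you have the second isomorphism, the first follows immediately. Second, the lattice identity $p\Z^n\cap\fg^*_\Z=p\fg^*_\Z$ does not need unimodularity: $\fg^*_\Z$ is the kernel of a map from $\Z^n$ to the torsion-free group $\ft^*_\Z$, hence is automatically saturated.
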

		\begin{proof}
			This is immediate from the fact that $P_\Bx$ remains projective in
			$A^\la_{\K}\mmod_o$, so the degrading of a projective resolution of
			$L_\Bx$ remains projective.
		\end{proof}
		One can easily see that this implies that, just like $A^\la_{\K}\mmod^D_o$, the category
		$A^\la_{\K}\mmod_o$  has a Koszul graded lift, since the coincidence of the homological and internal
		gradings is unchanged.

		We can deduce a presentation of
		\[\degradedHalg_{\la, \K}^!=\bigoplus_{\Bx,\By \in \degradedchambers(\la)}\Ext_{A^\la_{\K}\mmod_o}\left( L_{\Bx},  L_{\By} \right)\]
		Indeed, we think of $\gradedHalg_{\la, \K}^!$ as the path algebra of the
		quiver $\gradedchambers(\lambda)$ (over the base ring $U_\K(\ft^*)$) satisfying the relations in definition \ref{defHdual}, and then apply the quotient map to $\degradedchambers(\lambda)$, keeping the arrows and relations in place.  This is well-defined since the relations (\ref{wall-crossbang}--\ref{codim2bang}) are unchanged by adding a character of $G$ to $\Bx$. 
		
		Likewise, we have the following description of the endomorphism algebra of the projectives. Let $\degradedHalg^\la_\K$ be the algebra generated by the idempotents $1_\Bx$ and the elements
			$c_{\Bx}^{\pm i}$ for all $\Bx\in \degradedchambers(\lambda)$ over $\pS$ modulo
			the relations (\ref{wall-cross}--\ref{codim2}). Let $\degradedHalg^\la_\Z$ be the natural lift to $\Z$.
		\begin{proposition} \label{prop:degradedAmodulesalgebra}
	\[  \degradedHalg_{\la, \K} =\bigoplus_{\Bx,\By \in \degradedchambers(\la)}\Hom_{A^\la_{\K}\mmod_o}\left( P_{\Bx},  P_{\By} \right). \]
		\end{proposition} 
		
		\begin{example}
			We continue Example \ref{classicexample}.  The set $\degradedchambers(\lambda)$ has 3 elements corresponding
			to the chambers $A$ where $x_1+x_2+x_3=\lfloor
			\frac{\la}{p}\rfloor$, $B$ where $x_1+x_2+x_3=\lfloor
			\frac{\la}{p}\rfloor-1$, and $C$ where $x_1+x_2+x_3=\lfloor
			\frac{\la}{p}\rfloor-2$.  We have adjacencies between $A$ and $B$
			across 3 hyperplanes, and between $B$ and $C$ across 3 hyperplanes,
			with none between $A$ and $C$.  
			
			Thus, our quiver is 
			\[\tikz[very thick,scale=3]{\node[outer sep=2pt](A) at (0,0) {$A$};\node[outer sep=2pt](B) at (1,0) {$B$};\node[outer sep=2pt](C) at
				(2,0) {$C$};  \draw (A) to[in=180,out=0] (B); \draw (A) to[in=160,out=20]
				(B); \draw (A) to[in=-160,out=-20] (B); \draw (B) to[in=180,out=0]
				(C);\draw (B) to[in=160,out=20] (C); \draw (B) to[in=-160,out=-20]
				(C);}\]
			
			We use $x_i$ to the path from $A$ to $B$ across the $\Dc_i$ hyperplane,
			and $y_i$ the path from $C$ to $B$ across the $\Dc_i$ hyperplane.  Our
			relations thus become:
			\[x_1^*x_1=x_2^*x_2=x_3^*x_3\qquad y_1^*y_1=y_2^*y_2=y_3^*y_3\]
			\[x_1x_1^*+y_1y_1^*=x_2x_2^*+y_2y_2^*=x_3x_3^*+y_3y_3^*\]
			\[x_i^*y_j=-x_j^*y_i \qquad y_i^*x_j=-y_j^*x_i \qquad
			x_ix_j^*=-y_jy_i^* \qquad (i\neq j)\]
			\[y_1^*x_1=y_2^*x_2=y_3^*x_3=x_1^*y_1=x_2^*y_2=x_3^*y_3=0\]
			\[x_i^*x_j=y_i^*y_j=0  \qquad (i\neq j)\]
		\end{example}
		
		Note that there are only finitely many elements of $\degradedchambers(\lambda)$.  In
		fact, the number of such elements has an explicit upper bound.
		A {\bf basis} of the inclusion $T\subset D$ is a set of coordinates
		such that the corresponding coweights form a basis of
		$\fd_{\mathbb{Q}}/\ft_{\mathbb{Q}}$. For generic parameters, taking the intersection of the corresponding coordinate subtori defines a bijection of the bases with the vertices of $\textgoth{A}^{\operatorname{tor}}_\la$. 
		
		\begin{lemma}
			The number of elements of $\degradedchambers^\R(\lambda)$ is less than or equal to the number
			of bases for the inclusion $T\subset D$.
		\end{lemma}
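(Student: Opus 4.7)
The plan is to combine the preceding observation that for generic (smooth) $\la$ the vertices of $\textgoth{A}^{\operatorname{tor}}_\la$ biject with bases of $T \subset D$, together with an elementary Morse-theoretic injection $\bar\Lambda^\R(\la) \hookrightarrow \{\text{vertices of } \textgoth{A}^{\operatorname{tor}}_\la\}$. It suffices to treat the smooth case: a small generic perturbation $\la'$ of any $\la$ resolves degenerate intersections into transverse ones, so the arrangement $\textgoth{A}^{\operatorname{per}}_{\la'}$ refines $\textgoth{A}^{\operatorname{per}}_{\la}$ and no chamber is lost, giving $|\bar\Lambda^\R(\la)| \leq |\bar\Lambda^\R(\la')|$.

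Assume now that $\la$ is smooth. I first observe that every chamber $\chamber^\R_\Bx$ has compact closure: the images of $\Dc_1,\dots,\Dc_n$ in $\fg$ span $\fg_\R$ by surjectivity of $\fd\twoheadrightarrow\fg$, so these functionals separate points of $\fg^*_\R$, and the inequalities $p x_i - 1/2 < \Dc_i(\Ba) < p x_i + p - 1/2$ cut out a bounded convex polytope. Choose a generic linear functional $\phi\in\fg$ on $\fg^*_\R$, and associate to each $\Bx\in\Lambda^\R(\la)$ the unique vertex $v_\Bx$ of $\overline{\chamber^\R_\Bx}$ at which $\phi$ attains its minimum. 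Since translation by $p\cdot\fg^*_\Z$ shifts $\phi$ only by additive constants, this assignment is equivariant under the lattice action and descends to a map $\bar\Lambda^\R(\la) \to \{\text{vertices of }\textgoth{A}^{\operatorname{tor}}_\la\}$.

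To see that the descended map is injective, consider a vertex $v$ of the simple arrangement: the $n-k$ hyperplanes through $v$ divide a small neighborhood into $2^{n-k}$ octants, one contained in each adjacent chamber. For generic $\phi$, precisely one of these octants lies in the ascending cone of $\phi$ at $v$, namely the octant into which every outgoing edge from $v$ increases $\phi$, and only the chamber containing that octant has $v$ as its $\phi$-minimum on its closure. Hence distinct chambers have distinct min-vertices, and composing with the vertex--basis bijection asserted in the paragraph preceding the lemma produces the desired bound. The principal technical subtlety is verifying that the Morse construction descends compatibly to the quotient torus, but this is automatic from the linearity of $\phi$, which ensures that the min-vertex of any lift of a toroidal chamber is well-defined modulo the translation action.
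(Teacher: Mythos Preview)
Your proof is correct and follows essentially the same Morse-theoretic approach as the paper: choose a generic linear functional, assign to each chamber its extremal vertex, and observe that at most one chamber adjacent to a given vertex can have that vertex as its extremum, so the map to vertices (hence to bases) is injective. The only cosmetic differences are that the paper perturbs the hyperplanes directly via small generic shifts $\epsilon_i$ rather than perturbing $\lambda$ to a smooth value, and takes maxima rather than minima.
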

		\begin{proof}
			Choose a generic cocharacter $\xi\in \ft_{\mathbb{Q}}^\perp\subset
			\fd^*_{\mathbb{Q}}$.  
			Note that a real number $c$ satisfies the equations $x_ip\leq c
			<x_ip+p$ if and only if it satisfies $x_ip-\epsilon< c
			<x_ip+p-\epsilon$ for $\epsilon$ sufficiently small.  Thus, we will
			have no fewer nonempty regions if we consider the chambers   
			\[ E^\R_{\Bx}=\{\Ba\in \mathfrak{g}^{*,\la}_\Z\otimes \R \mid px_i-\epsilon_i <  a_i <
			px_i+p-\epsilon_i\}\] for some sufficiently small $\epsilon_i>0$
			chosen generically.  Note
			that $E^\R_{\Bx}$ is open.  
			For any $\bar E^{\R}_\Bx$, there is a maximal
			point for this cocharacter, that is, a point $\Ba$ such that for all
			$\Bb\neq \Ba\in \bar \chamber^{\R}_\Bx$, we have $\xi(\Bb-\Ba)<0$.  By
			standard convex geometry, this is only possible if there are
			hyperplanes in our arrangement passing through $\Ba$ defined by
			coordinates that are a basis.  In fact, by the genericity of the
			elements $\epsilon_i$, we can assume that the point $\Ba$ is hit by
			exactly a basis of hyperplanes.  This gives a map from $\degradedchambers(\lambda)^\R$ to the
			set of bases and this map is injective, since all but one of the
			chambers that contain $\Ba$ in its closure will contain points
			higher than $\Ba$.  
		\end{proof}
		
		Since the number of elements of $\degradedchambers^\R(\la)$ is lower-semicontinuous in $\la$, we see immediately that $\la$ is smooth if the size of $\degradedchambers(\lambda)$ is the number
		of bases.

		\subsection{Tilting generators for coherent sheaves}
		\label{sec:tilting-generators}

		We can also interpret these results in terms of coherent sheaves.   In
		particular, we can consider the coherent sheaf $\Qvectorbundle_\Ba=\varprojlim
		\A^\la_{\K}/\A^\la_{\K}(\ch_i^+-a_i)^N$
		on the formal completion of the
		fiber $\mu^{-1}(0)$. Here, as before, we assume that $a_i\in \mathbb{F}_p$, so $a_i^p-a_i=0$.  
		On this formal
		subscheme, this
		is an equivariant splitting bundle for the Azumaya algebra
		$\A^\la_{\K}$ by \cite[4.3.4]{Stadnik}.

		If we think of
		$\A^\la_{\K}|_{\mu^{-1}(\hat 0)}$ as a left module over
		itself, it decomposes according the eigenvalues of $\ch_i^+$
		acting on the right.  By construction, each generalized eigenspace defines a copy of $\Qvectorbundle_\Ba$ for some weight $\Ba$. If we let $\mathfrak{g}^{*,\la}_\Fp$ be the set of
		characters of $\fd_\Fp$ which agree with $\la\pmod p$ on $\ft_\Fp$,
		then these are precisely the simultaneous eigenvalues of the Euler
		operators $\ch_i^+$ that occur. Thus, we have
		\[\A^\la_{\K}|_{\mu^{-1}(\hat 0)}\cong\bigoplus_{\Bb\in \mathfrak{g}^{*, \la}_\Fp} \Qvectorbundle_\Bb\]
		In particular, given an $\A^\la_{\K}$-module $\Amodule$ over the formal neighborhood of $\mu^{-1}(\hat 0)$, we have an isomorphism of coherent sheaves
		\begin{equation} \label{cohiscoh}
			\Amodule \cong\bigoplus_{\Bb\in \mathfrak{g}^{*, \la}_\Fp}
			\Homs_{\A^\la_{\K}}(\Qvectorbundle_\Bb,\Amodule).
		\end{equation}
		
		The elements of $\A^\la_{\K}$ act on $\Qvectorbundle_\Ba$ on the left as endomorphisms of the underlying coherent sheaf; in particular,
		$\Qvectorbundle_\Ba$ naturally decomposes as the sum of the generalized
		eigenspaces for the Euler operators $\ch_i^+$. 
		
		In fact, each eigenspace for the action of $\ch_i^+$ defines a line bundle, so that the sheaf $\Qvectorbundle_\Ba$ is the sum of these line bundles. The next few results will provide a description of these line bundles. We begin with some preliminaries. Recall that $\fM_\K$ is defined as a free quotient of a $D$-stable subset of $T^*\mathbb{A}^n_\K$ by $T$. Given any character of $\Bx \in D$, the associated bundle construction defines a $D$-line bundle on $\fM_\K$. If we forget the $D$-equivariance, then the underlying line bundle depends only on the image $\bar{\Bx}$ of $\Bx$ in $\mathfrak{d}^*_\Z / \ft^{\perp}_\Z$.
		\begin{definition}
			Given $\Bx \in \mathfrak{d}^*_\Z$, let $\lin(\Bx)$ be the associated $D$-equivariant line bundle line bundle on $\fM_\K$.  We sometimes write $\lin(\bar{\Bx})$ for the underlying non-equivariant line-bundle. 
		\end{definition}

		Recall that the Weyl algebra $W_\K$ defines a coherent sheaf over the spectrum of its center, namely $(T^*\mathbb{A}_\K^{(1)})^n$. As a coherent sheaf, it is simply a direct sum of copies of the structure sheaf. Consider a monomial $m(\Bk,\Bl) := \prod_{i=1}^n\partial_i^{\Bk_i} z_i^{\Bl_i}$, viewed as a section of the structure sheaf. We have the following description of its $D$-weight $\Bx \in \mathfrak{d}^*_\Z$.

		Write $\epsilon_i$ for the generators of $\mathfrak{d}^*_\Z$, so that $\Bx = \sum_{i=1}^n \delta_i \epsilon_i$. Let $\delta_i^+$ be the maximal power of $z_i^p$ dividing $m(\Bl,\Bk)$ and $\delta_i^-$ be the maximal power of $\partial_i^p$ dividing $m(\Bl,\Bk)$. Then $\delta_i = \delta_i^+ - \delta_i^-$. In the notation of Section \ref{secisomQ}, we can write this as $\Bx = \sum_{i=1}^n \delta_i(0,\Bl - \Bk) \epsilon_i$. We conclude the following.

		\begin{lemma} \label{linebundlelemma}
			The monomial $m(\Bl,\Bk)$ descends to a section of the line bundle $\lin \Big( \sum_{i=1}^n \delta_i(0,\Bl - \Bk) \epsilon_i \Big)$ on $\mathfrak{M}^{(1)}_\K$.
		\end{lemma}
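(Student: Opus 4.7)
My plan is to write $m(\Bl,\Bk)\in W_\K$ as an explicit product of central elements in $\K[\sz,\sw]$ times a residual basis vector of the free rank-$p^{2n}$ module structure of $W_\K$ over its center; then the line bundle on $\fM_\K^{(1)}$ in which $m(\Bl,\Bk)$ lives after descent by $T$ is read off from the weight of the central factor. Indeed, $W_\K = \bigoplus_{(\Ba,\Bb)\in[0,p)^{2n}} \K[\sz,\sw]\cdot \partial^\Ba z^\Bb$ as a module over its center, and since central elements commute with everything such a decomposition is unique once we fix a normal form; the residual basis vector records the $D$-weight class modulo the sublattice generated by $p\epsilon_i$, and the central factor accounts for the rest.

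The key identity I would rely on is the characteristic $p$ relation
\[ (h_i^+)^p - h_i^+ = \sz_i\sw_i \qquad \text{in } W_\K, \]
which follows from Fermat's factorization $X^p-X=\prod_{a\in\Fp}(X-a)$ applied to $h_i^+$, together with the elementary computation $h_i^+(h_i^+-1)\cdots(h_i^+-(p-1)) = z_i^p\partial_i^p$ in $W_\K$. Its practical effect is that any block of $p$ consecutive factors $(h_i^++c)(h_i^++c+1)\cdots(h_i^++c+p-1)$ collapses to $\sz_i\sw_i$. To apply it, I would first put each single-variable factor $\partial_i^{l_i}z_i^{k_i}$ into a normal form using $\partial_i z_i=z_i\partial_i+1$: when $k_i\geq l_i$ the normal form is $[h_i]^{(-l_i)}\,z_i^{k_i-l_i}$, and dually when $l_i>k_i$ it is $(h_i^++l_i-k_i+1)\cdots(h_i^++l_i)\,\partial_i^{l_i-k_i}$. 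Then I would extract copies of $\sz_i\sw_i$ from the Pochhammer polynomial by grouping its factors in blocks of $p$, and extract further copies of $\sz_i$ (resp.\ $\sw_i$) from the remaining pure power of $z_i$ (resp.\ $\partial_i$) by grouping in blocks of $p$. This produces the claimed decomposition $m(\Bl,\Bk)=\prod_i \sz_i^{\delta_i^+}\sw_i^{\delta_i^-}\cdot g$, where $g$ is a basis element of the free module structure.

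The last step is to identify the net exponent $\delta_i := \delta_i^+ - \delta_i^-$ arising from this algebraic extraction with the combinatorial quantity $\delta_i(0,l_i-k_i)$ from Section~\ref{secisomQ}. Both sides count multiples of $p$ in an interval determined by $l_i-k_i$: on the algebraic side, the number of full $p$-blocks peeled off from the Pochhammer product together with the pure power of $z_i$ or $\partial_i$; on the combinatorial side, the number of walls of the form $\Dc_i=kp-\tfrac12$ strictly between $0$ and $l_i-k_i$. The main obstacle I expect is the sign asymmetry forced by the use of $\ch_i^+=z_i\partial_i$ rather than the symmetric $\ch_i^{\operatorname{mid}}$: this asymmetry is exactly what produces the half-integer offset $-\tfrac12$ in the wall equation and makes $\delta_i(0,\cdot)$ differ from the naively symmetric $\lfloor|l_i-k_i|/p\rfloor$. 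Matching will therefore require a careful case analysis split by the signs of $l_i-k_i$ and the comparison of $l_i \bmod p$ with $k_i \bmod p$, to verify that the blocks peeled off by our normal-form procedure land in exactly the right count of walls on each side.
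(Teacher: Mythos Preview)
Your route is much more elaborate than the paper's. The paper gives no proof of this lemma at all: it is stated as an immediate consequence of the preceding paragraph, which simply records that the relevant $D$-weight equals $\delta_i^+-\delta_i^-$, where $\delta_i^\pm$ are the maximal powers of $z_i^p$ and $\partial_i^p$ dividing the monomial. The one-line argument behind that is: since $z_i^p$ and $\partial_i^p$ are central in $W_\K$, one has directly
\[
\partial_i^{l_i} z_i^{k_i}
\;=\;(\partial_i^p)^{\lfloor l_i/p\rfloor}(z_i^p)^{\lfloor k_i/p\rfloor}\,\partial_i^{\,l_i \bmod p}\, z_i^{\,k_i \bmod p},
\]
and the residual $\partial_i^{\,l_i\bmod p}z_i^{\,k_i\bmod p}$ is already one of the $p^{2}$ basis vectors of $W_\K$ over its center. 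There is no need to pass through Pochhammer factors or the Artin--Schreier identity.

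More seriously, your Pochhammer-based extraction has a genuine gap. Take the single-variable case $l=p+1$, $k=2$ with $p>2$. Your normal form is $(h^{+}+p)(h^{+}+p+1)\,\partial^{\,p-1}$. The Pochhammer piece has only two factors, so your ``group into blocks of $p$'' step extracts nothing; the pure power $\partial^{\,p-1}$ likewise yields no $\sw$. Your procedure therefore outputs $\delta^{+}=\delta^{-}=0$. But the direct factorization gives $\delta^{-}=\lfloor(p+1)/p\rfloor=1$, $\delta^{+}=0$. The discrepancy arises because your residual $h^{+}(h^{+}+1)\partial^{\,p-1}$ is \emph{not} a basis vector: one computes $h^{+}\partial^{\,p-1}=z\partial^{\,p}=\sw\cdot z$, so a hidden central factor $\sw$ remains. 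Your extraction treats the Pochhammer piece and the pure power as independent, but they can combine to produce further central factors. The ``careful case analysis'' you anticipate at the end would not repair this: the problem is not in matching counts but in the extraction stopping too early. The fix is simply to use the direct factorization above.
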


		The following proposition holds over the formal neighborhood $\pi^{-1}(\hat{0})$.
		\begin{proposition} \label{characQ}
			\begin{align}
				\displaystyle \Homs_{\A^\la_{\K}}(\Qvectorbundle_\Bb,\Qvectorbundle_\Ba) & \cong
				\lin \Big(\sum_{i=1}^n \delta_i(\Bb,\Ba)\ep_i \Big)\label{eq:2} \\
				\Qvectorbundle_\Ba & \cong \bigoplus_{\Bb\in \mathfrak{g}^{*, \la}_\Fp} \lin \Big(\sum_{i=1}^n \delta_i(\Bb,\Ba)\ep_i\Big)
			\end{align}
		\end{proposition}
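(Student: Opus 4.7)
The second isomorphism is an immediate consequence of the first: applying (\ref{cohiscoh}) to the module $\cM = \mathscr{Q}_\Ba$ yields $\mathscr{Q}_\Ba \cong \bigoplus_\Bb \Homs_{\mathcal{A}^\la_{\K}}(\mathscr{Q}_\Bb, \mathscr{Q}_\Ba)$, and the first assertion identifies each summand. So the substantive work is entirely contained in proving the first isomorphism.

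My plan for the first isomorphism is to exhibit a trivializing section and then identify the resulting line bundle via Lemma~\ref{linebundlelemma}. By Theorem~\ref{frob-iso}, both $\mathscr{Q}_\Ba$ and $\mathscr{Q}_\Bb$ are splitting bundles for the Azumaya algebra $\mathcal{A}^\la_{\K}$ on the formal neighborhood $\mu^{-1}(\hat 0)$, so Morita theory for Azumaya algebras implies that $\Homs_{\mathcal{A}^\la_{\K}}(\mathscr{Q}_\Bb, \mathscr{Q}_\Ba)$ is invertible as an $\mathcal{O}$-module there. Its space of global sections is $\Hom(Q_\Bb, Q_\Ba) = W_\Bb(Q_\Ba)$, which by Lemma~\ref{weightspaceofproj} is a free rank-one $\widehat{\pS}$-module, with canonical generator $c^{\Ba - \Bb}_\Bb$ from (\ref{defcmorphism}).

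To identify the line bundle, note that $c^{\Ba - \Bb}_\Bb$ equals the monomial $m(\Ba - \Bb)$ multiplied by a product of factors $q(\ps_i, b_i + j)$, each of which is either the unit $1$ or an invertible element of $\widehat{\pS}$. Multiplication of a trivializing section by an invertible scalar does not change the underlying line bundle, so the Hom sheaf is generated, as an $\mathcal{O}$-module, by the monomial $m(\Ba - \Bb)$. Lemma~\ref{linebundlelemma} then identifies this line bundle via the character its generator carries on $\fM^{(1)}_\K$, obtained by extracting the central $z_i^p$- and $\partial_i^p$-powers, which become sections of the structure sheaf.

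The main obstacle lies in matching the character supplied by Lemma~\ref{linebundlelemma} with the claimed $\sum_i \delta_i(\Bb, \Ba) \ep_i$. The lemma's count is taken from basepoint $0$, whereas the proposition uses basepoint $\Bb$; the discrepancy between the two expressions is a sum of ``carry'' terms governed by the residues of $a_i$ and $b_i$ modulo $p$. One must verify that the aggregate discrepancy lies in $\ft^\perp = \fg^*$, so that the two characters agree as elements of $\ft^*$ and define the same line bundle on $\fM^{(1)}_\K$; this reconciliation relies on the hypothesis that $\Ba$ and $\Bb$ have a common restriction to $\ft$, and on a careful combinatorial bookkeeping of how the Frobenius-twist trivialization of $\mathscr{Q}_\Bb$ shifts the reference point for counting hyperplane crossings.
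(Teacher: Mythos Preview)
Your approach is essentially the paper's: derive the second isomorphism from the first via \eqref{cohiscoh}, argue that $\Homs_{\mathcal{A}^\la_{\K}}(\mathscr{Q}_\Bb,\mathscr{Q}_\Ba)$ is a line bundle because both arguments are splitting bundles for the same Azumaya algebra, exhibit $m(\Ba-\Bb)$ as a generating section, and read off the associated character from Lemma~\ref{linebundlelemma}. The detour through $c^{\Ba-\Bb}_\Bb$ and Lemma~\ref{weightspaceofproj} is harmless but unnecessary; the paper simply names $m(\Ba-\Bb)$ as the section and moves on.

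Your final paragraph flags a real point that the paper's proof also leaves implicit: Lemma~\ref{linebundlelemma} literally outputs $\sum_i\delta_i(0,\Ba-\Bb)\ep_i$, not $\sum_i\delta_i(\Bb,\Ba)\ep_i$, and these integers need not coincide coordinatewise. You correctly identify that what matters is their common image in $\fd^*_\Z/\ft^\perp_\Z$ (since $\lin(-)$ only sees the $T$-character), and that the reconciliation uses $\Ba-\Bb\in\ft^\perp_\Z$. But you stop at ``careful combinatorial bookkeeping'' without doing it. To close the gap, observe that shifting the lift of either argument by an element of $p\ft^\perp_\Z$ changes $\sum_i\delta_i(\cdot,\cdot)\ep_i$ by an element of $\ft^\perp_\Z$ (this is exactly the content of the remark following the proposition), so one may choose lifts making the two expressions agree on the nose, or simply note that both compute the $T$-weight of the same section and hence must agree in $\ft^*_\Z$. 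As written, your argument and the paper's are at the same level of completeness on this point; neither spells it out.
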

		Note that the image of $\sum_{i=1}^n \delta_i(\Bb,\Ba)\ep_i$ in $\mathfrak{d}^*_\Z / \ft^{\perp}_\Z$ depends only on the class of $\Bb$ in $\mathfrak{d}^*_\Z / p \cdot \ft^{\perp}_\Z$, so that the sum is well defined. The different isomorphism classes of line bundles that appear are in bijection with the
		chambers of $\degradedchambers(\lambda)$, but not canonically so, since we must
		choose $\Ba$.
		\begin{proof}[Proof of Proposition \ref{characQ}] 
			The second isomorphism follows from the first by \eqref{cohiscoh}. To construct the first isomorphism, we recall that $\A^\la_{\K} \cong \End(\Qvectorbundle_\Ba) \cong \End(\Qvectorbundle_\Bb)$. Thus $\Homs_{\A^{\la}_\K}(\Qvectorbundle_\Ba, \Qvectorbundle_\Bb)$ is a line bundle. It has a section given by the element $m(\Bb - \Ba) \in A^{\la}_{\K}$. By Lemma \ref{linebundlelemma}, it is the line bundle defined via the associated bundle construction by the character $\Big(\sum_{i=1}^n \delta_i(\Bb,\Ba)\ep_i \Big)$ of $T$. The proposition follows.
		\end{proof}
		
		We now pass from characteristic $p$ to characteristic zero. The first step is to replace the parameter $\la \in \mathfrak{t}^*_{\mathbb{F}_p}$ by a parameter $\zeta \in \mathfrak{t}^*_\R$.
		\begin{definition} \label{defAhyperplaneperiod1}
		Let $\textgoth{A}^{\operatorname{per}}_\zeta$ be the periodic hyperplane arrangement in $\mathfrak{g}^{*,\zeta}_\R$ defined by the hyperplanes $\Dc_i=k$ for $k\in \Z$ and $i = 1,.., n$.
	\end{definition}
	  This is the arrangement obtained from \ref{defAhyperplane} by sending $p \to \infty$ and rescaling by $\frac{1}{p}$. We can define $\gradedchambers(\zeta), \degradedchambers(\zeta)$ as before. If the element $p \zeta$ lies in $\mathfrak{t}^*_\Z$, then  its image $\la$ in $\mathfrak{t}^*_{\mathbb{F}_p}$ satisfies $\gradedchambers(\la) = \gradedchambers(\zeta),  \degradedchambers(\la) = \degradedchambers(\zeta)$. The parameter $\zeta$ is smooth if and only if $\la$ is smooth.
	  
	  Let \[ \tilt^{\zeta}_\Z\cong\bigoplus_{\bar{\Bx} \in \degradedchambers(\zeta)}\lin(\bar{\Bx}).\]
		For another commutative ring $R$, let $\tilt^{\zeta}_R$ be the corresponding bundle on $\fM_{R}$, the base change to $\Spec (R)$.
		Every line bundle which appears has a canonical $\bS$-equivariant structure (induced from the trivial $\bS$-equivariant structure on $\structuresheaf_{T^*\mathbb{A}_\Z^n}$), and we endow $\tilt^{\zeta}_\Z$ with the induced $\bS$-equivariant structure.  Note that any lift of $\degradedchambers(\zeta)$ to $\gradedchambers(\zeta)$ determines a $D\times \bS$-equivariant structure, although we do not need it here. The $\bS$-weights make $\End(\tilt^{\la}_\Z)$ into a $\Z_{\geq 0}$ graded algebra.
		Let $\Bx \in \mathfrak{d}^*_\Z$. 
		
		Consider the monomial 
		\[\monomialm(\Bx) :=\prod_{x_i>0}\sz_i^{x_i}\prod_{x_i<0}\sw_i^{-x_i}.
		\]
		Note the similarity with \eqref{defminimalel}, with the key difference that we do not require $\Bx \in \ft^{\perp}_\Z$. After Hamiltonian reduction, this defines a section of $\lin(\Bx)$ with $\bS$-weight equal to $|\Bx|_1$. By the same token, it defines an element of $\Hom(\lin(\By), \lin(\By'))$ whenever $\By' = \By + \Bx$. 

		\begin{proposition}\label{prop:equiv-coh}
			For all $\la$, we have an isomorphism of graded algebras 
			$\degradedHalg^{\lambda}_\Z\cong \End_{\mathsf{Coh}(\fM)}(\tilt^\la_\Z)$
			sending $c_{\Bx,\By}\mapsto \monomialm(\By-\Bx)$ and $\mathsf{s}_i \mapsto \sz_i \sw_i$.
		\end{proposition}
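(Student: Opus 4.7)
The plan is to compute $\End_{\mathsf{Coh}(\fM_\Z)}(\tilt^\la_\Z)$ explicitly from the GIT description of $\fM_\Z$, exhibit the map from $\bar H^\la_\Z$ directly on generators, and then check it is an isomorphism by reducing to the characteristic-$p$ calculations already carried out. Since $\tilt^\la_\Z = \bigoplus_{\bar\Bx} \lin(\bar\Bx)$ is a direct sum of line bundles, we have
\[
\End(\tilt^\la_\Z) \;=\; \bigoplus_{\bar\Bx,\bar\By\in\bar\Lambda(\la)} H^0\bigl(\fM_\Z,\lin(\bar\By-\bar\Bx)\bigr),
\]
and using the GIT realization of $\fM_\Z$ as the stable quotient of $\mu^{-1}(0)$ by $T$ (with unstable locus of codimension at least two by unimodularity), each summand identifies with the space of $T$-semi-invariants of weight $\bar\By-\bar\Bx$ inside $\mathcal{O}(\mu^{-1}(0)_\Z)$. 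Under this identification composition is ordinary polynomial multiplication, and the $\bS$-grading is inherited from the $\bS$-action on the ambient polynomial ring.

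I would define $\phi\colon \bar H^\la_\Z \to \End(\tilt^\la_\Z)$ on generators by $1_\Bx \mapsto \id_{\lin(\bar\Bx)}$, $c_{\Bx,\By}\mapsto m^p(\By-\Bx)$, and $\ps_i\mapsto \sz_i\sw_i$. A direct weight count confirms that $m^p(\By-\Bx)$ is a $T$-semi-invariant of weight $\bar\By-\bar\Bx$ with the expected $\bS$-degree, while $\sz_i\sw_i$ lies in $\mathcal{O}(\fN_\Z)$ and therefore acts on every summand of $\tilt^\la_\Z$ by multiplication. The defining relation \eqref{general-relation} of $\bar H^\la_\Z$ is verified by the polynomial identity
\[
m^p(\By-\Bx)\cdot m^p(\Bu-\By)\;=\;\prod_i (\sz_i\sw_i)^{\eta_i(\Bx,\By,\Bu)}\, m^p(\Bu-\Bx)
\]
in $\Z[\sz_i,\sw_i]$, which follows because for each coordinate $i$ the concatenation of the steps $\Bx\to\By$ and $\By\to\Bu$ backtracks exactly $\eta_i(\Bx,\By,\Bu)$ times in that direction, producing that many factors of $\sz_i\sw_i$. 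The homomorphism descends from $H^\la_\Z$ to the quotient $\bar H^\la_\Z$ because two lifts of $\bar\Bx$ differ by an element of $\fg^*_\Z$ viewed as a lattice shift, and the corresponding monomials differ by a monomial in the central elements $\sz_i\sw_i$, which is absorbed into the $\pS$-action.

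To upgrade $\phi$ to an isomorphism, I would base-change to a field $\K$ of characteristic $p\gg 0$. Stadnik's splitting theorem \ref{frob-iso} identifies $\tilt^\la_\K|_{\mu^{-1}(\hat 0)}$ with $\mathscr{Q}_\nu$; combining Proposition \ref{characQ} with the definition of $\tilt^\la$ as summing one representative line bundle per chamber of $\bar\Lambda(\la)$, the endomorphism ring of $\tilt^\la_\K|_{\mu^{-1}(\hat 0)}$ as an $\mathcal{A}^\la_\K$-module is the completion $\widehat{\bar H}^\la_\K$ by the degraded analogue of Theorem \ref{presentation1} from Section \ref{sec:degrading}. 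One checks on the distinguished generators that this identification agrees with the completion of $\phi\otimes\K$ at the graded maximal ideal, so that $\phi\otimes\K$ becomes an isomorphism after completion; since both source and target are $\Z_{\geq 0}$-graded with finite-dimensional graded pieces, this already forces $\phi\otimes\K$ to be an isomorphism in every graded degree. Running this for a cofinite set of primes and using that the two sides are both finitely generated flat $\Z$-modules in each graded degree then yields the isomorphism over $\Z$. The main obstacle I anticipate is this last flatness/freeness statement: matching the ranks of the graded pieces of $\End(\tilt^\la_\Z)$ and $\bar H^\la_\Z$ requires a careful analysis of the monomial basis of $T$-semi-invariants modulo the moment map ideal, which can be accomplished by a Gröbner-type argument, together with care that a generic choice of stability parameter $\alpha$ makes the unstable locus of sufficiently high codimension so that sections of $\lin(\bar\gamma)$ really are computed by the naive semi-invariant formula.
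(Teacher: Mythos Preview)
Your overall strategy---define the map on generators, verify the relations by a monomial identity, and then reduce to characteristic $p$ to conclude---matches the paper's. The paper, however, organizes the endgame more simply.

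Rather than going through the completed endomorphism ring and Stadnik's splitting, the paper proves \emph{surjectivity} directly: homomorphisms between the line bundles $\lin(\bar\Bx)$ and $\lin(\bar\By)$ are $T$-semi-invariants of weight $\bar\By-\bar\Bx$ in the coordinate ring of $\mu^{-1}(0)$, and these are manifestly spanned over $\Z[\sz_1\sw_1,\dots,\sz_n\sw_n]$ by the single monomial $m^p(\By-\Bx)$. This one-line observation disposes of the ``main obstacle'' you flag about the monomial basis of semi-invariants. Injectivity is then reduced modulo a large prime: since $\bar H^\la_\Z$ is torsion-free, it suffices to check injectivity over $\K$, which follows from Theorems \ref{presentation1}--\ref{presentation2} (these show each $1_\Bx H^\la_\K 1_\By$ is free of rank one over $S_\K$, so the graded pieces of $\bar H^\la_\K$ cannot exceed those of the target, and surjectivity forces equality).

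Your route through the formal neighborhood contains a slip worth flagging. You write that Theorem \ref{frob-iso} identifies $\tilt^\la_\K|_{\mu^{-1}(\hat 0)}$ with $\mathscr{Q}_\nu$, but these bundles have different ranks: $\mathscr{Q}_\nu$ has $p^d$ line-bundle summands (one per weight in $\fg^{*,\la}_{\Fp}$), while $\tilt^\la$ has only $|\bar\Lambda(\la)|$ summands (one per chamber). It is the Kaledin bundle $\tilt_\K$, not $\tilt^\la_\K$, that restricts to $\mathscr{Q}_\nu$. What is true is that $\tilt^\la_\K|_{\mu^{-1}(\hat 0)}$ is, up to twist, a direct summand of $\mathscr{Q}_\nu$ containing one representative of each isomorphism class, and with that correction your completion argument can be salvaged---but at that point you are re-deriving the degraded form of Theorem \ref{presentation1}, which is exactly the input the paper uses more directly.
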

		\begin{proof}
			We first check that the map is well-defined. The map $\mathsf{s}_i \mapsto \sz_i \sw_i$ is well-defined since the linear relations satisfied by $\mathsf{s}_i$ exactly match the relations on $\sz_i \sw_i$ coming from restriction to the zero fiber of the $T$-moment map. The map $c_{\Bx,\By}\mapsto \monomialm(\By-\Bx)$ is well-defined if the elements $\monomialm(\By-\Bx)$ satisfy relations (\ref{wall-cross}) and (\ref{codim1}--\ref{codim2}). Relation (\ref{wall-cross}) is satisfied if $\monomialm(\epsilon_i)\monomialm(-\epsilon_i) = \sz_i \sw_i$. This is immediate from the definition. 
			
			The relations (\ref{codim1}--\ref{codim2}) are clear from the commutativity of multiplication.  
			Thus, we have defined an algebra map $\degradedHalg^{\zeta}_\Z\to \End(\tilt^\zeta_\Z)$. This is a map of graded algebras, since both $c_{\Bx,\By}$ and $\monomialm(\By-\Bx)$ have degree $|\By-\Bx|_1$.
			
			This map is a
			surjection, since homomorphisms from one line bundle to another are
			spanned over $\Z[\sz_1\sw_1,\dots, \sz_n\sw_n]$ by $\monomialm(\Bx)$.  Since $\degradedHalg^{\zeta}_\Z$ is torsion free over $\Z$, it's enough to
			check that it is injective modulo sufficiently large primes, which
			follows from Theorems \ref{presentation1} and \ref{presentation2}.
		\end{proof}
		This allows us to understand more fully the structure of the bundle
		$\tilt^\zeta_\Z$.  Note that the bundle $\tilt^\zeta_\Z$ depends on $\zeta$, but only
		through the structure of the set $\degradedchambers(\zeta)$.
		
		\begin{proposition}\label{prop:smooth-tilting}
			The bundle $\tilt^\zeta_\Q$ is a tilting generator on $\fM_\Q$ if and only if $\zeta$ is smooth.
		\end{proposition}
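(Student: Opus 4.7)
My plan is to establish the two implications separately. For the forward direction (smooth $\la$ implies $\tilt^\la_\Q$ is a tilting generator), I would reduce to the characteristic $p$ result of Kaledin and Stadnik already cited in Section \ref{sec:char-p-local}. For the reverse direction, I would use a rank count of Grothendieck groups.

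Suppose $\la$ is smooth. I would choose a prime $p$ large enough that the construction of Section \ref{sec:char-p-local} applies to the reduction $\la\pmod p$ (which stays smooth for $p \gg 0$, since smoothness is a generic condition on the hyperplane arrangement), and pick $\nu\in \fd^*_{\Fp}$ restricting to $\la$ on $\ft$ and generic. Proposition \ref{characQ} then decomposes the Stadnik splitting bundle $\mathscr{Q}_\nu$ on $\mu^{-1}(\hat 0) \subset \fM^{(1)}_\K$ as a direct sum of the line bundles indexed by $\bar{\Lambda}(\la)$, so this restriction coincides with $\tilt^\la_\K|_{\mu^{-1}(\hat 0)}$ tautologically. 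The Kaledin lifting theorem, invoked in the proposition following Theorem \ref{frob-iso}, produces some $\bS$-equivariant global bundle on $\fM_\K$ whose formal restriction is $\mathscr{Q}_\nu$ and which is a tilting generator. Since each $\lin(\bar{\Bx})$ extends canonically over $\fM_\Z$, I would identify this Kaledin bundle with $\tilt^\la_\K$, giving the tilting generator property over $\K$.

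To transfer the result to $\fM_\Q$, I would invoke flat base change: since $\tilt^\la_\Z$ is locally free on the flat $\Z$-scheme $\fM_\Z$ (possibly after inverting finitely many primes), one has
\[ \Ext^i_{\fM_\K}(\tilt^\la_\K,\tilt^\la_\K) \cong \Ext^i_{\fM_\Z}(\tilt^\la_\Z,\tilt^\la_\Z)\otimes_\Z \K \]
for all $p$ outside a finite set. Vanishing for $p \gg 0$ forces the integral $\Ext$ to be torsion, so the rational $\Ext$ vanishes. Generation transfers by a spreading-out argument: any $\mathscr{F}_\Q \in \Coh(\fM_\Q)$ orthogonal to $\tilt^\la_\Q$ admits an integral model, and its reduction mod $p\gg 0$ would contradict the fact that $\tilt^\la_\K$ is a generator on $\fM_\K$. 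For the converse, suppose $\tilt^\la_\Q$ is a tilting generator. Then $\RHom(\tilt^\la_\Q,-)$ induces an equivalence $D^b(\Coh(\fM_\Q)) \simeq D^b(\bar{H}^\la_\Q\mmod^{op})$, so Grothendieck groups agree in rank. The right-hand side has rank $|\bar{\Lambda}(\la)|$, one class per simple. The left-hand side has rank equal to the number of $\bS$-fixed points of $\fM_\C$, which by standard hypertoric combinatorics is the number of bases of $T\subset D$. By the preceding lemma and the closing remark of Section \ref{sec:degrading}, the equality $|\bar{\Lambda}(\la)| = $ number of bases characterizes smoothness of $\la$.

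The main obstacle is the forward direction's comparison between $\tilt^\la_\K$ and the Kaledin tilting bundle: Kaledin's construction is only canonical up to isomorphism and only fixes the restriction to the formal neighborhood of $\mu^{-1}(\hat 0)$, so one must argue that the explicit globally defined bundle $\tilt^\la_\Z$ is genuinely compatible with it. Once this identification is in place, the flat base change and spreading-out arguments are routine, and the $K$-theoretic rank count for the converse is immediate from the equivalence.
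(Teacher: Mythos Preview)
Your converse direction is correct and is exactly the paper's argument: a tilting generator forces the number of simples over $\bar H^\la_\Q$ to match the rank of $K(\fM_\Q)$, which is the number of bases, and this characterises smoothness.

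The forward direction has a genuine gap. Your plan is to reduce $\la$ modulo a large prime and then ``pick $\nu\in\fd^*_{\Fp}$ restricting to $\la$ and generic'' so that Kaledin's theorem applies. But the freedom in $\nu$ does not help: Proposition~\ref{characQ} shows that $\mathscr{Q}_\nu$, as a coherent sheaf on the formal neighbourhood, depends only on $\la=\nu|_{\ft}$ (it is a fixed sum of line bundles indexed by $\bar\Lambda(\la)$), so the tilting-generator property of any $\bS$-equivariant lift is a condition on $\la$ alone. Kaledin's result, as cited, only bounds the size of the bad locus of parameters by a constant $N$ independent of $p$; it does not say that any specific $\la$ eventually lands in the good locus. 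Your ``main obstacle'' paragraph flags the identification of the Kaledin bundle with $\tilt^\la_\K$, which is a separate (and easier) point; it does not address this genericity issue.

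The paper closes the gap by a pigeonhole argument you are missing. The bundle $\tilt^\la$ depends only on the combinatorial type $\bar\Lambda(\la)$. When $\la$ is smooth, this type is stable under perturbation, so along an affine line $Z$ of parameters the set $\{\la'\in Z_{\Fp}:\bar\Lambda(\la')=\bar\Lambda(\la)\}$ has size growing like $Ap$ for some $A>0$. Since Kaledin's bad set in $Z_{\Fp}$ has size at most $N$, for $p>N/A$ there exists a good $\la'$ with $\bar\Lambda(\la')=\bar\Lambda(\la)$, hence $\tilt^{\la'}=\tilt^\la$, and the generator property follows. Once you have the generator property over $\Fp$ for one large $p$, your flat base-change transfer to $\Q$ is fine.
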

		\begin{proof}
		  The bundle $\tilt^\zeta_\Q$ is tilting by Theorem \ref{frob-iso}, so we need only check if it is a generator. In order to check this over
			$\Q$, it is enough to check it modulo a large prime $p$.  Fix an affine line $Z$ in $\mathfrak{g}^{*, \zeta}_\Z$. By
			\cite[4.2]{KalDEQ}, there is an
			integer $N$, independent of $p$, such that the set of $\la\in Z_{\Fp}$ such that $\tilt^\la_{\Fp}$ is {\it not} a
			generator has size $\leq N$.  
			
			If $\zeta$ is smooth, then for all sufficiently large $p$ we can find smooth $\zeta'$ satisfying $p\zeta' \in \mathfrak{t}^*_\Z$ and such that $\degradedchambers(\zeta') = \degradedchambers(\zeta)$. It follows that moreover $\tilt^{\zeta'}_\Q = \tilt^{\zeta}_\Q$. 
			
			Since $\zeta'$ is smooth, $\la' = p \zeta'$ is also smooth. The number of $\la \in
			Z_{\Fp}$ such that $\degradedchambers(\la) = \degradedchambers(\la')$ is asymptotic
			to $Ap$ where $A$ is the volume in $Z_{\R/\Z}$ of the real points such
			that $\degradedchambers(\la/p)^\R = \degradedchambers(\zeta)^\R$.  Thus, whenever $p\geq N/A$, there must
			be some choice of $\la$ such that $\tilt^{\la/p}_\Q = \tilt^{\zeta}_\Q$ is a tilting generator.
			
			If $\zeta$ is not smooth, then $\degradedHalg^{\zeta}_{\Q}$ has fewer simple modules than at a smooth parameter, so 
			$\tilt^\zeta_\Q$ cannot be a generator.
		\end{proof}
		
		Combining the above results yields the following equivalence of categories. In the following, we view $\tilt^\zeta_\Q$ as a coherent sheaf of $\degradedHalg^{\zeta}_\Q$-modules.
		\begin{corollary}\label{cor:H-equivalence}
			For smooth $\zeta$, the adjoint functors 
			\begin{align*}
				-\Lotimes_{\degradedHalg^{\zeta}_\Q}\tilt^\zeta_\Q&\colon
				D^b(\degradedHalg_\Q^{\zeta, \operatorname{op}}\operatorname{-mod})\to
				D^b(\Coh(\fM_\Q))\\
				\RHom(\tilt^\zeta_\Q,-)&\colon D^b(\Coh(\fM_\Q))\to
				D^b(\degradedHalg_\Q^{\zeta, \operatorname{op}}\operatorname{-mod})
			\end{align*}
			define equivalences between the derived categories of coherent sheaves over
			$\fM_\Q$ and finitely generated right $\degradedHalg^{\zeta}_\Q$-modules. 
			
			The same functors define an equivalence between the derived categories of graded modules and equivariant sheaves:
			\begin{align*}
				-\Lotimes_{\degradedHalg^{\zeta}_\Q}\tilt^\zeta_\Q&\colon
				D^b(\degradedHalg_\Q^{\zeta, \operatorname{op}}\operatorname{-gmod})\to
				D^b(\Coh_{\mathbb{G}_m}(\fM_\Q))\\
				\RHom(\tilt^\zeta_\Q,-)&\colon D^b(\Coh_{\mathbb{G}_m}(\fM_\Q))\to
				D^b(\degradedHalg_\Q^{\zeta, \operatorname{op}}\operatorname{-gmod})
			\end{align*}
			
			Finally, identical statements hold if we replace $\degradedHalg$ by $\gradedHalg$, and replace $\Coh(\fM_\Q)$ by $\Coh_{G}(\fM_\Q)$ and $\Coh_{\mathbb{G}_m}(\fM_\Q)$ by $\Coh_{\mathbb{G}_m \times G}(\fM_\Q)$.
		\end{corollary}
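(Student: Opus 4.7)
The plan is to deduce the corollary as a standard application of tilting theory, once the main input---that $\tilt^\la_\Q$ is a tilting generator with endomorphism ring $\bar H^\la_\Q$---has been assembled from the preceding results. The three facts that need to be in hand at the start are: (i) $\End_{\Coh(\fM_\Q)}(\tilt^\la_\Q)\cong \bar H^\la_\Q$ as graded algebras (Proposition \ref{prop:equiv-coh}); (ii) $\tilt^\la_\Q$ is a tilting bundle, i.e.\ $\Ext^i(\tilt^\la_\Q,\tilt^\la_\Q)=0$ for $i>0$ (cf.\ Theorem \ref{frob-iso} and the results of Kaledin cited before Proposition \ref{prop:smooth-tilting}); and (iii) for smooth $\la$, the bundle $\tilt^\la_\Q$ is a generator of $D^b(\Coh(\fM_\Q))$ (Proposition \ref{prop:smooth-tilting}).

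From these inputs, the first (ungraded) equivalence follows by the standard tilting mechanism, which I would invoke in the form of the Bondal--Van den Bergh / Keller theorem: a perfect, compact tilting generator $\cT$ on a reasonable scheme yields a quasi-equivalence of dg-enhancements $\RHom(\cT,-)\colon D^b(\Coh(\fM_\Q))\to D^b(\End(\cT)^{\op}\mmod)$, with quasi-inverse $-\Lotimes_{\End(\cT)}\cT$. Concretely I would check three conditions in turn: compactness (immediate since $\tilt^\la_\Q$ is locally free of finite rank on a smooth quasi-projective variety, so lies in the subcategory of perfect complexes), adjunction (the two functors are adjoint essentially by construction), and that the unit and counit of the adjunction are isomorphisms. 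The latter reduces to checking the unit on $\tilt^\la_\Q$ itself, where it is built from (i), and using (ii)+(iii) to extend by devissage through the generator. Finite global dimension of $\bar H^\la_\Q$---which follows from Koszulity (Corollary \ref{cor:KoszulityofH}) after degrading---ensures that $D^b(\bar H^{\la,\op}_\Q\mmod)$ coincides with the compact objects of the unbounded derived category, so no boundedness issue arises.

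For the graded/equivariant version, I would repeat exactly the same argument in the category of $\bS=\mathbb{G}_m$-equivariant coherent sheaves on $\fM_\Q$, using the $\bS$-equivariant structure on $\tilt^\la_\Z$ introduced just before Proposition \ref{prop:equiv-coh}. Proposition \ref{prop:equiv-coh} is already stated as an isomorphism of \emph{graded} algebras, with the grading on the right induced by $\bS$-weights and on the left being the internal grading of $\bar H^\la_\Q$; thus $\RHom_{\Coh_{\mathbb{G}_m}(\fM_\Q)}(\tilt^\la_\Q,-)$ naturally lands in graded $\bar H^\la_\Q$-modules. The tilting and generation properties are inherited from the ungraded case since the forgetful functor $\Coh_{\mathbb{G}_m}(\fM_\Q)\to \Coh(\fM_\Q)$ is faithful and exact and any $\bS$-equivariant sheaf killed by $\RHom(\tilt^\la_\Q,-)$ is in particular killed after forgetting equivariance. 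The graded equivalence then follows by the same Bondal--Van den Bergh argument applied inside the equivariant derived category.

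I expect the one step that requires genuine care, rather than a citation, is verifying that the unit/counit of the adjoint pair are isomorphisms on the full derived category, i.e.\ checking that generation upgrades from the heart to $D^b$. This is the place where finite global dimension of $\bar H^\la_\Q$ and compactness of $\tilt^\la_\Q$ combine to reduce the question to the single object $\tilt^\la_\Q$, where it is handled by (i) and (ii). The remaining points---compatibility with the grading and the statement about right versus left modules (the ``op'' arises because we are treating $\tilt^\la_\Q$ as a left module over its endomorphism ring acting on the right)---are bookkeeping.
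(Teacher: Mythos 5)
Your proposal is correct and follows exactly the route the paper intends: the paper states the corollary as an immediate consequence of Proposition \ref{prop:equiv-coh} (identifying $\End(\tilt^\la_\Q)$ with $\bar H^\la_\Q$ as graded algebras) and Proposition \ref{prop:smooth-tilting} (tilting generator for smooth $\la$), leaving the standard Bondal--Van den Bergh tilting mechanism and its graded/equivariant upgrade implicit, which is precisely what you spell out.
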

		Since $\degradedHalg^{\zeta, \operatorname{op}}_\Q$ is defined as a path algebra modulo relations, its graded simple modules are just the 1-dimensional modules $L^{\operatorname{op}}_\Bx := \Hom(\oplus_{\By\in\degradedchambers(\zeta)} L_{\By},L_\Bx)$;  we denote the corresponding complexes of coherent sheaves by \[\mathscr{L}_{\Bx} := L^{\operatorname{op}}_\Bx \Lotimes_{\degradedHalg^{\zeta}_\Q}\tilt^\zeta_\Q.\] 
		
		The induced $t$-structure on $D^b(\mathsf{Coh}_{\mathbb{G}_m}(\fM))$ is what's often called an ``exotic $t$-structure.''
		
		We also have a Koszul dual description of coherent sheaves as dg-modules over the
		quadratic dual $\degradedHalg_{\zeta, \Q}^!$.
		Since $\degradedHalg^{\zeta}_\Q$ is an infinite dimensional algebra, we have to be a
		bit careful about finiteness properties here.  We let $\Coh(\fM_\Q)_o$
		be the category of coherent sheaves set theoretically supported on the
		fiber $\pi^{-1}(o)$, and 
		$\degradedHalg_\Q^{\zeta, \operatorname{op}}\mmod_o$ denote the corresponding
		category of $\degradedHalg_\Q^{\operatorname{op}}$-modules;  one
		characterization of these modules is that for some integer $N$, they
		are killed by all algebra elements of degree $>N$.

		\begin{lemma}
			A complex of coherent sheaves lies in $D^b(\Coh_{\mathbb{G}_m}(\fM_\Q)_{o})$ if and only if it is in the triangulated envelope of the complexes $\mathscr{L}_{\Bx}$.
		\end{lemma}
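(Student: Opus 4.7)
The strategy is to transport the statement across the Koszul-type equivalence of Corollary \ref{cor:H-equivalence} and reduce to a statement about finitely generated graded modules over $\bar{H}^\la_\Q$. Under this equivalence, the object $\mathscr{L}_\Bx$ corresponds to the one-dimensional simple graded module $L^{\operatorname{op}}_\Bx$, and the full subcategory $D^b(\Coh_{\mathbb{G}_m}(\fM_\Q)_o)$ corresponds to the full subcategory $D^b_{\mathrm{coh}}$ of complexes in $D^b(\bar{H}_\Q^{\la,\operatorname{op}}\operatorname{-gmod})$ whose cohomology modules lie in $\bar{H}_\Q^{\la,\operatorname{op}}\mmod_o$. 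So the task becomes showing that this latter subcategory is precisely the triangulated envelope of the simples $L^{\operatorname{op}}_\Bx$ (closed under the shifts $[1]$ and $\langle 1\rangle$ in the equivariant/graded setting).

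The $(\supseteq)$ direction is immediate: each $L^{\operatorname{op}}_\Bx$ is 1-dimensional and killed by every positive-degree element of $\bar{H}^\la_\Q$, so it lies in $\bar{H}_\Q^{\la,\operatorname{op}}\mmod_o$; since this subcategory is closed under cones and shifts, the triangulated envelope is contained in it.

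For the $(\subseteq)$ direction, I first reduce to a single module using stupid truncation: by induction on the length of a bounded complex, it is enough to show every finitely generated graded module $M \in \bar{H}_\Q^{\la,\operatorname{op}}\mmod_o$ lies in the triangulated envelope of the $L^{\operatorname{op}}_\Bx$ (and their grading shifts). The key claim is that such an $M$ is automatically finite-dimensional over $\Q$. Indeed, pick a finite set of homogeneous generators $m_1,\dots,m_k$; by definition of $\mmod_o$, all algebra elements of degree $>N$ annihilate each $m_i$ for some $N$, so $M = \sum_i \bigl(\bar{H}^\la_\Q\bigr)_{\leq N}\, m_i$. Since $\bar{H}^\la_\Q$ is a path algebra modulo relations on the finite set $\bar{\Lambda}(\la)$ with generators in degree $0$ and $1$ over the polynomial base ring $S = U_\Q(\fg)$ (cf.\ Theorem \ref{presentation1} and \ref{presentation2}), each graded piece $(\bar{H}^\la_\Q)_j$ is finite-dimensional, and therefore $M$ is finite-dimensional.

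Once $M$ is finite-dimensional and graded, it admits a finite composition series in the graded module category whose successive quotients are of the form $L^{\operatorname{op}}_\Bx\langle n\rangle$ for various $\Bx\in\bar{\Lambda}(\la)$ and $n\in\Z$ (any simple graded module over a graded algebra is, up to shift, the quotient of some $1_\Bx$ by the radical). The corresponding short exact sequences are distinguished triangles in the derived category, so iterated cone formation expresses $M$ as an object in the triangulated envelope of the $L^{\operatorname{op}}_\Bx$. Translating back through Corollary \ref{cor:H-equivalence} yields the desired statement for $\mathscr{L}_\Bx$. The only non-routine point in the argument is the finite-dimensionality of finitely generated modules in $\mmod_o$, which I expect to be the main (though mild) obstacle; everything else is standard homological bookkeeping.
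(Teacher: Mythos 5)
Your argument is correct and takes essentially the same route as the paper: transport across the equivalence of Corollary \ref{cor:H-equivalence}, then use the support/degree condition together with the non-negative grading on $\bar{H}^{\la}_\Q$ (finite quiver over a polynomial base, hence finite-dimensional graded pieces) to conclude that the cohomology modules are finite iterated extensions of the graded simples $L^{\operatorname{op}}_{\Bx}$, so the complex lies in the triangulated envelope of the $\mathscr{L}_{\Bx}$. (One minor point: the dévissage from a bounded complex to its cohomology modules uses the canonical truncations $\tau_{\leq n}$ rather than stupid truncations, but this does not affect the argument.)
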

		\begin{proof}
			The complex $M$ is in the subcategory $D^b(\Coh_{\mathbb{G}_m}(\fM_\Q)_{o})$ if and only if it is sent to a complex of modules over $\degradedHalg_\Q^{\zeta, \operatorname{op}}$ killed up to homotopy by a sufficiently high power of the 2-sided ideal generated by the elements of positive degree in $H^0(\fM_\Q, \structuresheaf_{\fM_\Q})$. This ideal contains all elements of sufficiently large degree (since the quotient by it is finite dimensional and graded), so each cohomology module of the image is a finite extension of the graded simples.  Thus the complex itself is an iterated extension of shifts of these modules.
		\end{proof}
		
		Let
		$\degradedHalg_{\zeta, \Q}^!\perf$ be the category of perfect dg-modules over
		$\degradedHalg_{\zeta, \Q}^!$. 
		As usual, we will abuse notation and let $D^b(\Coh(\fM_\Q))$
		to denote the usual dg-enhancement of this category, and similarly with $D^b(\Coh_{\mathbb{G}_m}(\fM_\Q))$.  Combining  the
		equivalence of Corollary \ref{cor:H-equivalence} with Koszul duality:
		\begin{proposition}\label{prop:Hbang-coh}\hfill
			\begin{enumerate}
				\item   We have an equivalence of dg-categories $\degradedHalg_{\zeta, \Q}^!\perf\cong D^b(\Coh(\fM_\Q)_{o})$, induced by $\oplus_{\By\in\degradedchambers(\zeta)} \Ext(\mathscr{L}_{\By},-)$.
				\item   We have an equivalence of dg-categories $D^b_{\operatorname{perf}}(\degradedHalg_{\zeta, \Q}^!\operatorname{-gmod})\cong D^b(\Coh_{\mathbb{G}_m}(\fM_\Q)_{o})$, induced by $\oplus_{\By\in\degradedchambers(\zeta)} \Ext(\mathscr{L}_{\By},-)$.
			\end{enumerate}
		\end{proposition}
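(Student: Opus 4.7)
The plan is to combine Corollary \ref{cor:H-equivalence} (which identifies $D^b(\Coh(\fM_\Q))$ and $D^b(\Coh_{\mathbb{G}_m}(\fM_\Q))$ with module categories over $\bar H^{\la,\mathrm{op}}_\Q$) with the Koszul duality established in Corollary \ref{cor:KoszulityofH} and Theorem \ref{thm:HandHdualaredual}. The preceding lemma already tells us that $D^b(\Coh_{\mathbb{G}_m}(\fM_\Q)_o)$ is the triangulated envelope of the objects $\mathscr{L}_\Bx$, which under the equivalence of Corollary \ref{cor:H-equivalence} correspond to the graded simples $L^{\mathrm{op}}_\Bx$. Thus the problem reduces to showing that the functor $\Ext(\bigoplus_\By L^{\mathrm{op}}_\By, -)$ induces an equivalence between the triangulated envelope of the simples inside $D^b(\bar H^{\la,\mathrm{op}}_\Q\text{-gmod})$ and $\bar H^!_{\la,\Q}\perf$.

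For part (2), the proof of this latter equivalence is a standard form of Koszul duality. Since $\bar H^\la_\Q$ is Koszul of finite global dimension (bounded by $2n$ thanks to Corollary \ref{cor:KoszulityofH}), each graded simple $L^{\mathrm{op}}_\Bx$ admits a finite length linear projective resolution. Applying the derived endomorphism functor $\mathbb R\End(\bigoplus_\Bx L^{\mathrm{op}}_\Bx,-)$ produces a dg-functor valued in dg-modules over $\bar H^!_{\la,\Q}=\Ext^\bullet(\bigoplus L^{\mathrm{op}}_\Bx,\bigoplus L^{\mathrm{op}}_\Bx)$; since the simples are sent to one-dimensional modules (the dual idempotents $e_\Bx$) and finite global dimension guarantees that their images, hence any finite iterated extension, are perfect over $\bar H^!_{\la,\Q}$, the functor lands in $\bar H^!_{\la,\Q}\perf$. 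Fully-faithfulness follows from the definition of the Ext-algebra at the level of simples and extends to the triangulated envelope by standard devissage; essential surjectivity follows from the fact that $\bar H^!_{\la,\Q}\perf$ is generated as a triangulated category by the perfect modules $e_\Bx\bar H^!_{\la,\Q}$, each of which is in the image.

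For part (1), I forget the grading. The ungraded simples $L_{\bar\Bx}$ remain a collection of DG-generators of $D^b(\Coh(\fM_\Q)_o)$ (equivalently of $D^b(\bar H^{\la,\mathrm{op}}_\Q\mmod_o)$), and the ungraded $\Ext$-algebra is still $\bar H^!_{\la,\Q}$ because in the Koszul setup the internal and homological gradings coincide on $\Ext$, so degrading does not collapse any classes. The same formalism as in (2) then yields the ungraded equivalence.

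The main technical obstacle is that $\bar H^\la_\Q$ has infinitely many primitive idempotents (one per chamber in $\bar\Lambda(\la)$ might be finite, but the periodic cover is not — and indeed for arguments involving $H^\la_\Q$ and $H^!_{\la,\Q}$ one must work with infinitely many chambers), so the standard Koszul duality references (e.g.\ Beilinson--Ginzburg--Soergel) do not apply off the shelf. As in the proof of Corollary \ref{cor:KoszulityofH}, one circumvents this by working with the truncated algebras $H^{(m)}$, for which Koszul duality is classical, and passing to the limit using that any morphism or extension involves only finitely many chambers. This compatibility with the truncation was already the core observation behind the Koszulity statement, so the limiting argument goes through verbatim here.
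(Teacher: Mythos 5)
Your proposal is correct and follows essentially the same route as the paper: reduce via Corollary \ref{cor:H-equivalence} and the preceding lemma (the triangulated envelope of the $\mathscr{L}_{\By}$) to a statement about the graded simples over $\bar{H}^{\la}_\Q$, and conclude by Koszul duality from Corollary \ref{cor:KoszulityofH} and Theorem \ref{thm:HandHdualaredual}, the only cosmetic difference being that you prove the graded statement first and degrade, while the paper proves (1) directly and treats (2) as its graded version via \cite[Thm. 2.12.1]{BGS96}. Your worry about infinitely many idempotents is unnecessary here, since $\bar{\Lambda}(\la)$ is finite and $\bar{H}^{\la}_\Q$ is locally finite in each degree, so the truncation-by-$H^{(m)}$ device is only needed (as in Corollary \ref{cor:KoszulityofH}) for the periodic algebra $H^{\la}$, not for the present statement.
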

		\begin{proof}
			Since elements of $D^b(\Coh(\fM_\Q)_{o})$ are finite extensions of $\mathscr{L}_{\By}$ for different $\By$, they are sent by $\oplus_{\By\in\degradedchambers(\zeta)} \Ext(\mathscr{L}_{\By},-)$ to perfect complexes and vice versa. This proves item (1).
			
			Item (2) is just the graded version of this statement, which corresponds to Corollary \ref{cor:H-equivalence} via the usual Koszul duality (\cite[Thm. 2.12.1]{BGS96}).  
		\end{proof}

				This shows that smooth parameters also have an interpretation in
				terms of $\A^\la_{\K}$; this is effectively a restatement of Proposition \ref{prop:smooth-tilting}, so we will not include a proof.
				\begin{proposition}
					The functor $\mathbb{R}\Gamma : D^b(\A^\la_{\K}\mmod_0) \mapsto D^b(A^\la_{\K}\mmod_0)$ is an equivalence of categories if
					and only if the parameter $\la$ is smooth.
				\end{proposition}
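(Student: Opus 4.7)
The plan is to reduce to the characteristic $p$ tilting criterion of Proposition \ref{prop:smooth-tilting}, by translating $\mathbb{R}\Gamma$ into the Ext-functor for the tilting bundle $\tilt^\la_\K$. First I would appeal to Theorem \ref{frob-iso}: on the formal neighborhood $\mu^{-1}(\hat{0})$, the Azumaya algebra $\A^\la_\K$ is isomorphic to $\sEnd(\mathscr{Q}_\nu)$, and by Proposition \ref{characQ} the splitting bundle $\mathscr{Q}_\nu$ decomposes into line bundles, each class $\lin(\bar{\Bx})$ for $\bar{\Bx}\in\bar{\Lambda}(\la)$ appearing with some multiplicity. The reduced tilting bundle $\tilt^\la_\K$ from Section \ref{sec:tilting-generators} is the summand containing each $\lin(\bar{\Bx})$ exactly once.

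Next I would exhibit the commutative diagram underlying the translation. The functor $\sHom_{\A^\la_\K}(\tilt^\la_\K, -)$ sends $\A^\la_\K$-modules to coherent sheaves on $\fM^{(1)}_\K$, and when restricted to modules supported on $\pi^{-1}(o)$ is the usual Morita equivalence induced by the (formal) splitting of the Azumaya algebra. Under this equivalence $\mathbb{R}\Gamma$ corresponds to $\RHom(\tilt^\la_\K, -)$, with codomain identified with modules over $\End(\tilt^\la_\K)\cong \bar{H}^\la_\K$ via Proposition \ref{prop:equiv-coh} and hence with $A^\la_\K$-modules via the isomorphism $A^\la_\K \cong \Gamma(\fM^{(1)}_\K, \A^\la_\K)$. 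Thus $\mathbb{R}\Gamma$ is an equivalence if and only if $\tilt^\la_\K$ is a tilting generator for $D^b(\Coh(\fM^{(1)}_\K)_o)$.

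The vanishing of higher $\Ext$ among summands of $\tilt^\la_\K$ holds for all $\la$, since the summands are line bundles on the resolution arising as factors of the Kaledin/Stadnik splitting bundle, so the question collapses to generation. That is precisely the content of the characteristic $p$ version of Proposition \ref{prop:smooth-tilting}: $\tilt^\la_\K$ generates if and only if $\la$ is smooth. When $\la$ is not smooth, $\bar{\Lambda}(\la)$ has strictly fewer elements than for nearby smooth parameters, $\tilt^\la_\K$ omits summands indexed by the missing chambers, and both $\RHom(\tilt^\la_\K, -)$ and hence $\mathbb{R}\Gamma$ fail to be equivalences. The main technical obstacle is making the identification of $\mathbb{R}\Gamma$ with $\RHom(\tilt^\la_\K, -)$ globally rigorous, since Stadnik's splitting is a priori only produced on the formal neighborhood of the zero fiber; the support hypothesis at $o$ should suffice, but verifying that no higher cohomology is lost in passing from the formal completion to the full resolution is the key subtlety.
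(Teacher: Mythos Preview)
Your proposal is correct and matches the paper's approach: the paper explicitly declines to give a proof, stating that the proposition ``is effectively a restatement of Proposition \ref{prop:smooth-tilting}.'' You have fleshed out exactly that reduction---using the Azumaya splitting on $\mu^{-1}(\hat{0})$ to identify $\mathbb{R}\Gamma$ with $\RHom(\tilt^\la_\K,-)$ and then invoking the tilting-generator criterion---and you correctly flag the formal-neighborhood-versus-global passage as the only technical wrinkle, which the support condition at $o$ indeed handles.
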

				\excise{\begin{proof}
						First note that since both are generic properties, for some $p\gg 0$, there is a parameter $\la$ that is smooth such that $\mathbb{R}\Gamma$ is an equivalence.  This shows that the dimension of the $K$-theory of $\fM$ is the number of bases (which is also easily shown using Chern character, and the Betti numbers of $\fM$).
						
						Assume that the functor $\mathbb{R}\Gamma$ is an equivalence.
						The Grothendieck group of  $D^b(\A^\la_{\K}\mmod_0)$ is the number of bases, so the same is true of $A^{\la}_{\K}\mmod_0$.  This shows that $\la$ is smooth.
						
						On the other hand, assume that $\la$ is smooth.
				\end{proof}}

				\section{Mirror symmetry via microlocal sheaves}

				In the previous sections, the conical $\mathbb{G}_m$-action on hypertoric varieties played a key role in our study of coherent sheaves. This is what allowed us to construct a tilting bundle based on a quantization in characteristic $p$.  This conic action also plays a crucial role in the study of enumerative invariants of these varieties \cite{BMO, MaOk, McS}. The quantum connection and quantum cohomology which appear in those papers lose almost all of their interesting features if one does not work equivariantly with respect to the conic action. We are thus interested in a version of mirror symmetry which remembers this conic action. 
				
				We expect the relevant A-model category to be a subcategory of a Fukaya category of the {\bf Dolbeault hypertoric manifold} $\Dol$, built from Lagrangian branes endowed with an extra structure corresponding to the conical $\mathbb{G}_m$-action on $\mathfrak{M}$. However, rather than working directly with the Fukaya category, we will replace it below by a category of DQ-modules on $\Dol$ . The calculations presented there should also be valid in the Fukaya category. The reader is referred to the sequel \cite{GMW} to this paper for more discussion of this point. 
				
				After defining the relevant spaces and categories of DQ modules, we state our main equivalence in Theorem \ref{mainthm-Hodge} and Corollary \ref{centralcorollary}. 
				
				There are a few obvious related questions. What corresponds to the category of all (not necessarily equivariant) coherent sheaves on $\mathfrak{M}$? What corresponds to the full category of $DQ$-modules of $\Dol$? We plan to address these questions in a future publication.
				
				\subsection{Dolbeault hypertoric manifolds} 
				In this section, we introduce Dolbeault hypertoric manifolds, whose definition we learned from unpublished work of Hausel and Proudfoot.
				
				Dolbeault hypertoric manifolds are complex manifolds attached to the
				data of a toric hyperplane arrangement (i.e. a collection of
				codimension one affine subtori), in much the same way that an additive hypertoric variety is attached to an affine hyperplane arrangement, and a toric variety is attached to a polytope. They carry a complex symplectic form, and a proper fibration whose generic fibers are complex lagrangian abelian varieties. 
				
				Our construction of Dolbeault manifolds parallels the construction of toric varieties as Hamiltonian reductions of powers of a basic building block. 
				
				For toric varieties, this building block is $\C$ with the usual Hamiltonian action of $\uu$. Its polytope is a ray in $\R$. Other toric varieties are constructed by taking the Hamiltonian reduction of $\C^n$ by a subtorus of $\uu^n$. Additive hypertoric varieties are similarly constructed from the basic building block $T^*\C$ with its hyperhamiltonian action of $\uu$. The affine hyperplane arrangement associated to this building block is a single point in $\R$. For Dolbeault manifolds, our basic building block will be the Tate curve $\mathfrak{Z}$ with a (quasi)-hyperhamiltonian action of $\uu$. Its toric hyperplane arrangement is a single point in $\uu$.  
				
				We give a construction of $\mathfrak{Z}$ suited to our purposes below, culminating in Definition \ref{deftatecurve}.
				
				Let $\C^* = \operatorname{Spec}\C[q,q^{-1}]$, and let $\DD^*$ be the
				punctured disk defined by $0 < q < 1$. Let $\mathfrak{Z}^*$ be the family of
				elliptic curves over $\DD^*$ defined by
				$( \C^* \times \DD^* ) / \mathbb{Z}$, where $1 \in \mathbb{Z}$ acts by
				multiplication by $q \times 1$. 
				
				We will define an extension of $\mathfrak{Z}^*$ to a family $\ZDol$ over $\DD$ with central fiber equal to a nodal elliptic curve.

				Let $\basicW_n := \Spec \C[x,y]$ for $n \in \Z$.  Consider the birational map $f: \basicW_n \to \basicW_{n+1}$ defined by $f^*(x) = \frac{1}{y}, f^*(y) = xy^2$. 
				This defines an automorphism of the subspace $\basicW_0\setminus \{xy=0\}$, and identifies the $y$-axis in $\basicW_n$ with the $x$-axis in $\basicW_{n+1}$ birationally, so they glue to a $\mathbb{P}^1$.  If we let $q:=xy$, then we can rewrite this automorphism as $(x,y)\mapsto (q^{-1}x,qy)$.    
				Note that this map preserves the product $xy$ and commutes with the $\C^*$-action on $\basicW_n$ defined by $\tau\cdot x=\tau x, \tau\cdot y=\tau^{-1}y$; we let $\mathbb{T}$ denote this copy of $\C^*$. 
				\begin{definition}
					Let $\basicW$  be the quotient of the union $\bigsqcup_{n \in \Z} \basicW_n$ by the equivalence relations that identify  the points $x \in \basicW_n$ and  $f(x) \in \basicW_{n+1}$.  
				\end{definition}
				The variety $\basicW$ is smooth  of infinite type, with a map $q := xy: \basicW \to \C$ and an action of $\C^*$ preserving the fibers of $q$.  The map $\basicW_0 \setminus \{xy=0\} \to \basicW \setminus q^{-1}(0)$ is easily checked to be an isomorphism. 
				
				$\basicW$ carries a $\Z$-action defined by sending $\basicW_n$ to $\basicW_{n+1}$ via the identity map. The action of $n\in \Z$ is the unique extension of the automorphism of $\basicW_0 \setminus \{xy=0\}$ given by $(x,y)\mapsto (q^{-n}x,q^ny)$.  Thus, $n$ fixes a point $(x,y)$ if and only if $q$ is an $n$th root of unity.  In particular, the action of $\Z$ on 
				\begin{equation} \widetilde{\mathfrak{Z}} := q^{-1}(\mathbb{D}) \end{equation} is free.  Combining this with the paragraph above, we see that $q^{-1}(\DD^*)=\{(x,y)\in \basicW_0\mid xy\in \DD^*\}$; since we can choose $x\in \C^*$ and $q\in \DD^*$, with $y=q/x$ uniquely determined, we have an isomorphism $q^{-1}(\DD^*)\cong \C^*\times \DD^*$.  Note that transported by this isomorphism, the $\C^*$-action we have defined acts by scalar multiplication on the first factor, and trivially on the second.

				Thus, we obtain the following commutative diagram of spaces: 
				\begin{equation}
					\tikz[->,very thick,baseline]{ \matrix[row sep=10mm,column sep=20mm,ampersand
						replacement=\&]{ \node (a) {$\C^* \times \DD^*\cong q^{-1}(\DD^*)$}; \& \node (c)
							{$\ZDolcov$}; \\
							\node (b) {$\DD^*$}; \& \node (d) {$\DD$};\\
						}; \draw (a) -- (c) node[above,midway]{ $j$}; \draw (b)
						--(d); \draw (a) --(b) 
						node[left,midway]{$q$}; \draw (c)--(d)
						node[right,midway]{$q$};} 
				\end{equation}
				The fiber $\widetilde{\mathfrak{Z}}_0 := q^{-1}(0)$ is an infinite chain of $\C\mathbb{P}^1$'s with each link connected to the next by a single node. 
				
				The action of $\C^*$ on $\widetilde{\mathfrak{Z}}_0$ scales each component, matching the usual action of scalars on $\mathbb{CP}^1$, thought of as the Riemann sphere.  The action of the generator of $\Z$ translates the chain by one link.
				
				\begin{definition} \label{deftatecurve}
					Let $\ZDol := \ZDolcov / \Z$. 
				\end{definition}
				The manifold $\ZDol$ will be our basic building block. We now study various group actions and moment maps for $\ZDol$, in order to eventually define a symplectic reduction of $\ZDol^n$. 
				
				The action of $\C^*$ on $\ZDolcov$ descends to an action on $\ZDol$; note that on any nonzero fiber of the map to $\DD$, it factors through a free  action of the quotient group $\C^* / q^{\Z}$, which is transitive unless $q=0$. Thus the generic fiber of $q$ is an elliptic curve. The fiber $\mathfrak{Z}_0 := q^{-1}(0)$ is a nodal elliptic curve. We write $\mathbf{n}$ for the node.  
				
				The action of $\uu \subset \C^*$ on $\ZDolcov$ is Hamiltonian with respect to a hyperkahler symplectic form and metric described in \cite[Prop. 3.2]{GrWi}, where one also finds a description of the $\Z$-equivariant moment map. 
				This moment map descends to
				\[ \mu : \ZDol \to \mathbb{R} / \mathbb{Z} = \mathbf{\uu}. \]
				Hence $\mu$ is the quasi-hamiltonian moment map for the action of
				$\uu$ on $\ZDol$. We may arrange that $\mu(\mathbf{n}) = \mathbf{1}
				\in \mathbf{\uu}$.  The nodal fiber $\mathfrak{Z}_0$ is the image of a $\uu$-equivariant immersion
				$\iota\colon \mathbb{CP}^1\to \ZDol$, which
				is an embedding except that $0$ and $\infty$ are both sent to
				$\mathbf{n}$.  We have a commutative diagram:
				\begin{equation}
					\tikz[->,very thick,baseline]{ \matrix[row sep=10mm,column sep=15mm,ampersand
						replacement=\&]{ \node (a) {$\mathbb{CP}^1$}; \& \node (c)
							{$\ZDol$}; \\
							\node (b) {$[0,1]$}; \& \node (d) {$\R/\Z$};\\
						}; \draw (a) -- (c) node[above,midway]{$\iota$}; \draw (b)
						--(d); \draw (a) --(b) 
						node[left,midway]{$\mu_{\mathbb{CP}^1}$}; \draw (c)--(d)
						node[right,midway]{$\mu$};} \qquad
					\mu_{\mathbb{CP}^1}(z)=\frac{|z|^2}{1+|z|^2}\colon \mathbb{CP}^1 \to
					[0,1].\label{eq:moment-map-1}
				\end{equation}
				
				The action of $\uu$ and map $\mu \times q$ form a kind of
				`multiplicative hyperkahler hamiltonian action' of $\uu$. In
				particular, $(\mu\times q)^{-1}(a,b)$ is a single $\uu$ orbit, which
				is free unless $a=1,b=0$, which case it is just the node
				$\mathbf{n}$.  It's worth comparing this with the hyperk\"ahler moment
				map on $T^*\C$ for the action of $\uu$: this is given by the
				map \[T^*\C\to \R\times \C\qquad 
				(z,w)\mapsto (|z|^2-|w|^2,zw)\]  The fibers over non-zero elements of
				$\R\times \C$ are circles, and the fiber over zero is the origin.  In a
				neighborhood of $\mathbf{n}$, $\mu\times q$ is analytically isomorphic
				to this map.  
				
				Without seeking to formalize the notion, we will simply mimic the notion of hyperkhaler reduction in
				this setting.  Recall that a hypertoric variety $\mathfrak{M}$ is defined using
				an embedding of tori $(\C^*)^k = T \to D = (\C^*)^n$. Let
				$T_{\R},D_{\R}$ be the corresponding compact tori in these groups, and
				$T^{\vee}{\R}\cong \ft_{\R}^*/\ft_{\Z}^*$ the Langlands dual torus; the
				usual inner product induces an isomorphism $D_{\R}\cong D_{\R}^{\vee}$
				which we will leave implicit.
				Thus, we have an
				action of $T_{\R}$ on $\ZDol^n$ and a $T_{\R}$-invariant map \begin{equation} \Phi:
					\ZDol^n \to T_{\R}^\vee\times \ft^*. \end{equation}  
				Given
				$\zeta \in  T_{\R}^\vee$, let $\zeta' = \zeta \times 0 \in
				T_{\R}^\vee\times \ft^*$. For generic $\zeta$ the action of $T_{\R}$ on $
				\Phi^{-1}(\zeta')$ is locally free.
				
				For the rest of this paper, we make the additional assumption that the torus embedding $T \to D$ is {\em unimodular}, meaning that if $e_k$ are the coordinate basis of $\mathfrak{d}_{\Z}$, then any collection of $e_k$ whose image spans $\mathfrak{d}_{\Q}/ \mathfrak{t}_{\Q}$ also spans  $\mathfrak{d}_{\Z}/ \mathfrak{t}_{\Z}$. As with toric varieties, this guarantees that for generic $\zeta$ the action of $T$ on $\Phi^{-1}(\zeta')$ is actually free. We expect that this assumption can be lifted without significant difficulties, but it will help alleviate notation in what follows.
				
				The following definition is due to Hausel and Proudfoot.
				\begin{definition} \label{def:dolbhyper}
					Let $\Dol := \Phi^{-1}(\zeta')/T_{\R}.$ 
				\end{definition}
				\begin{proposition}
					$\Dol$ is a $2d = 2n-2k$ dimensional holomorphic symplectic
					manifold.
					
				\end{proposition}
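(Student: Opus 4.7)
The plan is to realize $\mathfrak{Y}$ as a multiplicative hyperk\"ahler reduction, combining a quasi-Hamiltonian reduction along the real component of $\Phi$ with an ordinary holomorphic symplectic reduction along the complex component.

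First, I would pin down the holomorphic symplectic structure on $\ZDol$. On each chart $\frak{U}_n \cong \Spec \C[x,y]$ set $\omega_n := dx \wedge dy$. A direct computation using the transition $(x_{n+1},y_{n+1}) = (1/y_n, x_n y_n^2)$ shows $\omega_n = \omega_{n+1}$ on overlaps, and the $\Z$-action $(x,y) \mapsto (q^{-1}x, qy)$ preserves $\omega_0$, so the $\omega_n$ glue to a $\Z$-invariant holomorphic symplectic form on $\widetilde{\ZDol}$ that descends to a form $\omega_{\ZDol}$ on $\ZDol$. Its contraction with the infinitesimal generator $X = x\partial_x - y\partial_y$ of the $\C^*$-action equals $dq$, so $q\colon \ZDol \to \C$ is a holomorphic moment map for $\C^*$, while $\mu \colon \ZDol \to \uu$ is the quasi-Hamiltonian moment map for $\uu \subset \C^*$ of \cite[Prop.~3.2]{GrWi}. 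Taking products endows $\ZDol^n$ with a $D_\C$-action that is holomorphic symplectic, with complex moment map $(q_1,\dots,q_n)\colon \ZDol^n \to \fd^*$; composing with the projection $\fd^* \to \ft^*$ dual to $\ft \hookrightarrow \fd$ recovers the complex component $q_T$ of $\Phi$, and similarly $\mu_T$ is $(\mu_1,\dots,\mu_n)$ composed with the dual projection $D_\R^\vee \to T_\R^\vee$.

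Second, I would establish smoothness and the dimension count. For generic $\zeta$ the fiber $\Phi^{-1}(\zeta')$ avoids the nodes $\mathbf{n}_i \in \ZDol$: a tuple with some $p_i = \mathbf{n}_i$ satisfies $\mu(p_i) = \mathbf{1}$, and imposing $\mu_T = \zeta$ then forces $\zeta$ into a finite union of codimension-one subtori of $T_\R^\vee$ which we exclude. Away from the nodes $q$ is a submersion and $\mu$ is a smooth moment map for a free $\uu$-action on every smooth fiber, so $\Phi$ is a submersion of real codimension $3k$ near $\Phi^{-1}(\zeta')$. The unimodularity assumption together with the genericity of $\zeta$ guarantees that $T_\R$ acts freely on $\Phi^{-1}(\zeta')$, so $\mathfrak{Y}$ is a smooth real manifold of real dimension $4n - 3k - k = 4(n-k) = 4d$, i.e.\ of complex dimension $2d$.

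Third, to produce the holomorphic symplectic form, I would argue as in Marsden--Weinstein: the pullback of $\omega_{\ZDol^n}$ to $\Phi^{-1}(\zeta')$ is a closed real $2$-form whose kernel at each point is spanned by the tangent vectors to the $T_\R$-orbit, since the condition $q_T = 0$ cuts out a coisotropic subvariety whose null-foliation is given by the $T_\C$-orbits, and the remaining condition $\mu_T = \zeta$ intersects each $T_\C$-orbit in precisely a $T_\R$-orbit. The form therefore descends to a closed holomorphic $2$-form $\omega_\mathfrak{Y}$ on $\mathfrak{Y}$, non-degenerate by the dimension count. The main obstacle is the analysis near the nodes, needed for both smoothness and non-degeneracy; the cleanest resolution exploits the excerpt's observation that $\mu \times q$ near any $\mathbf{n}_i$ is real-analytically equivalent to the hyperk\"ahler moment map $T^*\C \to \R \times \C$ at the origin, so the classical hyperk\"ahler reduction theorem applies verbatim to a local model at any point of $\Phi^{-1}(\zeta')$ that survives in $\mathfrak{Y}$, unifying the treatment near and away from the nodes.
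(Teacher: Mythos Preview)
Your claim in the second paragraph that ``for generic $\zeta$ the fiber $\Phi^{-1}(\zeta')$ avoids the nodes'' is false. Having $p_i = \mathbf{n}$ for a single $i$ fixes only the $i$th coordinate of $\mu^n(p)$ at $\mathbf{1}$; the remaining $\mu(p_j)$ are unconstrained, so their image under $D_\R^\vee \to T_\R^\vee$ still sweeps out all of $T_\R^\vee$. Concretely, in the running example $T = \mathbb{G}_m$ diagonal in $\mathbb{G}_m^3$, every intersection $\frak{X}_\Bx \cap \frak{X}_\By$ in the core $\mathfrak{C}$ arises from points with at least one node coordinate, and this happens for \emph{every} generic $\zeta$. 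What is true (and sufficient) is that for generic $\zeta$ the set $S = \{i : p_i = \mathbf{n}\}$ at any point of $\Phi^{-1}(\zeta')$ satisfies: the coordinate subtorus of $D_\R$ indexed by $S$ meets $T_\R$ only in the identity. This is exactly the freeness of the $T_\R$-action, and with unimodularity it is the correct genericity condition.

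Your final paragraph already contains the right fix: the local identification of $\mu \times q$ near $\mathbf{n}$ with the hyperk\"ahler moment map on $T^*\C$ lets the standard hyperk\"ahler reduction theorem handle all points uniformly. But then the node-avoidance argument in the second paragraph is both wrong and unnecessary, and the submersion claim for $\Phi$ there must be replaced by the moment-map criterion (freeness of the action $\Leftrightarrow$ regularity of the level).

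The paper itself does not write out a proof; it simply observes in the paragraph following the proposition that the universal cover $\widetilde{\mathfrak{Y}} = \tilde{\Phi}^{-1}(\tilde{\zeta}')/T_\R$ is an ordinary (non-multiplicative) hyperk\"ahler reduction of $\widetilde{\ZDol}^n$, and that the resulting complex symplectic form is preserved by the $\fg_\Z^*$-action and hence descends. This bypasses the quasi-Hamiltonian formalism entirely: on $\widetilde{\ZDol}$ the real moment map is $\R$-valued rather than $\uu$-valued, so classical Marsden--Weinstein applies with no separate analysis near nodes. Your direct approach would work once corrected, but the universal-cover route is shorter and avoids the node issue altogether.
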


				We will also need to consider the universal cover $\Dolcov$; this can also be constructed as a reduction. We have a hyperk\"ahler moment map $\tilde{\Phi}\colon \ZDolcov^n\to \ft_{\R}^*\oplus \ft^*$. Let $\tilde{\zeta}'$ be a preimage of $\zeta'$. 
				\begin{definition} \label{def:dolcoverbhyper}
					Let $\Dolcov :=\tilde{\Phi}^{-1}(\tilde{\zeta}')/T_{\R}$. 
				\end{definition}
				$\Dolcov$ carries a natural action of $\mathfrak{g}_\Z^*$, the subgroup of $\Z^n$ which preserves the level $\tilde{\Phi}^{-1}(\tilde{\zeta}')$. The quotient by this map is $\Dol$, and the quotient map $\nu \colon \Dolcov \to \Dol$ is a universal cover. Note that $\Dolcov$ is a (non-multiplicative) hyperk\"ahler reduction, and the action of $\mathfrak{g}_\Z^*$ preserves the resulting complex symplectic form. This gives one way of defining the complex symplectic form on $\Dol$. 
				
				The $T$ action on $\ZDolcov^n$ and the holomorphic part of the hyperk\"ahler moment map $\tilde{\Phi}_\C$ both extend to the infinite type algebraic variety $\basicW^n$. 
				\begin{definition}
					Let $\Dolcov^{\operatorname{alg}}$ be the holomorphic symplectic reduction $\tilde{\Phi}_\C^{-1}(0) /\!/\!_{\tilde{\zeta}'} T$, where we take the GIT quotient by $T$ with linearization determined by $\tilde{\zeta}'$. 
				\end{definition} 
				As opposed to $\Dolcov$, the space $\Dolcov^{\operatorname{alg}}$ is naturally an infinite type but finite dimensional algebraic variety. Its construction and properties are described in detail in \cite{groechenighypertoric}. It contains the complex manifold $\Dolcov$ as an (analytic) open subset.
				
				Let $q_{\Dol}: \Dol \to \ft_\C^\perp \cong  \mathfrak{g}_\C^*$ be the map induced by $q^n :
				\ZDol^n \to \C^n \cong \fd^*_\C$. Its fibers are complex Lagrangians. The action of $\C^*$ on $\ZDol$ defines an action of $G = D/T$ on
				$\Dol$, which preserves the complex symplectic form and the fibers of the map $q_{\Dol}$, and acts transitively on fibers over values $(q_1,\dots, q_n)$ with $q_i\neq 0$ for all $i$.  Such fibers are $d$-dimensional abelian varieties.    
				
				\begin{definition}
					We define the {\em core} of $\Dol$ to be $\mathfrak{C} := q_{\Dol}^{-1}(0)$, and denote by $\widetilde{\mathfrak{C}}$ its preimage in $\Dolcov$.
				\end{definition}
				
			We thus have inclusions $\widetilde{\mathfrak{C}} \xrightarrow{closed} \Dolcov \xrightarrow{open} \Dolcov^{\operatorname{alg}}$. The lattice $\mathfrak{g}^*_{\mathbb{Z}}$ acts compatibly on all three spaces, but the quotient only makes sense for the first two, where it gives the inclusion $\mathfrak{C} \to \Dol$.
    
				
				Whereas $\Dol$ is merely a complex manifold, we will see that $\mathfrak{C}$ is naturally an algebraic variety. It is a free quotient of $\Dolcov$, whose components, as we shall see, are smooth complex Lagrangians. We can give an explicit description of $\mathfrak{C}$ as follows, in the spirit of the combinatorial description of toric varieties in terms of their moment polytopes. In our setting, polytopes are replaced by toroidal arrangements.
				
				We have the map $D^{\duall}_{\R} \to T^{\duall}_{\R}$; let $G^{\duall, \zeta}_\R$ be the preimage of $\zeta$. It is a torsor over $G_{\R}^{\duall}$.  The preimage of $G^{\duall, \zeta}_\R$ under the quotient $\mathfrak{d}^*_\R \to D^{\duall}_\R$ is given by $\mathfrak{g}_\R^{\duall, \zeta} := \widetilde{\zeta}' + \mathfrak{g}^*_{\R}$. 
				\begin{definition}
					Let $\textgoth{B}^{\operatorname{per}}_\zeta \subset \mathfrak{g}^{*,\zeta}_\R$ be the periodic hyperplane arrangement defined by the preimage of $\textgoth{B}^{\operatorname{tor}}_\zeta$ in $\widetilde{\zeta}' + \mathfrak{g}^*_{\R}$. Let $\gradedchambers^\R(\zeta)$ be the set of chambers of $\textgoth{B}^{\operatorname{per}}_\zeta$. We write $\Delta^\R_\Bx \subset \mathfrak{g}^{*,\zeta}_\R$ for the (closed) chamber indexed by $\Bx \in \gradedchambers^\R(\zeta)$.
				\end{definition}
				
				As in section \ref{interpretation}, let $\mathfrak{X}_\Bx$ be the toric variety obtained from the polytope $\Delta^\R_\Bx$ by the Delzant construction. 
				
				\begin{proposition} \label{prop:corecharactper}\hfill 
					\begin{enumerate}
						\item \label{strone} The irreducible components of $\widetilde{\mathfrak{C}}$ are smooth toric varieties $\mathfrak{X}_\Bx$ indexed by $\Bx \in \gradedchambers^\R(\zeta)$. 
						\item \label{strtwo} The intersection $\mathfrak{X}_\Bx \cap \mathfrak{X}_\By$ is the toric subvariety of either component indexed by $\Delta^\R_\Bx \cap \Delta^\R_\By$.
						\item \label{strthree} The image under the $G_\R$-moment map of $\mathfrak{X}_\Bx$ is precisely the polytope $\Delta^\R_{\Bx}$.
						\item  \label{strfour}  All components meet with normal crossings. 
					\end{enumerate}
				\end{proposition}
				
				\begin{proof}
					We begin by noting that $\widetilde{\mathfrak{C}}$ is the image in $\Dolcov$ of $\Phi^{-1}(\zeta')\cap \widetilde{\mathfrak{Z}}_0^n$. The irreducible components of $\widetilde{\mathfrak{Z}}_0^n$ are copies of $(\mathbb{CP}^1)^n$ indexed by $\Bx \in \Z^n$. The moment map $\mu^n : \widetilde{\mathfrak{Z}}_0^n \to \R^n$, restricted to the component $(\mathbb{CP}^1)^n_\Bx$, has image the translation $[0,1]^n_\Bx$ of the unit cube by $\Bx$. We write $\tilde{\Phi}_{\Bx} \colon (\mathbb{C}\mathbb{P}^1)^n\to \ft_{\R}^\vee$ for the restriction the $T_\R$ moment map. It is given by be the composition of $\mu_{\C\mathbb{P}^1}^n \colon
					(\mathbb{C}\mathbb{P}^1)^n\to [0,1]^n_\Bx$ with the projection $p: [0,1]^n_\Bx \subset \fd^*_\R \to
					\ft_{\R}^\vee$.

					The preimage $p^{-1}(\zeta) \subset [0,1]_\Bx^n$ is a polytope, given by $\mathfrak{g}_\R^{\duall, \zeta} \cap [0,1]_\Bx^n$. It is non-empty precisely when $\Bx \in \gradedchambers^\R(\zeta)$, in which case it is the chamber $ \Delta^{\R}_\Bx $.
					
					The irreducible components of $\widetilde{\mathfrak{C}}$ are thus the quotients $\tilde{\Phi}_{\Bx}^{-1}(\zeta) / T_\R$ for $\Bx \in \gradedchambers^\R(\zeta)$. The claims \eqref{strone}, \eqref{strtwo} and \eqref{strthree} now follow from standard toric geometry.
					
					Claim \eqref{strfour} follows from the corresponding property for $\widetilde{\mathfrak{Z}}^n_0$. In fact, the
					singular points of $\mathfrak{C}$ are analytically locally a product of $m$ nodes,
					and a $d-m$-dimensional affine space. 
					
				\end{proof}

				\begin{definition} \label{def:torBarrange}
					Let $\textgoth{B}^{\operatorname{tor}}_\zeta \subset G^{\duall, \zeta}_\R$ be the toric hyperplane arrangement defined by the coordinate subtori of $D^{\duall}_\R$. Let $\degradedchambers^\R(\zeta)$ be the set of chambers of $\textgoth{B}^{\operatorname{tor}}_\zeta$. Given $\Bx \in \degradedchambers^\R(\zeta)$, we write $\Delta^\R_\Bx \subset G^{\duall, \zeta}_\R$ for the corresponding chamber. 
				\end{definition}
				
				The toric arrangement $\textgoth{B}^{\operatorname{tor}}_\zeta$ is simply the quotient of the periodic arrangement  $\textgoth{B}^{\operatorname{per}}_\zeta$ by the action of the lattice $\mathfrak{g}^*_\Z$. The restriction of the quotient map to a fixed chamber $\Delta_\Bx \subset \mathfrak{g}^{*, \zeta}_\R$ is one-to-one on the interior, but may identify certain smaller strata. Correspondingly, the composition $\mathfrak{X}_\Bx \to \Dolcov \to \Dol$ is in general only an immersion. The following is easily deduced from \ref{prop:corecharactper}.
				
				\begin{proposition} \label{prop:corecharact}\hfill 
					\begin{enumerate}
						\item \label{degradedstrone} The irreducible components of $\mathfrak{C}$ are immersed toric varieties $\bar{\mathfrak{X}}_{\Bx}$ indexed by $\Bx \in \degradedchambers^\R(\zeta)$. Any lift of $\Bx \in \degradedchambers^\R(\zeta)$ to $\gradedchambers^\R(\zeta)$ determines a birational map $\mathfrak{X}_\Bx \to \bar{\mathfrak{X}}_{\Bx}$ with finite fibers.
						\item \label{degradedstrtwo} The intersection $\bar{\mathfrak{X}}_\Bx \cap \bar{\mathfrak{X}}_\By$ is the (immersed) toric subvariety of either component indexed by $\Delta^\R_\Bx \cap \Delta^\R_\By$.
						\item \label{degradedstrthree}The image under the $G_\R$-moment map of $\bar{\mathfrak{X}}_\Bx$ is precisely the toric chamber $\Delta^\R_{\Bx}$.
						\item  \label{degradedstrfour}  All components meet with normal crossings. 
					\end{enumerate}
				\end{proposition}

				\begin{example}
					We continue with Example \ref{classicexample}.  In this case, for generic $\tilde{\zeta}'$, we arrive at a picture like in \eqref{eq:P2-chambers}.  The 3 chambers shown in total there correspond to the 3 core components of $C$: two of these are isomorphic to $\mathbb{CP}^2$, and one to $\mathbb{CP}^1\times \mathbb{CP}^1$ blown up at $(0,0)$ and $(\infty,\infty)$.  We join these by joining the lines at $\infty$ in the first $\mathbb{CP}^2$ to the exceptional locus of the blow up at $(0,0)$, and its coordinate lines to the unique lifts of  $\mathbb{CP}^1\times \{\infty\}$ and $\{\infty\}\times \mathbb{CP}^1 $ to lines in the blowup (note that in the blowup, these lines don't intersect).  With the second $\mathbb{CP}^2$ we do the same gluing with $0$ and $\infty$ reversed.  
					
					Note that in $\mathfrak{C}$, the two $\mathbb{CP}^2$'s are embedded, but the third component is only immersed: it intersects itself transversely at each torus fixed point.  
				\end{example}
				
				\subsection{Weinstein neighborhoods and scaling actions}
				
				Let $\Bx$ be a chamber of the periodic arrangement $\textgoth{B}^{\operatorname{per}}_\zeta$, and let $\mathfrak{X}_\Bx$ be a component of the periodic core. We will construct an open neighborhood $\algcov_\Bx \cong T^*\mathfrak{X}_\Bx$ of $\mathfrak{X}_\Bx$ in $\algcov$. Its intersection 
				
				$$ \Dolcov_\Bx := \algcov_\Bx \cap \Dolcov$$ is an open neighborhood of $\mathfrak{X}_\Bx$ in $\Dolcov$, which maps by an immersion to an open neighborhood of $\bar{\mathfrak{X}}_\Bx$ in $\Dol$.  
				
				Consider the union $\basicW_0\cup \basicW_1\subset \basicW$. This is a Zariski open subset of $\basicW$ isomorphic to $T^*\C\mathbb{P}^1$. Let $\ZDolcov_0$ be its intersection with $\ZDolcov$. This is an open submanifold, isomorphic to a tubular neighborhood of $\C\mathbb{P}^1$ in its cotangent bundle. 
				
				These identifications map the function $q$ to the function induced by the vector field $z\frac{d}{dz}$ for $z$ the usual coordinate on $\mathbb{CP}^1$. The induced map $\tilde{U} \to \ZDol$ is an immersion.

				Applying the action of $\Z$ gives neighborhoods $\basicW_k$ of each component of $q^{-1}(0) \subset \basicW$. Repeating the same construction for the product $\basicW^n$, we obtain for each $\Bx \in \Z^n$ an open neighborhood $\tilde{\basicW}_{\Bx}$ of $(\mathbb{CP}^1)^n_{\Bx}$ in $\basicW^n$, isomorphic to $T^*(\mathbb{CP}^1)^n$. This neighborhood is preserved by the (complex hamiltonian) action of $T_\R$. Consider its complex symplectic reduction $$\algcov_{\Bx} := \basicW_{\Bx} /\!\!/_{\tilde{\zeta}'} T $$
				It is an open neighborhood of $\mathfrak{X}_{\Bx}$ in $\algcov$, naturally symplectomorphic to $T^*\mathfrak{X}_{\Bx}$. Intersecting with $\Dolcov \subset \algcov$, we obtain an open neighborhood $\Dolcov_\Bx$ of the zero section in $T^*\mathfrak{X}_\Bx$ mapping by a symplectic immersion
				\begin{equation} \label{eq:iotaimmersion} \iota_\Bx\colon \Dolcov_\Bx \to \Dol \end{equation} to an open subset of $\Dol$ extending the immersion $\mathfrak{X}_\Bx\to \Dol$ and a corresponding lift $\tilde\iota_\Bx\colon  \Dolcov_\Bx \to \Dolcov$, which is a symplectomorphism onto an open subset of $\Dolcov$. The set of such lifts is a torsor over $\mathfrak{g}^*_\Z$.

				\subsection{Scaling actions}
				The scaling $\C^*$-action on $T^*\mathfrak{X}_{\Bx}$ extends to an action of $\C^*$ on $\algcov$, which does not preserve $\Dolcov$. We first describe this action in the basic case of $\basicW$. Fix $p\in \Z$ and let $\mathbb{S}_p$ be the copy of $\C^*$ which acts on $\basicW_k$  giving $x$ degree $1-k+p$ and $y$ degree $k-p$. One can easily check on that this action descends to an action on $\basicW$ and gives the Poisson bracket degree one. On $\basicW_p\cup \basicW_{p+1}\cong T^*\mathbb{CP}^1$, it acts by the scaling action on the fibers. Note that $\mathbb{S}_p$ does not preserve the open subset $\ZDolcov \subset \basicW$. 
				
				The action of $\mathbb{S}_p\times \mathbb{T}$ does not commute with the translation action of $\Z$. Instead, the $\Z$-action intertwines the actions of $\mathbb{S}_p\times \mathbb{T}$ for different $p$. In particular, all such actions are given by precomposing an isomorphism $\mathbb{S}_p\times \mathbb{T} \to \mathbb{S}_0 \times \mathbb{T}$ with the action of the latter torus on $\basicW$.

				We can upgrade all these structures to the general case:  for each $\Bx$, we have a copy $\mathbb{S}_{\Bx}$ of $\C^*$ which acts on $\algcov$ such that on $\algcov_{\Bx} \subset T^*\mathfrak{X}_{\Bx}$ it matches the scaling action. As before, these actions do not commute with the $\mathfrak{g}^*_\Z$-action. Instead, they are intertwined by this action. In particular, all such actions factor through an isomorphism $\mathbb{S}_\Bx \times G \to \mathbb{S}_0 \times G$ with the action of the latter torus on $\algcov$. We make the following (purely notational) definition, to emphasise this independence of choices.
				
				\begin{definition}
					Let $\mathbb{S}G := \mathbb{S}_0 \times G$, with its action on $\algcov$.
				\end{definition}

				\subsection{Other flavors of multiplicative hypertoric manifold}
				
				In this paper, starting from the data of an embedding of tori $T \to \mathbb{G}_m^n$, we have constructed both an additive hypertoric variety $\mathfrak{M}$ and a Dolbeault hypertoric manifold $\Dol$. We view the latter as a multiplicative analogue of $\mathfrak{M}$. One can attach to the same data another, better known multiplicative analogue $\Bet$, which however plays only a motivational role in this paper.  For a definition, see \cite{Ganev18}. $\Bet$ is often simply known as a multiplicative hypertoric variety. For generic parameters, it is a smooth affine variety, of the same dimension as $\mathfrak{M}$ and $\Dol$. In fact, work of Zsuzsanna Dancso, Vivek Shende and the first author \cite{dancso2019deletioncontraction} constructs a smooth open embedding $\Dol \to \Bet$, such that $\Bet$ retracts smoothly onto the image. The embedding does not, however, respect complex structures; for instance, the complex Lagrangians considered here map to real submanifolds of the multiplicative hypertoric variety. Instead, $\Bet$ and $\Dol$ play roles analogous to the Betti and Dolbeault moduli of a curve.
				
				In the sequel \cite{GMW} to this paper, joint with Ben Gammage, we show that the core $\mathfrak{C} \subset \Dol$ becomes the Liouville skeleton of $\Bet$, thought of as a Liouville manifold with respect to the affine Liouville structure. Microlocal sheaves on this skeleton compute the wrapped Fukaya category of $\Bet$. In the next section, we will introduce a category of deformation quantization modules on $\Dol$, which roughly corresponds to microlocal sheaves on $\Bet$ with an extra $\mathbb{G}_m$ equivariant structure. This helps place our main results in the usual context of homological mirror symmetry. The relationship between the two papers is explained in more detail in \cite{GMW}.

				\subsection{Deformation quantization of \texorpdfstring{$\Dol$}{Y}} \label{sec:defquant}
				In the next few sections, we define a deformation quantization of $\Dol$ over $\C((\hbar^{\nicefrac{1}{2}}))$, and compare modules over this quantization with the category $A^{\la}_{\K}\mmod_o$ from the first half of the paper.  We'll also discuss how the structure of $\mathbb{G}_m$-equivariance of coherent sheaves can be recaptured by considering a category $\mhm$ of deformation quantization modules equipped with the additional structure of a `microlocal mixed Hodge module.'  
				
				Consider the sheaf of analytic functions $\EuScript{O}_{\basicW_n}$ on $\basicW_n$.  We'll endow the sheaf $\EuScript{O}_{\basicW_n}^\hdef:=\EuScript{O}_{\basicW_n}((\hdef^{\nicefrac{1}{2}}))$ with the Moyal product multiplication \[f\star g :=fg+\sum_{n=1}^{\infty}\frac{\hdef^n}{2^nn!}\big(\frac{\partial^n f}{dx^n}\frac{\partial^n g}{dy^n}-\frac{\partial^n g}{dx^n}\frac{\partial^n f}{dy^n}\big).\]  Note that if $f$ or $g$ is a polynomial this formula only has finitely many terms, but for a more general meromorphic function, we will have infinitely many.  Following the conventions of \cite{BLPWquant}, we let $\EuScript{O}_{\basicW_n}^\hdef(0)=\EuScript{O}_{\basicW_n}[[\hdef^{\nicefrac{1}{2}}]]$, which is clearly a subalgebra.  We'll clarify later why we have adjoined a square root of $\hdef$.
				
				Sending $x\mapsto 1/y,y\mapsto xy^2$ induces an algebra automorphism of this sheaf on the subset $\basicW_n\setminus \{xy=0\}$, since 
				\[\frac 1y\star xy^2= xy+\frac{\hdef}2\qquad xy^2\star \frac1y=xy-\frac{\hdef}{2}. \] 
				This shows that we have an induced star product on the sheaf $\EuScript{O}^\hdef_{\basicW}$, and thus on $\EuScript{O}^\hdef_{\ZDol}$.  We now use non-commutative Hamiltonian reduction to define a star product on $\EuScript{O}^\hdef_{\Dol}$. This depends on a choice of non-commutative moment map $\kappa_\hbar : \ft \to \EuScript{O}^\hdef_{\ZDol^n}$. We fix $\phi\in \fd^*$. Given $(a_1, \ldots, a_n) \in \fd$, define 
				$$\kappa_\hbar(a_1,\dots, a_n) := \sum a_ix_iy_i+\hdef\phi(\Ba).$$ Our quantum moment map is the restriction of $\kappa_\hbar$ to $\ft \subset \fd$. Note that this agrees mod $\hdef$ with the pullback of functions from $\ft^*$ under $\Phi$. 
				
				Let $\EuScript{C}_\phi=\EuScript{O}^\hdef_{\ZDol^n}/\EuScript{O}^\hdef_{\ZDol^n}\cdot \kappa_{\hdef}(\ft)$ be the quotient of $\EuScript{O}^\hdef_{\ZDol^n}$ by the left ideal generated by these functions.  Note that this is supported on the subset $\Phi^{-1}(T^\vee_{\R}\times \{0\})$. We have an endomorphism sheaf $\sEnd(\EuScript{C}_\phi)$ of this sheaf of modules over $\EuScript{O}^\hdef_{\ZDol^n}$.  
				
				\begin{definition} Let $\EuScript{O}^\hdef_\phi$ be the sheaf of algebras on $\Dol$ defined by restricting $\sEnd(\EuScript{C}_\phi)$ to $\Phi^{-1}(\zeta')$ and pushing the result forward to $\Dol$.
				\end{definition}

				One can easily check, as in \cite{KR07}, that $\EuScript{O}^\hdef_\phi$ defines a deformation quantization of $\Dol$, that is, this sheaf is free and complete over $\C[[\hdef]]$, we have an isomorphism of algebra sheaves $\EuScript{O}^\hdef_\phi(0)/\hdef \EuScript{O}^\hdef_\phi(0)\cong \structuresheafcal_{\Dol}$, and given two meromorphic sections $f,g$, we have
				\[ f\star g-g\star f \equiv \hdef\{f,g\} \pmod {\hdef}.\]
				
				\subsection{\texorpdfstring{$G$}{G}-equivariant modules}
				By a $\EuScript{O}^\hdef_\phi$-module, we will always mean a sheaf $\sM$ of $\EuScript{O}^\hdef_\phi$-modules which admits a good lattice $\sM(0) \subset \sM$. 
				
				By construction, the map $\kappa_\hbar : \fd \to \EuScript{O}^\hdef_{\ZDol^n}$ descends to map $\mathfrak{g} \to \EuScript{O}^\hdef_\phi$, which quantizes the moment map for the action of $G$ on $\Dol$.
				
				\begin{definition} \label{def:preGeq}
					We call a $\EuScript{O}_{\phi}^\hdef$-module {\bf pre-weakly $G$-equivariant} if the action of $\mathfrak{g}$ via left multiplication by $\hbar^{-1}\kappa_{\hbar}$ on the sections on any $G$-invariant open set is locally finite, i.e. it is spanned by its generalized weight spaces for this torus.  
				\end{definition}
				A pre-weak equivariant structure can be upgraded to a weak equivariant structure as follows: we can assume that $\sM$ is indecomposable, so all weights appearing are in a single coset of the character lattice of $G$.  We can take the semi-simple part of the action of each element of $\mathfrak{g}$, and globally shift by a character of the Lie algebra to make all weights appearing integral.  The resulting action integrates to a weak $G$-equivariant structure (but we do not want to fix a specific one); we call such an action compatible with the $\EuScript{O}_{\phi}^\hdef$-module structure.  Note that pre-weakly $G$-equivariant modules are a Serre subcategory. 
				\begin{lemma} \label{lem:supportofGeq}
					Any pre-weakly $G$-equivariant module $\sM$ is supported on $q_{\Dol}^{-1}(0)$.
				\end{lemma}
				\begin{proof}
					Given any non-zero $X\in \mathfrak{g}$, consider the action of $\hbar^{-1}k:=\hbar^{-1}\kappa_{\hbar}(X)$ on $\sM(U)$ for $U$ a $G$-equivariant open subset.  By the assumption of local finiteness, for each $m\in \sM(U)$, there is a monic  polynomial $p(u)=u^d+p_{d-1}u^{d-1}+\cdots + p_0\in \C[u]$ such that $p(\hbar^{-1}k)m=0$.  
					
					If $U\cap q_{\Dol}^{-1}(0)=\emptyset$, then $k$ is invertible in $\EuScript{O}_{\phi}^\hdef(0)$, and so we have 
					\[m=\hbar(-p_{d-1} k^{-1}-\cdots - p_0p_{d-1}\hbar^{d-1} k^{-d})m.\] Thus, for any choice of good lattice $\sM(0)\subset \sM$, we have $\sM(0) (U)\subset \hbar \sM(0)(U)$.  Nakayama's lemma then implies that $\sM(0)(U)=0$, so $\sM(U)=0$.
				\end{proof}
				
				Unfortunately, the action of $\mathbb{S}G$ on $\algcov$ does not preserve $\Dolcov$. We can nevertheless speak of $\mathbb{S}G$-equivariance on $\Dolcov$ and $\Dol$, as follows.
				
				Let $\sM$ be a pre-weakly $G$-equivariant $\EuScript{O}^\hdef_\phi$-module. Let $\nu^* \sM$ be the pullback of this module to $\Dolcov$.  We write $(\nu^*\sM)^{\operatorname{alg}}$ for the pushforward of $\nu^*\sM$ along the inclusion $\Dolcov \to \algcov$. Note that by Lemma  \ref{lem:supportofGeq}, $\nu^* \sM$ is supported on $\widetilde{\mathfrak{C}} \subset \Dolcov$, and this subset remains closed in $\algcov$. Thus the support is not enlarged.
				
				Fix $\Bx_0\in \gradedchambers$. 
				
				\begin{definition}
					A {\bf pre-weakly $\mathbb{S}G$-equivariant} structure on a pre-weakly $G$-equivariant $\EuScript{O}^\hdef_\phi$-module $\sM$ is an  action of the Lie algebra $\lieS_{\Bx_0})$ commuting with $\mathfrak{g}$ which integrates to an equivariant structure for $\mathbb{S}_{\Bx_0}$ on $(\nu^*\sM)^{\operatorname{alg}}$.  
					
					We write  $\SGmod$ for the category of such modules. Since a homomorphism between pre-weakly $\mathbb{S}G$-equivariant modules is $\lieS_{\Bx_0})$-equivariant, multiplication by $\hbar$ is not a morphism in this category, so this category is $\C$-linear, not $\C((\hbar))$-linear.  
				\end{definition}

				As with pre-weakly $G$-equivariant modules, after making some auxiliary choices, we can endow a pre-weakly $\mathbb{S}G$-equivariant-module with a `compatible' action of the torus $\mathbb{S}G$, which integrates the semisimple part of (a shift of) the infinitesimal action. 
				\begin{lemma} \label{lem:independenceofBx}
					Let $\sM \in \SGmod$. Fix a compatible action of $\mathbb{S}G$. The action of $\C^*$ on $\nu^*\sM^{\operatorname{alg}}$ induced by the composition $\C^* \cong \mathbb{S}_\By \to \mathbb{S}G$ does not depend on the $G$-equivariant structure, up to isomorphism.
				\end{lemma}
				\begin{proof}
					Again, we can reduce to the case where $\sM$ is indecomposable.  By construction, any two compatible $G$-equivariant structures on $\sM$ differ by tensor product with a character of the group $G$, so the induced $\mathbb{S}_{\By}$ structures differ by tensor product with a character of $\mathbb{S}_{\By}$, which we can think of as the integer weight $w$.  Since $\hbar$ has weight 1 under $\mathbb{S}_{\By}$, multiplication by $\hbar^w$ intertwines these two actions, and gives an isomorphism between the two $\mathbb{S}_{\By}$-equivariant structures.  
				\end{proof}

				\subsection{The deformation quantization near a component of \texorpdfstring{$\mathfrak{C}$}{C}}
				
				Given $\phi\in \ft_\Q^*$, we can define a fractional line bundle $\lin_\phi$ on any quotient by a free $T$-action. The component $\mathfrak{X}_\Bx$ was defined by a free $T_\R$-action; by standard toric geometry, it also carries a canonical presentation as a free $T$-quotient. Applying this construction to $\mathfrak{X}_\Bx$ thus yields a bundle $\lin_{\phi, \Bx}$. If $\phi\in \ft^*_\Z$, the set of honest characters, then this is an honest line bundle; otherwise, it gives a line bundle over a gerbe, but we can still define an associated Picard groupoid, and thus a sheaf of twisted differential operators (TDO) on $\mathfrak{X}_\Bx$.  Let $\Omega_\Bx$ be the canonical line bundle on $\mathfrak{X}_\Bx$, and $\Omega_\Bx^{1/2}$ the half-density fractional line bundle. It is a classical fact that $\Omega_\Bx = l_{-\phi_0, \Bx}$ where $\phi_0$ is the sum of all $T$-characters of $\C^n$ induced by the map $T \to D$.
				
				We let $D_{\phi, \Bx}$ denote the TDO associated to the fractional
				line bundle $\lin_{\phi, \Bx}\otimes \Omega_\Bx^{1/2}$, and let
				$\micro_{\phi, \Bx}$ be its microlocalization on $T^*\mathfrak{X}_\Bx$.
				That is, $\micro_{\phi, \Bx}$ is a sheaf in the classical topology on 
				$T^*\mathfrak{X}_\Bx$ whose sections on $T^*U$ for $U\subset \mathfrak{X}_\Bx$
				is the Rees algebra for the order filtration on $D_{\phi, \Bx}(U)$;
				for an open subset $V\subset T^*U$ (where we can assume WLOG that $U$
				is affine), we further invert any element of the Rees algebra
				whose image under the map $\micro_{\phi, \Bx}(U)/\hbar \micro_{\phi,
					\Bx}(U)\cong \EuScript{O}_{T^*U}(T^*U)$ is invertible on $V$. The construction of this algebra is discussed in more detail in \cite[\S 4.1]{BLPWquant}.  We'll be more interested in its localisation:
				\begin{definition}
					$$\microh_{\phi,\Bx}:=\micro_{\phi, \Bx}[\hbar^{-\nicefrac{1}{2}}].$$
				\end{definition}
				
				If we equip a module
				$\mathcal{M}$ over
				the TDO $D_{\phi, \Bx}$ with a good filtration, which for technical reasons we'll index with $\frac{1}{2}\Z$, its Rees
				module $\mathcal{M}(0)$ generated by $\hbar^{-k}\mathcal{M}_{\leq k}$ for $k\in \frac{1}{2}\Z$ is a coherent module over the Rees algebra (we can use this as a definition of good filtration).  That is, it is a coherent sheaf of
				$\micro_{\phi, \Bx}$-modules, equipped with a $\C^*$-equivariant structure for
				the squared scaling $\C^*$-action (or equivalently, a grading of its sections
				on $T^*U$).  Inverting $\hbar$, we obtain a $\microh_{\phi,\Bx}$-module
				$\sM=\mathcal{M}(0)[\hbar^{-\nicefrac{1}{2}}]$ which is independent of the choice of good
				filtration, which is {\bf good} in the sense of \cite[\S
				4]{BLPWquant}, that is, it admits a coherent,  $\C^*$-equivariant $\micro_{\phi, \Bx}$-lattice.
				By \cite[Prop. 4.5]{BLPWquant}, this is an
				equivalence between coherent $D_{\phi, \Bx}$-modules and good
				$\microh_{\phi, \Bx}$-modules.  
				
				\begin{theorem}\label{thm:micro-iso}
					We have an isomorphism of algebra sheaves $\iota_\Bx^*\EuScript{O}^\hdef_\phi\cong \microh_{\phi, \Bx}|_{\Dolcov_\Bx}$.
				\end{theorem}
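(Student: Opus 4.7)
The plan is to realize both sides of the claimed isomorphism as the output of compatible quantum Hamiltonian reductions of a common parent, and match them chart by chart.

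First, set up the classical geometry near $\frak{X}_\Bx$. The component $\frak{X}_\Bx$ is the $T_\R$-reduction of a $T_\R$-stable component $C_\Bx$ of $\Phi_+^{-1}(\zeta)\cap \frak{Z}_0^n$ inside the Lagrangian $(\mathbb{CP}^1)^n \subset \frak{Z}_0^n \subset \ZDol^n$. A $T$-equivariant version of the complex analytic Weinstein theorem, applied factor by factor, produces a $T$-equivariant symplectomorphism $\psi\colon V \to \tilde V$, where $V$ is a $T$-invariant open neighborhood of $C_\Bx$ in $\ZDol^n$ and $\tilde V$ is an open neighborhood of the zero section in $T^*(\mathbb{CP}^1)^n$. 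The Delzant presentation $\frak{X}_\Bx \cong (\mathbb{CP}^1)^n /\!/_\zeta T$ exhibits $U_\Bx$ as the classical reduction of $\tilde V$ by $T$, and this reduction coincides with the neighborhood of $\frak{X}_\Bx$ in $\frak{Y}$ obtained from $V$.

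Second, upgrade $\psi$ to an isomorphism of deformation quantizations $\psi^{-1*}\EuScript{O}^\hdef_{\ZDol^n}|_V \cong \mathcal{E}_{(\mathbb{CP}^1)^n}|_{\tilde V}$, where the right-hand side is the microlocal sheaf of (untwisted) differential operators. On each affine chart $\frak{U}_n$ of $\ZDol^n$ with coordinates $(x,y)$, the Moyal product coincides under the substitution $x\leftrightarrow z$, $y\leftrightarrow \hdef\partial_z$ with the Weyl algebra on $T^*\C$, so the two quantizations agree locally. The global isomorphism then follows from the classification of deformation quantizations of a complex symplectic manifold by a class in $H^2$: both classes are determined by the chart data and hence agree.

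Third, match the quantum moment maps and perform the reduction. Under $\psi$ the non-commutative moment map $\kappa_\hdef(\Ba)=\sum a_i x_i y_i + \hdef\phi(\Ba)$ corresponds to $\hdef\bigl(\sum a_i z_i\partial_i + \phi(\Ba)\bigr)$, the normal-ordered quantum moment map shifted by $\hdef\phi$. Quantum Hamiltonian reduction by $T$ at $\kappa_\hdef$ yields, on one hand, $\iota_\Bx^*\EuScript{O}^\hdef_\phi$ by construction of $\EuScript{O}^\hdef_\phi$, and on the other the microlocalization of a twisted differential operator algebra on $\frak{X}_\Bx$. The normal-ordered convention $\sum a_i z_i\partial_i$ differs from the symmetric convention $\frac{1}{2}\sum a_i(z_i\partial_i+\partial_iz_i)$ by the half-sum-of-weights shift $\frac{1}{2}\phi_0$ with $\phi_0=\sum \epsilon_i|_T$; since $\Omega_\Bx \cong \lin_{-\phi_0,\Bx}$, this extra shift is exactly the twist by $\Omega_\Bx^{1/2}$. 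Thus the reduction produces the microlocalization of the TDO attached to $\lin_{\phi,\Bx}\otimes \Omega_\Bx^{1/2}$, which is by definition $\mathcal{E}_{\phi,\Bx}|_{U_\Bx}$.

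The main obstacle is the quantum upgrade in the second step: classical equivariant symplectomorphisms do not automatically lift to isomorphisms of star products, and one must also ensure that the lift intertwines the quantum moment maps. One handles this either by explicit chart-by-chart matching together with a cohomological gluing argument, or by invoking a uniqueness statement for $T$-equivariant deformation quantizations on Stein opens. A secondary delicate point is the careful bookkeeping in step three for the half-sum-of-weights shift producing $\Omega_\Bx^{1/2}$: a miscounted factor of $\frac{1}{2}$ would yield the wrong TDO.
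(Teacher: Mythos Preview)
Your strategy matches the paper's: establish the isomorphism upstairs on $\ZDol^n$ near the relevant product of $\mathbb{CP}^1$'s, then descend by quantum Hamiltonian reduction and read off the twist. Where you leave the quantum lift to a classification-by-period argument, the paper takes exactly your first proposed fix and writes the isomorphism down explicitly on the pair of charts $\frak{U}_0,\frak{U}_1$ covering a single $\mathbb{CP}^1\subset\widetilde\ZDol$: send $x^{(1)}\mapsto z$, $y^{(1)}\mapsto \hbar\,d/dz$, and verify directly that under the gluing $y^{(0)}=1/x^{(1)}$, $x^{(0)}=y^{(1)}(x^{(1)})^2$ one obtains $y^{(0)}\mapsto 1/z$, $x^{(0)}\mapsto \hbar z^2\,d/dz$, the standard transition on $T^*\mathbb{CP}^1$. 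The product case follows factor by factor, so no cohomological gluing or uniqueness statement is invoked. For the reduction step (your step three), the paper computes the image of $\kappa_\hbar$ under this explicit map and then cites \cite{BLPWquant}, Propositions~3.16 and~4.4, for the compatibility of microlocal TDOs with symplectic reduction and the period--twist dictionary, rather than arguing via an ordering-convention comparison.

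One caution on your bookkeeping: under the explicit map the Moyal function $x_iy_i$ goes to $\hbar z_i\partial_i-\hbar/2$, not $\hbar z_i\partial_i$ as you write. That missing $-\hbar/2$ per factor is exactly the contribution which, after reduction, produces the $\Omega_\Bx^{1/2}$. You arrive at the correct endpoint, but the intermediate formula misplaces precisely the factor of $\tfrac{1}{2}$ you warn about at the end.
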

				\begin{proof}
					First, we check that this holds in the base case, i.e. when $\Dol = \ZDol$. It is convenient to check this on the universal cover $\ZDolcov$. By the $\Z$-symmetry of the latter, it is enough to check for a single component of the core. Hence, consider the copy of $\mathbb{CP}^1$ in the union of $\basicW_0\cup \basicW_1$.  Using superscripts to indicate which $\basicW_*$ we work on, we have birational coordinates $y^{(0)}=1/x^{(1)}$ and $x^{(0)}=y^{(1)}(x^{(1)})^2$.  We thus have an isomorphism of $\basicW_0\cup \basicW_1$ to $T^*\mathbb{CP}^1$ with coordinate $z$ and dual coordinate $\xi$ sending 
					\[x^{(1)}\mapsto z \qquad y^{(1)} \mapsto \xi \qquad x^{(0)}\mapsto z^2 \xi \qquad y^{(0)} \mapsto \frac{1}{z}\]
					We can quantize this to a map from $\EuScript{O}^\hdef_{\basicW}$ to $\micro_{\phi}$ by the corresponding formulas:
					\[x^{(1)}\mapsto z \qquad y^{(1)} \mapsto \hbar\frac{d}{dz}  \qquad x^{(0)}\mapsto \hbar  z^2\frac{d}{dz} \qquad y^{(0)} \mapsto \frac{1}{z}.\]
					
					This induces an isomorphism of sheaves, which in turn restricts to an isomorphism $\iota_\Bx^* \EuScript{O}^\hdef_{\mathfrak{Z}} \to \microh_{\phi, \Bx}|_{\ZDol_0}$.  Note that under this isomorphism, $q=x^{(1)}y^{(1)}\mapsto  \hbar z\frac{d}{dz}-\hbar/2$.

					To proceed to the general case, we consider $\ZDolcov^n$ and its quantized $T$-moment map $\kappa_{\hbar}$. Fix as above an open subset of isomorphic to $(T^*\mathbb{P}^1)^n$. Applying the above morphism to the image of $\kappa_{\hbar}$, we obtain the following.
					\[\kappa_{\hbar}(a_1,\dots, a_n)\mapsto \sum_i a_iz_i\frac{d}{dz_i}-\frac{\hbar}{2}+\hbar \phi(\Ba)= \sum_i \frac{a_i}{2}(z_i\frac{d}{dz_i}+\frac{d}{dz_i}z_i)+\hbar \phi(\Ba).\]
					The result then follows from the compatibility of twisted microlocal differential operators with symplectic reduction as in \cite[Prop. 3.16]{BLPWquant}.  We can identify the twist of a TDO from its period by \cite[Prop. 4.4]{BLPWquant}.
				\end{proof}
				
				Thus, given a $\EuScript{\tilde{O}}^\hdef_\phi$-module $\ssM$, we can pull it
				back to an $\microh_{\phi,\Bx}|_{\Dolcov_\Bx}$-module $\ssM|_{\Dolcov_{\Bx}}$ on
				$\Dolcov_\Bx \subset T^*\mathfrak{X}_{\Bx}$. 
				
				If we additionally choose a
				$\mathbb{S}_{\Bx}$-equivariant structure which makes $\ssM|_{\Dolcov_{\Bx}}$
				into a good module, then the equivalence of \cite[Prop. 4.5]{BLPWquant} will
				give a corresponding module over the TDO $D_{\phi, \Bx}$, with a
				choice of good filtration.
				\begin{definition}
					Given $\sM \in \SGmod$, let $\sigma_{\Bx}(\ssM) \in D_{\phi, \Bx} \mmod$ be the module defined as above for some choice of $\mathbb{S}_\Bx$-equivariant structure.
				\end{definition}
				The resulting $D$-module does not depend on the choice of compatible $\mathbb{S}G$-equivariant structure, by Lemma \ref{lem:independenceofBx}.  It does carry a good filtration which depends on this choice, but only up to a shift on each indecomposable summand of $\sigma_\Bx(\sM)$.

				The modules $\sigma_\Bx(\ssM)$ for different $\Bx$ are compatible in the following sense: 
				As discussed previously, the intersection $\mathfrak{X}_\Bx\cap \algcov_{\By}$ is precisely the conormal bundle $N_{\Bx,\By}=N^*_{\mathfrak{X}_{\By}}(\mathfrak{X}_\Bx\cap \mathfrak{X}_{\By})$ to $\mathfrak{X}_\Bx\cap \mathfrak{X}_{\By}$ in $T^*\mathfrak{X}_{\By}$.  Thus   the intersection $ \algcov_{\Bx,\By}=\algcov_\Bx\cap \algcov_\By$ can be identified with $T^*(N_{\Bx,\By})$ or swapping the roles of $\Bx,\By$ with $T^*(N_{\By,\Bx})$.
				
				Since the vector bundles $N^*_{\mathfrak{X}_{\By}}(\mathfrak{X}_\Bx\cap \mathfrak{X}_{\By})$ and $N^*_{\mathfrak{X}_{\Bx}}(\mathfrak{X}_\Bx\cap \mathfrak{X}_{\By})$ are dual, so Fourier transform $\mathcal{F}_{\By,\Bx}$ gives an equivalence between the categories of pre-weakly $G$-equivariant D-modules on these spaces, and between constructible sheaves with $\mathbb{R}$-coefficients, which are compatible with respect to the solution functor.  By construction, we thus have 
				\begin{equation}\label{eq:Fourier1}
					\mathcal{F}_{\By,\Bx}\sigma_\Bx(\ssM)|_{N_{\Bx,\By}}\cong \sigma_\By(\ssM)|_{N_{\By,\Bx}}.
				\end{equation} 
				\subsection{Preliminaries on the Ext-algebra of the simples}
				Assume that $\phi$ is chosen so that $\lin_\phi\otimes \Omega^{1/2}$ is an honest line bundle for all $\Bx$.  From now on, we use the abbreviations $\microh_\Bx := \microh_{\phi, \Bx}$, $D_\Bx := D_{\phi, \Bx}$ and $\lin_\Bx := \lin_{\phi, \Bx}$, since the dependence on $\phi$ will not play any further role in this paper.
				
				\begin{remark} Recall that $\Omega_\Bx^{1/2}$ equals $\lin_{-\phi_0/2, \Bx}$ where $\phi_0$ is the sum of all $T$-characters induced by the embedding $T \to D$. Thus our assumption will be satisfied whenever $\phi(\Ba)\in \Z+\frac{1}{2} \sum a_i$ for all $\Ba\in \ft_\Z$. For example, we can let $\phi$ be the restriction of the element $(\frac{1}{2},\cdots, \frac{1}{2})\in \mathfrak{d}^*$.   Nothing we do will depend on this choice; in fact, the categories of $\EuScript{O}^\hdef_{\phi}$-modules for $\phi$ in a fixed coset of $\ft_\Z^*$ are all equivalent via tensor product with quantizations of line bundles on $\Dol$ (as in \cite[\S 5.1]{BLPWquant}), so our calculations will be independent of this choice.
				\end{remark}
				
				In this case, the sheaf $\microh_\Bx$  naturally acts on  $\sL_\Bx' := \lin_{\Bx}\otimes \Omega_\Bx^{1/2}((\hdef))$ as a sheaf on $\mathfrak{X}_\Bx$  pushed forward into $\Dolcov_\Bx$; under the equivalence of \cite[Prop. 4.5]{BLPWquant} mentioned above, this corresponds to the twisted D-module $\lin_{\Bx}\otimes \Omega_\Bx^{1/2}$.    
				Of course, this sheaf is equivariant for the  action of $\mathbb{S}_{\Bx}$, and pre-weakly $G$-equivariant.

				Via the maps \[\iota_\Bx : \Dolcov_\Bx \to \Dol\qquad \tilde{\iota}_\Bx : \Dolcov_\Bx \to \Dolcov\] we can define modules over $\EuScript{O}^\hdef_\phi$ and $\EuScript{\tilde{O}}^\hdef_\phi$:
				\begin{definition}
					Let $\sL_\Bx=\iota_*\sL_\Bx'$ and $\tsL_\Bx=\tilde{\iota}_*\sL_\Bx'$.
				\end{definition} 
				Using $\mathbb{S}_{\Bx}$-equivariance, and the pre-weak $G$-equivariant of this module, we obtain a twisted D-module $\sigma_{\By}\tsL_\Bx$.
				Recall that we have a universal cover map $\nu : \Dolcov \to \Dol$.
				\begin{proposition}
					We have isomorphisms $\nu_*\tsL_\Bx\cong \sL_\Bx$, and $\nu^*\sL_\Bx\cong \oplus_{\Bz\in \mathfrak{g}_\Z^*} \tsL_{\Bx+\Bz}$.
				\end{proposition}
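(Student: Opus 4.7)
The plan is to exploit the fact that $\nu\colon \widetilde{\frak{Y}}\to\frak{Y}$ is a regular \'etale covering with deck group $\fg^*_\Z$, and that the deck transformation by $\Bz\in\fg^*_\Z$ sends the core component $\frak{X}_\Bx$ of $\widetilde{\frak{C}}$ to $\frak{X}_{\Bx+\Bz}$. I would further check that this deck action is compatible with the chosen Weinstein lifts: if $\tilde{\iota}_\Bx$ is a fixed lift of $\iota_\Bx$, then composing with translation by $\Bz$ produces another valid lift which, after identifying $U_\Bx\cong U_{\Bx+\Bz}$ through the canonical toric isomorphism $\frak{X}_\Bx\cong\frak{X}_{\Bx+\Bz}$ (the two polytopes differ only by a lattice translation), agrees with $\tilde{\iota}_{\Bx+\Bz}$. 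Shrinking $U_\Bx$ if necessary, I would arrange that the translates $\tilde{\iota}_{\Bx+\Bz}(U_{\Bx+\Bz})$ are pairwise disjoint and that each maps isomorphically onto $\iota_\Bx(U_\Bx)$ under $\nu$.

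For the first claim, $\nu\circ\tilde{\iota}_\Bx=\iota_\Bx$ and $\nu$ restricts to an open embedding on $\tilde{\iota}_\Bx(U_\Bx)$, so
\[\nu_*\tsL_\Bx \;=\; \nu_*\tilde{\iota}_{\Bx,*}\sL_\Bx' \;=\; \iota_{\Bx,*}\sL_\Bx' \;=\; \sL_\Bx.\]
For the second, the preimage decomposes as $\nu^{-1}(\iota_\Bx(U_\Bx))=\bigsqcup_{\Bz\in\fg^*_\Z}\tilde{\iota}_{\Bx+\Bz}(U_{\Bx+\Bz})$ with each component mapping isomorphically to $U_\Bx$, so flat base change along the \'etale map $\nu$ gives
\[\nu^*\sL_\Bx \;\cong\; \bigoplus_{\Bz\in\fg^*_\Z}\tilde{\iota}_{\Bx+\Bz,*}\bigl(\text{pullback of }\sL_\Bx'\bigr).\]

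The main technical point, which I expect to be the principal obstacle, is identifying the pullback of $\sL_\Bx'=\lin_\Bx\otimes\Omega_\Bx^{1/2}[[\hbar]]$ under the toric isomorphism $\frak{X}_{\Bx+\Bz}\xrightarrow{\sim}\frak{X}_\Bx$ with $\sL_{\Bx+\Bz}'$. This identification should follow because both fractional line bundles are produced by the associated bundle construction applied to the single character $\phi\in\ft^*$ via the free $T$-quotient presentations of the respective toric varieties, and this construction is equivariant under the $\fg^*_\Z$-translations, which preserve the $T$-action. The half-density twist $\Omega^{1/2}$ behaves similarly, as $\Omega_\Bx\cong\lin_{-\phi_0,\Bx}$ arises from the same construction applied to the character $-\phi_0$. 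A small auxiliary check is that under Theorem \ref{thm:micro-iso}, the DQ-algebras $\iota_\Bx^*\EuScript{O}^\hbar_\phi$ and $\tilde{\iota}_{\Bx+\Bz}^*\EuScript{O}^\hbar_\phi$ are identified compatibly with the deck action, which follows from the locality of the construction of $\EuScript{O}^\hbar_\phi$ by Hamiltonian reduction and its $\fg^*_\Z$-equivariance. Granted these identifications, each summand in the base-change formula becomes exactly $\tsL_{\Bx+\Bz}$, proving the second isomorphism.
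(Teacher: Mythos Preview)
The paper states this proposition without proof, treating both isomorphisms as immediate from the definitions $\sL_\Bx=\iota_*\sL_\Bx'$, $\tsL_\Bx=\tilde{\iota}_*\sL_\Bx'$ together with $\nu\circ\tilde{\iota}_\Bx=\iota_\Bx$ and the fact that the lifts $\tilde{\iota}_\Bx$ form a $\fg^*_\Z$-torsor. Your argument supplies exactly the details one would fill in: the first isomorphism is literally the functoriality of pushforward along the composite $\nu\circ\tilde{\iota}_\Bx=\iota_\Bx$, and the second is the standard decomposition of the pullback of a sheaf supported in a small open set along a covering map, combined with the observation that the deck translates of $\tilde{\iota}_\Bx$ are the lifts $\tilde{\iota}_{\Bx+\Bz}$. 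Your identification of $\sL'_\Bx$ with $\sL'_{\Bx+\Bz}$ under the toric isomorphism is correct and is precisely why the paper can treat this as obvious: the associated-bundle construction for $\lin_{\phi,\Bx}$ and $\Omega_\Bx^{1/2}$ depends only on the character $\phi$ (resp.\ $-\phi_0/2$) and the local $T$-quotient presentation, both of which are $\fg^*_\Z$-equivariant. So your approach is correct and matches what the paper leaves implicit.
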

				
				The first category we will consider on the A-side of our correspondence is $\Lcat$, the dg-subcategory of $\SGmod$ generated by $\sL_\Bx$ for all $\Bx$.  As observed before, since weakly $G$-equivariant modules form a Serre subcategory, any finite length object in this category is pre-weakly $G$-equivariant, and so we can define the D-modules $\sigma_{\By}(\ssM)$ for any module $M$ in this category.
				
				This has a natural $t$-structure, whose heart is an abelian category $\lcat$.   We similarly let $\widetilde{\Lcat}$ be the dg-subcategory generated by $\tsL_\Bx$, and $\widetilde{\lcat}$ the heart of the natural $t$-structure. This definition might seem slightly ad hoc, but we will later see that it is motivated by our notion of microlocal mixed Hodge modules.
				
				Since the Ext sheaf between $\tsL_\Bx$ and $\tsL_{\By}$ is supported on the intersection between $\mathfrak{X}_{\Bx}$ and $\mathfrak{X}_{\By}$, we have
				\[\Ext_{\widetilde{\Lcat}}(\tsL_\By,\tsL_\Bx)\cong \Ext_{\microh_\Bx|_{\Dolcov_\Bx}\mmod ^{\mathbb{S}}}(\tilde{\iota_\Bx}^* \tsL_\By, \sL_\Bx')\]
				
				In fact, if we replace $\tsL_\By$ by an injective resolution, we see that this induces a homotopy equivalence between the corresponding Ext complexes.  Since $\cL'_\Bx$ is supported on the zero section, Ext to it is unchanged by passing to an open subset containing this support and 
				\begin{equation}\label{eq:micro-ext}
					\Ext_{\widetilde{\Lcat}}(\tsL_\By,\tsL_\Bx)\cong \Ext_{D(\mathfrak{X}_{\Bx})}(\sigma_{\Bx}\tsL_\By,\lin_\Bx\otimes \Omega_\Bx^{1/2})
				\end{equation}
				where the latter Ext is computed in the category of D-modules on the toric variety $\mathfrak{X}_{\Bx}$.  
				In the toric variety $\mathfrak{X}_\Bx$, the preimage of the intersection with the image of $\mathfrak{X}_\By$ is a toric subvariety corresponding to the intersection of the corresponding chambers in $\textgoth{B}^{\operatorname{per}}_\zeta$.  
				\begin{lemma} \label{lem:functionmodule}
					The microlocalization $\sigma_{\Bx}\tsL_\By$ is the line bundle $\lin_\By\otimes \Omega_\By^{1/2}$ pulled back to $\mathfrak{X}_\Bx\cap \mathfrak{X}_\By$ and pushed forward to $\mathfrak{X}_\Bx$ as a $D_\Bx$-module.
				\end{lemma}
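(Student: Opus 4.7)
The plan is to work entirely in the Weinstein neighborhood $U_\Bx$ and reduce to a standard local model of two Lagrangians meeting with normal crossings. Under the isomorphism of Theorem \ref{thm:micro-iso}, it suffices to identify, as an $\mathcal{E}_\Bx$-module on $T^*\frak{X}_\Bx$, the pullback $\tilde{\iota}_\Bx^*\tsL_\By$ (followed by second microlocalization), in terms of $D_\Bx$-module data on $\frak{X}_\Bx$.

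First I would describe the geometric support. Let $Z_{\Bx,\By} := \frak{X}_\Bx\cap\frak{X}_\By$; by Proposition \ref{prop:corecharact}, $Z_{\Bx,\By}$ is a smooth toric subvariety corresponding to $\Delta^\R_\Bx\cap\Delta^\R_\By$, and the two Lagrangians meet transversely along it. Since $\frak{X}_\By$ is complex Lagrangian in $\widetilde{\frak{Y}}$ meeting $\frak{X}_\Bx$ transversely along $Z_{\Bx,\By}$, the preimage $\tilde{\iota}_\Bx^{-1}(\frak{X}_\By)\subset U_\Bx$ is a Lagrangian in $T^*\frak{X}_\Bx$ whose intersection with the zero section is exactly $Z_{\Bx,\By}$. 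After possibly shrinking $U_\Bx$, this Lagrangian coincides with the conormal bundle $T^*_{Z_{\Bx,\By}}\frak{X}_\Bx$: indeed, any Lagrangian in $T^*\frak{X}_\Bx$ that meets the zero section transversely along $Z_{\Bx,\By}$ and agrees with it to first order agrees with the conormal on a neighborhood by the complex analytic Weinstein theorem applied to $Z_{\Bx,\By}\hookrightarrow \frak{X}_\By$.

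Next I would use the standard local model. Pick a point $p\in Z_{\Bx,\By}$ and choose local symplectic coordinates on $\widetilde{\frak{Y}}$ near $p$ in which $\frak{X}_\Bx$ is the zero section of $T^*\C^d$ and $\frak{X}_\By$ is the conormal bundle of the coordinate subspace $\C^{d-k}\times 0$. In these coordinates the deformation quantization is the Moyal quantization and $\tsL_\By$ restricted to a Weinstein neighborhood of $\frak{X}_\By$ is (a twist of) the structure sheaf of $\frak{X}_\By$, which presents as $\EuScript{E}_{\C^d}/\sum \EuScript{E}_{\C^d}\cdot x_i$ for the last $k$ coordinates $x_i$. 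Second microlocalization (associated graded with respect to powers of the ideal sheaf of the zero section, as in \cite{Gin86}) converts this into the microlocalization of the $D$-module $\cO_{Z_{\Bx,\By}}$ pushed forward from $Z_{\Bx,\By}$ to $\frak{X}_\Bx$. By Kashiwara's theorem this identifies $\sigma_\Bx\tsL_\By$ as the pushforward of a line bundle from $Z_{\Bx,\By}$ to $\frak{X}_\Bx$, with characteristic variety $T^*_{Z_{\Bx,\By}}\frak{X}_\Bx$.

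Finally I would pin down the line bundle. The $\EuScript{E}_\Bx$-module structure encodes a twist by $D_\Bx$, i.e.\ by $\lin_\Bx\otimes\Omega_\Bx^{1/2}$; the additional line bundle contributed by $\tsL_\By$ is the restriction of $\lin_\By\otimes\Omega_\By^{1/2}$ to $Z_{\Bx,\By}$. Matching the two twists along $Z_{\Bx,\By}$ requires checking that the half-density factors combine correctly: using normal crossings, $\Omega_\By^{1/2}|_{Z_{\Bx,\By}}\otimes \Omega_{Z_{\Bx,\By}/\frak{X}_\Bx}^{1/2}$ produces the expected twist for a $D_\Bx$-module pushforward, so the surviving data on $Z_{\Bx,\By}$ is exactly $\lin_\By\otimes\Omega_\By^{1/2}|_{Z_{\Bx,\By}}$. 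The identification of TDO twists follows from the period computation of \cite[Prop.~4.4]{BLPWquant} already invoked in the proof of Theorem \ref{thm:micro-iso}.

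The main obstacle I expect is a bookkeeping one rather than a conceptual one: getting the fractional line bundles and half-density twists to match on the nose, particularly because the two Weinstein neighborhoods $U_\Bx$ and $U_\By$ equip $Z_{\Bx,\By}$ with two a priori different normal bundles that must be reconciled via the normal-crossings structure.
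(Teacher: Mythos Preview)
Your approach matches the paper's: reduce to a Weinstein neighborhood of $\frak{X}_\Bx$, identify the image of $\frak{X}_\By$ with the conormal $T^*_{Z_{\Bx,\By}}\frak{X}_\Bx$, and then invoke the standard fact that second microlocalization of a local system on $A$ at the conormal of a smooth subvariety $B$ yields the $D$-module pushforward of the pullback to $B$. The paper compresses your local-model computation into a single appeal to this general principle, and leaves the twist bookkeeping implicit; your more explicit treatment of the half-density factors is a reasonable elaboration.

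One point to tighten: your sentence ``any Lagrangian in $T^*\frak{X}_\Bx$ that meets the zero section transversely along $Z_{\Bx,\By}$ and agrees with it to first order agrees with the conormal on a neighborhood'' is not literally true (e.g.\ the graph of a closed $1$-form vanishing to first order along $Z_{\Bx,\By}$). The paper handles this by saying one can \emph{arrange} the Weinstein identification so that $\frak{X}_\By$ maps to the conormal; alternatively, note that second microlocalization only depends on the linearization at the zero section, so the discrepancy is irrelevant. Either fix closes the gap without altering your argument.
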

				\begin{proof}
					Consider the intersection of $\mathfrak{X}_{\By}$ with $\algcov_{\Bx}$.  This is a closed $\mathbb{S}_{\Bx}$-invariant Lagrangian closed subset, so it is the conormal to its intersection with the zero section $\mathfrak{X}_{\Bx}\cap \mathfrak{X}_{\By}$.  The D-module $\sigma_{\Bx}\tsL_\By$ has singular support on this subvariety, and thus must be a local system on $\mathfrak{X}_{\Bx}\cap \mathfrak{X}_{\By}$, which is necessarily $\lin_\By\otimes \Omega_\By^{1/2}$.
				\end{proof}
				Since $\mathfrak{X}_{\Bx}\cap \mathfrak{X}_{\By}$ is a smooth toric subvariety, the sheaf $\Ext$ between $\sigma_{\Bx}\tsL_\By$ and $\lin_\Bx\otimes \Omega_\Bx^{1/2}$ is $\mathbb{C}_{\mathfrak{X}_{\Bx}\cap \mathfrak{X}_{\By}}[-k]$ 
				where $k$ is the codimension of $\mathfrak{X}_{\Bx}\cap \mathfrak{X}_{\By}$ in $\mathfrak{X}_{\Bx}$. 
				This shows that we have an isomorphism 
				\begin{equation}
					\Ext_{\widetilde{\Lcat}}^m(\tsL_\By,\tsL_\Bx)\cong 
					\HH^{m-k}(\mathfrak{X}_\By\cap \mathfrak{X}_\Bx;\C).\label{eq:ext-coh}
				\end{equation}

				We will be interested in the class $\DDD_{\By,\Bx}$ in the left-hand space corresponding to the identity in $ \HH^*\big(\mathfrak{X}_\By\cap \mathfrak{X}_\Bx;\C\big)$.  Unfortunately, this is only well-defined up to scalar. We will only need the case where $|\Bx-\By|_1=1$. In this case, we can define $\DDD_{\By,\Bx}$ (without scalar ambiguity) as follows.
				
				Consider the inclusions $\mathfrak{X}_{\Bx}\setminus (\mathfrak{X}_\By\cap \mathfrak{X}_\Bx)\overset{j}{\hookrightarrow}\mathfrak{X}_{\Bx}\overset{i}{\hookleftarrow}\mathfrak{X}_\By\cap \mathfrak{X}_\Bx$ and the corresponding sequence of D-modules 
				\begin{equation} \label{eq:ext1}
					0\longrightarrow \structuresheaf_{\mathfrak{X}_{\Bx}} \longrightarrow 
					j_*\structuresheaf_{\mathfrak{X}_{\Bx}\setminus (\mathfrak{X}_\By\cap \mathfrak{X}_\Bx)}\longrightarrow 
					j_*\structuresheaf_{\mathfrak{X}_{\Bx}\setminus (\mathfrak{X}_\By\cap \mathfrak{X}_\Bx)} / \structuresheaf_{\mathfrak{X}_{\Bx}} 
					\longrightarrow 0.
				\end{equation}
				Any identification of the right-hand D-module with $i_!\structuresheaf_{\mathfrak{X}_\By\cap \mathfrak{X}_\Bx}$ defines a class $\DDD_{\By,\Bx} \in \Ext^1(\structuresheaf_{\mathfrak{X}_{\Bx}}, i_!\structuresheaf_{\mathfrak{X}_\By\cap \mathfrak{X}_\Bx}) = \Ext^1_{\widetilde{\Lcat}}(\tsL_\By,\tsL_\Bx) $.

				Such an identification is obtained by picking the germ of a function $g$ on $\mathfrak{X}_{\Bx}$ in the formal neighborhood of $\mathfrak{X}_\By\cap \mathfrak{X}_\Bx$ that vanishes on this divisor with order 1. Given such a function, the map $f\mapsto \tilde{f}/g$ where $\tilde{f}$ is an extension of a meromorphic function on $\mathfrak{X}_\By\cap \mathfrak{X}_\Bx$ to the formal neighborhood defines an isomorphism of D-modules $ i_!\structuresheaf_{\mathfrak{X}_\By\cap \mathfrak{X}_\Bx} \to j_*\structuresheaf_{\mathfrak{X}_{\Bx}\setminus (\mathfrak{X}_\By\cap \mathfrak{X}_\Bx)} / \structuresheaf_{\mathfrak{X}_{\Bx}} $.  
				
				We can arrange our choice of chart in $\ZDolcov^n$ so that $\mathfrak{X}_\By\cap \mathfrak{X}_\Bx$ is defined by the vanishing of one of the coordinate functions; note that in this case, $\mathfrak{X}_\By\cap \mathfrak{X}_\Bx$ is defined inside $\mathfrak{X}_{\By}$ by the vanishing of the symplectically dual coordinate function (eg, if the first is defined by the vanishing of $x_i$, then the latter will be defined by $y_i$).  We choose this as the function to define $\DDD_{\Bx,\By}$. 
				\begin{definition}
					For any $\Bx, \By$ such that $|\Bx-\By|_1=1$, let $\DDD_{\By,\Bx} \in \Ext_{\widetilde{\Lcat}}^1(\tsL_\By,\tsL_\Bx)$ be the class defined by the above prescription.
				\end{definition}

				\subsection{Mirror symmetry}
				We are almost ready to compare the first and second halves of this paper. First, we need to match the parameters entering into our constructions. Recall that $\Dol$ depends on a choice of generic stability parameter $\zeta \in T^{\duall}_{\R}$, 
				Likewise, the hypertoric enveloping algebra in characteristic $p$ depends on a central character $\la \in \mathfrak{t}^*_{\mathbb{F}_p}$. The algebra $\gradedHalg^{\la, !}$ which describes the $\Ext$ groups of its simple modules (Definition \ref{defHdual}) thereby also depends on $\la$.  In order to match $\zeta$ and $\la$, we identify $\mathfrak{t}^*_{\mathbb{F}_p}$ with $\mathfrak{t}^*_\Z / p \mathfrak{t}^*_\Z$ and thereby embed it in $T^{\duall}_{\R} = \mathfrak{t}^*_\R / \mathfrak{t}^*_\Z$ via $\la \mapsto \frac{1}{p} \la$. From now on we suppose that $\lambda$ is smooth, and that $\zeta$ is its image in $T^{\duall}_\R$. 
				
				It follows that $\textgoth{B}^{\operatorname{per}}_\zeta$ is the real form of $\textgoth{A}^{\operatorname{per}}_\la$, and their sets of chambers are naturally in bijection. Since the chambers of $\textgoth{A}^{\operatorname{per}}_\la$ index the simples of $A^\la_{\K}\mmod_o$ and the chambers of $\textgoth{B}^{\operatorname{per}}_\zeta$ index the simples of $\Lcat$, we obtain a bijection of simple objects. 
				
				We now show that the $\Ext$-algebras of these simples share an integral form. 
				
				\begin{theorem} \label{mainthm}
					We have isomorphisms of algebras \[\gradedHalg_{\zeta, \C}^!\cong \bigoplus_{\Bx, \By \in \gradedchambers(\zeta)}\Ext_{\widetilde{\Lcat}}\Big(\tsL_\Bx, \tsL_\By \Big)\qquad \degradedHalg_{\zeta, \C}^!\cong \bigoplus_{\Bx, \By \in \degradedchambers(\zeta)}\Ext_{{\Lcat}}\Big(\sL_\Bx, \sL_\By \Big)\] sending $d_{\Bx,\By}\mapsto \DDD_{\Bx,\By}$ when $\By\in \al_i(\Bx)$.
				\end{theorem}
				\begin{proof}
					
					We need to check that the rule $d_{\Bx,\By}\mapsto \DDD_{\Bx,\By}$ defines a homomorphism, i.e. that the relations (\ref{wall-crossbang}--\ref{doublestep}) hold in $\bigoplus_{\Bx, \By \in \gradedchambers(\zeta)} \Ext_{\widetilde{\Lcat}}\Big(\tsL_\Bx, \tsL_\By \Big)$.
					\begin{enumerate}
						\item The relation (\ref{wall-crossbang}) follows from the fact
						that when $|\Bx-\By|_1=1$, the element $\DDD_{\Bx,\By}\DDD_{\By,\Bx}$ is the class in
						$\HH^2\big(\mathfrak{X}_\Bx;\Q\big)$ dual to the divisor $\mathfrak{X}_{\Bx}\cap
						\mathfrak{X}_{\By}$, while the class $\mathsf{t}_i$ is defined by the Chern class of the corresponding line bundle, for which a natural section vanishes with order one on $\mathfrak{X}_{\Bx}\cap
						\mathfrak{X}_{\By}$ for $\By\in \alpha(i)$ and nowhere else.
						\item Note that the relations (\ref{codim1bang}) and (\ref{codim2bang}) equate two elements of the 1-dimensional space $\Ext^2(\sL_\Bx, \sL_\Bw) \cong \HH^0(\mathfrak{X}_x \cap \mathfrak{X}_w ; \C)$. Thus, we only need check that we have the scalars right, and this can be done after restricting to any small neighborhood where all the classes under consideration have non-zero image.
						
						Thus ultimately we can reduce to assuming $\mathfrak{X}_{\Bx}=\C^2$, and $\mathfrak{X}_{\By}$ and $\mathfrak{X}_{\Bw}$ are the conormals to the coordinate lines, and $\mathfrak{X}_{\Bz}$ the cotangent fiber over $0$.
						Let $r_1,r_2$ be the usual coordinates on $\C^2$, and $\partial_1,\partial_2$ be the directional derivatives for these coordinates.  Thus, we are interested in comparing the $\Ext^2$'s given by the sequences in the first and third row of the diagram below.  
						Both sequences are quotients of the free Koszul resolution in the second row: 
						\[\tikz[->,thick]{
							\matrix[row sep=14mm, column sep=30mm,ampersand replacement=\&]{
								\node[scale=1.1] (a1) {$\frac{D}{D\cdot (r_1,r_2)}$}; \& \node[scale=1.1] (b1) {$\frac{D_ {\partial_2}}{D_{ {\partial_2}}\cdot (r_1,r_2)}$}; \& \node[scale=1.1] (c1) {$\frac{D_{{\partial_1}}}{D_{ {\partial_1}}\cdot (r_1,\partial_2)}$}; \& \node[scale=1.1] (d1) {$\frac{D}{D\cdot (\partial_1,\partial_2)}$};\\
								\node[scale=1.1] (a2) {$D$}; \& \node[scale=1.1] (b2) {$D\oplus D$}; \& \node[scale=1.1] (c2) {$D$}; \& \node[scale=1.1] (d2) {$\frac{D}{D\cdot (\partial_1,\partial_2)}$};\\
								\node[scale=1.1] (a3) {$\frac{D}{D\cdot (r_1,r_2)}$}; \& \node[scale=1.1] (b3) {$\frac{D_{\partial_1}}{D_{ {\partial_1}}\cdot (r_1,r_2)}$}; \& \node[scale=1.1] (c3) {$\frac{D_{{\partial_2}}}{D_{ {\partial_2}}\cdot (r_2,\partial_1)}$}; \& \node[scale=1.1] (d3) {$\frac{D}{D\cdot (\partial_1,\partial_2)}$};\\
							};
							\draw (a1) -- node[scale=.75,midway, above]{$1\mapsto 1$}(b1) ; 
							\draw (b1) -- node[scale=.75,midway, above]{$\frac{1}{\partial_2}\mapsto 1$}(c1) ;
							\draw (c1) -- node[scale=.75,midway, above]{$\frac{1}{\partial_1}\mapsto 1$}(d1) ;
							\draw (a2) -- node[scale=.75,midway, above]{$1\mapsto \begin{bmatrix} \partial_2 \\ -\partial_1 \end{bmatrix}$}(b2) ; 
							\draw (b2) -- node[scale=.75,midway, above]{$\begin{bmatrix} a\\ b \end{bmatrix}\mapsto a\partial_1+b\partial_2$}(c2) ;
							\draw (c2) -- node[scale=.75,midway, above]{$1\mapsto 1$}(d2) ;
							\draw (a3) -- node[scale=.75,midway, above]{$1\mapsto 1$}(b3) ; 
							\draw (b3) -- node[scale=.75,midway, above]{$\frac{1}{\partial_1}\mapsto 1$}(c3) ;
							\draw (c3) -- node[scale=.75,midway, above]{$\frac{1}{\partial_2}\mapsto 1$}(d3) ;
							\draw (a2) -- node[scale=.75,midway, right]{$1\mapsto 1$}(a1) ; 
							\draw (a2) -- node[scale=.75,midway, right]{$1\mapsto -1$}(a3) ;
							\draw (b2) -- node[scale=.75,right,midway]{$\begin{bmatrix} a\\ b \end{bmatrix}\mapsto\frac{a}{\partial_2}$}(b1) ; 
							\draw (b2) -- node[scale=.75,right,midway]{$\begin{bmatrix} a\\ b \end{bmatrix}\mapsto\frac{b}{\partial_1}$}(b3) ;
							\draw (c2) -- node[scale=.75,right,midway]{$1\mapsto\frac{1}{\partial_1}$}(c1) ; 
							\draw (c2) -- node[scale=.75,right,midway]{$1\mapsto\frac{1}{\partial_2}$}(c3) ;
							\draw (d2) -- node[scale=.75,right,midway]{$\cong$}(d1) ; 
							\draw (d2) -- node[scale=.75,right,midway]{$\cong$}(d3) ;
						}\]
						The opposite signs in the leftmost column confirm that we have $\DDD_{\Bz,\Bw}\DDD_{\Bw,\Bx}=-\DDD_{\Bz,\By}\DDD_{\By,\Bx}$. Hence the elements $\DDD_{\Bx,\By}$ satisfy the relations  (\ref{codim1bang}--\ref{codim2bang}).
						\item The relations (\ref{doublestep}) follows from the fact that in this case $\mathfrak{X}_{\Bx}\cap \mathfrak{X}_{\Bz}=\emptyset$.
					\end{enumerate}
					Recall that the complex dimension of $e_\Bx \gradedHalg^!_{\zeta, \C} e_\By$ coincides with that of $\HH^*(\mathfrak{X}_{\Bx}\cap \mathfrak{X}_{\By};\C)$, as we discussed in Section \ref{interpretation}. Thus the spaces $e_\Bx \gradedHalg^!_{\zeta, \C} e_\By$ and $\HH^*(\mathfrak{X}_{\Bx}\cap \mathfrak{X}_{\By};\C)$ are vector spaces of the same rank. Thus, in order to show that our map is an isomorphism, it is enough to show that it is surjective. 
					
					By Kirwan surjectivity, the fundamental class generates $\HH^*(\mathfrak{X}_{\Bx}\cap \mathfrak{X}_{\By}; \C)$ as a module over the Chern classes  of line bundles associated to representations of $T$. Since the fundamental classes are images of $\pm \DDD_{\Bx,\By}$ and the Chern classes are images of $\C[\mathsf{t}_1,\dots, \mathsf{t}_n]$, we have a surjective map.  As noted before, comparing dimensions shows that it is also injective, which concludes the proof.
				\end{proof}
				
				Comparing this result with Proposition \ref{prop:Hbang-coh}, we see that the categories $\Lcat$ and $D^b(\mathsf{Coh}(\fM))$ are rather similar.  We would immediately obtain a fully-faithful functor $\Lcat\to D^b(\mathsf{Coh}(\fM))$ if we knew that $ \bigoplus_{\Bx, \By \in \degradedchambers(\lambda)}\Ext_{{\Lcat}}\Big(\sL_\Bx, \sL_\By \Big)$ were formal as a dg-algebra, but it is not clear that this is the case. 
				
				To show this formality, we need to use a different approach to construct this functor, using projective objects in the category $\tdq$.  This approach also naturally leads to a structure on $\Lcat$ that corresponds to the $\mathbb{G}_m$-action on $\fM$ discussed earlier: a new structure on DQ-modules, closely related to Saito's theory of mixed Hodge modules.  This will result in a graded category, which is to $D^b(\mathsf{Coh}_{\mathbb{G}_m}(\fM))$ as $\Lcat$ is to $D^b(\mathsf{Coh}(\fM))$. 
				
				\subsection{Projectives}
				
				As described above, we'll construct projective covers in $\tdq$.  As usual, let us first construct these on $\mathfrak{\tilde{Z}}$.  
				
				Consider $A=\C[x,y,\hbar]$ with the usual Moyal star product defined above.
				There are unique dq-modules $\sP_*^{(k)},\sP_!^{(k)}$ over $\C^2$ whose sections are the quotients \[H^0(\C^2; \sP_*^{(k)} ) =A/A\star (y\star x)^{\star k}\qquad H^0(\C^2; \sP_!^{(k)}) = A/A\star (x\star y)^{\star k}.\]
				\nc{\dP}{\mathcal{P}}
				Identifying $A$ with the Rees algebra of differential operators $D_x$ on $\C[x]$ (sending $y\mapsto \hbar \frac{\partial}{\partial x}$), these modules become the Rees modules of D-modules $\dP_*^{(k)},\dP_!^{(k)}$ on $\mathbb{A}^1$ with coordinate $x$.  
				We can identify these with  the $*$- and $!$-pushforwards of the D-module $L^{(k)}$ on $\C^*=\Spec(\C[x,x^{-1}])$ defined by the connection $\nabla=d-\frac{N}{x}$ on the trivial bundle with fiber $\C^k$, where $N$ is the regular nilpotent matrix
				\[N=\begin{bmatrix}
					0& 1 & 0 & \cdots &0&0\\
					0& 0& 1 & \cdots &0& 0\\
					0 & 0 & 0 & \cdots &0&0\\
					\vdots & \vdots & \vdots & \ddots & \vdots & \vdots\\
					0 & 0 & 0 & \cdots &0& 1\\
					0 & 0 & 0 & \cdots &0& 0
				\end{bmatrix} \]
				Both $\dP_*^{(k)}$ and $\dP_!^{(k)}$ are projective in the category of D-modules on $\mathbb{A}^1$ which are smooth away from the origin, whose monodromy around the origin has nilpotent part of length $\leq k$.  The D-module $\dP_!^{(k)}$ is the projective cover of the D-module of polynomials on $\mathbb{A}^1$, and $\dP_*^{(k)}$ is the projective cover of the delta functions at the origin.  
				
				Our presentation of these D-modules induces a good filtration on them; in DQ-module terms, this is an equivariant structure for the cotangent scaling $\mathbb{S}$ which has weight $0$ on $x$ and weight $1$ on $y$.  In fact, we will want to use shifts of this filtration, corresponding to  $\hdef \sP_*^{(k)}$ and $\hdef^{\nicefrac{1}{2}}\sP_!^{(k)}$ (note that the latter is only equivariant under the squared scaling).  In D-module terms, this means that we endow $\dP_*^{(k)}$ with the good filtration such that the image of $\C[x]\subset D_x$ spans $F_{-1} \dP_*^{(k)}$ and $F_{p}\dP_*^{(k)}=F_{p+1}D_x\cdot F_{-1} \dP_*^{(k)}$, and $\dP_!^{(k)}$ with the good filtration such that the image of $\C[x]\subset D_x$ spans $F_{-\nicefrac{1}{2}} \dP_!^{(k)}$ and $F_{p-\nicefrac{1}{2}}\dP_!^{(k)}=F_{p}D_x\cdot F_{-\nicefrac{1}{2}} \dP_!^{(k)}$.  These might seem like slightly strange choices: they are deliberately chosen so that in both cases, the unique simple quotient carries a pure Hodge structure of weight 0.  
				
				We will need certain morphisms between these DQ-modules: 
				
				\begin{enumerate}
					\item The linear map $N$ on $\C^k$ induces an endomorphism on $\sL^{(k)}$ and hence of $\sP_*^{(k)}$ and $\sP_!^{(k)}$.  This is the same as right multiplication by $y\star x$ or $x\star y$, respectively.  
					\item We have a $c^-\colon \sP_*^{(k)}\to \sP_{!}^{(k)}$, induced by multiplication on the right by $y$. Note that this map becomes an isomorphism if we invert $y$, and consider these as D-modules on $\Spec\C[y,y^{-1}]$.  
					\item  In the opposite direction we have a map $c^+ \colon \sP_{!}^{(k)}\to \sP_{*}^{(k)}$, induced by multiplication on the right by $x$; this is also induced by the identity on the local system $\sL^{(k)}$. Similarly, this map becomes an isomorphism if we invert $x$.  
				\end{enumerate}
				Note that the morphisms $c^-$ and $c^+$ shift the good filtration by $\frac{1}{2}$.  
				By \cite[Th. 2.12]{ArapuraTata}, we can identify these maps with the logarithm of the monodromy around the origin, the canonical map from nearby to vanishing cycles and the modified variation map discussed in \cite[\S 2.7]{ArapuraTata}.

				\begin{lemma}\label{lem:end1}
					The algebra $\End(\sP_*^{(k)}\oplus \sP_{!}^{(k)})$  is generated by $c^{\pm}$ subject to the relations 
					\begin{equation}\label{eq:rels1}
						c^+c^-=N\qquad c^-c^+=N \qquad  N^k=0.
					\end{equation}    
					
				\end{lemma}
				\begin{proof}
					By construction, $\Hom( \sP_!^{(k)}, \sM)$ is the kernel of the $k$th power of the logarithm of the monodromy on the stalk of $\sM$ at a generic point, given by the image of $(0,\dots, 0,1)$ in this stalk.  In particular, for $ \Hom( \sP_!^{(k)}, \sP_!^{(k)})$, this is $\C^k$ itself, and the map sending $(0,\dots, 0,1)$ to $(a_1,\dots, a_k)$ is $a_k+a_{k-1}N +\dots a_1N^{k-1}$.  Similarly for $ \Hom( \sP_!^{(k)}, \sP_*^{(k)})$, this stalk is the same, but now the map sending $(0,\dots, 0,1)$ to $(a_1,\dots, a_k)$ is $(a_k+a_{k-1}N +\cdots+ a_1N^{k-1})c^+$.  A symmetric argument holds with $*$ and $!$ reversed.  
				\end{proof}
				
				As noted before, the map $\tau$ induces an isomorphism $\C^2\setminus \{y=0\}\cong \C^2\setminus \{x=0\}.$ We can construct a DQ-module on $\basicW_i\cup \basicW_{i+1}$ glued using $\tau$, and placing $\sP_*^{(k)}$ or $\sP_!^{(k)}$ on each $\basicW_i$.  
				\begin{itemize}
					\item If the two modules are different, i.e. $\sP_*^{(k)}$ on $\basicW_i$ and  $\sP_!^{(k)}$ on $\basicW_{i+1}$ or {\it vice versa} then we use the natural isomorphism induced by swapping the roles of $x$ and $y$.  
					\item If they are the same, i.e.  $\sP_*^{(k)}$ or $\sP_!^{(k)}$ on both $\basicW_i$ and  $\basicW_{i+1}$, then we use the isomorphisms of multiplication by $y^{\pm 1}$ on $\basicW_i$ or equivalently $x^{\pm 1}$ on $\basicW_{i+1}$.
				\end{itemize}

				Iterating this process, we can construct a DQ-module on $\mathfrak{\tilde{Z}}$ associated to a choice of integer $k$ and a map $\wp\colon \Z\to \{*,!\}$, isomorphic to $\sP_{\wp(i)}^{(k)}$ on $\basicW_i$ . To endow this DQ-module with a global $\mathbb{S}$-action, we will need to shift the natural $\mathbb{S}$-action on the local components $\sP_{\wp(i)}^{(k)}$ by a certain amount, determined as follows. 
				
				We can associate to $\sP_*^{(k)}$ a local system on each of the two components of its singular support, $\{x=0\}$ and $\{y=0\}$, both described in terms of the vector space $\vectorspace^{(k)}\cong \mathbb{C}\oplus \mathbb{C}(1)\oplus\cdots \oplus\mathbb{C}(k-1)$; here $(p)$ represents shifting the good filtration/$\bS$-action, though when we discuss Hodge modules, we will want to use it to represent Tate twist by the same amount. At a generic point of $\{x=0\}$, the fiber is $\vectorspace^{(k)}\Big(\frac{1}{2}\Big)$ (so we obtain local systems of weights $0,2,\dots, 2(k-1)$), and at a generic point of  $\{y=0\}$, the fiber is $\vectorspace^{(k)}(1)$; for $\sP_!^{(k)}$, these swap roles.  Thus, in order to have matching $\mathbb{S}$-actions (or equivalently, good filtrations), we need to choose a function $\varsigma\colon \Z\to \frac{1}{2}\Z$ with the property that:
				\begin{equation}\label{eq:varsigma} \varsigma(m+1)=\begin{cases}
						\varsigma(m) & \wp(m)\neq \wp(m+1)\\
						\varsigma(m)-\frac{1}{2} & \wp(m)=\wp(m+1)=*\\
						\varsigma(m)+\frac{1}{2} & \wp(m)=\wp(m+1)=\:\, !\\
					\end{cases} 
				\end{equation}
				and place $\sP_{\wp(i)}^{(k)}(  \varsigma(i))$ on $\basicW_i$.  
				The most important modules constructed this way, denoted $\sP_i^{(k)}$, are given by  
				the functions 
				\begin{equation} \label{eq:wp} \wp(i) =\begin{cases}
						\:\, ! & m>i\\
						\:\,* & m\leq i
					\end{cases} \qquad \zeta(i) =\begin{cases}
						\frac{1}{2}(m-i-1) & m>i\\
						\frac{1}{2}(i-m) & m\leq i
					\end{cases} 
				\end{equation}
				\begin{lemma}\label{lem:projZ}
					The DQ-module $\sP_i^{(k)}$ is the projective cover of $\sL_i$ in the subcategory of $\tdq$ on $\mathfrak{\tilde{Z}}$ where the nilpotent part of the monodromy has length $\leq k$.   
				\end{lemma}
				\begin{proof}
					We can reduce to the case where $i=0$ using the $\Z$ action.  First, we must prove that $\sL_0$ is the unique simple quotient of $\sP_i^{(k)}$.  On $\basicW_0\cup \basicW_1\cong T^*\mathbb{P}^1$, this module is the pushforward $j_! \mathcal{L}^{(k)}$ where $j\colon \C^*\hookrightarrow\mathbb{P}^1$ is the inclusion of the complement of the north and south poles.  This has unique simple quotient given the intermediate extension of the 1-d local system with the standard connection.  This matches the simple $\sL_0$.  Any other simple quotient must be $\sL_m$ with $m\neq 0$.  If $m<0$, this would induce a map on   $\basicW_m$ of $\sP_!^{(k)}$ to the delta function D-module;  similarly, if  $m>0$, it would induce a map on $\basicW_{m+1}$ of $\sP_*^{(k)}$ to the function D-module.  No such map exists, so indeed $\sL_0$ is the unique quotient.  
					
					Now, we need to show it is projective.  Assume that $\ssM$ is an object in $\tdq$ with nilpotent part of the monodromy of length $\leq k$, and that there is a surjective map $\ssM\to  \sL_0$.  First, we note that we can restrict $\ssM $ to $T^*\mathbb{P}^1$ and obtain a D-module on $\mathbb{P}^1$ smooth on $\C^*$.  Since $\sL_0$ is the only simple in $\tdq$ supported on $\mathbb{P}^1$, the local system we obtain on $\C^*$ is regular, so it is on the trivial vector bundle with fiber $\C^d$ with a connection of the form $d-\frac{N'}{x}$ for $N'\colon \C^d\to \C^d$ a nilpotent map, and the map to $\sL_0$ is induced by a map $\phi\colon \C^d\to \C$ whose kernel contains the image of $N'$.  We can lift this up to a map $\sP_i^{(k)}|_{T^*\mathbb{P}^1}\to \ssM |_{T^*\mathbb{P}^1}$ by defining a map $\C^k\to \C^d$ by sending $(0,\dots, 0,1)$ to any vector $v$ with non-zero image under $\phi$, and then extending by the rule that $N^r(0,\dots, 0,1)\mapsto (N')^rv$.  By assumption, $(N')^k=0$, so this sends the standard basis of $\C^k$ to the vectors $(N')^rv$ for $r=0,\dots, k-1$; there a unique linear map satisfying this property. 
					
					Now, we change focus to $\basicW_{-1}$; by the projective property of $\sP_!^{(k)}$, induced the map of local systems on $\mathfrak{X}_{-1}$ extends to a map of $\sP_i^{(k)}|_{\basicW_{-1}}\to \ssM |_{\basicW_{-1}}$.  Applying the same argument again to $\basicW_{-2}$ gives a compatible map $\sP_i^{(k)}|_{\basicW_{-2}}\to \ssM |_{\basicW_{-2}}$.  By induction, we can extend to all $\basicW_{i}$ with $i<0$.  A symmetric argument shows how to extend to $i>1$.  This establishes the result.  
				\end{proof}

				\begin{lemma}
					The stalk of $\sigma_{i'}(\sP_i^{(k)})$ at a generic point in $\mathbb{P}^1$ is $\vectorspace^{(k)}(\frac{|i-i'|+1}{2})$.  
				\end{lemma}
				\begin{proof}
					By their identification with the $*$- and $!$-pushforwards $\sP_{*}^{(k)}$ and $\sP_{!}^{(k)}$ both have stalk $\vectorspace^{(k)}$ on $\mathbb{A}^1-\{0\}$.    
					We need to understand how these correspond to  the generic fiber on $y=0$. This is the same as the vanishing cycles with respect to $x$ at $x=0$.    
					
					In the case $\sP_{*}^{(k)}$, the canonical map induces an isomorphism of these vanishing cycles to $\vectorspace^{(k)}(-\frac{1}{2})$; in  the case $\sP_{!}^{(k)}$, the variation map induces an 
					isomorphism of these vanishing cycles to $\vectorspace^{(k)}(\frac{1}{2})$.
					This makes it clear that on each component, we have shift of the $\bS$-structure on $\vectorspace^{(k)}$, and that this shift is $|i-i'|/2$.
				\end{proof}
				
				Note, this means that $\Hom(\sP_i^{(k)}, \sP_{i'}^{(k)})=\vectorspace^{(k)}(\frac{|i-i'|}{2})$.
				The morphisms $c^{\pm}$ and $N$ induce morphisms of DQ-modules:
				\[N\colon   \sP_i^{(k)} \to \sP_i^{(k)}(1)\qquad c^-\colon \sP_i^{(k)}\to \sP_{i+1}^{(k)}\Big(\frac{1}{2}\Big)\qquad c^+ \colon \sP_{i}^{(k)}\to \sP_{i-1}^{(k)} \Big(\frac{1}{2}\Big)\]
				
				By Lemma \ref{lem:end1}, we have that these morphisms generate the endomorphism algebra $\oplus_{i,j\in \Z}\Hom(\sP_i^{(k)},\sP_j^{(k)})$ subject to the same relations \eqref{eq:rels1}.  That is:
				\begin{lemma}\label{lem:A1-alg}
					The endomorphism algebra $\displaystyle\bigoplus_{i, j\in \Z} \Hom(\sP_i^{(k)}, \sP_j^{(k)})$ is isomorphic to the quotient of the algebra $\gradedHalg_{\C}$ attached to the usual action of $\mathbb{G}_m$ on $\mathbb{A}^1$, modulo the relations $\mathsf{s_i}^k=0$ for all $i$.
				\end{lemma}
				Now, we extend this to the general case.  Given $\mathbf{x}$, we can define a projective by the exterior tensor product  $\sP_{x_1}^{(k)}\boxtimes\cdots \boxtimes \sP_{x_n}^{(k)}$, and consider the action of the torus $T_{\Q}$. We let $\sQ^{(k)}_{\Bx}$ be the unique largest quotient of this exterior product where the monodromy around $T_{\Q}$-orbits is trivial. Concretely, the exterior tensor product above carries an action of $\C[N_1, ...., N_n] = U_\C(\mathfrak{d})$ which can be interpreted as the logarithms of monodromy along orbits of the larger torus $D_{\Q}$. The monodromy is trivial along $T$-orbits if this action factors through the quotient $U_\C(\mathfrak{d}) \to S$. We therefore have \[ Q^{(k)}_{\Bx} = \left( \sP_{x_1}^{(k)}\boxtimes\cdots \boxtimes \sP_{x_n}^{(k)} \right) \otimes_{U_\C(\mathfrak{d})} S. \]
				This quotient has a natural strong $T$-equivariant structure.
				\begin{definition}\label{def:P_x}
					Let $\sP^{(k)}_{\Bx}$ be the Hamiltonian reduction of $\sQ^{(k)}_{\Bx}$  on $\mathfrak{\tilde Y}$; we consider this as a DQ-module.
				\end{definition}
				\begin{lemma}\label{lem:projective-cover}
					The object $\sP^{(k)}_{\Bx}$ is the projective cover of $\sL_{\Bx}$ in the category of DQ-modules with monodromy around $\mathfrak{X}_{\Bx}\cap \mathfrak{X}_{\By}$ unipotent of length $\leq k$ for all $\By$ with $|\Bx-\By|_1=1$.  
				\end{lemma}
				\begin{proof}
					The desired map from $\sP^{(k)}_{\Bx}\to \sL_{\Bx}$ is induced by the simple quotient of $\sP_{x_i}^{(k)}$ for all $i$.  Thus, we need to show the projective property, and the fact that there are no other simple quotients, which we will do by induction on the distance between $\Bx$ and $\By$ in the taxicab metric.  The restriction of $\sP^{(k)}_{\Bx}$ to $T^*\mathfrak{X}_{\Bx}$ is $j_!\mathcal{L}^{(k)}$ where $\mathcal{L}^{(k)}$ is the induced D-module on the complement of the intersection with all other components in $\mathfrak{X}_{\Bx}$.
					There is only one map to $\sL_{\Bx}$ since $\mathcal{L}^{(k)}$ is indecomposable and has unique simple quotient. Since there are no maps of $j_!\mathcal{L}^{(k)}$ to D-modules supported on intersections with other components, we have no maps to $\sL_{\By}$ for $|\Bx-\By|_1=1$.  As in Lemma \ref{lem:projZ}, we can extend this argument to all other $\By$, since the map can't be non-zero on $\By$ if it is zero on all $\By'$ closer to $\Bx$.  This shows that $\sL_{\Bx}$ is the unique simple quotient of $\sP^{(k)}_{\Bx}$.

					Now, let us prove the projective property for $\sP^{(k)}_{\Bx}$.  That is, let  $\sM$ be an object in $\tdq$ with monodromy around $\mathfrak{X}_{\Bx}\cap \mathfrak{X}_{\Bx'}$ for all $|\Bx - \Bx'|_1=1$ unipotent of length $\leq k$, and with a map $\sM \to \sL_{\Bx}$. We wish to show that we have an induced map $\psi\colon \sP^{(k)}_{\Bx}\to \sM$ making the usual diagram commute.
					
					Now, let $\Dol_{\leq p}$ be the union of the subspaces $T^*\mathfrak{X}_{\By}$ for $|\Bx-\By|_1\leq p$.   We will show that the map $\psi$ exists by constructing it inductively of $\Dol_{\leq p}$.

					On $\Dol_{\leq 0}\cong T^*\mathfrak{X}_{\Bx}$, we have that an induced map from $L^{(k)}$ to the local system given by the restriction of $\sM$ to the open orbit in $\mathfrak{X}_{\Bx'}$ by the universal property of $L^{(k)}$, and thus an induced map $\psi|_{\Dol_{\leq 0}}\colon \sigma_{\Bx}(\sP^{(k)}_{\Bx})\cong j_!L^{(k)}\to \sigma_{\Bx}(\sM)$.

					Now, assume that we have defined the map $
					\psi$ on $\Dol_{< p}$, and that $|\By-\Bx|_{1}=p$.  If $U=\Dol_{< p}\cap \mathfrak{X}_{\By}$, then by assumption, we have defined a map $\sigma_{\By}(\sP^{(k)}_{\Bx})|_U\to \sigma_{\By}(\sM)|_U$.  By construction, $\sigma_{\By}(\sP^{(k)}_{\Bx})=i_!(\sigma_{\By}(\sP^{(k)}_{\Bx})|_U)$ where $i\colon U \hookrightarrow\mathfrak{X}_{\By}$ is the inclusion.  Thus, we have a unique induced map $\sigma_{\By}(\sP^{(k)}_{\Bx})\to \sigma_{\By}(\sM)$; applying this for each $\By$ extends this map to $\Dol_{\leq p}$.  This shows that we have the projective property, and we have already confirmed it is the indecomposable projective cover of $\sL_{\Bx}$.
				\end{proof}
				
				\subsection{An equivalence of categories}
				Let $\gradedHalg^{(k)}_\mathbb{K}$ be the quotient of $\gradedHalg^\la_{\mathbb{K}}$ by the 2-sided ideal generated by $\mathsf{s}_i^k$.
				\begin{lemma}\label{lem:end-iso}
					The endomorphism ring $\displaystyle \bigoplus_{\Bx, \By \in \gradedchambers} \Hom_{\dq}(\sP^{(k)}_{\Bx}, \sP^{(k)}_{\By})$ is isomorphic to  $\gradedHalg^{(k)}_{\C}$.  
				\end{lemma}
				\begin{proof}
					This map is induced by sending the  morphism $c^{\pm i}_{\Bx}$ to the  morphism $c^{\pm}$ in the $i$th factor of the exterior product, and $\mathbf{s}_i$ to the endomorphism $N$ of this tensor factor. 
					
					We check that this is well-defined. The linear relations among the variables $\mathbf{s}_i$ in $S$ correspond to the triviality of monodromy along $T$-orbits in $\sP^{(k)}_{\Bx}$. The relations (\ref{wall-cross}) are a consequence of Lemma \ref{lem:end1}, while the relations (\ref{codim1}, \ref{codim2}) follow from the fact that these are the tensor product of endomorphisms of two different tensor factors.  This shows that we have the desired map. 
					
					Now, consider $\Hom(\sP^{(k)}_{\Bx},\sP^{(k)}_{\By})$; this is a quotient of $\C[\mathbf{s}_1,\dots,\mathbf{s}_n]/ (\mathbf{s}_i^k)$, which is the stalk of $  \boxtimes_{i=1}^n \sP^{(k)}_{y_i}$ on the component $\mathfrak{X}_{y_1}\times \cdots \times \mathfrak{X}_{y_n}$.  Killing the monodromy on $T$ gives us the quotient $ S/(\mathbf{s}_i^k)$.  This is generated by the image of $c_{\Bx,\By}$, so our homomorphism is surjective, and the fact that $\gradedHalg_\C$ is free as an $S$-module shows it is also injective.  
				\end{proof}
				Assume that $M$ is a finite dimensional right $\gradedHalg^{\la}_{\C}$-module.  Assume $k$ is chosen large enough that $\mathbf{s}_i^k$ kills $M$ for all $i$.  We can thus write $M$ as a quotient of $\bigoplus_{p=1}^{q}1_{\Bx_p}\gradedHalg^{(k)}_{\Q}$ for some $\Bx_p$, and in fact as the cokernel of a map
				\[ \mapomodules \colon \bigoplus_{r=1}^{s}1_{\By_r}\gradedHalg^{(k)}_{\C}\to \bigoplus_{p=1}^{q}1_{\Bx_p}\gradedHalg^{(k)}_{\C}\]
				induced by elements $a_{rp}\in 1_{\Bx_p}\gradedHalg^{(k)}_{\Q}1_{\By_r}$ of degree $2(\ell_p-\nu_r)$.
				We can also view $\mapomodules$ as a morphism of  DQ-modules 
				\[\mathsf{\tilde d}(\mapomodules)\colon \bigoplus_{r=1}^{s}\sP_{\By_r}^{(k)}\to \bigoplus_{p=1}^{q}\sP_{\Bx_p}^{(k)}\]
				Let $\mathsf{\tilde d}(M)$ denote the cokernel of the map $\mathsf{\tilde d}(\mapomodules)$.  
				Let  $\gradedHalg^{\la,\operatorname{op}}_{\C}\operatorname{-mod}_{o}$ be the category of finite dimensional right modules of $\gradedHalg^{\la}_{\C}$ on which each $\ps_i$ acts nilpotently.
				\begin{proposition}\label{mainthm-noHodge}
					This defines equivalences of categories  \[\mathsf{\tilde d}\colon \gradedHalg^{\la,\operatorname{op}}_{\C}\operatorname{-mod}_{o}\to \tdq\qquad \mathsf{d}\colon \degradedHalg^{\la,\operatorname{op}}_{\C}\operatorname{-mod}_{o}\to \dq.\]  
				\end{proposition}
				\begin{proof}
					By Lemma \ref{lem:Freyd-morita}, the category of DQ-modules which are quotients of a finite sum of the objects $\sP_{\Bx}^{(k)}$ is equivalent to the category $H^{(k),\operatorname{op}}_{\C}\operatorname{-mod}$ via the functor $\mathsf{\tilde d}$. The dimension of $\mathsf{\tilde d}(M)$ under this equivalence is the same as the composition length of $M$, so $M$ is in $\tdq$ if and only if its image is finite dimensional.  Thus, we have an equivalence $H^{(k),\operatorname{op}}_{\C}\operatorname{-mod}_{o}\to \tdq^{\leq k}$ between finite dimensional $H^{(k),\operatorname{op}}_{\C}$-modules and the subcategory $\tdq^{\leq}$ of $\tdq$ where all monodromy has unipotent length $\leq k$.  Since $\gradedHalg^{\la,\operatorname{op}}_{\C}\operatorname{-mod}_{o}$ is the union of the modules factoring through the quotients $H^{(k),\operatorname{op}}_{\C}$ for all $k$, and similarly $\tdq=\cup_{k}\tdq^{\leq k}$, this induces an equivalence $\mathsf{\tilde d}\colon \gradedHalg^{\zeta,\operatorname{op}}_{\C}\operatorname{-mod}_{o}\to \tdq$ as desired.

						The proof for $\mathsf{d}$ is word-for-word identical to that for $\mathsf{\tilde d}$, so we leave the details to the reader.
					\end{proof}

					Since $\tilde{\mathsf{d}}$ is an exact functor, it extends to a (both left and right) derived functor $\tilde{\mathsf{d}}\colon D^{b}( \gradedHalg^{\la,\operatorname{op}}_{\C}\operatorname{-mod}_{o})\to \widetilde{\DQ}$. 
					Combining with Corollary \ref{cor:H-equivalence}, we see our
					version of homological mirror symmetry in this context, as promised in
					the introduction: 
					\begin{theorem} 
						\label{centralcorollary}
						The functor $\Amodule \mapsto \tilde{\mathsf{d}}(\RHom(\tilt^\la_\C,\Amodule))$ defines an equivalence of dg categories $D^b(\Coh_G(\fM_\C)_o)\to \widetilde{\Lcat}$. Similarly, $\mathsf{d}$ defines an equivalence
						$D^b(\Coh(\fM_\C)_o)\to \Lcat$. \end{theorem}
					
					\begin{proof}
						We give the proof for the first equivalence, leaving the second to the reader. We know from Corollary \ref{cor:H-equivalence} that this reduces to showing the derived functor of $\tilde{\mathsf{d}}$ is an equivalence of derived categories $D^{b}( \gradedHalg^{\la,\operatorname{op}}_{\C}\operatorname{-mod}_{o})\to \widetilde{\DQ}$.  Proposition \ref{mainthm-noHodge} show that this functor is an equivalence of categories on the heart of the usual $t$-structure.  It's enough to additionally check that for a set of generating objects, such as the simples $L_\Bx$, the induced map $\Ext^k(L_{\Bx},L_{\By})\to \Ext^k(\tilde{\mathsf{d}}(L_{\Bx}),\tilde{\mathsf{d}}(L_{\By}))$ is an isomorphism for all $k,\Bx,\By$; this isomorphism follows for all other objects by a standard long exact sequence argument.  Thus, to complete the proof, it is enough to show that $\mathsf{\tilde d}$ induces an isomorphism $e_{\By}\gradedHalg_{\la, \C}^!e_{\Bx}\cong \Ext_{\widetilde{\Lcat}}(\tsL_\Bx,\tsL_\By)$.  
						
						Of course, Theorem \ref{mainthm} implies that {\it an} isomorphism between the  corresponding $\Ext$-algebras exists.  We could carefully confirm that this is (up to sign conventions) the same as that of Theorem \ref{mainthm}, but this is not strictly necessary.  The  equivalence of abelian categories of Proposition \ref{mainthm-noHodge} implies this functor induces an isomorphism $\Ext^1(L_{\Bx},L_{\By})\to \Ext^1(\tilde{\mathsf{d}}(L_{\Bx}),\tilde{\mathsf{d}}(L_{\By}))$ for all $\Bx,\By$.  Since $\gradedHalg_{\la, \C}^!$ is generated by elements of degree 1, this implies that the map induced by $\tilde{\mathsf{d}}$ is surjective, and thus an isomorphism since the  dimensions of $e_{\By}\gradedHalg_{\la, \C}^!e_{\Bx}$ and $\Ext_{\widetilde{\Lcat}}(\tsL_\Bx,\tsL_\By)$ in each degree are the same.  In fact, since $\Ext^1_{\widetilde{\Lcat}}(\tsL_\Bx,\tsL_\By)$ is at most 1-dimensional, we must recover the isomorphism of Theorem \ref{mainthm} up to rescaling the image of $d_{\Bx,\By}$ to be a non-zero scalar multiple of $\mathbf{d}_{\Bx,\By}$.  
					\end{proof}
					Note that this also resolves the concern about formality raised below Theorem \ref{mainthm}: since $\gradedHalg^{\la,\operatorname{op}}_{\C}$ is Koszul, the induced dg-algebra structure on the Ext of simples is formal, and this shows that the same holds in $\tdq$.  
					
					\section{Hodge structures}
					\label{sec:Hodge}
					\subsection{Microlocal mixed Hodge modules}

					We will need the notion of a unipotent mixed $\Q$-Hodge structure on $\sigma_{\Bx}(\ssM)$; see \cite{saito2016young} for a reference. 
					``Unipotent'' simply means that the monodromy on every piece of a stratification on which the D-module
					is smooth is unipotent.  Mixed Hodge modules are a very deep subject, but one which we can use in a mostly black-box manner.  The important thing for us is that given a holonomic regular D-module $\mathcal{M}$, a mixed Hodge structure can be encoded as real form and a pair of filtrations, a good filtration (often called the Hodge filtration) and the weight filtration (by submodules) on $\mathcal{M}$.  As discussed previously, we are allowing good filtrations indexed by $\frac{1}{2}\Z$.  
					
					Note that while most references on mixed Hodge modules only consider untwisted D-modules, since a Hodge structure is given by local data, the definition extends to twisted D-modules in an obvious way. We will only be using twists by honest line bundles (as opposed fractional powers), so we have an even easier definition available to us: a mixed/pure Hodge structure on a module $\mathcal{M}$ over differential operators twisted by a line bundle $L$ is the same structure on the untwisted D-module $L^*\otimes \mathcal{M}$. Since we will be working with fixed twists in what follows, we will conceal this choice and simply speak of mixed Hodge modules on $\mathfrak{X}_\Bx$ rather than twisted mixed Hodge modules.
					
					Given an $\mathbb{S}G$-equivariant $\EuScript{\tilde{O}}^\hdef_\phi$-module $\ssM$ in $\dq$, a $\EuScript{\tilde{O}}^\hdef_\phi(0)$-lattice $\ssM(0)$ induces a good filtration on $\sigma_{\Bx}(\ssM)$ for each $\Bx$.  
					
					A $\Q$-form of $\sigma_{\Bx}(\ssM)$ is a perverse sheaf $L$ on $\mathfrak{X}_{\Bx}$ with coefficients in $\Q$ with a fixed isomorphism $L\otimes_{\Q}\C\cong \operatorname{Sol}(\sigma_{\Bx}\ssM)$.  We wish to define a $\Q$-form of $\ssM$ analogously, but we need to think carefully about compatibility between different $\Bx$.
					
					\begin{definition}
						An $\Q$-form for $\ssM \in \SGmod$ is a perverse sheaf $L_{\Bx}$ on $\mathfrak{X}_{\Bx}$ for each $\Bx$ with a fixed isomorphism $L_{\Bx}\otimes_{\Q}\C\cong \operatorname{Sol}(\sigma_{\Bx}\ssM)$ such that the isomorphism \eqref{eq:Fourier1} induces an isomorphism $\mathcal{F}_{\By,\Bx}(L_{\Bx}|_{N_{\Bx,\By}})\cong L_{\By}|_{N_{\By,\Bx}},$ that is, it is compatible with the induced conjugation maps on the solution sheaves $ \operatorname{Sol}(\sigma_{\Bx}\ssM)$.  
						
						A {\bf mixed Hodge structure} on $\ssM$ consists of a lattice $\ssM(0)$, $\Q$-form $L_{\Bx}$ for all $\Bx$ and an increasing weight filtration $W_\bullet$ of $\ssM$ by submodules such that the induced good filtration,  $\Q$-form and weight filtration on $\sigma_{\Bx}(\ssM)$ is a unipotent mixed $\Q$-Hodge structure on this D-module. The real forms are required to be compatible under the isomorphism \eqref{eq:Fourier1}. 
					\end{definition}  
					
					\begin{remark}
						We should note that this definition does not provide any hope of giving a general definition of ``mixed Hodge DQ-modules.'' The space $\algcov$ is a union of cotangent bundles of smooth varieties, with the scaling action  on the cotangent bundle of each component extending to a global action on $\algcov$. We don't know of any similar situation outside the hypertoric case. Generalizing this definition to other cases is, of course, a quite interesting question but not one on which we can provide much insight at the moment. 
					\end{remark}

					\subsection{Hodge structures on projectives}
					
					One natural operation on mixed Hodge DQ-modules is that of {\bf Tate twist}, which shifts the filtrations by $F_i\ssM(k)=F_{i+k}\ssM$ and $W_{i}\ssM(k)=W_{i+2k}\ssM$ for $k\in \frac 12\Z$. Note that defining Tate twists for half-integers requires using good filtrations which are indexed by $k\in \frac 12\Z$, this explains our cryptic introduction of half-integers in earlier sections.  We're only interesting in understanding simple modules up to this operation.  
					We can easily check that:
					\begin{lemma}
						If $\ssM$ is supported on the core $\mathfrak{C}$, then 
						the $D$-module $\sigma_{\Bx}(\ssM)$ is smooth along the orbit stratification of $\mathfrak{X}_{\Bx}$ as a toric variety.
					\end{lemma}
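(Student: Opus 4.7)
The plan is to trace through what the support condition on $\ssM$ implies for $\sigma_{\Bx}(\ssM)$, using the normal-crossing description of $\mathfrak{C}$ near $\frak{X}_{\Bx}$. Recall that a (coherent, here automatically holonomic) D-module on a smooth variety is smooth along a stratification precisely when its characteristic variety is contained in the union of the conormal bundles to the strata.

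First I would unpack the definition of $\sigma_{\Bx}$. The pullback $\iota_{\Bx}^{*}\ssM$ is a module over $\iota_{\Bx}^{*}\EuScript{O}^{\hdef}_{\phi}\cong \mathcal{E}_{\Bx}|_{U_{\Bx}}$ (Theorem \ref{thm:micro-iso}), and its support as a coherent sheaf on $U_{\Bx}\subset T^{*}\frak{X}_{\Bx}$ is contained in $\iota_{\Bx}^{-1}(\mathfrak{C})$, since $\ssM$ is supported on $\mathfrak{C}$. Taking the associated graded with respect to the powers of the ideal sheaf of the zero section can only shrink the support, so it suffices to show that $\iota_{\Bx}^{-1}(\mathfrak{C})$ is contained in the union of the conormal bundles to the toric strata of $\frak{X}_{\Bx}$.

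The key geometric input, already exploited in the proof of Lemma \ref{lem:functionmodule}, is the following. By Proposition \ref{prop:corecharact} the components of $\mathfrak{C}$ meeting $\frak{X}_{\Bx}$ intersect it with normal crossings along toric subvarieties, and each such intersection $\frak{X}_{\Bx}\cap\frak{X}_{\By}$ is an orbit closure in $\frak{X}_{\Bx}$. Both $\frak{X}_{\Bx}$ and $\frak{X}_{\By}$ are complex Lagrangian, so in the Weinstein chart $U_{\Bx}$ the component $\frak{X}_{\By}$ is realised as the conormal bundle $T^{*}_{\frak{X}_{\Bx}\cap\frak{X}_{\By}}\frak{X}_{\Bx}$ (one checks this in the local coordinates on $\widetilde{\ZDol}^{n}$ used in the proof of Theorem \ref{mainthm}, where $\frak{X}_{\Bx}\cap\frak{X}_{\By}$ is cut out by a single coordinate $x_{i}$ and $\frak{X}_{\By}$ by the Darboux-dual coordinate $y_{i}$). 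Iterating this for deeper intersections accounts for the remaining toric strata. Consequently $\iota_{\Bx}^{-1}(\mathfrak{C})$ is contained in the union of the conormal bundles to the orbit closures in $\frak{X}_{\Bx}$.

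Combining these two steps, the characteristic variety of $\sigma_{\Bx}(\ssM)$ lies in the conormal variety of the orbit stratification, which is what smoothness along that stratification means. The only delicate point is the identification of the neighbouring components with conormal bundles in step two, but this is just the local normal-crossings/Lagrangian picture already used elsewhere in the paper, so no new argument is really required.
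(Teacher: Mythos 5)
The paper offers no proof of this lemma (it is prefaced only by ``We can easily check that''), so there is nothing to compare against; your argument supplies exactly the expected check. It is correct: the support condition places the characteristic variety of $\sigma_{\Bx}(\ssM)$ inside the cone of $\iota_{\Bx}^{-1}(\mathfrak{C})$, and since the neighbouring core components meet $\frak{X}_{\Bx}$ cleanly as Lagrangians along toric subvarieties, that cone lies in the union of conormal bundles to the orbit strata, which is the standard criterion for smoothness along the (Whitney) orbit stratification.
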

					
					\begin{lemma} \label{MHMgeneration}
						The sheaf $\sL_\Bx$ has a unique mixed Hodge structure whose associated mixed Hodge modules are pure of weight 0. 
					\end{lemma}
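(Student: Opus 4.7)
The plan is to construct the lift by specifying, for each component $\frak{X}_\By$ with $\By \in \bar\Lambda(\la)$, a canonical mixed Hodge module structure on the microlocalization $\sigma_\By(\sL_\Bx)$, and then to verify that these structures are compatible in the sense of Definition \ref{def:mhm}. The key input is the analogue of Lemma \ref{lem:functionmodule} for $\sL_\Bx$: it identifies $\sigma_\By(\sL_\Bx)$ with the $D_\By$-module pushforward along the closed inclusion $j_{\Bx,\By} \colon \frak{X}_\Bx \cap \frak{X}_\By \hookrightarrow \frak{X}_\By$ of the restriction of $\lin_\Bx \otimes \Omega_\Bx^{1/2}$ to the intersection. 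Because this intersection is a smooth toric subvariety and the coefficient bundle is torus-equivariant, the resulting $D_\By$-module carries a canonical pure Hodge module structure, pulled back from the trivial variation of Hodge structure on the open torus orbit of $\frak{X}_\Bx \cap \frak{X}_\By$.

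First I would fix the grading on $\sigma_\By(\sL_\Bx)$ as the Rees construction applied to the good order filtration on $(j_{\Bx,\By})_*$ of the line bundle, equip it with the trivial weight filtration, and take the $\Q$-form to be the constant sheaf on the smooth open locus of $\frak{X}_\Bx \cap \frak{X}_\By$, middle-extended and pushed forward, with a Tate twist fixed once and for all so that the result is pure of weight $0$. Unipotence of the monodromy along the orbit stratification of $\frak{X}_\By$ is automatic, since on each stratum of $\frak{X}_\Bx \cap \frak{X}_\By$ the underlying $\Q$-local system arises by restriction of a torus-equivariant line bundle, and hence has trivial monodromy on the open orbit. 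Next I would check the pairwise compatibility of the $\sigma_\By(\sL_\Bx)$: the further microlocalizations of $\sigma_\By(\sL_\Bx)$ and $\sigma_{\By'}(\sL_\Bx)$ to the symplectically dual conormals of $\frak{X}_\By \cap \frak{X}_{\By'}$ are both described by pushforwards from the triple intersection $\frak{X}_\Bx \cap \frak{X}_\By \cap \frak{X}_{\By'}$, and the Fourier transform of a monodromic pure Hodge module supported on the conormal to a smooth subvariety exchanges such pushforwards, with weight and $\Q$-structure preserved.

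The main obstacle I expect is the bookkeeping of twists and weights. The half-density factors $\Omega_\Bx^{1/2}$ and $\Omega_\By^{1/2}$ and the codimension of $\frak{X}_\Bx \cap \frak{X}_\By$ inside $\frak{X}_\By$ each contribute shifts to the Hodge weight under Saito's conventions, and arranging a single Tate twist that renders every $\sigma_\By(\sL_\Bx)$ simultaneously pure of weight $0$ requires matching these shifts via the symplectic geometry of the embeddings $\frak{X}_\Bx \cap \frak{X}_\By \hookrightarrow \frak{X}_\By$. Once this is pinned down, canonicity of the lift is immediate: a pure Hodge module with prescribed underlying $D$-module and torus-equivariant generic $\Q$-local system is determined up to Tate twist, and the weight-$0$ condition pins down that twist.
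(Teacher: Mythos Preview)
Your proposal is correct and follows essentially the same approach as the paper: identify each microlocalization $\sigma_\By(\sL_\Bx)$ as the $D$-module pushforward of a line bundle from the smooth toric subvariety $\frak{X}_\Bx\cap\frak{X}_\By$, endow it with its unique pure Hodge structure of weight $0$, and note compatibility under Fourier transform. The paper's proof is much terser than yours---it simply asserts that the pushforward has a unique weight-$0$ Hodge structure and that these are Fourier-compatible---so your discussion of unipotence, the explicit $\Q$-form, and the weight-twist bookkeeping fleshes out details the paper leaves implicit.
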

					\begin{proof}
						The trivial local system on $\mathfrak{X}_{\Bx}$ has the structure of a variation of Hodge structure which is pure of weight 0.  This is unique by    \cite[Prop. 1.13]{DeligneMonodromie}.
						Of course, any mixed Hodge structure of weight 0 on $\sL_\Bx$ must be induced by this VMHS, which shows uniqueness.  
						
						Thus, we only need to show that the induced lattice $\sL_{\Bx}(0)$, real form, and (trivial) weight filtration induce 
						mixed Hodge structures on the microlocalizations $\sigma_\By(\sL_\Bx)$ for each $\By$. Recall that $\sigma_{\By}(\sL_{\Bx})$ is the pushforward of the trivial line bundle on $\mathfrak{X}_{\Bx}\cap \mathfrak{X}_{\By}$, so the result follows from the compatibility of mixed Hodge structure with pushforward.
					\end{proof}

					Unfortunately, while the Hodge structure on a simple module is unique up to Tate twist, there are ``too many'' different Hodge structures on other objects in $\dq$.  For example, $\sL_\Bx\oplus \sL_\Bx(k)$ has a non-trivial moduli of Hodge structures, induced by the same phenomenon on $\Q\oplus \Q(k)$.  
					
					Thus, we need to find a way of avoiding these sort of deformations of Hodge structure.  We do this by constructing a natural Hodge structure on the modules $\sP^{(k)}_{\Bx}$.  
					
					Recall that we started the contruction of these projectives by considering modules $\sP_*^{(k)},\sP_!^{(k)}$ over $A=\C[x,y,\hbar]$ with the usual Moyal star product.  We make these into mixed Hodge modules on $\mathbb{A}^1$ by  endowing $\dP_*^{(k)}$ with the good filtration such that the image of $\C[x]\subset D_x$ spans $F_{-1} \dP_*^{(k)}$ and $F_{p}\dP_*^{(k)}=F_{p+1}D_x\cdot F_{-1} \dP_*^{(k)}$, and $\dP_!^{(k)}$ with the good filtration such that the image of $\C[x]\subset D_x$ spans $F_{-\nicefrac{1}{2}} \dP_!^{(k)}$ and $F_{p+\nicefrac{1}{2}}\dP_!^{(k)}=F_{p}D_x\cdot F_{-\nicefrac{1}{2}} \dP_!^{(k)}$.  These might seem like slightly strange choices: they are deliberately chosen so that in both cases, the unique simple quotient carries a pure Hodge structure of weight 0.  
					
					Now, we consider Hodge structures on these DQ-modules extending the good filtrations defined above on $\dP_{*}^{(k)}$ and $\dP_{!}^{(k)}$. Their real form is the obvious one where $x$ and $y$ are conjugation invariant; this corresponds to the obvious real form of $L^{(k)}$.  We define the weight filtration on $\dP_{*}^{(k)}$ by 
					\[W_p\dP_{*}^{(k)}=\begin{cases}
						\displaystyle	0 & p<-2k+1. \\
						\displaystyle	D_x (\partial_x x)^{ -p/2}/D_x(\partial_x x)^{ k}& 0> p\geq 2k+1, p\text{ even} \\
						\displaystyle	D_x x (\partial_x x)^{ -(p+1)/2}/D_x(\partial_x x)^{ k}& 0> p\geq 2k+1, p\text{ odd} \\
						\displaystyle	\dP_{*}^{(k)} & p>0
					\end{cases} \]
					and the weight filtration on $\dP_{!}^{(k)}$ analogously, swapping $x$ and $y$.
					
					\begin{lemma}
						These data define mixed Hodge structures on $\dP_{*}^{(k)},\dP_{!}^{(k)}$.
					\end{lemma}
					\begin{proof}
						First, let's consider $\dP_{*}^{(k)}$. By the definition above, $W_p\dP_{*}^{(k)}/W_{p-1}\dP_{*}^{(k)}\cong D_x/D_x x$ if $p$ is even and $0\geq p\geq 2k+1$;  this is equipped the good filtration where the image of $\partial_x^r$ for $r< s$ span $F_{s+\nicefrac{p}{2}}$.  On the other hand, the $V$-filtration of this D-module for the function $x$ has $V^{\ell}$ spanned by $y^r$ for $r\geq -\ell$. Thus, the vanishing cycles $\Phi=\phi(W_p\dP_{*}^{(k)}/W_{p-1}\dP_{*}^{(k)})$ are spanned by the image of 1, i.e. they are 1-dimensional.  Accounting for the shift of good filtration (as in \cite[(2.1.7)]{saito2016young}) they are equipped with the good filtration 
						\[F_{s+\nicefrac{p}{2}}(\Phi)=\begin{cases} \Phi & s\geq 0\\
							0 & s<0\end{cases}\]
						This means that $W_p\dP_{*}^{(k)}/W_{p-1}\dP_{*}^{(k)}$ is isomorphic to
						the usual Tate pure Hodge structure of weight $p$ on $\Q$, pushed forward at the origin $x=0$.  If $p$ is odd, then we have $W_p\dP_{*}^{(k)}/W_{p-1}\dP_{*}^{(k)}\cong D_x/D_x\partial_x$; exactly as above, the generic fiber of this local system has the Tate Hodge structure of weight $p-1$, and so gives a pure Hodge module of weight $p$.  
						
						For $\dP_{!}^{(k)}$, the calculations are the same, but odd and even cases swap roles.  In particular, we see that half-integral filtrations are needed so that we can endow $\Q$ with a Tate Hodge structure of odd weight (i.e. a half integral Tate twist).  
					\end{proof}

					We defined above morphisms $N,c^{\pm}$ between these DQ-modules. These morphisms preserve the mixed Hodge structure up to Tate twist and become morphisms of mixed Hodge modules 
					\[N\colon 	 \sP_*^{(k)} \to \sP_*^{(k)}(1)\qquad c^-\colon \sP_*^{(k)}\to \sP_{!}^{(k)}\Big(\frac{1}{2}\Big)\qquad c^+ \colon \sP_{!}^{(k)}\to \sP_{*}^{(k)} \Big(\frac{1}{2}\Big)\]
					This means that they define Tate elements of the endomorphism algebra $\End(\sP_*^{(k)}\oplus \sP_{!}^{(k)})$, and since they generate, they show that the induced Hodge structure on this algebra is of Tate type agreeing with the grading $\deg(c^{\pm})=1,\deg(N)=2$.
					
					As noted before, the map $\tau$ induces an isomorphism $\C^2\setminus \{y=0\}\cong \C^2\setminus \{x=0\}.$ We can construct a DQ-module on $\basicW_i\cup \basicW_{i+1}$ glued using $\tau$, and placing $\sP_*^{(k)}$ or $\sP_!^{(k)}$ on each $\basicW_i$.  
					\begin{itemize}
						\item If the two modules are different, i.e. $\sP_*^{(k)}$ on $\basicW_i$ and  $\sP_!^{(k)}$ on $\basicW_{i+1}$ or {\it vice versa} then we use the natural isomorphism induced by swapping the roles of $x$ and $y$.  
						\item If they are the same, i.e.  $\sP_*^{(k)}$ or $\sP_!^{(k)}$ on both $\basicW_i$ and  $\basicW_{i+1}$, then we use the isomorphisms of multiplication by $y^{\pm 1}$ on $\basicW_i$ or equivalently $x^{\pm 1}$ on $\basicW_{i+1}$.
					\end{itemize}

					Of course, if we don't include shifts, this gluing will not respect the Hodge structure, so we need to glue these DQ-modules with Tate twists in them.  The functions we Tate twist by have already been constructed in \eqref{eq:varsigma}, based on a choice of which version of the module we will take on each component, expressed by a function $\wp$.  This makes the modules $\sP_i^{(k)}$ into mixed Hodge modules.  
					
					\begin{remark}
						These modules are not projective in the category of mixed Hodge modules (even with appropriate monodromy restrictions) since they don't account for non-Tate extensions.  
					\end{remark}
					
					This induces a Hodge structure on the module  $\sP^{(k)}_{\Bx}$ defined in Definition \ref{def:P_x}, and thus on the endomorphism ring $\bigoplus_{\Bx, \By \in \gradedchambers} \Hom_{\dq}(\sP^{(k)}_{\Bx}, \sP^{(k)}_{\By})$.  
					
					\begin{lemma}\label{lem:hodgeend-iso}
						The Hodge structure on the endomorphism ring $\bigoplus_{\Bx, \By \in \gradedchambers} \Hom_{\dq}(\sP^{(k)}_{\Bx}, \sP^{(k)}_{\By})$ is Tate and matches that constructed from the grading on $H^{(k)}_{\Q}$.  
					\end{lemma}
					
					\subsection{The category of mixed Hodge modules } 
					Now, we wish to establish a graded version of the equivalence of Theorem \ref{centralcorollary}.  As discussed above, looking at all mixed Hodge structures on DQ-modules results in ``too many'' objects; in particular, the graded lift of a projective object will not be projective in the category of all mixed Hodge structures on objects in $\tdq$, which is not the behavior we expect from adding a grading to a ring.  In more categorical terms, the functor of forgetting Hodge structure is not a ``degrading'' functor.  
					
					Thus, we will consider objects in $\tdq$ with a more restricted set of Hodge structures, only those which arise as a quotient of the objects $\sP_{\Bx}^{(k)}$; it's worth noting that while these objects have a projective property in $\tdq$ (subject to a restriction on monodromy), they are not projective amongst mixed Hodge DQ-modules with this monodromy.  The important effect this has is that it forces the local systems on the open part of $\mathfrak{X}_{\Bx}$ to be Tate as mixed Hodge structures;  typically, the structures we wish to avoid will not have this property.
					
					\begin{definition}\label{def:mhm}
						We let $\mhm$ and $\tmhm$ be the categories of mixed Hodge DQ-modules in $\dq$ and $\tdq$ which are quotients of a sum of the form $\bigoplus_{p=1}^{q} \sP_{\Bx_p}^{(k)}(\ell_p)$ for some $k\geq 0,\ell_p\in \frac{1}{2}\Z$ and $\{\Bx_1,\dots, \Bx_{q}\}\subset \gradedchambers$. 
					\end{definition}

					We let $\MHM$ and $\tMHM$ be the standard dg-enhancements of the derived categories $D^b(\mhm)$ and $D^b(\tmhm)$ (the quotient of the dg-category of all complexes modulo that of acyclic complexes).
					
					Now, assume that $M$ is a finite dimensional graded right $\gradedHalg^{\zeta}_{\Q}$-module.  Recall that $M(\ell)$ denotes $M$ with the grading shifted down by $\ell$.  Assume $k$ is chosen large enough that $\mathbf{s}_i^k$ kills $M$ for all $i$.  We can thus write $M$ as a quotient of $\bigoplus_{p=1}^{q}1_{\Bx_p}H^{(k)}_{\Q}(2\ell_p)$ with $\Bx_p$ and $\ell_p$ as above, and in fact as the cokernel of a map
					\[\mapomodules \colon \bigoplus_{r=1}^{s}1_{\By_r}H^{(k)}_{\Q}(2\nu_r)\to \bigoplus_{p=1}^{q}1_{\Bx_p}H^{(k)}_{\Q}(2\ell_p)\]
					induced by elements $a_{rp}\in 1_{\Bx_p}H^{(k)}_{\Q}1_{\By_r}$ of degree $2(\ell_p-\nu_r)$.
					We can also view $A$ as a morphism of Hodge DQ-modules 
					\[\tilde{\mathsf{m}}(\mapomodules)\colon \bigoplus_{r=1}^{s}\sP_{\By_r}^{(k)}(\nu_r)\to \bigoplus_{p=1}^{q}\sP_{\Bx_p}^{(k)}(\ell_p)\]
					Let $\tilde{\mathsf{m}}(M)$ denote the cokernel of the map $\tilde{\mathsf{m}}(\mapomodules)$.  
					Let  $\gradedHalg^{\zeta,\operatorname{op}}_{\Q}\operatorname{-gmod}_{o}$ be the category of finite dimensional graded right modules of $\gradedHalg^{\zeta}_{\Q}$.  
					\begin{theorem}\label{mainthm-Hodge}
						This defines equivalences of categories  $$ \tilde{\mathsf{m}} \colon \gradedHalg^{\zeta,\operatorname{op}}_{\Q}\operatorname{-gmod}_{o}\to \tmhm \qquad \mathsf{m} \colon \degradedHalg^{\zeta,\operatorname{op}}_{\Q}\operatorname{-gmod}_{o}\to \mhm$$ sending grading shift $(\ell)$ to the Tate twist $(\nicefrac{\ell}{2})$. 
					\end{theorem}
					
					\begin{proof}
						If $f\colon M\to M'$ is a homogeneous map of modules, the construction of $\tilde{\mathsf{m}}(f)$ by presenting $M$ and $M'$ as cokernels proceeds exactly as in the proof of Proposition \ref{mainthm-noHodge}, as does the proof that this functor is fully faithful. 
						
						The only point where we need a bit more care is in the proof of essential surjectivity.  By definition, any module $\sM$ in $\tmhm$ is a quotient of $\tilde{\mathsf{m}}(\sP_0)$ for some $\sP_0$.  Thus, we need to show that the kernel $\sK$ is also an object in $\tmhm$.  The object $\sK$ has a largest semi-simple quotient, i.e. its cosocle.  This is a finite sum of objects of the form $\sL_{\By_r}(\nu_r)$.  This shows that $\sK$ is generated by the images of maps (of DQ-modules, ignoring Hodge structure) from $\sP^{(k)}_{\By_r}$ for $r=1,\dots, s$.  Note that $\Hom(\sP^{(k)}_{\By_r},\sK)$ carries a mixed Hodge structure which is a subobject of $\Hom(\sP^{(k)}_{\By_r},\tilde{\mathsf{m}}(\sP_0))$, the former has Tate type since the latter does as well.  Thus, there is a module $M_1$ such that 
						\[\tilde{\mathsf{m}}(\sP_1)=\bigoplus_{r=1}^s \Hom(\sP^{(k)}_{\By_r},\sK)\otimes_{\Q} \sP^{(k)}_{\By_r}\]
						as mixed Hodge DQ-modules; of course, the image of the induced map $\tilde{\mathsf{m}}(\sP_1)\to \tilde{\mathsf{m}}(\sP_0)$ is exactly $\sK$, and so $\sM=\tilde{\mathsf{m}}(M)$ where $M$ is the cokernel of the map $\sP_1\to \sP_0$. This completes the proof that $\tilde{\mathsf{m}}$ is an equivalence. The second equivalence is proven the same way. 
					\end{proof}

					Analogous to the proof of Theorem \ref{centralcorollary}, we have the following: 
					\begin{corollary} \label{cor:maincorollary}
						There are equivalences of categories \[ D^b(\Coh_{\mathbb{G}_m\times G}(\fM_\Q)_o)\to \tMHM.\] and \[ D^b(\Coh_{\mathbb{G}_m}(\fM_\Q)_o)\to \MHM \] 
      sending grading shift $(\ell)$ to the Tate twist $(\nicefrac{\ell}{2})$. 			\end{corollary}
					
					We conclude with a few questions raised by this result. Under our equivalence, the $\mathbb{G}_m$-action on $\fM_\C$ corresponds to the weight grading on $\MHM$. This action, which dilates the symplectic form, is key to the enumerative geometry of hypertoric varieties. Indeed, the symplectic structure on $\fM_\C$ implies that the non-equivariant quantum connection of $\fM_\C$ is essentially trivial. Its $\mathbb{G}_m$-equivariant version, on the other hand, is the hypergeometric system studied in \cite{McS}. The same is true for more general symplectic resolutions : for instance, the $\mathbb{G}_m$-equivariant quantum connection of the Springer resolution is the decidedly non-trivial affine KZ connection \cite{BMO}. Our result thus suggests that the mirror description of these connections can be approached via microlocal Hodge structures.
					
					We also note that whereas the left-hand side of both of our equivalences is a geometrically defined category, the right-hand sides are defined by picking certain generators inside the ambient category of deformation-quantization modules. This is in contrast to the equivalence proven in the sequel to this paper \cite{GMW}, which equates coherent sheaves on $\fM_\C$ with the wrapped Fukaya category of its mirror. A more direct geometric definition of $\MHM$ and its grading, in particular, would be of great interest. 
					
					\bibliography{gen}
					\bibliographystyle{amsalpha}
				\end{document}